 \newtheorem{thm}{Theorem}[subsection]
 \newtheorem{cor}[thm]{Corollary}
 \newtheorem{lem}[thm]{Lemma}
 \newtheorem{prop}[thm]{Proposition}
 \theoremstyle{definition}
 \newtheorem{defn}[thm]{Definition}
 \theoremstyle{remark}
 \newtheorem{rem}[thm]{Remark}
 \newtheorem{ejem}[thm]{Example}
 \newtheorem{ejems}[thm]{Examples}
\numberwithin{equation}{subsection}
\newcommand{\U}{{\mathcal U}}
\newcommand{\ZZ}{{\mathbb Z}}
\newcommand{\QQ}{{\mathbb Q}}
\newcommand{\Lc}{{\mathcal L}}
\newcommand{\T}{{\mathcal T}}
\newcommand{\LL}{{\mathbb L}}
\newcommand{\RR}{{\mathbb R}}
\newcommand{\Sc}{{\mathbb S}}
\DeclareMathOperator{\OOgamma}{{L}}
\newcommand{\wh}{\widehat}
\DeclareMathOperator{\limi}{{lim}}
\newcommand{\plim}[1]{\,\underset{#1}{\underset{\leftarrow}{\limi}}\,}
\newcommand{\ilim}[1]{\,\underset{#1}{\underset{\rightarrow}{\limi}}\,}
\newcommand{\punto}{{\scriptscriptstyle \bullet}}
\newcommand{\enumera}{\begin{enumerate}}
\newcommand{\eenumera}{\end{enumerate}}
 \DeclareMathOperator{\Ima}{{Im}}
\DeclareMathOperator{\Hom}{{Hom}}
\DeclareMathOperator{\DHom}{\underline{H\widehat{om}}}
\DeclareMathOperator{\HHom}{\underline{Hom}}
\DeclareMathOperator{\id}{{id}}
\begin{document}

\title[Homology  and Cohomology of Finite Spaces]
 {Homology  and Cohomology of Finite Spaces}

\author{ V. Carmona S\'anchez}\author{ C. Maestro P\'erez}\author{  F. Sancho de Salas}\author{ J.F. Torres Sancho}

\address{ \newline V\'ictor Carmona S\'anchez\newline Departamento de \'algebra\newline
Universidad de Sevilla-IMUS\newline   Avda Reina Mercedes s/n, Edificio Celestino Mutis,
41012 Sevilla\newline  Spain}
\email{vcarmona1@us.es}

\address{ \newline Carlos Maestro P\'erez\newline Institut fur Mathematik\newline
Humboldt-Universitat zu Berlin - Berlin Mathematical School (BMS)\newline Rudower Chaussee 25 (Johann von Neumann Haus) \\
12489 Berlin\newline  Germany}
\email{maestroc@mathematik.hu-berlin.de}

\address{ \newline Fernando Sancho de Salas\newline Departamento de
Matem\'aticas\newline
Universidad de Salamanca\newline  Plaza de la Merced 1-4\\
37008 Salamanca\newline  Spain}
\email{fsancho@usal.es}

\address{ \newline Juan Francisco Torres Sancho\newline Departamento de
Matem\'aticas\newline
Universidad de Salamanca\newline  Plaza de la Merced 1-4\\
37008 Salamanca\newline  Spain}
\email{juanfran24@usal.es}

\subjclass[2010]{55N25, 55N30, 06A11, 05E25, 55U30}

\keywords{Finite spaces, homology, cohomology, duality}

\thanks {The third author was supported by research project MTM2017-86042-P (MEC)}




\begin{abstract} We provide base change theorems, projection formulae and Verdier duality for both cohomology and homology in the context of finite topological spaces.
\end{abstract}

\maketitle

\section*{Introduction}

Finite topological spaces or finite posets are an already classic object of study, ever since the
work of Alexandroff, Stong, McCord, and so on; as well as a current area of interest (see  \cite{Barmak} and references
therein). For a while, let us consider locally compact and separated spaces.


Let $f\colon X\to Y$ be a proper map between locally compact and separated topological spaces. The cohomology of $f$ (i.e., the higher direct images) satisfies (see \cite{Iversen}):

\medskip\begin{enumerate}

\item Base change theorem: For any sheaf $F$ of abelian groups on $X$ and any $y\in Y$, one has an isomorphism
\[ (R^if_*F)_y\overset\sim\to H^i(f^{-1}(y),F_{\vert f^{-1}(y)}).\]

\item Projection formula: for any $F\in D^-(X)$ and any $G\in D^-(Y)$ one has an isomorphism
\[ \RR f_* F\overset\LL\otimes G\overset\sim\to \RR f_*(F\overset\LL\otimes f^{-1}G).\]

\item Verdier duality: There exists a functor $f^!\colon D^+(Y)\to D^+(X)$ satisfying:
\[ \RR\Hom^\punto(\RR f_* F,G) = \RR\Hom^\punto(F,f^!G) \] for any $F\in D^+(X)$, $G\in D^+(Y)$.

\item[(3.1)] Local isomorphism of duality: the isomorphism (3) can be sheafified to an isomorphism 
\[ \RR\HHom^\punto(\RR f_* F,G) = \RR f_*\RR\HHom^\punto(F,f^!G). \]

\end{enumerate}

A first aim of this paper is to study the validity of these results in the context of finite topological spaces. Thus, let $f\colon X\to Y$ be a continuous map between finite topological spaces (or finite posets). Then Verdier duality (3) still holds (\cite{Navarro}). However (1), (2) and (3.1) are no longer true, even if we assume that $f$ is proper (i.e., universally closed). Properness is too weak in this context (it is equivalent to being closed). It is a necessary but not sufficient condition for having (1), (2) and (3.1).  A stronger notion is necessary. This notion is that of a {\it cohomologically proper} map: We say that $f\colon X\to Y$ is cohomologically proper (c-proper for short) if it is a closed map and for any $x\in X$ the map $f_{\vert{C_x}}\colon C_x\to C_y$ has homologically trivial fibers, where $C_x, C_y$ denotes the closure of $x$ and $y$ respectively. We prove that a c-proper map satisfies (1), (2) and (3.1) and conversely, any of them characterizes a c-proper map. These results (and others characterizing c-proper maps) are given in Theorems \ref{c-proper&basechange}, \ref{hsatprojform} and  \ref{projformulacharacterization}.

A second aim of the paper is to develop these results for homology. One of the most striking properties of a  finite topological space $X$ is the equivalence between the category of sheaves of abelian groups on $X$  and the category of cosheaves (over closed subsets) of abelian groups on $X$, thanks to the fact that both of these categories are in turn equivalent to the category of functors from $X$ to abelian groups (which will be referred to in this paper as abelian data on $X$). This has the following fundamental implications. On an arbitrary topological space, cohomology is defined for any sheaf and studied within the framework of the theory of derived functors, whereas homology is defined only for constant or locally constant coefficients and thus does not fit within such a framework. However, on finite topological spaces one may in fact define the homology and cohomology of any abelian data (or sheaf, or cosheaf) and both of these constructions are developed within the framework of the theory of derived functors. In other words, on a finite topological space it is possible to consider the homology groups $H_i(X,F)$ with coefficients on any sheaf $F$, and in this case  $H_i(X,\quad)$ is the $i$-th (left-)derived functor of $H_0(X,\quad)$. This was already pointed out by  Deheuvels (\cite{Deheuvels}). Moreover, just as the functor of sections of a sheaf admits a relative version  (the direct image $f_*$), so too does the functor of cosections of a cosheaf admit a relative version $f_!$ (which is the direct image of cosheaves), and again, just as cohomology admits a relative version  (the higher direct images $R^if_*$), so too does homology admit a relative version  $L_if_!$. Thus, one can formulate base change or projection formula morphisms for homology and to study when these morphisms are isomorphisms. This leads to the notion of an {\it homologically open} map (h-open for short), which is the analogous for homology to that of a c-proper map, and we obtain the same results for homology in Theorems \ref{h-open&basechange} and \ref{hsatprojform} (we leave aside Verdier duality for homology for the moment, and will treat it below). Base change theorem for cohomology (resp. for homology) is related to the theory of cohomological (resp. homological) fiber theorems    (see \cite{BjörnerWachsWelker} and \cite{Quillen}); as an example, see Corollary \ref{3.2.10}.

A great part of the paper is dedicated to duality (section \ref{4}). There are several types of duality to be considered.  Subsection 4.1  is devoted to the duality between homology and cohomology. We generalize to a relative situation and, for any complex of abelian data, to the standard duality between cohomology and homology with coefficientes on a locally constant sheaf. Subsection 4.2 is devoted to  topological duality, that is, to relating  the cohomology of  complexes of abelian data on $X$ to the cohomology of complexes of abelian data on the dual space $\widehat X$. The main results are  Theorem \ref{topduality}, Corollary \ref{topduality2} and Theorem \ref{topdualityrel}, that generalize the well known fact that a space and its dual have the same cohomology and homology groups with coefficients on a constant sheaf.

In subsection 4.3 we study Grothendieck-Verdier duality for first cohomology and then homology for a morphism $f\colon X\to Y$. Some related duality results can also be found in \cite{Curry}. Verdier duality for cohomology deals with the existence of a right adjoint $f^!$ of the functor $\RR f_*$. This is the global form of duality, which is already done in \cite{Navarro}. We shall give an explicit description of the dualizing complex $D_X$ in Theorem \ref{ExplicitDualizing} (a related result may be found in \cite{Curry}). As we mentioned above, we also study the validity of the local form of duality in Theorem \ref{projformulacharacterization}.


Now, let us comment on the results obtained about Grothendieck-Verdier duality for homology, which we have named as co-duality. The first result is the existence of a left adjoint $f^\#$ of the functor $\LL f_!$ (Theorem \ref{coduality}). This is the global form of co-duality. In particular, we obtain a relative co-dualizing complex $D^{X/Y}:=f^\#\ZZ$ and a co-dualizing complex $D^X$ (when $Y$ is a point). We shall give an explicit description of the complex $D^X$ in Theorem \ref{ExplicitCodualizing}. It was a surprising fact for us to find that there is not a natural local form of co-duality (in contrast with duality). What  still holds is the local structure on $Y$ of the functor $f^\#$, in the sense that there is a morphism (Proposition \ref{localcodualizing}), though it is not an isomorphism in general. The homological version of Theorem \ref{projformulacharacterization} is given in Theorem \ref{coprojformulacharacterization}. This theorem  says in particular that the functor $f^\#$ is local on $Y$ if and only if $f$ is  h-open.  Notice that there is not a perfect symmetry between the cohomological statements of Theorem \ref{projformulacharacterization} and the homological ones in Theorem \ref{coprojformulacharacterization} (in contrast with  Theorems \ref{c-proper&basechange}-\ref{h-open&basechange} and Theorem \ref{hsatprojform}). For example: while the local form of the adjunction between $\RR f_*$ and $f^{-1}$   holds for any $f$, the local form of the adjunction between $\LL f_!$ and $f^{-1}$ holds if and only if $f$ is h-open (Theorem \ref{coprojformulacharacterization}).

Finally, in Theorem \ref{dualizing-codualizing} and Proposition \ref{perfect} we see the relation between the dualizing  and co-dualizing complexes of a space $X$.

Sections 1. and 2. have no original results and are well known for specialists on the subject. We have included them in order to fix notations and make the exposition more self-contained. In section 1., we recall some basic notions related to sheaves and cosheaves on a finite topological space and their equivalence to abelian data (Theorem \ref{sheaf=abeliandata=cosheaf}), as well as some facts about direct and inverse images, supporting on locally closed subspaces, homology and cohomology of abelian
data and standard resolutions. In section 2., we collect some standard results about the derived category of abelian data on a finite topological space, locally constant complexes, perfect complexes and several adjunction formulas.

Many of the notions and results that are present in this paper can be extended to non-finite posets. However, since the finiteness hypothesis is required for the main results obtained here, we have preferred to focus our discussion solely on finite topological spaces.

\section{Basics} 

We shall give here some basic results about sheaves, cosheaves, direct and inverse images, homology, cohomology and standard resolutions (see also \cite[Sections 2., 3. and 6.]{Curry}).
\medskip

Let $X$  be a finite topological space (for the sake of brevity, we shall just say a finite space to refer to a finite topological space). For each $p\in X$ let us denote
\[\aligned  U_p&=\text{ smallest open subset containing } p,\\ C_p &=\text{ smallest closed subset containing } p =\text{ closure of } p.\endaligned\]

\noindent (Alexandroff): We have a preorder on $X$: $p\leq q$ iff $C_p\subseteq C_q$ (equivalently  $U_p\supseteq U_q$). Thus
\[ U_p=\{ q\in X: q\geq p\},\qquad C_p=\{ q\in X: q\leq p\}.\]

If $X,Y$ are two finite  spaces, then a map $f\colon X\to Y$ is continuous if and only if it is monotone: $p\leq q$ implies that $f(p)\leq f(q)$.

\begin{defn} The {\it dimension} of a finite space $X$ is the maximum of the lengths of the chains $x_0< x_1 <\cdots < x_n$. It is also called {\it length} of $X$ in the literature.
\end{defn}

\begin{defn} Let $X$ be a finite topological space. The {\it dual space} $\wh X$ is the same set as $X$, but with the opposite topology: an open subset of $\wh X$ is a closed subset of $X$.  For any $x\in X$ we shall denote by $\widehat x$ the same element $x$ but considered as an element of $\widehat X$. The preorder in $\wh X$ is the inverse of that of $X$; thus,   $\widehat x\leq \widehat y$ is equivalent to $x\geq y$. Given a map $f\colon X\to Y$, we shall denote $\widehat f\colon\widehat X\to \widehat Y$ the same map but with the opposite topologies on the sets; $f$ is continuous if and only if $\widehat f$ is. 
\end{defn}

\subsection{Abelian data, sheaves and cosheaves}

\begin{defn} An {\it abelian data} $F$ on $X$ is the following data:  an abelian group  $F_p$ for each $p\in X$ and a group homomorphism  $r_{pq}\colon F_p\to F_q$ for each $p\leq q$, satisfying $$r_{pp}=\id, \text{ for any } p\in X,\quad \text{and}\quad r_{ql}\circ r_{pq}=r_{pl} \text{ for any } p\leq q\leq l.$$  If we view $X$ as a category, $F$ is just a functor $$F\colon X\to \{\text{Abelian Groups}\}.$$  Morphisms of abelian data are just morphisms of functors. Thus one has the category of abelian data on $X$, which is abelian. 
\end{defn}

If $\{ F_i\}$ is a direct system of abelian data, the direct limit $\ilim{} F_i$ is the abelian data defined by \[(\ilim{} F_i)_p:=\ilim{} (F_i)_p\] and it coincides with the categorical direct limit. Analogously, the inverse limit $\plim{} F_i$ is defined by $$(\plim{} F_i)_p := \plim{} (F_i)_p$$ and it coincides with the categorical inverse limit. The tensor product $F\otimes F'$ of two abelian data $F,F'$ is defined by $(F\otimes F')_p:=F_p\otimes F'_p$.

\begin{ejem} The finite space with one element will be denoted by $\{ *\}$. An abelian data on $\{ *\}$ is just an abelian group. Thus the category of abelian data on $\{ *\}$ is the category of abelian groups.
\end{ejem}

\begin{defn} Let $S$ be a subspace of $X$, $F$ an abelian data on $X$. We define:
\[ \Gamma(S,F)= \plim{s\in S} F_s,\qquad L(S,F)=  \ilim{s\in S} F_s\] which will be called {\it sections} and {\it cosections} of $F$ on $S$ respectively.
\end{defn}

By definition, one has exact sequences
\[\aligned 0\to \Gamma(S,F)\to \prod_{s\in S} F_s \to\prod_{(s<s')\in S} F_{s'} \\ 
\bigoplus_{(s'<s)\in S} F_{s'}\to \bigoplus_{s\in S} F_s\to  L(S,F)\to  0
\endaligned .\]

Functoriality: $\Gamma(S,F)$ and $L(S,F)$ are functorial on $S$ and $F$. If  $S\subseteq S'$, one has morphisms $\Gamma(S',F)\to \Gamma(S,F)$ and $L(S,F)\to L(S',F)$. If $F\to F'$ is a morphism of abelian data on $X$, then it induces morphisms $\Gamma(S,F)\to\Gamma(S,F')$ and $L(S,F)\to L(S,F')$. Thus, for a fixed subspace $S$, one has a covariant  functor 
\[  \Gamma(S,\quad)\colon \{\text{Abelian data on } X\} \to \{\text{Abelian groups} \}\] which is left exact, and a covariant functor
\[ L(S,\quad)\colon \{\text{Abelian data on } X\} \to \{\text{Abelian groups} \} \]
which is right exact.

\begin{rem} For each $p\in X$ one has: $$\Gamma(U_p,F)=F_p=L(C_p,F).$$
\end{rem} 

%
%

\begin{defn} A {\it sheaf} $F$ on $X$ is a contravariant functor
\[\aligned F\colon \{\text{\rm Open subsets of } X\} &\to \{\text{\rm Abelian groups}\}
\\ U &\rightsquigarrow F(U)\\ V\subseteq U &\rightsquigarrow F(U)\to F(V)\endaligned
\] such that for any open subset $U$ and any open covering   $U=\cup_i  U_i $   the sequence 
\[ 0\to F(U)\to\prod_i F(U_i) \to \prod_{i,j} F(U_i\cap U_j)\] is exact. A morphism of sheaves is just a morphism of functors, i.e., a natural transformation.
\end{defn}

\begin{defn} A {\it cosheaf} $F$ on $X$ is a \underline{covariant} functor
\[\aligned F\colon \{\text{\rm Closed subsets of } X\} &\to \{\text{\rm Abelian groups}\}
\\ C &\rightsquigarrow F(C)\\ C\subseteq C' &\rightsquigarrow F(C)\to F(C')\endaligned
\] such that for any closed subset $C$ and any closed covering   $C=\cup_i  C_i $ the sequence
\[  \bigoplus_{i,j} F(C_i\cap C_j)\to  \bigoplus_i F(C_i) \to  F(C)\to 0\] is exact. A morphism of cosheaves is just a morphism of functors, i.e., a natural transformation.
\end{defn}

\begin{rem} The term cosheaf is usually used to refer to a covariant functor from the category of open sets of $X$ to abelian groups (see \cite[Chapter 5]{Bredon}, or \cite{Curry}). However, in this paper, cosheaf means a functor on closed subsets. Since the category of closed subsets of $X$ coincides with the category of open subsets of $\widehat X$, what we call a cosheaf on $X$ coincides with what is called a cosheaf on $\widehat X$ in the literature.
\end{rem}




\begin{thm}\label{sheaf=abeliandata=cosheaf} Let $X$ be a finite topological space. There are natural equuivalences
\[ \{ \text{\rm Sheaves on } X\} \simeq \{ \text{\rm Abelian data on } X\}   \simeq  \{ \text{\rm Cosheaves on } X\}  \] 
\end{thm}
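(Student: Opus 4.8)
The plan is to construct the two equivalences by exhibiting functors in both directions and checking they are mutually inverse, using the special structure of the minimal open sets $U_p$ and minimal closed sets $C_p$. First I would pass from a sheaf $F$ to an abelian data by setting $\widetilde F_p := F(U_p)$, with restriction maps $r_{pq}\colon F(U_p)\to F(U_q)$ for $p\le q$ coming from the inclusion $U_q\subseteq U_p$; functoriality of $F$ gives $r_{pp}=\id$ and the cocycle condition $r_{ql}\circ r_{pq}=r_{pl}$. Conversely, from an abelian data $F$ I would build a sheaf by $\overline F(U):=\Gamma(U,F)=\plim{p\in U}F_p$, with the restriction maps $\overline F(U)\to\overline F(V)$ for $V\subseteq U$ induced by functoriality of the inverse limit. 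The key point here is the remark that $\Gamma(U_p,F)=F_p$, which shows $\overline F(U_p)=F_p$, so the two constructions are inverse to each other on objects (and a short natural-transformation chase makes this an equivalence of categories).

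The substantive verification on the sheaf side is that $\overline F$ actually \emph{is} a sheaf, i.e. that the glueing sequence is exact for every open cover $U=\bigcup_i U_i$. In a finite space this reduces to a purely order-theoretic statement: for each $p\in U$ there is a \emph{smallest} open set $U_p$ in the cover's refinement containing $p$ (namely $U_p$ itself, which is contained in some $U_i$), so the inverse limit over $U$ is computed by the same data as the inverse limit over the $U_i$ and their pairwise intersections. Concretely, $\plim{p\in U}F_p$ is the equalizer of $\prod_i\plim{p\in U_i}F_p \rightrightarrows \prod_{i,j}\plim{p\in U_i\cap U_j}F_p$; this is a formal consequence of the fact that $U=\bigcup_i U_i$ and $U_i\cap U_j$ form a cover closed enough under the relevant intersections, together with the cofinality of the $\{U_p\}_{p\in U}$ among all open subsets of $U$. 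I would spell this out as: a compatible family of sections $s_i\in\Gamma(U_i,F)$ agreeing on overlaps determines, for each $p$, a well-defined element $(s_i)_p\in F_p$ (independent of the chosen $i$ with $p\in U_i$), and these assemble into an element of $\Gamma(U,F)$; injectivity and the exactness at the product term are immediate from the same principle.

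For the cosheaf equivalence I would invoke the duality already set up in the excerpt: the category of closed subsets of $X$ coincides with the category of open subsets of $\widehat X$, and the order on $\widehat X$ is reversed. Thus a cosheaf on $X$ is by definition the same thing as a sheaf on $\widehat X$ — with the covariance/contravariance and the colimit glueing condition matching up correctly under the order reversal — and an abelian data on $X$ is the same as an abelian data on $\widehat X$ (a functor $X\to\text{Ab}$ is a functor $\widehat X\to\text{Ab}$ after reversing arrows, but note abelian data is covariant, so one must be slightly careful: a cosheaf on $X$, being covariant on closed sets, corresponds to a sheaf on $\widehat X$, and under $\{\text{Sheaves on }\widehat X\}\simeq\{\text{Abelian data on }\widehat X\}$ one lands on functors $\widehat X\to\text{Ab}$; the point is that $L(S,F)=\ilim{s\in S}F_s$ and $\Gamma(S,\widehat F)=\plim{\widehat s\in S}\widehat F_{\widehat s}$ are dual formulas). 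Explicitly, I would send an abelian data $F$ to the cosheaf $C\rightsquigarrow L(C,F)=\ilim{p\in C}F_p$, note that $L(C_p,F)=F_p$ by the remark, and check the colimit glueing sequence by the dual cofinality argument (now $C_p$ is the \emph{smallest} closed set containing $p$). The main obstacle, and the only place requiring genuine care rather than bookkeeping, is this exactness of the (co)glueing sequences; everything else is the observation that $\{U_p\}$ (resp. $\{C_p\}$) form a cofinal/coinitial system realizing sections (resp. cosections) as pointwise data. I would therefore organize the proof around that cofinality lemma and then let the three equivalences fall out formally, with the cosheaf case deduced from the sheaf case via passage to $\widehat X$.
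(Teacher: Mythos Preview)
Your constructions match the paper's exactly: from abelian data to sheaf via $U\mapsto\Gamma(U,F)$, back via $F_p:=F(U_p)$; from abelian data to cosheaf via $C\mapsto L(C,F)$, back via $F_p:=F(C_p)$. The paper's proof simply records these and leaves the sheaf/cosheaf verifications implicit, so your more detailed cofinality argument is a reasonable expansion.

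However, your attempt to \emph{deduce} the cosheaf equivalence from the sheaf equivalence by passage to $\widehat X$ is wrong as stated. A cosheaf on $X$ (covariant on closed sets, colimit glueing) is, as the Remark says, a cosheaf on $\widehat X$ in the literature's sense (covariant on open sets of $\widehat X$, colimit glueing) --- it is \emph{not} a sheaf on $\widehat X$, which would be contravariant with limit glueing. Likewise, an abelian data on $X$ (a covariant functor $X\to\text{Ab}$) is not an abelian data on $\widehat X$ (a covariant functor $\widehat X\to\text{Ab}$, i.e.\ a contravariant functor on $X$). So there is no reduction of the cosheaf case to the already-proved sheaf case for $\widehat X$; the categories simply do not line up the way you claim. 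What \emph{is} true is that the cosheaf argument is the formal order-theoretic dual of the sheaf argument (swap $U_p\leftrightarrow C_p$, $\varprojlim\leftrightarrow\varinjlim$, opens $\leftrightarrow$ closeds), and your final paragraph, where you give the explicit construction $C\mapsto L(C,F)$ and invoke the dual cofinality argument, is exactly this and is correct. Drop the $\widehat X$ detour and keep the direct argument.
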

\begin{proof} Let $F$ be an abelian data. Then $F$ can be thought of as a sheaf by defining $F(U)=\Gamma(U,F)$, for each open subset $U$ of $X$,     and it can be thought of as a cosheaf by defining $F(C)=L(C,F)$, for each closed subset.

Conversely, if $F$ is sheaf, then $F$ can be thought of as an abelian data, by taking $F_p:=F(U_p)$ for each $p\in X$, and, for each $p\leq q$, $r_{pq}:=F(i)$, where $i\colon U_q\to U_p$ is the inclusion.  If $F$ is a cosheaf, then $F$ can be thought of as an abelian data, by taking $F_p:=F(C_p)$ for each $p\in X$, and, for each $p\leq q$, $r_{pq}:=F(j)$, where $j\colon C_p\to C_q$ is the inclusion. 
\end{proof}


\subsection{Direct and inverse images} 

\begin{defn} Let $f\colon X\to Y$ be a continuous map, $F$ an abelian data on $Y$. Then $f^{-1}F$ is the abelian data on $X$
 defined by:
\[(f^{-1}F)_x=F_{f(x)}\]
and the obvious restriction morphisms $r_{pq}$. One says that $f^{-1}F$ is the {\it inverse image} of $F$ by $f$. If we view $F$ as a functor $Y\overset F\to \{\text{Abelian groups}\}$, then $f^{-1}F$ is just the composition $$X\overset f\to Y\overset F\to \{\text{Abelian groups}\}.$$ 
\end{defn}

\begin{rem} If $S$ is a subspace of $X$ and $j\colon S\hookrightarrow X$ is the inclusion, we shall  denote $$F_{\vert S}:= j^{-1}F.$$
\end{rem}

A morphism $F\to F'$ of abelian data on $Y$, induces a morphism $f^{-1} F\to f^{-1} F'$ of abelian data on $X$. If $Y\overset g\to Z$ is another continuous map, then $(g\circ f)^{-1}= f^{-1}\circ g^{-1}$.

\begin{prop} $f^{-1}$ is an exact functor and it commutes with direct and inverse limits.
\end{prop}

\begin{defn} Let $f\colon X\to Y$ be a continuous map, $F$ an abelian data on $X$. One defines:
\begin{enumerate}
\item $f_*F$ as the abelian data on $X$ given by:
\[(f_*F)_y=\Gamma(f^{-1}(U_y),F).\]

\item $f_!F$ as the abelian data on $X$ given by:
\[(f_!F)_y=L(f^{-1}(C_y),F).\]
\end{enumerate}
\end{defn}

\begin{prop} Let $f\colon X\to Y$ be a continuous map, $F$ an abelian data on $X$.
\begin{enumerate} \item For any open subset $V$ of $Y$ one has  $$\Gamma(V, f_*F)=\Gamma(f^{-1}(V),F).$$ 
\item For any closed subset $C$ of $Y$ one has $$L(C, f_!F)=L(f^{-1}(C),F).$$\end{enumerate}
\end{prop}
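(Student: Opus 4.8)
The plan is to recognize the statement as saying that, under the equivalences of Theorem~\ref{sheaf=abeliandata=cosheaf}, the operations $f_*$ and $f_!$ just defined on abelian data correspond to the ordinary direct images of sheaves and of cosheaves respectively; once this is set up, the proposition becomes a one-line consequence. Concretely, for~(1): the sheaf attached to the abelian data $F$ on $X$ is $W\rightsquigarrow\Gamma(W,F)$, so its ordinary direct image is the sheaf $V\rightsquigarrow\Gamma(f^{-1}(V),F)$ on $Y$; its underlying abelian data has stalk at $y$ equal to $\Gamma(f^{-1}(U_y),F)$, which is exactly $(f_*F)_y$ by definition, and one checks the restriction maps agree. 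Thus the abelian data $f_*F$ is the one attached to the sheaf $V\rightsquigarrow\Gamma(f^{-1}(V),F)$; since the sheaf attached to an abelian data $G$ on $Y$ is $V\rightsquigarrow\Gamma(V,G)$, and passing from a sheaf to its abelian data and back recovers the sheaf (this is the content of Theorem~\ref{sheaf=abeliandata=cosheaf}), applying this with $G=f_*F$ gives $\Gamma(V,f_*F)=\Gamma(f^{-1}(V),F)$. Part~(2) is the dual statement: the cosheaf attached to $F$ is $C\rightsquigarrow L(C,F)$, its ordinary direct image is $C\rightsquigarrow L(f^{-1}(C),F)$, whose underlying abelian data has stalk $L(f^{-1}(C_y),F)=(f_!F)_y$, and the same reasoning with $L(-,\ )$ in place of $\Gamma(-,\ )$ applies.

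It is worth noticing that the special case $V=U_y$ (resp. $C=C_y$) of the Proposition is simply the Remark $\Gamma(U_p,F)=F_p=L(C_p,F)$ together with the definitions of $f_*$ and $f_!$; so the only real content is the passage from this ``stalk-wise'' identity to arbitrary open (resp. closed) subsets, which is precisely what Theorem~\ref{sheaf=abeliandata=cosheaf} supplies.

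If one prefers a self-contained argument that avoids invoking Theorem~\ref{sheaf=abeliandata=cosheaf}, one can exhibit the isomorphism by hand. Unwinding the definitions, $\Gamma(V,f_*F)=\plim{y\in V}(f_*F)_y=\plim{y\in V}\Gamma(f^{-1}(U_y),F)$, so an element is a family $(\eta_y)_{y\in V}$ with $\eta_y=(\eta_{y,x})_{x\in f^{-1}(U_y)}$, compatible internally in $x$ and externally in $y$ (note $f^{-1}(U_{y'})\subseteq f^{-1}(U_y)$ when $y\le y'$). I would send $(\eta_y)_y$ to the family $(\xi_x)_{x\in f^{-1}(V)}$ given by $\xi_x:=\eta_{f(x),x}$, which is legitimate since $f(x)\in V$ and $x\in f^{-1}(U_{f(x)})$; conversely, send a compatible family $(\xi_x)_{x\in f^{-1}(V)}$ to the family whose $y$-term is its restriction to $f^{-1}(U_y)$, using the universal property of $\plim{}$. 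Part~(2) is then the verbatim dual: replace $\plim{}$ by $\ilim{}$, ``open'' and $U_p$ by ``closed'' and $C_p$, reverse all inclusions, and use the exact sequence $\bigoplus_{(s'<s)\in S}F_{s'}\to\bigoplus_{s\in S}F_s\to L(S,F)\to 0$ in place of its cohomological counterpart.

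The one point that needs checking in this hands-on approach — the ``main obstacle'', modest as it is — is that $(\xi_x)_x$ with $\xi_x=\eta_{f(x),x}$ is a compatible family, and that the two maps are mutually inverse. For compatibility: if $x\le x'$ in $f^{-1}(V)$ then $f(x)\le f(x')$, hence $x'\in f^{-1}(U_{f(x)})$; internal compatibility of $\eta_{f(x)}$ gives $r_{xx'}(\eta_{f(x),x})=\eta_{f(x),x'}$, while external compatibility at $f(x)\le f(x')$ gives $\eta_{f(x),x'}=\eta_{f(x'),x'}$, so $r_{xx'}(\xi_x)=\xi_{x'}$. The fact that the composites are the identity uses the same two compatibilities once more — in particular, for $x\in f^{-1}(U_y)$ one has $y\le f(x)$, whence $\eta_{f(x),x}=\eta_{y,x}$. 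Naturality in $F$ and with respect to inclusions of open (resp. closed) subsets of $Y$ is immediate from the construction, either way.
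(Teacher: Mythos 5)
Your proof is correct. The paper in fact states this proposition without any proof at all (it belongs to the ``standard facts'' of Section 1), so there is no printed argument to compare against; what you supply is precisely the routine verification the authors leave implicit. Both of your routes are sound: the first correctly reduces the statement to the naturality of the equivalence in Theorem \ref{sheaf=abeliandata=cosheaf} together with the standard fact that the direct image presheaf $V\rightsquigarrow\Gamma(f^{-1}(V),F)$ of a sheaf is again a sheaf (and likewise for cosheaves on closed sets), while the second is a complete, self-contained unwinding of $\Gamma(V,f_*F)=\plim{y\in V}\Gamma(f^{-1}(U_y),F)$ in which the only real checks --- compatibility of the family $(\xi_x)$ with $\xi_x=\eta_{f(x),x}$, and the two composites being the identity via the relation $y\leq f(x)$ for $x\in f^{-1}(U_y)$ --- are exactly the ones you carry out; the dualization for $L$ and $f_!$ is verbatim as you say.
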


\begin{ejems} \begin{enumerate}
\item Let $\pi\colon X\to \{*\}$ be the projection onto a point. Then $$\pi_*F=\Gamma(X,F)\quad\text{\rm and}\quad  \pi_!F=L(X,F).$$
\item Let $U$ be an open subset and $j\colon U\hookrightarrow X$ the inclusion. For any abelian data $F$ on $U$, $j_!F$ is the extension by zero out of $U$; i.e.
\[ (j_!F)_{\vert U}=F\quad\text{and}\quad (j_!F)_{\vert X- U}=0.\] In particular, $j_!$ is an exact functor.
\item Let $C$ be a closed subset and $j\colon C\hookrightarrow X$ the inclusion. For any abelian data $F$ on $C$, $j_*F$ is the extension by zero out of $C$; i.e.
\[ (j_*F)_{\vert C}=F\quad\text{and}\quad (j_*F)_{\vert X- C}=0.\] In particular, $j_*$ is an exact functor.
\end{enumerate}
\end{ejems}

A morphism $F\to F'$ of abelian data on $X$, induces  morphisms  $f_* F\to f_* F'$ and  $f_! F\to f_! F'$ of abelian data on $Y$. For any continuous map $Y\overset g\to Z$ one has: $(g\circ f)_*= g_*\circ f_* $ and $(g\circ f)_!= g_!\circ f_! $.

\begin{prop}\label{f_*&f_!} $f_*$ is left exact and commutes with inverse limits. $f_!$ is right exact and commutes with direct limits. 
\end{prop}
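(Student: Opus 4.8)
The plan is to reduce everything to the properties of the functors $\Gamma(S,\quad)$ and $L(S,\quad)$ already recorded above, by arguing pointwise. Recall that in the category of abelian data on a space, arbitrary limits and colimits, as well as exactness, are all computed pointwise: this is precisely how $\ilim{}$ and $\plim{}$ of abelian data were defined, and a sequence of abelian data is exact if and only if it is exact at every point. Hence, in order to establish a statement about $f_*$ (resp. $f_!$) as a functor from abelian data on $X$ to abelian data on $Y$, it suffices to prove the corresponding statement for each of the functors $F\mapsto (f_*F)_y$ (resp. $F\mapsto (f_!F)_y$), $y\in Y$, from abelian data on $X$ to abelian groups.

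First I would treat $f_*$. Fix $y\in Y$ and set $S=f^{-1}(U_y)$, a subspace of $X$; by definition $(f_*F)_y=\Gamma(S,F)$. Since $\Gamma(S,\quad)\colon\{\text{abelian data on }X\}\to\{\text{abelian groups}\}$ is left exact, the functor $F\mapsto (f_*F)_y$ is left exact for every $y$, and therefore $f_*$ is left exact. For inverse limits, let $\{F_i\}$ be an inverse system of abelian data on $X$. Using that $\plim{}$ of abelian data is computed pointwise, that $\Gamma(S,\quad)=\plim{s\in S}$, and that limits commute with limits, one obtains
\[ (f_*\plim{i}F_i)_y=\Gamma(S,\plim{i}F_i)=\plim{s\in S}\plim{i}(F_i)_s=\plim{i}\plim{s\in S}(F_i)_s=\plim{i}\Gamma(S,F_i)=\plim{i}(f_*F_i)_y, \]
and these identifications are natural in $y$, so $f_*\plim{i}F_i=\plim{i}f_*F_i$.

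The statement for $f_!$ is entirely dual. Fixing $y\in Y$ and setting $T=f^{-1}(C_y)$ (a subspace of $X$), we have $(f_!F)_y=L(T,F)$ with $L(T,\quad)=\ilim{t\in T}$ right exact, so $f_!$ is right exact; and for a direct system $\{F_i\}$, using that $\ilim{}$ of abelian data is computed pointwise and that colimits commute with colimits, one gets $(f_!\ilim{i}F_i)_y=\ilim{i}(f_!F_i)_y$ naturally in $y$, whence $f_!$ commutes with direct limits. There is no genuine obstacle here: the only point to be careful about is the pointwise nature of (co)limits and exactness for abelian data, which is what reduces the claims to the already proved properties of $\Gamma(S,\quad)$ and $L(S,\quad)$. (Alternatively, once the adjunctions $f^{-1}\dashv f_*$ and $f_!\dashv f^{-1}$ are available one could invoke the fact that right adjoints preserve limits and left adjoints preserve colimits, but since those adjunctions have not yet been established the pointwise argument is the more direct route.)
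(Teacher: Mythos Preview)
Your argument is correct: reducing to the stalkwise functors $F\mapsto\Gamma(f^{-1}(U_y),F)$ and $F\mapsto L(f^{-1}(C_y),F)$ and using that limits commute with limits (resp.\ colimits with colimits) is exactly the right approach. The paper itself gives no proof of this proposition---it is stated without argument---so there is nothing to compare against; your write-up supplies the natural justification the paper omits.
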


\begin{rem} Proposition \ref{f_*&f_!} still holds for arbitrary - not necessarily finite - posets. In the finite case it is also true that $f_*$ commutes with direct limits; however, it is not true that $f_!$ commutes with inverse limits in general, even though $X$ is finite. 
\end{rem}

\begin{thm} $f_*$ is the right adjoint of $f^{-1}$ and $f_!$ is the left adjoint of $f^{-1}$. That is, for any abelian data $F$ on $X$ and $G$ on $Y$:
\[ \aligned \Hom(f^{-1}F, G)&=\Hom (F,f_*G)\\  \Hom(G, f^{-1}F)&=\Hom (f_!G,F).\endaligned\]
\end{thm}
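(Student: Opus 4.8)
The plan is to establish the two adjunctions by exhibiting the unit and counit transformations explicitly, using the pointwise definitions of $f^{-1}$, $f_*$ and $f_!$ together with the universal properties of $\plim{}$ and $\ilim{}$. I will treat the adjunction $(f^{-1}, f_*)$ first; the other follows by a dual argument (or, formally, by passing to $\widehat X$, $\widehat Y$, where $f_!$ becomes $\widehat f\,{}_*$ and limits are interchanged). Let $F$ be an abelian data on $X$ and $G$ on $Y$. Unwinding definitions, a morphism $\varphi\colon f^{-1}G\to F$ is a family of group homomorphisms $\varphi_x\colon G_{f(x)}\to F_x$, one for each $x\in X$, compatible with the restriction maps on both sides; a morphism $\psi\colon G\to f_*F$ is a family $\psi_y\colon G_y\to \Gamma(f^{-1}(U_y),F)=\plim{x\in f^{-1}(U_y)}F_x$, again compatible with restrictions.

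First I would define the map from $\Hom(f^{-1}G,F)$ to $\Hom(G,f_*F)$. Given $\varphi=(\varphi_x)$, and given $y\in Y$, for each $x\in f^{-1}(U_y)$ we have $f(x)\ge y$, hence a composite $G_y\xrightarrow{r_{y,f(x)}}G_{f(x)}\xrightarrow{\varphi_x}F_x$; the compatibility of $\varphi$ with restrictions shows these are compatible over the poset $f^{-1}(U_y)$, so by the universal property of the inverse limit they assemble into $\psi_y\colon G_y\to\plim{x\in f^{-1}(U_y)}F_x=(f_*F)_y$. One checks that $(\psi_y)$ is compatible with the restriction maps of $G$ and of $f_*F$ (this uses only functoriality of $\Gamma(\;\cdot\;,F)$ in the first variable, already recorded in the excerpt), so $\psi$ is a morphism of abelian data. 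Conversely, given $\psi=(\psi_y)$, for $x\in X$ we set $\varphi_x\colon G_{f(x)}\xrightarrow{\psi_{f(x)}}\Gamma(f^{-1}(U_{f(x)}),F)\to F_x$, the last arrow being the canonical projection to the factor indexed by $x$ (legitimate since $x\in f^{-1}(U_{f(x)})$, as $f(x)\ge f(x)$). Again one verifies compatibility with restrictions. Finally I would check that these two assignments are mutually inverse: composing one after the other and using that the projection $\plim{x\in f^{-1}(U_y)}F_x\to F_{x_0}$ followed by nothing, and the reconstruction from these projections, are inverse to each other — precisely the statement that $\Gamma(U_p,F)=F_p$ when one takes $y=f(x)$ and that a compatible family into an inverse limit is determined by its components. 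The naturality in $F$ and $G$ is immediate from the constructions.

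For the second adjunction, the argument is entirely dual: a morphism $\phi\colon F\to f^{-1}G$ is a family $\phi_x\colon F_x\to G_{f(x)}$, and $\eta\colon f_!G\to F$ — wait, rather $\eta\colon G\to$ — here one wants $\Hom(G,f^{-1}F)=\Hom(f_!G,F)$; so $\eta\colon f_!G\to F$ is a family $\eta_y\colon L(f^{-1}(C_y),G)\to$ ... I would instead start from $\theta\colon G\to f^{-1}F$, i.e. $\theta_y\colon G_y\to F_{f(y)}$, no — let me keep the roles as in the statement: $G$ lives on $Y$, $F$ on $X$, and we compare $\Hom(G,f^{-1}F)$ with $\Hom(f_!G,F)$. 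A morphism $G\to f^{-1}F$ is a family $G_y\to F_{?}$; this does not typecheck unless $f^{-1}$ acts the other way, so in fact the intended reading is $\Hom_X(G',f^{-1}F)=\Hom_Y(f_!G',F)$ with $G'$ on $X$ — in any case, whichever orientation, the construction mirrors the first one with $\ilim{}$ in place of $\plim{}$, the injections of a direct limit in place of the projections of an inverse limit, and the identity $L(C_p,F)=F_p$ in place of $\Gamma(U_p,F)=F_p$; concretely one passes to the dual space $\widehat X\to\widehat Y$, under which $f_!$ on $X$ becomes direct image on $\widehat X$ and the already-proven adjunction applies verbatim.

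The routine but slightly fiddly part — the one place where care is needed — is verifying the compatibility of the constructed families with the restriction morphisms, i.e. that the square-diagrams commute; this comes down to the transitivity relation $r_{ql}\circ r_{pq}=r_{pl}$ for $F$ and $G$ together with the defining compatibilities of the given morphism, and to the fact that for $y\le y'$ the restriction $(f_*F)_{y}\to$ ... runs the wrong way, i.e. $\Gamma(f^{-1}(U_{y'}),F)\to\Gamma(f^{-1}(U_y),F)$ is induced by $f^{-1}(U_{y'})\subseteq f^{-1}(U_y)$, which is exactly the functoriality in the subspace recorded before the theorem. No genuine obstacle arises; the content is the correct bookkeeping of indices, and the cleanest write-up is to do $(f^{-1},f_*)$ in detail and deduce $(f_!,f^{-1})$ by dualization.
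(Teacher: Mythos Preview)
Your approach is essentially the paper's: build the Hom-set bijection by using the universal property of the relevant (co)limit to pass between a morphism on one side and the compatible family of stalk maps on the other, then check the two assignments are inverse. The only cosmetic difference is that the paper writes out the $(f_!,f^{-1})$ adjunction in detail (phrasing it via an explicit counit $f_!f^{-1}F\to F$ and unit $G\to f^{-1}f_!G$) and omits the first as standard, whereas you do the opposite.

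One caution on your shortcut for the second adjunction. The sentence ``pass to the dual space $\widehat X\to\widehat Y$, under which $f_!$ on $X$ becomes direct image on $\widehat X$ and the already-proven adjunction applies verbatim'' does not work as stated: an abelian data on $X$ is a covariant functor $X\to\{\text{Abelian groups}\}$, while an abelian data on $\widehat X$ is a covariant functor on the \emph{opposite} poset, and there is no natural equivalence of categories intertwining $f_!$ with $\widehat f_*$. (What \emph{is} true is that the opposite of the category of abelian data on $X$ identifies with functors $\widehat X\to\text{Ab}^{\mathrm{op}}$, which is not the same thing.) Your other description of the dual argument---repeat the construction with $\ilim{}$ in place of $\plim{}$, $C_p$ in place of $U_p$, colimit injections in place of limit projections, and $L(C_p,F)=F_p$ in place of $\Gamma(U_p,F)=F_p$---is the correct formulation and is exactly what the paper carries out. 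So drop the $\widehat X$ gloss and keep the explicit dualization; nothing else needs to change.

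Finally, the type-checking tangle you ran into in the middle paragraph is not your fault: the theorem's preamble assigns $F$ to $X$ and $G$ to $Y$, but the displayed isomorphisms (and the paper's own proof) require the opposite convention. Your resolution is the right one.
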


\begin{proof} Let $F$ be an abelian data on $Y$ and let $y\in Y$. If $x\in f^{-1}(C_y)$, then $f(x)\leq y$ and one has a morphism $F_{f(x)}\to F_y$. Taking direct limit on $x\in f^{-1}(C_y)$ one obtains a morphism 
\[ (f_!f^{-1}F)_y\to F_y\] hence a morphism of abelian data $\alpha\colon f_!f^{-1}F \to F$. Then a morphism $G\to f^{-1}F$ induces a morphism $f_!G\to f_!f^{-1}F$, and, by composition with $\alpha$, a morphism $f_!G\to F$. This gives a map $$ \widetilde \alpha\colon \Hom(G, f^{-1}F)\to \Hom (f_!G,F).$$ 

Now let $G$ be an abelian data on $X$. For each $x\in X$ the inclusion $C_x\subseteq f^{-1}(C_{f(x)})$ induces a morphism $L(C_x,G)\to L(f^{-1}(C_{f(x)}),G)$, i.e., a morphism $G_x\to (f^{-1}f_!G)_x$. Thus we have a morphism $\beta\colon G \to  f^{-1}f_!G$. Then, a morphism $f_!G\to F$ induces a morphism $f^{-1}f_!G\to f^{-1} F$ and, by composition with $\beta$, a morphism $G\to f^{-1}F$. This gives a map $$\widetilde\beta \colon\Hom (f_!G,F) \to \Hom(G, f^{-1}F) .$$ One checks that $\widetilde \alpha$ and $\widetilde\beta$ are mutually inverse.
\end{proof}

\begin{prop}\label{1.2.10} Let $f\colon X\to Y$ be a continuous map and let $V\subseteq Y$ (resp. $C\subseteq Y$) be an open (resp. closed) subset. For any abelian data $F$ on $X$, one has
\[ (f_* F)_{\vert V}=(f_V)_* (F_{\vert f^{-1}(V)}),\qquad (f_! F)_{\vert C}=(f_C)_! (F_{\vert f^{-1}(C)}) \] where $f_V:f^{-1}(V)\to V$ is the restriction of $f$ to $f^{-1}(V)$ (resp. $f_C\colon f^{-1}(C)\to C$).
\end{prop}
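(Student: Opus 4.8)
The plan is to verify the two stated identities stalk by stalk, using the definitions of $f_*$, $f_!$ and of restriction to a locally closed subspace. Since an abelian data is just a functor on the poset, two abelian data on $V$ (resp.\ on $C$) agree provided their stalks agree compatibly with the restriction maps $r_{pq}$, and the natural morphisms we write down will manifestly be compatible by functoriality of $\Gamma$ and $L$. So the real content is the computation of stalks.

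First I would treat the case of an open subset $V\subseteq Y$. Pick $v\in V$ and let $U_v$ be the smallest open set of $Y$ containing $v$; since $V$ is open and contains $v$, we have $U_v\subseteq V$, so $U_v$ is also the smallest open set of $V$ containing $v$. Hence by definition $((f_*F)_{\vert V})_v=(f_*F)_v=\Gamma(f^{-1}(U_v),F)$. On the other hand, writing $g=f_V\colon f^{-1}(V)\to V$ and $G=F_{\vert f^{-1}(V)}$, we have $(g_*G)_v=\Gamma(g^{-1}(U_v),G)$, and $g^{-1}(U_v)=f^{-1}(U_v)$ because $U_v\subseteq V$, while $G$ restricted to $f^{-1}(U_v)$ is just $F$ restricted to $f^{-1}(U_v)$. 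Thus $\Gamma(g^{-1}(U_v),G)=\Gamma(f^{-1}(U_v),F)$, and the two stalks coincide. Checking that this identification is compatible with the restriction maps for $v\le v'$ in $V$ is immediate from the functoriality of $\Gamma(S,\,\cdot\,)$ in $S$ recalled after the definition of sections and cosections.

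The closed case is entirely dual: for $c\in C$ the smallest closed set $C_c$ of $Y$ containing $c$ satisfies $C_c\subseteq C$ since $C$ is closed, so $C_c$ is also the smallest closed set of $C$ containing $c$. Therefore $((f_!F)_{\vert C})_c=(f_!F)_c=L(f^{-1}(C_c),F)$, and with $h=f_C\colon f^{-1}(C)\to C$, $H=F_{\vert f^{-1}(C)}$ one gets $(h_!H)_c=L(h^{-1}(C_c),H)=L(f^{-1}(C_c),F)$, again using $h^{-1}(C_c)=f^{-1}(C_c)$. Compatibility with the corestriction maps $L(S,F)\to L(S',F)$ for $c\le c'$ is the dual statement of functoriality of $L(S,\,\cdot\,)$ in $S$.

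I do not expect any serious obstacle here; the proposition is essentially a bookkeeping consequence of the local description of sheaves/cosheaves via abelian data and of the fact that for a point of an open (resp.\ closed) subset, the local minimal open (resp.\ closed) neighbourhood in the subspace coincides with the one in the ambient space. The only point requiring a line of care is the verification that the stalk-wise identifications assemble into a morphism of abelian data, i.e.\ commute with the $r_{pq}$'s; this follows formally from naturality and could be dispatched by the remark that $\Gamma(U_p,\,\cdot\,)=(\,\cdot\,)_p=L(C_p,\,\cdot\,)$ together with the already-noted functoriality of $\Gamma$ and $L$ in the subspace argument.
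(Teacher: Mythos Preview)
Your proof is correct and is precisely the stalkwise verification that the paper implicitly has in mind; the paper's own proof is just the word ``Immediate.'' You have simply unpacked that word, using exactly the key observation that for $v\in V$ (resp.\ $c\in C$) the minimal open $U_v$ (resp.\ minimal closed $C_c$) in the subspace coincides with the one in $Y$.
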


\begin{proof} Immediate.
\end{proof}

\begin{prop}\label{1.2.11} Let us consider a commutative diagram 
\[\xymatrix{ \overline X\ar[r]^{\bar g} \ar[d]_{\bar f} & X\ar[d]^f\\ \overline Y\ar[r]^g & Y .}\]
For any abelian data $F$ on $X$ one has natural morphisms
\[ g^{-1}f_*F\to {\bar f}_*\, {\bar g}^{-1}F,\qquad  {\bar f}_!\, {\bar g}^{-1}F\to g^{-1}f_!F.\]
\end{prop}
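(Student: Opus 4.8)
The plan is to produce both morphisms formally from the adjunctions established above, so that naturality in $F$ is automatic and nothing has to be checked by hand. The only nonformal ingredient is the strict functoriality of inverse image: since the square commutes, $g\circ\bar f=f\circ\bar g$, and hence $\bar f^{-1}\circ g^{-1}=\bar g^{-1}\circ f^{-1}$ as functors from abelian data on $Y$ to abelian data on $\overline{X}$.

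For the first morphism, I would begin with the counit $f^{-1}f_*F\to F$ of the adjunction $(f^{-1},f_*)$, apply the exact functor $\bar g^{-1}$ to obtain $\bar g^{-1}f^{-1}f_*F\to\bar g^{-1}F$, and rewrite the source by the identity above as $\bar f^{-1}(g^{-1}f_*F)\to\bar g^{-1}F$. By the adjunction $(\bar f^{-1},\bar f_*)$, a morphism $\bar f^{-1}(g^{-1}f_*F)\to\bar g^{-1}F$ is the same as a morphism $g^{-1}f_*F\to\bar f_*\bar g^{-1}F$, which is the desired one. Dually, for the second morphism I would begin with the unit $F\to f^{-1}f_!F$ of the adjunction $(f_!,f^{-1})$, apply $\bar g^{-1}$ to obtain $\bar g^{-1}F\to\bar g^{-1}f^{-1}f_!F=\bar f^{-1}(g^{-1}f_!F)$, and convert this, via the adjunction $(\bar f_!,\bar f^{-1})$, into a morphism $\bar f_!\bar g^{-1}F\to g^{-1}f_!F$.

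I do not anticipate a genuine obstacle here; the only thing requiring care is the bookkeeping --- which adjunction ($f$ or $\bar f$), unit versus counit, and the direction of $\bar f^{-1}g^{-1}=\bar g^{-1}f^{-1}$. As a sanity check one can describe the arrows on stalks: for $\bar y\in\overline{Y}$ commutativity gives $\bar g\big(\bar f^{-1}(U_{\bar y})\big)\subseteq f^{-1}(U_{g(\bar y)})$ and $\bar g\big(\bar f^{-1}(C_{\bar y})\big)\subseteq f^{-1}(C_{g(\bar y)})$, and functoriality of the inverse limit (resp.\ the direct limit) along the map induced by $\bar g$ yields $(g^{-1}f_*F)_{\bar y}=\Gamma(f^{-1}(U_{g(\bar y)}),F)\to\Gamma(\bar f^{-1}(U_{\bar y}),\bar g^{-1}F)=(\bar f_*\bar g^{-1}F)_{\bar y}$ and $(\bar f_!\bar g^{-1}F)_{\bar y}=L(\bar f^{-1}(C_{\bar y}),\bar g^{-1}F)\to L(f^{-1}(C_{g(\bar y)}),F)=(g^{-1}f_!F)_{\bar y}$; with this description the only remaining point is compatibility with the restriction maps $r_{\bar y\bar y'}$, which is routine and which the adjunction route avoids altogether.
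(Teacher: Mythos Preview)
Your proposal is correct and follows essentially the same approach as the paper. The paper only writes out the construction of the second morphism, using precisely your unit-plus-adjunction argument: apply $\bar g^{-1}$ to $F\to f^{-1}f_!F$, rewrite $\bar g^{-1}f^{-1}=\bar f^{-1}g^{-1}$, and adjoin over $(\bar f_!,\bar f^{-1})$; your treatment of the first morphism and your stalkwise sanity check are additional detail the paper omits.
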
 

\begin{proof} The natural morphism $F\to f^{-1}f_!F$ induces ${\bar g}^{-1}F\to {\bar g}^{-1} f^{-1}f_!F= {\bar f}^{-1}g^{-1} f_!F$, hence a morphism ${\bar f}_!\, {\bar g}^{-1}F\to g^{-1}f_!F$.
\end{proof}

\subsection{Constant and locally constant abelian data}

\begin{defn} Let $G$ be an abelian group . The {\it constant abelian data $G$} on $X$ is defined by: $G_p=G$ for any $p\in X$ and $r_{pq}=\id$ for any $p\leq q$. An abelian data $\Lc$ is   {\it locally constant} if for any $p\in X$, $\Lc_{\vert U_p}$ is isomorphic to a constant abelian data  on $U_p$. A locally constant data {\it of finite type} is a locally constant data $\Lc$ such that $\Lc_p$ is a finitely generated abelian group for any $p\in X$.
\end{defn}

Locally constant abelian data are characterized by the following:

\begin{prop} An abelian data $\Lc$ is locally constant if and only if $r_{pq}\colon\Lc_p\to\Lc_q$ is an isomorphism for any $p\leq q$.
\end{prop}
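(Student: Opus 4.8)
The plan is to prove both implications directly from the definition of a locally constant abelian data, using the smallest open sets $U_p$ as the local neighborhoods on which $\Lc$ is constant.

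First I would prove the ``only if'' direction. Suppose $\Lc$ is locally constant and let $p\leq q$ be given. Since $q\in U_p$ (because $p\leq q$ means $q\in U_p$), the restriction $\Lc_{\vert U_p}$ is isomorphic to a constant abelian data $G$ on $U_p$; fix such an isomorphism $\varphi\colon \Lc_{\vert U_p}\overset{\sim}\to G$. For the pair $p\leq q$, both living in $U_p$, the naturality of $\varphi$ gives a commutative square whose vertical arrows are the isomorphisms $\varphi_p\colon\Lc_p\to G$ and $\varphi_q\colon\Lc_q\to G$, whose top arrow is $r_{pq}\colon\Lc_p\to\Lc_q$, and whose bottom arrow is the restriction morphism of the \emph{constant} data $G$, which is $\id_G$ by definition. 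Hence $\varphi_q\circ r_{pq}=\id_G\circ\varphi_p=\varphi_p$, so $r_{pq}=\varphi_q^{-1}\circ\varphi_p$ is an isomorphism.

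Next I would prove the ``if'' direction. Suppose $r_{pq}$ is an isomorphism for every $p\leq q$, and fix $p\in X$. I claim $\Lc_{\vert U_p}$ is isomorphic to the constant abelian data $\Lc_p$ on $U_p$. For each $q\in U_p$ we have $p\leq q$, so $r_{pq}\colon\Lc_p\to\Lc_q$ is an isomorphism; define $\psi_q:=r_{pq}^{-1}\colon\Lc_q\to\Lc_p$ (with $\psi_p=\id$). To see that the collection $\{\psi_q\}_{q\in U_p}$ is a morphism of abelian data from $\Lc_{\vert U_p}$ to the constant data $\Lc_p$, I must check that for $q\leq q'$ in $U_p$ the square commutes, i.e. $\psi_{q'}\circ r_{qq'}=\id\circ\psi_q=\psi_q$, equivalently $r_{qq'}= \psi_{q'}^{-1}\circ\psi_q = r_{pq'}\circ r_{pq}^{-1}$; this follows from the cocycle identity $r_{qq'}\circ r_{pq}=r_{pq'}$ satisfied by any abelian data. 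Since each $\psi_q$ is an isomorphism of abelian groups, the morphism $\{\psi_q\}$ is an isomorphism of abelian data, so $\Lc_{\vert U_p}$ is constant.

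I do not expect any genuine obstacle here; the statement is essentially a reformulation of the definition once one observes that $U_p$ is the smallest open set containing $p$ and that $q\in U_p\iff p\leq q$. The only points requiring a little care are (i) extracting the object-wise isomorphisms from a natural isomorphism of functors and using naturality with respect to the specific morphism $p\leq q$, and (ii) verifying the compatibility of the candidate trivialization $\{r_{pq}^{-1}\}$ with restriction morphisms, which reduces to the associativity axiom $r_{ql}\circ r_{pq}=r_{pl}$ built into the definition of abelian data.
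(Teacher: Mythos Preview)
Your proof is correct. The paper actually states this proposition without proof, treating it as an immediate consequence of the definition of locally constant abelian data; your argument simply supplies the routine details, so there is nothing to compare.
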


\begin{rem}\begin{enumerate} 
\item If $X$ has a minimum (or a maximum) then any locally constant abelian data on $X$ is constant.

\item If $f\colon X\to Y$ is continuous and  $G$ is the constant abelian data $G$ on  $Y$, then $f^{-1}G$ is the constant abelian data $G$ on $X$, i.e. $f^{-1}G=G$. If $\Lc$ is a locally constant abelian data on $Y$, then $f^{-1}\Lc$ is a locally constant abelian data on $X$.
\end{enumerate}
\end{rem}
 
It is proved in \cite{Sancho} that an abelian data $F$ on $X$ is locally constant if and only if it is quasi-coherent (as a sheaf of $\ZZ$-modules). That is why we shall denote by $\text{\rm Qcoh(X)}$ the category of locally constant abelian data on $X$. A locally constant data is of finite type if and only if it is a coherent $\ZZ$-module. We shall denote  by
$\text{\rm Coh(X)}$  the category of locally constant data of finite type on $X$.

\begin{rem} Let $\# X$ be the number of connected components of $X$. If $G$ is the constant abelian data $G$ on $X$, then
\[\aligned \Gamma(X,G)&= G\times \overset{\# X}\cdots\times G, \\ L(X,G)&= G\oplus \overset{\# X}\cdots\oplus G. \endaligned\]
\end{rem}

%
%


\subsection{Supporting on a locally closed subspace}

\begin{defn} Let $S$ be a locally closed subset of $X$ and $F$ an abelian data on $X$. Then $F_S$ is the abelian data on $X$ given by \[ (F_S)_p=\left\{\aligned F_p\text{ if }p\in S, \\ 0\text{ if }p\notin S.\endaligned\right. \] The restriction morphisms $r_{pq}$ are those of $F$ if $p,q\in S$ and zero otherwise (the associativity $r_{ql}\circ r_{pq}= r_{pl}$ is satisfied because $S$ is locally closed). In other words, if we think $F$ as a sheaf, then $F$ is the sheaf supported on $S$. 
\end{defn}

\begin{ejem} If $j\colon C\hookrightarrow X$ is a closed subset, then $$F_C=j_*j^{-1}F.$$   If $j\colon U\hookrightarrow X$ is an open  subset, then $$F_U=j_!j^{-1}F.$$
\end{ejem}

\begin{prop}\label{propsop1} Let $F$ be an abelian data on $X$ and $G$ an abelian group. 
\begin{enumerate} 
\item For any closed subset $Z$ of $X$
\[ \Hom(F, G_Z)=\Hom(L(Z,F),G).\]
\item For any open subset $U$ of $X$
\[ \Hom(G_U,F)=\Hom(G,\Gamma(U,F))).\]
\end{enumerate}
\end{prop}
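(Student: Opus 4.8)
The plan is to prove both statements by reducing the Hom of abelian data to a Hom of abelian groups, using the explicit colimit/limit descriptions of $L(Z,F)$ and $\Gamma(U,F)$ together with the characterization of $G_Z$ and $G_U$ via $j_*$, $j_!$ and the adjunctions already established.

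\begin{proof}
For (1), write $j\colon Z\hookrightarrow X$ for the inclusion of the closed subset $Z$. By the example preceding the statement, $G_Z=j_*j^{-1}G=j_*(G)$, where on the right $G$ denotes the constant abelian data on $Z$ (note $j^{-1}$ of a constant data is constant). Since $j_*$ is right adjoint to $j^{-1}$, we get
\[
\Hom(F,G_Z)=\Hom(F,j_*G)=\Hom(j^{-1}F,G)=\Hom(F_{\vert Z},G).
\]
Now $G$ is the constant abelian data on $Z$, so a morphism $F_{\vert Z}\to G$ is a family of group homomorphisms $\varphi_s\colon F_s\to G$ for $s\in Z$ compatible with the restriction maps $r_{ss'}$ of $F$ for $s\leq s'$ in $Z$ (the compatibility with the identity maps of the constant data forces $\varphi_{s'}\circ r_{ss'}=\varphi_s$). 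By the universal property of the direct limit this is exactly the data of a single homomorphism $L(Z,F)=\ilim{s\in Z}F_s\to G$; one checks this identification is natural. Hence $\Hom(F,G_Z)=\Hom(L(Z,F),G)$.

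For (2), dually, write $j\colon U\hookrightarrow X$ for the inclusion of the open subset $U$. By the example, $G_U=j_!j^{-1}G=j_!(G)$ with $G$ the constant abelian data on $U$. Since $j_!$ is left adjoint to $j^{-1}$,
\[
\Hom(G_U,F)=\Hom(j_!G,F)=\Hom(G,j^{-1}F)=\Hom(G,F_{\vert U}).
\]
A morphism from the constant abelian data $G$ on $U$ to $F_{\vert U}$ is a compatible family of homomorphisms $G\to F_s$ for $s\in U$, i.e.\ a cone from $G$ to the diagram $(F_s)_{s\in U}$, which by the universal property of the inverse limit is precisely a homomorphism $G\to \Gamma(U,F)=\plim{s\in U}F_s$; again the identification is natural. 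Hence $\Hom(G_U,F)=\Hom(G,\Gamma(U,F))$.
\end{proof}

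The only slightly delicate point is making sure the identifications of $G_Z$ (resp.\ $G_U$) with $j_*G$ (resp.\ $j_!G$) are legitimate, i.e.\ that $j^{-1}$ applied to a constant data is again constant and that the extension-by-zero descriptions match the definition of supporting on a locally closed subset; but these are already recorded in the excerpt (the remark on $f^{-1}$ of constant data, and the examples identifying $F_C=j_*j^{-1}F$, $F_U=j_!j^{-1}F$), so no new work is needed. After that the argument is a formal chain of adjunctions followed by the universal property of a (co)limit, and there is no real obstacle; one could even bypass the adjunctions and argue directly by unwinding that a morphism $F\to G_Z$ is the same as a compatible family $F_s\to G$ over $s\in Z$ (morphisms into $G_Z$ automatically vanish nowhere-relevant because $G_Z$ is zero outside $Z$ and the restriction maps out of $Z$ are zero), which is the route I would actually write if space is tight.
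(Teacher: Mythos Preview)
Your proof is correct and follows essentially the same route as the paper: write $G_Z=j_*\pi^{-1}G$ with $\pi\colon Z\to\{*\}$, use the $(j^{-1},j_*)$ adjunction to reduce to $\Hom(F_{\vert Z},G)$, and then identify this with $\Hom(L(Z,F),G)$. The only cosmetic difference is that the paper packages your final colimit argument as the adjunction $\Hom(F_{\vert Z},\pi^{-1}G)=\Hom(\pi_!F_{\vert Z},G)$ with $\pi_!F_{\vert Z}=L(Z,F)$, which is exactly the universal property you spell out by hand.
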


\begin{proof} Let  $i\colon Z\hookrightarrow X$ be the inclusion and $\pi\colon Z \to \{*\}$ the projection onto a point.
One has $G_Z=i_*\pi^{-1}G$. By adjunctions, one has
\[ \Hom(F, G_Z)= \Hom(F, i_*\pi^{-1}G)= \Hom( F_{\vert Z}, \pi^{-1} G )=   \Hom(\pi_! F_{\vert Z},G).\] One concludes because $ L(Z, F_{\vert Z})=L(Z, F)$.
\end{proof}


\begin{defn} Let  $F$ be an abelian data on $X$. \begin{enumerate} \item For any closed subset $Y\subseteq X$, we shall denote by $\Gamma_Y(X,F)$ the kernel of the morphism $\Gamma(X,F)\to \Gamma (X-Y,F)$. 
\item For any open subset $U\subseteq X$, we shall denote by $L^U(X,F)$ the cokernel of the morphism $L(X-U,F)\to L (X,F)$. 
\end{enumerate}
\end{defn}

\begin{prop}\label{propsop2}  Let $F$ be an abelian data on $X$ and $G$ an abelian group. 
\begin{enumerate} 
\item For any closed subset $Z$ of $X$
\[ \Hom(  G_Z,F)=\Hom(G, \Gamma_Z(X,F) ).\]
\item For any open subset $U$ of $X$
\[ \Hom(F,G_U )=\Hom(L^U(X,F),G).\]
\end{enumerate}
\end{prop}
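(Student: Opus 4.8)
The plan is to mirror the proof of Proposition~\ref{propsop1} by reducing each statement to the adjunctions of Theorem~1.2.8 together with the examples of support data. Consider first statement~(1). Writing $i\colon Z\hookrightarrow X$ for the inclusion of the closed subset and $j\colon U'=X-Z\hookrightarrow X$ for the complementary open immersion, the defining short exact sequence of abelian data is $0\to G_Z\to G_X\to G_{U'}\to 0$, where $G_X$ denotes the constant data $G$ on all of $X$. Applying the left-exact functor $\Hom(\quad,F)$ gives an exact sequence $0\to\Hom(G_{U'},F)\to\Hom(G_X,F)\to\Hom(G_Z,F)$, but that is the wrong direction, so instead I would argue directly: since $G_Z=i_*\pi^{-1}G$ with $\pi\colon Z\to\{*\}$, the adjunction between $i_*$ and $i^{-1}$ together with $(i_*\pi^{-1}G)=G_Z$ is not quite what is needed because $i$ is a closed immersion and $i_*$ is right adjoint to $i^{-1}$; rather, I should use that for a closed immersion $i$ one also has $i^{-1}G_Z=\pi^{-1}G$ and appeal to the extension-by-zero description. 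Concretely, a morphism $G_Z\to F$ is the same as a morphism $G_Z\to F$ whose restriction outside $Z$ is automatically zero, i.e.\ a compatible family over $Z$; unwinding, $\Hom(G_Z,F)=\Hom_{\text{data on }Z}(\pi^{-1}G, F_{\vert Z})=\Hom(G,\Gamma(Z,F_{\vert Z}))$. But $\Gamma(Z,F_{\vert Z})=\Gamma(Z,F)$ is not $\Gamma_Z(X,F)$; the point is that $\Gamma_Z(X,F)=\Gamma(X,F_Z)$ by the very definition of the kernel and the exact sequence $0\to F_Z\to F\to F_{X-Z}$, since $\Gamma$ is left exact and $\Gamma(X,F_{X-Z})=\Gamma(X-Z,F)$. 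Hence $\Hom(G_Z,F)=\Hom(G,\Gamma(X,F_Z))=\Hom(G,\Gamma_Z(X,F))$, using Proposition~\ref{propsop1}(2) with the open set taken to be all of $X$ applied to $F_Z$, or more directly Remark~1.1.13 identifying $\Gamma(X,\quad)$ with $\Hom(\ZZ,\quad)$-type statements; I would phrase it via the universal property that $\Hom(G_Z,F)\cong\Hom(G_X,F_Z)$ composed with $\Hom(G_X,F_Z)\cong\Hom(G,\Gamma(X,F_Z))$.

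For statement~(2) the argument is formally dual. From $0\to F_{X-U}\to F\to F_U\to 0$ one gets, applying the left-exact contravariant... no — applying $\Hom(\quad,G_U)$? Instead I would use that $L^U(X,F)=L(X,F_U)$ directly: the defining exact sequence for cosections gives $L(X-U,F)\to L(X,F)\to L^U(X,F)\to 0$, and comparing with the right-exact functor $L(X,\quad)$ applied to $0\to F_{X-U}\to F\to F_U\to 0$ yields $L(X,F_{X-U})\to L(X,F)\to L(X,F_U)\to 0$, together with $L(X,F_{X-U})=L(X-U,F)$ (extension by zero from a closed set, Example~1.4.3), so $L^U(X,F)=L(X,F_U)$. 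Then $\Hom(F,G_U)$: using $G_U=j_!j^{-1}G=j_!\pi^{-1}G$ where now $j\colon U\hookrightarrow X$ is the open immersion and the adjunction $\Hom(j_!\pi^{-1}G,F)=\Hom(\pi^{-1}G,j^{-1}F)$ — but that computes $\Hom(G_U,F)$, the content of Proposition~\ref{propsop1}(2), not $\Hom(F,G_U)$. So I must instead dualize through $\widehat X$: on the dual space a closed set becomes open and cosections become sections, so statement~(2) for $X$ is exactly statement~(1) for $\widehat X$, since $L(X,F_U)=\Gamma(\widehat X,\widehat F_{\widehat U})$ and $G_U$ on $X$ corresponds to $G_{\widehat U}$ on $\widehat X$ with $\widehat U$ closed. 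Thus $\Hom_X(F,G_U)=\Hom_{\widehat X}(G_{\widehat U},\widehat F)$... again the wrong side. I therefore expect the cleanest route is the purely computational one: unravel $\Hom(F,G_U)$ by hand. A morphism $\varphi\colon F\to G_U$ is a family $\varphi_p\colon F_p\to (G_U)_p$ with $(G_U)_p=G$ for $p\in U$ and $0$ otherwise, compatible with restrictions; since $U$ is open, compatibility forces $\varphi_q\circ r_{pq}=\varphi_p$ whenever $p\le q$, and for $q\notin U$ this gives $\varphi_p$ factoring appropriately, whence the data of $\varphi$ is exactly a morphism from $L^U(X,F)=\operatorname{coker}(L(X-U,F)\to L(X,F))$ to $G$, because $L(X,\quad)$ is the universal target of such compatible families and killing the contribution of $X-U$ is precisely the cokernel. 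That identification $\Hom(F,G_U)=\Hom(L^U(X,F),G)$ then follows.

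The main obstacle is bookkeeping with variances: $f_*$ is right adjoint to $f^{-1}$ while $f_!$ is left adjoint, and the support functors $(\quad)_Z$, $(\quad)_U$ sit on the *source* side of a Hom in one statement and the *target* side in the other, so one cannot simply quote Proposition~\ref{propsop1}; one has to redo the adjunction chase with the arrows pointing the other way, which forces the intermediate objects $\Gamma_Z(X,F)$ and $L^U(X,F)$ rather than $\Gamma(Z,F)$ and $L(Z,F)$. Once the identities $\Gamma_Z(X,F)=\Gamma(X,F_Z)$ and $L^U(X,F)=L(X,F_U)$ are recorded (each an immediate consequence of left/right exactness of $\Gamma(X,\quad)$, $L(X,\quad)$ applied to the tautological support sequences, plus the extension-by-zero Examples of Section~1.4), both halves reduce to: (i) $\Hom(G_Z,F)\cong\Hom(G_X,F_Z)$ and $\Hom(F,G_U)\cong\Hom(F_U,G_X)$, which hold because $(\quad)_Z$, $(\quad)_U$ are, respectively, right and left adjoint to the inclusion-and-restriction on the relevant locally closed piece (or simply because $G_Z\to F$ must annihilate everything off $Z$ and $F\to G_U$ must factor through $F_U$), and (ii) $\Hom(G_X,M)=\Hom(G,\Gamma(X,M))$ and $\Hom(M,G_X)=\Hom(L(X,M),G)$ for the constant data $G_X$, which are the adjunctions of the Examples in Section~1.2.4 (the point map) combined with $\pi_*M=\Gamma(X,M)$, $\pi_!M=L(X,M)$. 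Stringing these together yields both formulae, and I would present the second one as the image of the first under passage to the dual space $\widehat X$ to avoid repeating the chase.
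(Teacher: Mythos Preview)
Your proposal eventually arrives at a correct argument, but you took a long detour because of an error at the very start: for a closed $Z$ with open complement $U'=X-Z$ the support sequence is
\[ 0 \to G_{U'} \to G_X \to G_Z \to 0, \]
not $0\to G_Z\to G_X\to G_{U'}\to 0$ as you wrote. (In general, for any $F$ and open $U'\subseteq X$, the subobject is $F_{U'}$ and the quotient is $F_{X-U'}$.) With the correct sequence, applying $\Hom(\quad,F)$ gives
\[ 0 \to \Hom(G_Z,F) \to \Hom(G_X,F) \to \Hom(G_{U'},F), \]
and Proposition~\ref{propsop1}(2) identifies the last two terms with $\Hom(G,\Gamma(X,F))$ and $\Hom(G,\Gamma(X-Z,F))$; hence $\Hom(G_Z,F)=\Hom(G,\Gamma_Z(X,F))$ directly from the definition of $\Gamma_Z$. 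This is exactly the paper's proof, and it is exactly the approach you wrote down first and then dismissed as ``the wrong direction'' --- it only looked wrong because your sequence was reversed. Part~(2) is handled identically: apply $\Hom(F,\quad)$ to $0\to G_U\to G_X\to G_{X-U}\to 0$ and invoke Proposition~\ref{propsop1}(1).

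Your final summary --- reducing to $\Gamma_Z(X,F)=\Gamma(X,F_Z)$ and $L^U(X,F)=L(X,F_U)$, then using the support adjunctions $\Hom(G_Z,F)\cong\Hom(G_X,F_Z)$ and $\Hom(F,G_U)\cong\Hom(F_U,G_X)$ together with the point-map adjunctions --- is correct and is essentially an unrolled version of the same argument. But the exact-sequence route is two lines; there is no need to pass through $\widehat X$ or to do an element chase.
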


\begin{proof} For any closed subset $Z$ one has an exact sequence 
\[ 0\to G_{X-Z}\to G\to G_Z\to 0.\] Taking $\Hom(\quad,F)$ and applying Proposition \ref{propsop1}, one concludes (1). Analogously, taking $\Hom(F,\quad )$ and applying Proposition \ref{propsop1}, one concludes (2).
\end{proof}

\subsection{Homology and cohomology}

In order to derive the sections and cosections functors, we need the following basic result.

\begin{thm} The category of abelian data on $X$ has enough injectives and projectives.
\end{thm}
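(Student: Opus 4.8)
The plan is to exhibit explicit enough injectives and projectives using the equivalence between abelian data on $X$ and functors $X\to\{\text{Abelian groups}\}$, together with the "point" functors that arise from the elements of $X$. First I would treat injectives. For each $p\in X$ and each abelian group $G$, consider the abelian data $G_{C_p}$ supported on the closed subset $C_p$; by Proposition \ref{propsop1}(1), $\Hom(F,G_{C_p})=\Hom(L(C_p,F),G)=\Hom(F_p,G)$, so the functor $F\mapsto G_{C_p}$ is right adjoint to the exact evaluation functor $F\mapsto F_p$. Since exact left adjoints preserve projectives of the source, the dual statement gives: if $I$ is an injective abelian group, then $I_{C_p}$ is an injective abelian data. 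Now, for an arbitrary abelian data $F$, choose for each $p$ an embedding $F_p\hookrightarrow I_p$ into an injective abelian group; the adjunction produces a morphism $F\to I_p{}_{C_p}$ which is injective in the stalk at $p$. Taking the product $\prod_{p\in X}(I_p)_{C_p}$ (a finite product, since $X$ is finite, hence also a finite sum, and in any case still injective), one obtains a monomorphism $F\hookrightarrow\prod_{p}(I_p)_{C_p}$: it is a monomorphism because it is injective on every stalk $F_q$, since $q\in C_p$ whenever $p\geq q$, in particular for $p=q$. This shows there are enough injectives.

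For projectives I would argue dually using the open sets $U_p$. For each $p\in X$ and each abelian group $G$, the abelian data $G_{U_p}$ satisfies, by Proposition \ref{propsop1}(2), $\Hom(G_{U_p},F)=\Hom(G,\Gamma(U_p,F))=\Hom(G,F_p)$, so $F\mapsto G_{U_p}$ is left adjoint to the exact evaluation functor $F\mapsto F_p$; hence if $P$ is a projective (i.e.\ free) abelian group, $P_{U_p}$ is a projective abelian data. Given an arbitrary $F$, for each $p$ pick a surjection $P_p\twoheadrightarrow F_p$ from a free abelian group; the adjunction gives a morphism $(P_p)_{U_p}\to F$ which is surjective on the stalk at $p$. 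Summing over $p$ (a finite sum) yields an epimorphism $\bigoplus_{p\in X}(P_p)_{U_p}\twoheadrightarrow F$, surjective on each stalk $F_q$ because $q\in U_p$ exactly when $p\leq q$, in particular for $p=q$. Since a finite direct sum of projectives is projective, this shows there are enough projectives.

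The only genuinely delicate point is the passage from "injective/surjective on every stalk" to "mono/epi of abelian data", and the identification of the adjoints; both are quick once one recalls that (co)kernels of morphisms of abelian data are computed stalkwise (the category is the functor category $\mathrm{Fun}(X,\mathrm{Ab})$) and that the evaluation functors $F\mapsto F_p$ are exact. Finiteness of $X$ is used only to keep the products and sums finite, which is convenient but not essential for this particular statement. I expect no real obstacle here; the substance is entirely in invoking Proposition \ref{propsop1} and the general nonsense that a right (resp.\ left) adjoint of an exact functor preserves injectives (resp.\ projectives).
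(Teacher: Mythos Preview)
Your argument is correct. The paper itself states this theorem without proof, treating it as a basic fact about functor categories, so there is no approach to compare against. Your construction via the adjoints $G\mapsto G_{C_p}$ (right adjoint to evaluation at $p$) and $G\mapsto G_{U_p}$ (left adjoint to evaluation at $p$), drawn from Proposition~\ref{propsop1}, is the standard explicit proof and is entirely in the spirit of the paper; indeed, the paper uses exactly these objects elsewhere (e.g.\ the projectivity of $\ZZ_{U_x}$ in the proof of Theorem~\ref{ExplicitCodualizing}, and the injectivity of $\Omega_{C_x}$ in Theorem~\ref{ExplicitDualizing}). Two minor remarks: your sentence ``exact left adjoints preserve projectives of the source, the dual statement gives\ldots'' is a slightly roundabout way of saying that right adjoints of exact functors preserve injectives, which is what you actually use; and finiteness of $X$ is not needed for this statement, since arbitrary products of injectives are injective and arbitrary sums of projectives are projective.
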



If $I$ is an injective abelian data on $X$, then $I_{\vert U}$ is an injective abelian data on $U$, for any open subset $U$, and $f_*I$ is an inyective abelian data on $Y$ for any continuous map $f\colon X\to Y$. Analogously, f $P$ is a projective abelian data on $X$, then $P_{\vert C}$ is a projective abelian data on $C$, for any closed subset $C$, and $f_!P$ is a proyective abelian data on $Y$ for any continuous map $f\colon X\to Y$.

\begin{defn} We shall denote by $H^i(X,\quad)$ the right derived functors of $\Gamma(X,\quad)$ and by $H_i(X,\quad)$ the left derived functors of $L(X,\quad)$. More generally, $R^if_*$ and $L_if_!$ are the left and right derived functors of $f_*$and $f_!$ respectively. One has
\[ (R^if_*F)_y=H^i(f^{-1}(U_y),F),\qquad (L_if_!F)_y=H_i(f^{-1}(C_y),F).\]
\end{defn}

\begin{defn} Let $F$ be an abelian data on $X$. $F$ is called {\it flasque} if $\Gamma(X,F)\to \Gamma(U,F)$ is surjective for any open subset $U$. $F$ is called {\it coflasque} if $L(C,F)\to L(X,F)$ is injective for any closed subset $C$.
\end{defn}

\begin{prop} If $F$ is flasque, then $F_{\vert U}$ is flasque for any open subset $U$. If $F$ is coflasque, then $F_{\vert Z}$ is coflasque for any closed subset $Z$. If $f\colon X\to Y$ is a continuous map and $F$ is an abelian data on $X$, then $F$ flasque implies that $f_*F$ is flasque and $F$ coflasque implies that  $f_!F$ is coflasque. \end{prop}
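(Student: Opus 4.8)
The plan is to prove the four assertions by reducing each statement about $X$ to the corresponding statement on the smaller spaces $U$, $Z$, or on fibers, exploiting the explicit formulas $\Gamma(V,F_{\vert U}) = \Gamma(V,F)$ for $V$ open in $U$, $L(W,F_{\vert Z}) = L(W,F)$ for $W$ closed in $Z$, together with the stalk formulas $(f_*F)_y = \Gamma(f^{-1}(U_y),F)$ and $(f_!F)_y = L(f^{-1}(C_y),F)$. None of the four claims needs a derived-functor argument; each is a direct diagram chase with the defining (co)section sequences.

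First I would treat flasqueness under restriction to an open $U \subseteq X$. Let $V \subseteq U$ be open; since $U$ is open in $X$, $V$ is open in $X$ as well, and $\Gamma(V, F_{\vert U}) = \Gamma(V,F)$, while $\Gamma(U, F_{\vert U}) = \Gamma(U,F)$. The restriction map $\Gamma(U,F_{\vert U}) \to \Gamma(V, F_{\vert U})$ is thus identified with $\Gamma(U,F) \to \Gamma(V,F)$, which factors as $\Gamma(X,F) \twoheadrightarrow \Gamma(U,F) \to \Gamma(V,F)$, the composite $\Gamma(X,F) \to \Gamma(V,F)$ being surjective by flasqueness of $F$; hence $\Gamma(U,F_{\vert U}) \to \Gamma(V,F_{\vert U})$ is surjective. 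The coflasque case is the dual argument on $\widehat X$: for $Z$ closed in $X$ and $W \subseteq Z$ closed, $W$ is closed in $X$, $L(W,F_{\vert Z}) = L(W,F)$, $L(Z,F_{\vert Z}) = L(Z,F)$, and $L(W,F) \to L(Z,F) \to L(X,F)$ is injective because the composite $L(W,F) \to L(X,F)$ is injective by coflasqueness of $F$; an injective map followed by any map can only be injective if, in fact — here I would note that injectivity of the composite forces injectivity of the first arrow directly. So $F_{\vert Z}$ is coflasque.

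Next I would handle $f_*F$ for $F$ flasque. Take $V \subseteq V'$ open in $Y$; using Proposition \ref{1.2.10}, $\Gamma(V', f_*F) = \Gamma(f^{-1}(V'),F)$ and $\Gamma(V,f_*F) = \Gamma(f^{-1}(V),F)$, and the restriction map of $f_*F$ is identified with $\Gamma(f^{-1}(V'),F) \to \Gamma(f^{-1}(V),F)$. Since $f^{-1}(V) \subseteq f^{-1}(V')$ are open in $X$, this last map factors through $\Gamma(X,F)$, which surjects onto $\Gamma(f^{-1}(V),F)$ by flasqueness of $F$; hence $\Gamma(V',f_*F) \to \Gamma(V,f_*F)$ is surjective, so $f_*F$ is flasque. (It suffices to test surjectivity against $V' = Y$, which simplifies matters further.) Dually, for $F$ coflasque, $L(C,f_!F) = L(f^{-1}(C),F)$ for $C$ closed in $Y$ by Proposition \ref{1.2.10}(2), and $L(f^{-1}(C),F) \to L(X,F) = L(Y, f_!F)$ is injective by coflasqueness of $F$; this identifies the map $L(C,f_!F) \to L(Y,f_!F)$ with an injection, so $f_!F$ is coflasque.

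The argument is essentially all verification; the only point requiring a moment of care — the closest thing to an obstacle — is the bookkeeping in the coflasque cases, namely that one is deducing injectivity of a first arrow from injectivity of a composite (legitimate) and matching up the canonical maps $L(W,F) \to L(Z,F)$, $L(C,f_!F) \to L(Y,f_!F)$ with the maps appearing in the definitions of coflasque and in Proposition \ref{1.2.10}. This is routine once one writes out the colimit definitions, but it is where a sign-free proof can look circular if stated too quickly. I would also remark that all four statements can be packaged as two, by passing to the dual space $\widehat X$ and using that a flasque abelian data on $X$ corresponds to a coflasque one on $\widehat X$ and that $\widehat{f_*F} = \widehat f_!\,\widehat F$ under the duality, though writing both cases out explicitly is arguably cleaner here.
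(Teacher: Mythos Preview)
The paper states this proposition without proof, so there is nothing to compare against; your argument stands on its own and is correct. Each of the four claims reduces, exactly as you say, to the identification $\Gamma(V,F_{\vert U})=\Gamma(V,F)$ (resp.\ $L(W,F_{\vert Z})=L(W,F)$) and the formula $\Gamma(V,f_*F)=\Gamma(f^{-1}(V),F)$ (resp.\ $L(C,f_!F)=L(f^{-1}(C),F)$), together with the elementary fact that a surjective composite forces surjectivity of its second factor and an injective composite forces injectivity of its first factor.

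One wording issue: in the $f_*F$ paragraph you write that the map $\Gamma(f^{-1}(V'),F)\to\Gamma(f^{-1}(V),F)$ ``factors through $\Gamma(X,F)$''. It does not; rather, the map $\Gamma(X,F)\to\Gamma(f^{-1}(V),F)$ factors through $\Gamma(f^{-1}(V'),F)$, and surjectivity of that composite gives what you want. Your parenthetical remark (that it suffices to take $V'=Y$, in which case $f^{-1}(V')=X$ and the issue evaporates) is the cleanest fix. The closing comment about packaging the statements via $X\leftrightarrow\widehat X$ is fine heuristically, but note that the paper's symbol $\widehat F$ denotes the derived construction $\LL\DHom^\punto(F,\ZZ)$, not the na\"ive passage of an abelian data to $\widehat X$, so that remark would need separate justification if you wished to make it precise.
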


\begin{prop}If $F$ is flasque, then  $H^i(X,F)=0$ for $i>0$. If $F$ is coflasque, then  $H_i(X,F)=0$ for $i>0$. 
\end{prop}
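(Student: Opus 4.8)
The plan is to reduce the cohomology statement to a standard fact about flasque sheaves and then dualize it for homology. First I would recall that a flasque abelian data $F$ (in the sense just defined: $\Gamma(X,F)\to\Gamma(U,F)$ surjective for every open $U$) is exactly a flasque sheaf under the equivalence of Theorem \ref{sheaf=abeliandata=cosheaf}, since $\Gamma(U,F)=F(U)$ there; hence the vanishing $H^i(X,F)=0$ for $i>0$ is the classical statement that flasque sheaves are acyclic for global sections. To keep the argument self-contained in the finite setting I would prove it directly by induction on the number of points: embed $F$ into an injective abelian data $I$ (which exists) and form the short exact sequence $0\to F\to I\to F'\to 0$. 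Using the earlier proposition that an injective is flasque and that flasqueness passes to restrictions, together with the snake-type argument showing $\Gamma(X,\cdot)$ turns this into an exact sequence $0\to\Gamma(X,F)\to\Gamma(X,I)\to\Gamma(X,F')\to 0$ (surjectivity on the right is where flasqueness of $F$ is used, via a compatibility/gluing argument), one sees $F'$ is again flasque, and the long exact sequence of cohomology gives $H^i(X,F)\cong H^{i-1}(X,F')$ for $i\geq 2$ and $H^1(X,F)=0$. Induction (on dimension, or on cardinality, or simply using that $I$ is acyclic) then finishes it.

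For the homological statement I would dualize the whole argument. Dually, a coflasque abelian data $F$ has $L(C,F)\to L(X,F)$ injective for every closed $C$; by the cosheaf side of Theorem \ref{sheaf=abeliandata=cosheaf} and the remark identifying cosheaves on $X$ with sheaves on $\wh X$, coflasque data on $X$ correspond to flasque data on $\wh X$, and $L(X,F)$ on $X$ is $\Gamma(\wh X,\cdot)$ on $\wh X$; moreover $H_i(X,F)$ is $H^i(\wh X,\cdot)$ of the corresponding object. So $H_i(X,F)=0$ for $i>0$ is literally the cohomological statement applied to $\wh X$. Alternatively, one argues directly: cover $F$ by a projective abelian data $P$ (which exists and is coflasque by the earlier proposition), form $0\to P'\to P\to F\to 0$, note that $L(X,\cdot)$ is exact on this sequence because coflasqueness of $F$ makes the relevant maps injective, deduce $P'$ is coflasque, and run the long exact sequence of homology to get $H_i(X,F)\cong H_{i+1}(X,F)$... — rather, $H_i(X,F)\cong H_{i-1}(X,P')$ shifting the other way — and conclude by the same induction.

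The main obstacle is the exactness step: proving that $\Gamma(X,\cdot)$ (resp. $L(X,\cdot)$) preserves surjectivity (resp. injectivity) on the chosen short exact sequence when the outer term is flasque (resp. coflasque). In the sheaf-theoretic world this is the familiar "flasque lifts of sections" argument using a maximal extension and the sheaf gluing axiom; in the finite setting one can make it completely explicit by lifting over the minimal open sets $U_p$ in order of the poset (resp. over the minimal closed sets $C_p$ for the homology version), which is where finiteness genuinely helps. Once that exactness is in hand, everything else is formal homological algebra: the long exact sequence, the observation that the "cokernel" $F'$ (resp. "kernel" $P'$) inherits flasqueness (resp. coflasqueness), and the inductive descent using acyclicity of injectives (resp. projectives). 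I expect the cleanest write-up to phrase the whole thing via the dual space $\wh X$ so that the homology case needs no separate proof at all.
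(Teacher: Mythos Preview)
Your direct argument for the coflasque case --- cover $F$ by a projective $P$, show $L(X,\cdot)$ is exact on $0\to P'\to P\to F\to 0$, deduce $P'$ is again coflasque, then dimension-shift --- is exactly the skeleton the paper uses. Two points deserve sharpening. First, ``projective implies coflasque'' is not an earlier proposition here; the paper proves it inside this very proof, via the adjunction $\Hom(P,G_Z)=\Hom(L(Z,P),G)$ of Proposition~\ref{propsop1} applied with arbitrary $G$. Second, for the key exactness step (your ``main obstacle'') the paper does not lift over minimal closed sets: it shows by induction on $|X|$ that if the quotient $F''$ is coflasque then $L(X,\cdot)$ remains left exact on $0\to F'\to F\to F''\to 0$, writing a reducible $X$ as $X_1\cup X_2$, applying the snake lemma to the resulting diagram of cosections, and invoking coflasqueness of $F''$ only for the injectivity of $L(X_1\cap X_2,F'')\to L(X_i,F'')$. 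Your framing of this step as a lifting/gluing problem is the wrong dualization: what is needed is \emph{injectivity} of $L(X,F')\to L(X,F)$, so there is nothing to lift.

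Your preferred route via $\wh X$, however, has a genuine gap. Theorem~\ref{sheaf=abeliandata=cosheaf} does \emph{not} identify abelian data on $X$ with abelian data on $\wh X$: an abelian data on $X$ is a covariant functor out of the poset $X$, and $\wh X$ is the opposite poset, so the two functor categories are genuinely different (the identification only works for locally constant data, as in subsection~\ref{dualspace}). What is true is that a cosheaf on $X$ in this paper's sense is a covariant functor on closed subsets of $X$, equivalently on open subsets of $\wh X$ --- but that makes it a \emph{cosheaf} on $\wh X$ in the standard sense, not a sheaf, and coflasqueness does not become flasqueness of any sheaf on $\wh X$. There is therefore no identification $H_i(X,F)=H^i(\wh X,G)$ to invoke, and the homology statement cannot be obtained from the cohomology one simply by passing to the dual space.
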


\begin{proof} The result for a flasque $F$ is well known. If $F$ is coflasque the result follows from the following lemmas:
\begin{lem}\label{coflasque} Let $0\to F'\to F\to F''\to 0$ be an exact sequence of abelian data on $X$. If $F''$ is coflasque, then the sequence
\[ 0\to L(X,F')\to L(X,F)\to L(X,F'')\to 0\] is exact.
\end{lem}

\begin{proof} If $X$ is irreducible, then $X=C_p$, so $L(X,F)=F_p$ for any $F$ and the result is immediate. If $X$ is not irreducible, then $X$ is the union of two proper closed subsets $X=X_1\cup X_2$. By induction on the number of points, the result holds on $X_1,X_2$ and $X_{12}=X_1\cap X_2$, so we obtain a commutative diagram of exact sequences


\[ \xymatrix{   0 \to L(X_1,F')\oplus L(X_2,F') \ar[r] & L(X_1,F)\oplus L(X_2,F)\ar[r] &  L(X_1,F'')\oplus L(X_2,F'') \to 0\\ 0 \longrightarrow L(X_{12},F')\ar[r] \ar[u] & L(X_{12},F)\ar[r] \ar[u] & L(X_{12},F'') \longrightarrow 0  \ar[u] 
}\] 
One concludes the result by the snake lemma, since $L(X_{12},F'')\to L(X_{i},F'')$ is injective, because $F''$ is coflasque.\end{proof}

\begin{lem} If $P$ is projective then it is coflasque.
\end{lem}

\begin{proof} Let $Z$ be a closed subset and $G$ an abelian group. One has an epimorphism $G\to G_Z$, hence an epimorphism $\Hom(P,G)\to\Hom(P,G_Z)$; by Proposition \ref{propsop1}, this yields an  epimorphism
\[\Hom(L(X,P),G)\to \Hom(L(Z,P),G).\] Since this holds for any $G$, $L(Z,P)\to L(X,P)$ is injective (in fact, a direct summand).
\end{proof}

\begin{lem} Let $0\to F'\to F\to F''\to 0$ be an exact sequence of abelian data on $X$. If $F$ and $F''$ are coflasque, then $F'$ is coflasque too.
\end{lem}
\begin{proof} It follows easily from Lemma \ref{coflasque}.
\end{proof}
\end{proof}

\subsection{Standard resolutions:}  Finite resolutions by flasque and coflasque abelian data.
\medskip

Let $X$ be a finite  space. A  complex $F$   of abelian data on $X$ may be denoted as a sequence
\[ \cdots\to F^n  \to F^{n+1}\to \cdots\] whose differential has degree $+1$, or as a sequence 
\[ \cdots\to F_n  \to F_{n-1}\to \cdots\] whose differential  has degree $-1$. The equivalence between both presentations is given by $F^{-n}=F_n$.

\begin{defn} Let $F$ be an abelian data on $X$. For each integer $n\geq 0$,   $C^nF$ is the sheaf on $X$ defined by:

\[ (C^nF)(U)=\prod_{(x_0<\cdots <x_n)\in U}F_{x_n}. \]  
\end{defn}
These groups form a complex 
\[ 0\to (C^0F)(U)\to (C^1F)(U)\to \cdots \to (C^dF)(U)\to 0,\quad d=\dim X.\] hence we have a complex $C^\punto F$ of abelian data
\[ 0\to C^0F\to C^1F\to \cdots \to C^dF\to 0\] and an augmentation $F\to C^\punto F$. 
\begin{thm}\label{finiteflasqueresolution} $C^\punto F$ is a finite and flasque resolution of $F$. Hence, for any open subset $U\subseteq X$: $$H^i[\Gamma(U,C^\punto F)]  \simeq H^i(U,F).$$ 
\end{thm}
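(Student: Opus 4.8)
The plan is to establish three things in sequence: that $C^\punto F$ is a complex, that the augmentation $F\to C^\punto F$ is a resolution (i.e.\ exact), and that each $C^nF$ is flasque; the displayed cohomology isomorphism then follows formally from $\Gamma(U,\quad)$ being left exact with enough flasques (which are $\Gamma(U,\quad)$-acyclic for all open $U$ simultaneously).

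First I would define the differential $d\colon (C^nF)(U)\to (C^{n+1}F)(U)$ by the usual alternating-sum formula over chains: for a section $s=(s_{x_0<\cdots<x_n})$, set $(ds)_{x_0<\cdots<x_{n+1}}=\sum_{i=0}^{n+1}(-1)^i \, s_{x_0<\cdots\widehat{x_i}\cdots<x_{n+1}}$, with the understanding that when $i=n+1$ the term $s_{x_0<\cdots<x_n}\in F_{x_n}$ must be transported to $F_{x_{n+1}}$ via $r_{x_n x_{n+1}}$ before being added. This is visibly compatible with the restriction maps for $V\subseteq U$, so it gives a morphism of sheaves (abelian data), and $d^2=0$ is the standard simplicial identity. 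The augmentation $F\to C^0F$ sends $F(U)=\Gamma(U,F)=\plim{x\in U}F_x$ to $\prod_{x\in U}F_x$ in the obvious way, and one checks $d\circ(\text{aug})=0$ since a compatible family is killed by the two-term alternating sum.

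Next, exactness of $0\to F_x\to (C^0F)_x\to (C^1F)_x\to\cdots$ at each point $x$. Here $(C^nF)_x=(C^nF)(U_x)=\prod_{x\le x_0<\cdots<x_n}F_{x_n}$, i.e.\ chains in $U_x$; since $U_x$ has a minimum (namely $x$), I would use the standard cone/contracting-homotopy argument: the operator $h$ that prepends $x$ to a chain, $(hs)_{x_0<\cdots<x_{n-1}}=s_{x<x_0<\cdots<x_{n-1}}$ (defined to be $0$ if $x=x_0$), satisfies $dh+hd=\id$ on positive degrees and realizes the augmentation as a chain-homotopy equivalence; concretely this shows the complex of stalks is acyclic with $H^0=F_x$. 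This is essentially the fact that the order complex of a poset with a cone point is contractible, transcribed algebraically. I would present the homotopy formula and the identity $dh+hd=\id$ and leave the bookkeeping of signs and the $r_{pq}$-transports to the reader as routine.

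Then flasqueness: I must show $\Gamma(X,C^nF)\to\Gamma(U,C^nF)$ is surjective. By the proposition relating $\Gamma(U,\quad)$ to the sheaf-theoretic sections, $\Gamma(U,C^nF)=(C^nF)(U)=\prod_{(x_0<\cdots<x_n)\subseteq U}F_{x_n}$, and the restriction map is simply the projection of $\prod_{(x_0<\cdots<x_n)\subseteq X}F_{x_n}$ onto the factors indexed by chains contained in $U$; a product of groups always surjects onto a sub-product of factors (extend by $0$), so this is immediate. Finiteness is clear since a chain has length at most $d=\dim X$, so $C^nF=0$ for $n>d$. Finally, the displayed isomorphism $H^i[\Gamma(U,C^\punto F)]\simeq H^i(U,F)$ follows because $C^\punto F$ is a finite flasque resolution of $F$, flasque abelian data restricted to $U$ remain flasque (stated earlier) hence are $\Gamma(U,\quad)$-acyclic (also stated earlier), and an acyclic resolution computes the derived functor. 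The main obstacle is purely the acyclicity-at-stalks step, i.e.\ writing down the contracting homotopy correctly and checking $dh+hd=\id$ in the presence of the restriction maps $r_{pq}$; everything else is formal.
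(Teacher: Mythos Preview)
Your proof is correct and follows the standard argument; the paper itself states this theorem without proof, treating it as well known, so there is nothing to compare against. Your contracting homotopy (prepending the minimum $x$ of $U_x$) is exactly the usual cone-operator verification of acyclicity, and the flasqueness-from-products observation is the expected one.
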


The analogous construction for homology is the following. 

\begin{defn} For each integer $n\geq 0$,  $C_nF$ is the cosheaf on $X$ defined by 
\[ (C_nF)(C)=\bigoplus_{(x_0<\cdots <x_n)\in C}F_{x_0}. \] 
\end{defn}
We have a complex 
\[ 0\to (C_dF)(C)\to  \cdots \to (C_1F)(C)\to (C_0F)(C)\to 0,\quad d=\dim X\] hence we have a complex $C_\punto F$ of abelian data
\[ 0\to C_dF\to \cdots \to C_1F\to  C_0F\to 0\] and an augmentation $  C_0 F\to F$.

\begin{thm}\label{finitecoflasqueresolution} $C_\punto F$ is a finite and coflasque resolution of $F$. Hence, for any closed subset $Z\subseteq X$:  $$H_i[L(Z,C_\punto F) ]\simeq H_i(Z,F).$$
\end{thm}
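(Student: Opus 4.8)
The plan is to establish Theorem \ref{finitecoflasqueresolution} by dualizing the proof of Theorem \ref{finiteflasqueresolution}, exploiting the formal symmetry between sheaves/sections and cosheaves/cosections. First I would verify that $C_\punto F$ really is a complex of abelian data: by construction each $C_nF$ is a cosheaf, hence an abelian data, and the differential $d\colon C_nF\to C_{n-1}F$ is the usual simplicial boundary $d=\sum_{i=0}^n(-1)^i\,\partial_i$, where $\partial_i$ deletes $x_i$; deleting $x_0$ changes the distinguished vertex from $x_0$ to $x_1$ and so must be composed with the restriction map $r_{x_0 x_1}\colon F_{x_0}\to F_{x_1}$, while the remaining face maps act by the identity on $F_{x_0}$. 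The relation $d^2=0$ is the standard simplicial identity, using $r_{x_1x_2}\circ r_{x_0x_1}=r_{x_0x_2}$ exactly where two of the first vertices get deleted in succession.

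Next I would check that the augmentation $\varepsilon\colon C_0F\to F$ is a well-defined morphism of abelian data and that $C_\punto F\to F$ is a resolution, i.e. exact. Exactness can be checked stalkwise; for a point $p\in X$ one has $(C_nF)_p=(C_nF)(C_p)=\bigoplus_{x_0<\cdots<x_n\le p}F_{x_0}$, so the complex $(C_\punto F)_p$ computes the simplicial homology of the order complex of $C_p=\{q:q\le p\}$ with the appropriate local coefficients pushed into $F_{x_0}$. Since $C_p$ has a maximum, namely $p$, its order complex is a cone, hence contractible, and a standard contracting homotopy (inserting $p$ as a new top vertex) shows the augmented complex $\cdots\to(C_1F)_p\to(C_0F)_p\to F_p\to 0$ is acyclic. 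This is precisely the homological mirror of the acyclicity argument behind Theorem \ref{finiteflasqueresolution}, where $U_p$ has a minimum.

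Then I would prove that each $C_nF$ is coflasque. By the equivalence of Theorem \ref{sheaf=abeliandata=cosheaf}, $C_nF$ as a cosheaf has $(C_nF)(C)=\bigoplus_{x_0<\cdots<x_n\in C}F_{x_0}=L(C,C_nF)$, so coflasqueness amounts to showing $L(C,C_nF)\to L(X,C_nF)$ is injective for every closed $C$. But this map is visibly the inclusion of a direct summand: it is the inclusion of the sub-sum indexed by chains contained in $C$ into the full sum over all chains of $X$. (Equivalently, $C_nF=\bigoplus_{x_0<\cdots<x_n}(F_{x_0})_{C_{x_n}}$ is a direct sum of data supported on closed sets $C_{x_n}$, each of which is trivially coflasque; one reduces via Lemma \ref{coflasque} or a direct check.) Hence each term of the resolution is coflasque, so by the acyclicity of coflasque data (the proposition preceding Theorem \ref{finiteflasqueresolution}) the complex $C_\punto F$ is an $L$-acyclic resolution, giving $H_i(Z,F)\simeq H_i[L(Z,C_\punto F)]$ for every closed $Z\subseteq X$ — here one also uses that $F_{\vert Z}$ has standard resolution $(C_\punto F)_{\vert Z}=C_\punto(F_{\vert Z})$, which is immediate from the defining formula since chains in $Z$ are chains in $X$ lying in $Z$.

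I expect the main obstacle to be the coflasqueness verification done cleanly: unlike flasqueness of the cochain cosheaves, which follows because $(C^nF)(U)$ is a product and restriction is a surjection onto a sub-product, here one must be careful that the relevant map on cosections is genuinely a split injection and not merely injective, and that this survives restriction to an arbitrary closed subset. Everything else — the simplicial identity $d^2=0$, the cone contraction at each stalk, and the final derived-functor bookkeeping — is routine and strictly parallel to the cohomological case, so I would present those briefly and spend the bulk of the argument on identifying $L(C,C_nF)$ as a direct summand of $L(X,C_nF)$.
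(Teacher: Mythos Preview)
Your proposal is correct and complete. The paper does not actually supply a proof of Theorem~\ref{finitecoflasqueresolution} (nor of its cohomological analogue Theorem~\ref{finiteflasqueresolution}); both are stated as standard facts in the preliminaries, which the authors explicitly describe as containing ``no original results''. Your argument---checking exactness stalkwise via the cone contraction on $C_p$ (which has $p$ as maximum), and verifying coflasqueness by observing that $L(C,C_nF)\hookrightarrow L(X,C_nF)$ is the inclusion of the sub-sum indexed by chains lying in $C$---is exactly the expected dualization of the flasque case, and the identification $(C_\punto F)_{\vert Z}=C_\punto(F_{\vert Z})$ for closed $Z$ is the right way to pass to the statement for arbitrary closed subsets.

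One minor remark: your worry about coflasqueness being ``merely injective'' versus split is unfounded here, since coflasqueness only demands injectivity of $L(C,F)\to L(X,F)$; the split injection you exhibit is more than enough, and you need not spend the bulk of the argument on it.
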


\begin{cor}\label{finiteness} Let $F$ be an abelian data on $X$. If $F$ is of finite type (i.e., $F_p$ is a finitely generated abelian group for any $p\in X$), then $H^i(X,F)$ and $H_i(X,F)$ are finitely generated.
\end{cor}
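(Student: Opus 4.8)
The plan is to read off both (co)homology groups from the standard finite resolutions of Theorems \ref{finiteflasqueresolution} and \ref{finitecoflasqueresolution} and observe that, because $X$ is finite, all the terms appearing are finitely generated. Concretely, for cohomology I would start from the isomorphism $H^i(X,F)\simeq H^i[\Gamma(X,C^\punto F)]$, where by definition $\Gamma(X,C^nF)=(C^nF)(X)=\prod_{(x_0<\cdots<x_n)\in X}F_{x_n}$. Since $X$ is finite there are only finitely many chains $x_0<\cdots<x_n$, so this product is in fact a finite direct sum; and each $F_{x_n}$ is finitely generated by hypothesis, hence $\Gamma(X,C^nF)$ is a finitely generated abelian group for every $n$ (and the complex $\Gamma(X,C^\punto F)$ is moreover bounded, of length $\dim X$).

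Next I would use that $H^i(X,F)$ is a subquotient of $\Gamma(X,C^iF)$, namely $\Ker(d^i)/\Ima(d^{i-1})$ for the differential of the complex $\Gamma(X,C^\punto F)$. As $\ZZ$ is Noetherian, subgroups and quotients of finitely generated abelian groups are again finitely generated, so $H^i(X,F)$ is finitely generated. The argument for homology is formally identical: by Theorem \ref{finitecoflasqueresolution} one has $H_i(X,F)\simeq H_i[L(X,C_\punto F)]$ with $L(X,C_nF)=(C_nF)(X)=\bigoplus_{(x_0<\cdots<x_n)\in X}F_{x_0}$, again a finite direct sum of finitely generated abelian groups, and $H_i(X,F)$ is the subquotient $\Ker(\partial_i)/\Ima(\partial_{i+1})$ of $L(X,C_iF)$, hence finitely generated.

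I do not expect any genuine obstacle: the corollary is immediate once the standard resolutions are available. The single place where finiteness of $X$ (rather than merely the poset structure) is used is in asserting that, for each $n$, the set of $n$-chains of $X$ is finite, so that the degree-$n$ terms of $\Gamma(X,C^\punto F)$ and $L(X,C_\punto F)$ are finite (co)products of the finitely generated groups $F_{x_n}$ resp. $F_{x_0}$; everything else is the Noetherianity of $\ZZ$.
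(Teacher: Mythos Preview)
Your proposal is correct and follows exactly the paper's approach: the paper simply observes that $\Gamma(X,C^\punto F)$ and $L(X,C_\punto F)$ are complexes of finitely generated abelian groups and invokes Theorems \ref{finiteflasqueresolution} and \ref{finitecoflasqueresolution}. You have spelled out the implicit details (finitely many chains, Noetherianity of $\ZZ$) that the paper leaves to the reader.
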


\begin{proof} By hypothesis, $\Gamma(X,C^\punto F) $ (resp. $L(X,C_\punto F )$) is a complex of finitely generated abelian groups. On concludes by Theorem \ref{finiteflasqueresolution} (resp. Theorem \ref{finitecoflasqueresolution}).
\end{proof}

\begin{rem}\label{C^n&C_n} Let $X_\text{dis}$ be the set $X$ with the discrete topology, and  $\id\colon X_\text{dis} \to X$ the identity map. Then $C^0F=\id_*\id^{-1}F$ and $C_0F=\id_!\id^{-1}F$.  
More generally, for each $n>0$, let $X_<^n$ be the subspace of $X\times\overset n\cdots \times X$ defined as $$X_<^n=\{ (x_1,\dots,x_n)\in X^n: x_1<\cdots <x_n\}$$ and let $\pi_1\colon X_<^n\to X$, $\pi_n\colon X_<^n\to X$ be the projections on the first and last factors. Then
\[ C^nF={\pi_1}_*C^0(\pi_n^{-1}F),\qquad C_nF={\pi_n}_!C_0(\pi_1^{-1}F).\]
\end{rem}

\section{Derived Category}

 We shall denote by $C(X)$ the category of complexes of abelian data on $X$ and by $D(X)$ its derived category.

\subsection{Standard results}
\medskip

In this subsection we give standard concepts and results regarding derived categories and derived functors on a finite space. 

For any $F,F'\in C(X)$ we shall denote by $\Hom^\punto(F,F')$ the complex of homomorphisms. A complex $I$ is called {\it $K$-injective} if $\Hom^\punto(\quad, I)$ takes acyclic complexes into acyclic complexes (equivalently, quasi-isomorphisms into quasi-isomorphisms). A complex $P$ is called {\it $K$-projective} if $\Hom^\punto(P,\quad)$ takes acyclic complexes into acyclic complexes (equivalently, quasi-isomorphisms into quasi-isomorphisms).

\begin{thm} The category $C(X)$ has enough $K$-injectives and $K$-projectives: for each $F\in C(X)$ there is a functorial quasi-isomorphism $F\to I(F)$ (resp. $P(F)\to F$) with $I(F)$ a $K$-injective complex (resp. $P(F)$ a $K$-projective complex).
\end{thm}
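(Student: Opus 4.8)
The plan is to construct $K$-injective and $K$-projective resolutions by combining the Cartan--Eilenberg style truncation argument with the standard flasque and coflasque resolutions $C^\punto$ and $C_\punto$ from Theorems \ref{finiteflasqueresolution} and \ref{finitecoflasqueresolution}. Since $X$ is finite of dimension $d$, these resolutions have bounded length ($d+1$ terms), which is what makes the construction go through cleanly even for unbounded complexes. I will treat the $K$-injective case; the $K$-projective case is formally dual, applying the argument on $\widehat X$ or directly with the coflasque resolution $C_\punto$.

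First I would observe that it suffices to produce, functorially in $F\in C(X)$, a quasi-isomorphism $F\to I(F)$ with each $I(F)^n$ flasque (or, better, built from the $C^k$-construction), because a bounded-below complex of flasque abelian data is automatically $K$-injective on a finite space: flasque objects are $\Gamma(X,\quad)$-acyclic and, more to the point, the class of flasque data is closed under the operations needed so that $\Hom^\punto(-,\cdot)$ behaves well. The key device is the \emph{total complex} of the double complex obtained by applying the functorial flasque resolution $F\mapsto C^\punto F$ degreewise. Concretely, given $F=(F^n,d)\in C(X)$, form the double complex $C^q(F^p)$ with horizontal differential induced by $d$ and vertical differential the one of $C^\punto$; since $C^q(F^p)=0$ for $q>d$, the vertical direction is uniformly bounded, so the product total complex $I(F):=\mathrm{Tot}\,C^\punto(F^\punto)$ is well-defined with no convergence issues, and the augmentations $F^p\to C^\punto F^p$ assemble into a morphism of complexes $F\to I(F)$.

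Next I would check the two properties. That $F\to I(F)$ is a quasi-isomorphism follows from the bounded filtration argument: filter $I(F)$ by vertical degree; because each column $C^q(F^\punto)$ is, after fixing the total degree contributions, controlled by the fact that $F\to C^\punto F$ is a resolution (Theorem \ref{finiteflasqueresolution}), the spectral sequence of the double complex (which converges, the filtration being finite in one direction and the complex bounded above in that direction) degenerates to give the claim; equivalently one runs induction on $d$ using the standard ``resolution of a complex by acyclic objects is a quasi-isomorphism on the total complex when the resolution length is bounded'' lemma. That $I(F)$ is $K$-injective: each $C^q(F^p)$ is flasque, hence each term $I(F)^n=\prod_{p+q=n}C^q(F^p)$ is flasque (a product of flasque data is flasque, and the sum is finite since $0\le q\le d$); a complex all of whose terms are flasque and which is bounded below — here it is, since $C^q=0$ for $q<0$ and we may assume $F$ bounded below after a first reduction, or handle the unbounded case by the extra observation that on a finite-dimensional space flasque complexes are $K$-injective without boundedness — has $\Hom^\punto(A,I(F))$ acyclic for acyclic $A$, by the usual argument reducing to $\Hom$ into a single flasque object and using $\Gamma$-acyclicity together with $\dim X<\infty$. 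Functoriality of $I(\cdot)$ and of the map $F\to I(F)$ is inherited from the functoriality of $C^\punto$.

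The main obstacle is the unbounded case: a priori, a complex of flasque (or even injective) objects need not be $K$-injective, so one must exploit finite-dimensionality. The clean way is the standard lemma that if $X$ has finite cohomological dimension $d$, then any complex of $\Gamma$-acyclic (e.g.\ flasque) abelian data is $K$-injective for $\Gamma$ — proved by writing an acyclic $A$ as a filtered colimit of bounded truncations, or by a brutal truncation/uniform-bound estimate on $\Hom^\punto(A,I(F))$ using that $\mathrm{Ext}^{>d}$ vanishes — and the analogous statement for cosheaves/coflasque data on a finite-dimensional space, giving $K$-projectivity. I would cite or reprove this finite-dimensional acyclicity lemma, then the rest is the bookkeeping of the total complex above. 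The $K$-projective half is obtained verbatim by replacing $C^\punto F$ with $C_\punto F$ (Theorem \ref{finitecoflasqueresolution}), flasque with coflasque, $\Gamma$ with $L$, and products with direct sums in the total complex — again finite because $C_k=0$ for $k>d$ — with the dual finite-homological-dimension lemma ensuring $K$-projectivity in the unbounded case.
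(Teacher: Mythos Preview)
The paper states this theorem without proof (it is listed among ``standard results''), so there is no proof to compare against; I will therefore assess your argument on its own merits.

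Your construction has a genuine gap: flasque abelian data are \emph{not} injective, and a complex of flasque abelian data is \emph{not} $K$-injective in the sense required. Flasqueness gives $\Gamma$-acyclicity, i.e.\ it suffices to compute $\RR\Gamma$ and $\RR f_*$; but $K$-injectivity demands that $\Hom^\punto(A,I)$ be acyclic for every acyclic $A$, which is a statement about the functor $\Hom(-,I)$, not about $\Gamma$. Already for $X$ a single point, every abelian group is flasque, yet an arbitrary complex of abelian groups is certainly not $K$-injective. More generally, $C^0F=\id_*\id^{-1}F$ is injective only when each stalk $F_x$ is an injective abelian group, so your total complex $\mathrm{Tot}\,C^\punto(F^\punto)$ is not a complex of injectives, let alone $K$-injective. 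The same objection applies verbatim to the $K$-projective half with $C_\punto$ and coflasque data.

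A correct route is to observe that the category of abelian data on a finite poset $X$ is (equivalent to) the category of modules over the incidence algebra of $X$; for any ring, the category of chain complexes of modules has enough $K$-projectives and $K$-injectives by Spaltenstein's construction (homotopy limits/colimits of truncations of genuine injective/projective resolutions). Alternatively, invoke that this is a Grothendieck abelian category (for $K$-injectives) with enough projectives and finite global dimension (for $K$-projectives). If you want to salvage your bounded double-complex idea, you must first replace $F$ by a pointwise complex of injective abelian groups (length $1$, since $\ZZ$ has global dimension $1$) and \emph{then} apply $C^\punto$: each $C^qF^p$ will then be a finite product of sheaves $(i_x)_*J$ with $J$ injective, hence genuinely injective; the resulting total complex has injective terms, and one still needs a finite-dimension argument (or a homotopy-limit step) to pass from ``complex of injectives'' to ``$K$-injective'' in the unbounded case.
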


\subsubsection{} The right derived functor  $\RR\Hom^\punto(F,F')$ is by definition: $$\RR\Hom^\punto(F,F'):=  \Hom^\punto(F,I(F')).$$  We can also derive the complex of homomorphisms by the left: $\LL\Hom^\punto(F,F')= \Hom^\punto(P(F),F')$; both are isomorphic, since they are both isomorphic to $ \Hom^\punto  (P(F),I(F'))$. 

\subsubsection{} For each open subset $U$ of $X$, the right derived functor of the functor $\Gamma(U,\quad)$ is    $$\RR\Gamma (U,F):=\Gamma(U,I(F)).$$  For each closed subset $C$ of $X$, the left derived functor of the functor $L(C,\quad)$ is    $$\LL L  (C,F):=L(C,P(F)).$$  For simplicity, we shall denote $$\LL(C,F):=\LL L(C,F).$$ 
More generally, if $f\colon X\to Y$ is a continous map, we have the functors $$\RR f_*F:=f_*I(F) \quad \text{\rm and}\quad \LL f_!F:=f_!P(F).$$  If $Y$ is a point, we recover the functors $\RR\Gamma(X,F)$ and $\LL(X,F)$. 
The stalkwise description of $\RR f_*F$ and $\LL f_!F$ is:
\[ (\RR f_*F)_y=\RR\Gamma(f^{-1}(U_y), F),\qquad (\LL f_!F)_y=\LL(f^{-1}(C_y), F).\]

If $X\overset f\to Y\overset g\to Z$ are continuous maps one has natural isomorphisms
\[ \RR(g\circ f)_*F\simeq \RR g_*\RR   f_*F \quad \text{\rm and}\quad   \LL(g\circ f)_!F\simeq \LL g_!\LL   f_!F .\]

These functors may be calculated with the standard resolutions: for each complex $F\in C(X)$, we shall denote $C^\punto F$ and $C_\punto F$ the simple complexes associated to the bicomplexes $C^pF^q$ and $C_pF_q$ respectively; then, one has isomorphisms $$\RR f_*F\simeq f_*C^\punto(F)\quad \text{\rm and}\quad \LL f_!F\simeq f_!C_\punto(F).$$  

Direct image commutes with filtered direct limits. Since $C^0F=\id_*\id^{-1}F$, one has that $C^0$ commutes with filtered direct limits. Now, since $C^n F= {\pi_0}_* C^0(\pi_n^{-1} F)$ (Remark \ref{C^n&C_n}), one has that $C^n$ also commutes with filtered direct limits. Thus, if $f\colon X\to Y$ is a continuous map and  $\{ F_i\}$ a filtered direct system of complexes of abelian data on $X$, we have an isomorphism $\ilim{} f_*C^\punto F_i \simeq f_*C^\punto (\ilim{} F_i) $ and 
\[ \ilim{i} \RR f_* F_i\to \RR f_*(\ilim{i}F_i)\] is an isomorphism in the derived category.

\begin{defn} A complex $F\in D(X)$ is said to be of {\it finite type} if $H^i(F_x)$ is a finitely generated abelian group, for any $i\in \ZZ$ and any $x\in X$.
\end{defn}
  Corollary \ref{finiteness}  can be generalized in a standard way to the following

\begin{thm} [Finiteness Theorem] Let $f\colon X\to Y$ be a continous map and $F\in D(X)$. If $F$ is of finite type, then $\RR f_* F$ and $\LL f_! F$ are also of   finite type.
\end{thm}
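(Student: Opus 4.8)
The plan is to reduce the statement about $\RR f_*F$ and $\LL f_!F$ to the absolute case already handled in Corollary \ref{finiteness}, using the stalkwise descriptions together with the standard resolutions. First I would recall that, by the stalkwise formulas recorded in the excerpt,
\[ (\RR f_*F)_y = \RR\Gamma(f^{-1}(U_y),F_{\vert f^{-1}(U_y)}), \qquad (\LL f_!F)_y = \LL(f^{-1}(C_y),F_{\vert f^{-1}(C_y)}), \]
so it suffices to show that for a finite space $Z$ and a complex $F\in D(Z)$ of finite type, the groups $H^i(\RR\Gamma(Z,F))=H^i(Z,F)$ and $H_i(\LL(Z,F))=H_i(Z,F)$ are finitely generated; indeed $f^{-1}(U_y)$ and $f^{-1}(C_y)$ are finite spaces and the restriction of a finite type complex is again of finite type. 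Note that we may also assume $F$ is a single abelian data rather than a complex, once we know the complex $F$ is of finite type: either truncate and use the hyperhomology spectral sequence, or argue degree by degree; but in fact the cleanest route is to work directly with the standard resolution of the complex.

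The key step is then the following. For $F\in D(Z)$ of finite type, form the standard resolution $C^\punto F$ (resp. $C_\punto F$), which computes $\RR\Gamma(Z,F)$ (resp. $\LL(Z,F)$) by the results of the Standard Resolutions subsection. By the explicit formulas
\[ (C^nF)(Z) = \prod_{(x_0<\cdots<x_n)\in Z} (F_{x_n}), \qquad (C_nF)(Z) = \bigoplus_{(x_0<\cdots<x_n)\in Z} (F_{x_0}), \]
each term $(C^nF)(Z)$, resp. $(C_nF)(Z)$, is a \emph{finite} product, resp. finite direct sum (because $Z$ is finite, hence has only finitely many chains), of groups of the form $H^j(F_x)$ or shifts thereof — more precisely, since $F$ here denotes a complex, each $(C^nF)^q(Z)=\prod_{(x_0<\cdots<x_n)}F_{x_n}^q$, a finite product of finitely generated groups, hence finitely generated. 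Thus the simple complex $\Gamma(Z,C^\punto F)$ (resp. $L(Z,C_\punto F)$) is a bounded complex of finitely generated abelian groups; its cohomology (resp. homology) is therefore finitely generated, since $\ZZ$ is Noetherian. This is exactly $H^i(Z,F)$ (resp. $H_i(Z,F)$), and feeding this back into the stalkwise descriptions gives that $\RR f_*F$ and $\LL f_!F$ are of finite type.

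The main obstacle — and it is a mild one — is bookkeeping the passage from a single abelian data to a complex: one must be careful that $C^\punto$ applied to a complex yields a \emph{bounded} double complex with finitely generated terms, which uses both the finiteness of $\dim Z$ (bounding the resolution length) and the assumption that $F$ is cohomologically bounded and of finite type, so that one may replace $F$ by a bounded complex of finite type abelian data before resolving. Once that reduction is in place, the argument is formally identical to the proof of Corollary \ref{finiteness}, now carried out for the total complex of a finite bicomplex rather than for a single complex, and the Noetherianity of $\ZZ$ does the rest. No genuinely new idea beyond the standard resolutions and the finiteness of chains in a finite poset is required.
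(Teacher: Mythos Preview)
Your reduction to the absolute case via the stalkwise formulas is correct and is precisely the ``standard'' step the paper has in mind. The gap is in how you pass from a complex $F$ of finite type to something with finitely generated terms. In the middle paragraph you assert that $(C^nF)^q(Z)=\prod_{(x_0<\cdots<x_n)}F_{x_n}^q$ is a finite product of finitely generated groups, but the hypothesis only gives that $H^i(F_x)$ is finitely generated, not that the individual terms $F_x^q$ are. In the last paragraph you try to repair this by replacing $F$ with a bounded complex of finite type abelian data, but cohomological boundedness is \emph{not} part of the definition of ``finite type'' used here, so that replacement is unjustified.

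The clean fix is exactly the option you mention and then set aside: use the hypercohomology spectral sequence. The bicomplex $\Gamma(Z,C^pF^q)$ has $0\le p\le \dim Z$, so the filtration by $p$ is finite and the spectral sequence
\[
E_2^{p,q}=H^p(Z,H^q(F))\ \Longrightarrow\ H^{p+q}(Z,F)
\]
converges regardless of whether $F$ is bounded (here one uses that $\Gamma(Z,C^p-)$ is a finite product of stalk functors, hence exact, to identify $E_1$). Each $H^q(F)$ is a single abelian data of finite type, so $E_2^{p,q}$ is finitely generated by Corollary~\ref{finiteness}, and for fixed $n$ only the finitely many pairs $(p,q)$ with $p+q=n$ and $0\le p\le\dim Z$ contribute. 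Hence $H^n(Z,F)$ is finitely generated. The homological side is identical with $C_\punto$ in place of $C^\punto$. With this adjustment your argument goes through and is the intended one.
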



\subsubsection{} For any $F,F'\in C(X)$, the  complex of sheaves of homomorphisms $\HHom^\punto (F,F')$ is defined by $$\Gamma(U,\HHom^\punto(F,F')):=\Hom^\punto(F_{\vert U},F'_{\vert U}).$$ Thus, for each point $x\in X$,   $\HHom^\punto (F,F')_x=\Hom^\punto(F_{\vert U_x},F'_{\vert U_x})$. The right derived functor is $$\RR\HHom^\punto(F,F'):=\HHom^\punto(F,I(F')).$$

For any open subset $U$ of $X$ one has canonical isomorphisms
\[\RR\Gamma(U,\RR\HHom^\punto(F,F'))\simeq \RR\Hom^\punto(F_{\vert U}, F'_{\vert U}),\qquad \RR\HHom^\punto(F,F')_{\vert U}\simeq  \RR\HHom^\punto(F_{\vert U},F'_{\vert U})\]

\begin{defn} For any $F\in D(X)$, the {\it dual} $F^\vee$ is defined as:
\[ F^\vee:=\RR\HHom^\punto (F,\ZZ).\]
\end{defn}

\subsubsection{} For each $F,F'\in C(X)$ we shall denote $F\otimes F'$ the tensor product complex. Its left derived functor is
\[ F\overset \LL\otimes F':= P(F)\otimes F'.\]

\subsubsection{Locally constant complexes. Perfect complexes}

\begin{defn} A complex $\Lc\in C(X)$ is called {\it locally constant} (resp. {\it constant}) if it is a complex of locally constant (resp. constant) abelian data on $X$. 
\end{defn}

A complex $\Lc\in C(X)$ is locally constant if and only if the morphisms $r_{pq}\colon \Lc_p\to\Lc_q$ are isomorphisms, for any $p\leq q$. We shall denote by $C(\text{\rm Qcoh}(X))$ the category of locally constant complexes, which is a full subcategory of $C(X)$, and by $D(\text{\rm Qcoh}(X))$ its derived category. 

Let $\pi\colon X\to \{*\}$. If $X$ is connected, the inverse image $\pi^{-1}\colon C(\{*\})\to C(X)$ establishes an equivalence between the category of complexes of abelian groups and the category of constant complexes on $X$.

We shall denote by $D_{\text{qc}}(X)$ (resp. $D_\text{c}(X)$) the full subcategory of $D(X)$ consisting of the complexes with locally constant (resp. locally constant of finite type) cohomology. The objects of $D_{\text{qc}}(X)$  are characterized by the following property: for any $p\leq q$ the morphism $r_{pq}\colon \Lc_p\to\Lc_q$ is a quasi-isomorphism. 

One has  natural functors $D(\text{\rm Qcoh}(X))\to D_{\text{qc}}(X)$ and $D(\text{\rm Coh}(X))\to D_{\text{c}}(X)$ which are not equivalences in general. However, they are equivalences if $X=U_p$ or $X=C_p$ (i.e., $X$ has a minimum or a maximum).

\begin{defn} A complex in $ D(X)$ is called {\it perfect} if it is locally isomorphic (in the derived category) to a bounded complex of free $\ZZ$-modules of finite type.
\end{defn}

We shall denote $D_\text{perf}(X)$ the full subcategory of $D(X)$ consisting in perfect complexes. One has a natural inclusion $D_\text{perf}(X)\hookrightarrow D_c^b(X)$, where $D^b_c(X)$ is the full subcategory of complexes in $D_c(X)$ with bounded cohomology. This inclusion  is an equivalence; this is essentially due to the fact that any finitely generated abelian group admits a finite resolution (in fact a resolution of length one) by finitely generated free abelian groups. If $F\in D(X)$ is of finite type and has bounded cohomology, then $\LL(X,F)$ and $\RR\Gamma(X,F)$ are perfect complexes of abelian groups.

If $\Lc\in D(X)$ is perfect, the natural morphism $\Lc\to \Lc^{\vee\vee}$ is an isomorphism. Moreover, for any $F\in D(X)$ one has an isomorphism 
\[ \Lc^\vee\overset\LL\otimes F\overset\sim\to  \RR\HHom^\punto(\Lc,F)\] and then, for any $F,K\in D(X)$, an isomorphism  
\[ \RR\Hom^\punto(K,  \Lc^\vee\overset\LL\otimes F)\overset\sim\to  \RR\Hom^\punto(K\overset\LL\otimes\Lc,F) \]


\subsubsection{Adjunctions}

\begin{prop}[Adjunctions $\LL f_!$ $\leftrightarrow$ $f^{-1}$ $\leftrightarrow$ $\RR f_*$]\label{AdjunctionsOfInvIm} Let $f\colon X\to Y$ be a continuous map. For any $F\in D(X)$, $G\in D(Y)$ one has
\[ \aligned \RR\Hom^\punto (f^{-1}G,F)& = \RR\Hom^\punto (G,\RR f_*F),\\  \RR\Hom^\punto (F,f^{-1}G) & = \RR\Hom^\punto ( \LL f_!F,G).\endaligned\]
\end{prop}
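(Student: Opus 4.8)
The plan is to deduce both adjunction formulas from the underived adjunctions (Theorem stating $f_* \dashv f^{-1}$ and $f_! \dashv f^{-1}$, i.e. $\Hom(f^{-1}F,G)=\Hom(F,f_*G)$ and $\Hom(G,f^{-1}F)=\Hom(f_!G,F)$) by passing through $K$-injective and $K$-projective resolutions, using the elementary fact that $f_*$ preserves $K$-injectives and $f_!$ preserves $K$-projectives (the complex-level analogue of the statement recorded in the excerpt that $f_*I$ is injective and $f_!P$ is projective; this in turn follows from $f^{-1}$ being exact, hence preserving acyclicity, and the underived adjunction). I would establish these two preservation facts first as a lemma, since they are the technical engine.

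For the first formula, I would take a $K$-injective resolution $F\to I(F)$, so that $\RR f_*F = f_*I(F)$ and the latter is $K$-injective on $Y$. Then $\RR\Hom^\punto(f^{-1}G,F)$ may be computed as $\Hom^\punto(f^{-1}G, I(F))$ since $f^{-1}$ sends a $K$-projective resolution of $G$ to a $K$-projective resolution of $f^{-1}G$ (again because $f^{-1}$ is exact and has the left adjoint $f_!$, so it preserves $K$-projectivity — or, more simply, use that $\Hom^\punto(\,\cdot\,,I(F))$ already sends quasi-isomorphisms to quasi-isomorphisms, so no resolution of the first argument is needed). Now the chain-level adjunction isomorphism $\Hom^\punto(f^{-1}G,I(F)) \cong \Hom^\punto(G, f_*I(F))$ holds degreewise and is compatible with differentials, hence is an isomorphism of complexes; and the right-hand side is $\RR\Hom^\punto(G,\RR f_*F)$ precisely because $f_*I(F)$ is $K$-injective. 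This gives the first equality in the derived category.

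For the second formula the argument is formally dual: take a $K$-projective resolution $P(F)\to F$, so $\Hom^\punto(F, f^{-1}G)$ is computed by $\Hom^\punto(P(F), f^{-1}G)$; apply the underived adjunction $\Hom^\punto(P(F), f^{-1}G)\cong \Hom^\punto(f_!P(F), G)$ degreewise; and observe $f_!P(F)$ is $K$-projective on $Y$, so the right-hand side equals $\RR\Hom^\punto(\LL f_!F, G)$. (One should also use a $K$-injective resolution of $G$ on the left, or equivalently note that $\Hom^\punto(P(F),\,\cdot\,)$ is exact, to match conventions; the two derived $\Hom$'s agree as noted in the excerpt.)

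The main obstacle — really the only non-formal point — is verifying that $f_!$ sends $K$-projectives to $K$-projectives and $f_*$ sends $K$-injectives to $K$-injectives at the level of unbounded complexes. For $f_*$ this is the standard fact that a right adjoint of an exact functor preserves $K$-injectivity: if $A$ is acyclic then $\Hom^\punto(A, f_*I) \cong \Hom^\punto(f^{-1}A, I)$ by the underived adjunction (applied degreewise and compatibly with differentials), $f^{-1}A$ is acyclic since $f^{-1}$ is exact, and this last complex is acyclic since $I$ is $K$-injective; dually for $f_!$ using that $f^{-1}$ is exact with right adjoint $f_*$. Once this lemma is in place, everything else is bookkeeping with degreewise isomorphisms, and I would not belabor those computations.
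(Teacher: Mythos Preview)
Your proposal is correct and follows essentially the same route as the paper: take a $K$-projective resolution $P\to F$, apply the underived adjunction $\Hom^\punto(P,f^{-1}G)=\Hom^\punto(f_!P,G)$, and use that $f_!P$ is ($K$-)projective to identify the right-hand side with $\RR\Hom^\punto(\LL f_!F,G)$. The paper only writes out the second formula and simply cites ``the projectivity of $f_!P$'' for the last step, whereas you additionally spell out the dual argument for $\RR f_*$ and supply the standard verification that adjoints of an exact functor preserve $K$-injectivity/$K$-projectivity; this extra care is fine but not a different approach.
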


\begin{proof} We shall only give the prove of the second part. Let $P\to F$ be a projective resolution. Then
\[ \RR\Hom^\punto (F,f^{-1}G) =  \Hom^\punto (P,f^{-1}G)= \Hom^\punto (f_!P , G) =\RR\Hom^\punto(\LL f_!F, G)\] where  the last equality is due to the projectivity of $f_!P$.
\end{proof}

\begin{prop}[Adjunction $\overset\LL\otimes$ $\leftrightarrow$ $\RR\HHom$]\label {RHom&Tens} For any $F_1,F_2,F_3\in D(X)$ one has a natural isomorphism
\[ \RR\Hom^\punto (F_1,\RR\HHom^\punto(F_2,F_3)) \simeq \RR\Hom^\punto (F_1\overset\LL\otimes F_2,F_3).\] 
\end{prop}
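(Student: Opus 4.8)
The plan is to establish the adjunction $\RR\Hom^\punto(F_1,\RR\HHom^\punto(F_2,F_3)) \simeq \RR\Hom^\punto(F_1\overset\LL\otimes F_2,F_3)$ by deriving the classical (underived) tensor-hom adjunction, being careful to use resolutions that compute the derived functors correctly on both sides. First I would recall the underived statement: for abelian data (equivalently sheaves) on $X$, there is a natural isomorphism $\Hom^\punto(F_1,\HHom^\punto(F_2,F_3)) \simeq \Hom^\punto(F_1\otimes F_2,F_3)$ of complexes; at the level of abelian data this is checked stalkwise (or open-set-wise via the definition $\Gamma(U,\HHom^\punto(F_2,F_3)) = \Hom^\punto(F_{2\vert U},F_{3\vert U})$) and reduces to the ordinary tensor-hom adjunction for abelian groups applied to $(F_1)_x\otimes(F_2)_x$ and $(F_3)_x$, naturally compatible with restriction morphisms.

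To derive this correctly, the key step is the choice of resolutions. I would replace $F_1$ by a $K$-projective resolution $P_1\to F_1$ and $F_3$ by a $K$-injective resolution $F_3\to I_3$. Then the left-hand side is computed as $\Hom^\punto(P_1,\HHom^\punto(F_2,I_3))$, provided $\HHom^\punto(F_2,I_3)$ computes $\RR\HHom^\punto(F_2,F_3)$ — which holds since $I_3$ is $K$-injective, by the definition of $\RR\HHom^\punto$ given in the excerpt. Applying the underived adjunction isomorphism of complexes turns this into $\Hom^\punto(P_1\otimes F_2, I_3)$. It then remains to identify this with $\RR\Hom^\punto(F_1\overset\LL\otimes F_2,F_3)$. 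By definition $F_1\overset\LL\otimes F_2 = P(F_1)\otimes F_2$, and one may take $P(F_1)=P_1$; moreover $\Hom^\punto(-,I_3)$ computes $\RR\Hom^\punto(-,F_3)$ since $I_3$ is $K$-injective. So $\Hom^\punto(P_1\otimes F_2,I_3) = \RR\Hom^\punto(P_1\otimes F_2,F_3) = \RR\Hom^\punto(F_1\overset\LL\otimes F_2,F_3)$, and naturality is inherited from the functoriality of the resolutions and of the underived adjunction.

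The main obstacle, and the point needing the most care, is verifying that $\HHom^\punto(F_2,I_3)$ is an acceptable representative of $\RR\HHom^\punto(F_2,F_3)$ without any flatness or local-freeness hypothesis on $F_2$: the content here is precisely that $K$-injectivity of $I_3$ suffices, which is exactly how $\RR\HHom^\punto$ was defined (it is derived only in the second variable). A secondary subtlety is that the underived adjunction is an isomorphism of complexes of abelian groups (not merely of sheaves) — one needs the global-section version $\Hom^\punto(F_1\otimes F_2,F_3)\cong\Hom^\punto(F_1,\HHom^\punto(F_2,F_3))$ — but this follows by taking global sections in the sheaf-level adjunction $\HHom^\punto(F_1\otimes F_2,F_3)\cong\HHom^\punto(F_1,\HHom^\punto(F_2,F_3))$, which is local and hence reduces to the pointwise case on each $U_x$. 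Apart from these checks the argument is formal bookkeeping with resolutions, so I expect the write-up to be short.
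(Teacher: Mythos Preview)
The paper states this proposition without proof, treating it as a standard fact. Your proposal is correct and is exactly the expected argument: replace $F_1$ by a $K$-projective $P_1$ and $F_3$ by a $K$-injective $I_3$, invoke the underived tensor--hom adjunction $\Hom^\punto(P_1,\HHom^\punto(F_2,I_3))\simeq\Hom^\punto(P_1\otimes F_2,I_3)$, and observe that each side computes the desired derived functor by the definitions given in the paper (in particular $\RR\HHom^\punto(F_2,F_3):=\HHom^\punto(F_2,I_3)$ and $F_1\overset\LL\otimes F_2:=P_1\otimes F_2$).
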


\begin{prop}\label{InvImTensHom} Let $f\colon X\to Y$ be a continuous map and let $G,\Lc\in D(Y)$. 
\begin{enumerate} \item There is a natural isomorphism
\[ f^{-1}(G\overset\LL\otimes \Lc)\simeq (f^{-1}G)\overset\LL\otimes (f^{-1} \Lc).\] 
\item There is a natural morphism  \[ f^{-1} \RR\HHom^\punto(\Lc ,G)\to  \RR\HHom^\punto(f^{-1}\Lc , f^{-1} G)\] which is an isomorphism if $\Lc$ is locally constant.
\end{enumerate}
\end{prop}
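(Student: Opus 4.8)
The plan is to verify both isomorphisms stalkwise, exploiting that $f^{-1}$, $\otimes$ and $\HHom$ are all controlled by what happens on stalks; part (1) is essentially formal, and the real content lies in part (2), and there in the case of a space with a minimum. For (1), at the level of (underived) abelian data the identity is immediate and natural, since both $f^{-1}(G\otimes\Lc)$ and $(f^{-1}G)\otimes(f^{-1}\Lc)$ are the abelian data $x\mapsto G_{f(x)}\otimes\Lc_{f(x)}$ with the evident restriction maps. To derive it I would take a $K$-projective resolution $P\to G$ on $Y$: since $f^{-1}$ is exact, $f^{-1}P\to f^{-1}G$ is a quasi-isomorphism, and since $f^{-1}$ is exact, preserves coproducts and sends each projective generator $\ZZ_{U_p}$ (cf. Proposition \ref{propsop1}) to $\ZZ_{f^{-1}(U_p)}$ — which is flat, as tensoring with it is the exact support functor $(-)_{f^{-1}(U_p)}$ — the complex $f^{-1}P$ is $K$-flat. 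Hence $f^{-1}(G\overset\LL\otimes\Lc)=f^{-1}(P\otimes\Lc)=f^{-1}P\otimes f^{-1}\Lc$ already computes $(f^{-1}G)\overset\LL\otimes(f^{-1}\Lc)$. (Equivalently: the stalk of a $K$-projective complex is a $K$-flat complex of abelian groups, so $\overset\LL\otimes$ of abelian data is computed stalkwise, $(G\overset\LL\otimes\Lc)_y\simeq G_y\overset\LL\otimes\Lc_y$, and (1) then drops out of $(f^{-1}F)_x=F_{f(x)}$.)

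For (2) I would first construct the morphism. Writing $\RR\HHom^\punto(\Lc,G)=\HHom^\punto(\Lc,I)$ for a $K$-injective resolution $G\to I$ on $Y$, the obvious pullback map of complexes of sheaf-homomorphisms gives $f^{-1}\HHom^\punto(\Lc,I)\to\HHom^\punto(f^{-1}\Lc,f^{-1}I)$; composing with a $K$-injective resolution $f^{-1}I\to J$ on $X$ and with the quasi-isomorphism $f^{-1}I\simeq f^{-1}G$ (exactness of $f^{-1}$ again) yields the desired $f^{-1}\RR\HHom^\punto(\Lc,G)\to\RR\HHom^\punto(f^{-1}\Lc,f^{-1}G)$.

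To see this is an isomorphism when $\Lc$ is locally constant, I would check it on stalks. Fix $x\in X$, put $y=f(x)$, and let $g\colon U_x\to U_y$ be the map induced by $f$ (well defined since $f$ is monotone). By the stalk description $\RR\HHom^\punto(A,B)_p\simeq\RR\Hom^\punto(A_{\vert U_p},B_{\vert U_p})$ recorded above, by the stalkwise nature of $f^{-1}$, and by the identifications $(f^{-1}\Lc)_{\vert U_x}=g^{-1}(\Lc_{\vert U_y})$, $(f^{-1}G)_{\vert U_x}=g^{-1}(G_{\vert U_y})$, the morphism on stalks at $x$ is the pullback map
\[ \RR\Hom^\punto_{U_y}(\Lc',G')\longrightarrow\RR\Hom^\punto_{U_x}(g^{-1}\Lc',g^{-1}G'),\qquad \Lc':=\Lc_{\vert U_y},\quad G':=G_{\vert U_y}. \]
Since $U_y$ has a minimum, the locally constant complex $\Lc'$ is constant, say $\Lc'=\pi_{U_y}^{-1}M$ for a complex of abelian groups $M$, with $\pi_{U_y}\colon U_y\to\{*\}$; hence $g^{-1}\Lc'=\pi_{U_x}^{-1}M$. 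Because $U_y$ and $U_x$ have minima $y$ and $x$, the section functor is exact there, so $\RR\Gamma(U_y,-)=(-)_y$ and $\RR\Gamma(U_x,-)=(-)_x$; the adjunction $\pi^{-1}\dashv\RR\pi_*$ (Proposition \ref{AdjunctionsOfInvIm}) then identifies the source with $\RR\Hom^\punto_\ZZ(M,G'_y)$ and the target with $\RR\Hom^\punto_\ZZ(M,(g^{-1}G')_x)=\RR\Hom^\punto_\ZZ(M,G'_y)$ (since $g(x)=y$), the morphism going over to the identity. Hence it is an isomorphism, proving (2).

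The points that need care are: in (1), that $f^{-1}$ turns a $K$-projective resolution into a complex flat enough to compute $\overset\LL\otimes$ — equivalently, that $\overset\LL\otimes$ of abelian data is stalkwise; and in (2), the bookkeeping that pins down the stalkwise form of the natural morphism. Once the problem is reduced to a space with a minimum, the rest is automatic: local constancy collapses to constancy, and the adjunction between $\pi^{-1}$ and $\RR\pi_*$ finishes the argument at once.
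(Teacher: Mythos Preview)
Your proposal is correct and follows essentially the same approach as the paper. The paper only proves the isomorphism assertion in (2), and does so by the identical stalkwise reduction: at $x\in X$ with $y=f(x)$, both sides are identified with $\RR\Hom^\punto(\Lc_y,G_y)$ via the adjunction $\pi^{-1}\dashv\RR\pi_*$ on the spaces $U_y$ and $U_x$ (packaged in the paper as Lemma~\ref{lemita}); your argument is the same, just with the adjunction spelled out explicitly and a bit more care about the construction of the comparison morphism.
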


\begin{proof} We shall only prove the last statment of (2). 
Assume that $\Lc$ is locally constant. Let $x\in X$ and $y=f(x)$. Then 
\[ (f^{-1}\RR\HHom^\punto (\Lc,K))_x=  \RR\HHom^\punto (\Lc,K)_y= \RR\Hom^\punto(\Lc_{\vert U_y},K_{\vert U_y} )=  \RR\Hom^\punto(\Lc_y,K_y ),\] where the last equality is due to Lemma \ref{lemita} ($\Lc_{\vert U_y}$ is constant). On the other hand 
\[\aligned \RR\HHom^\punto (f^{-1}\Lc, f^{-1}K)_x =\RR\Hom^\punto ((f^{-1} \Lc)_{\vert U_x}, (f^{-1} K)_{\vert U_x})&= \RR\Hom^\punto ((f^{-1} \Lc)_x, (f^{-1} K)_x)\\ &= \RR\Hom^\punto(\Lc_y,K_y ), \endaligned
\] where the second equality is due again to Lemma \ref{lemita} (since $(f^{-1} \Lc)_{\vert U_x}$ is constant)

\end{proof}

\begin{lem}\label{lemita} Let $X$ be   connected. For any  $K\in D(X)$ and any constant $G\in D(X)$, one has
\[\RR\Hom^\punto(G,K)=\RR\Hom(G,\RR\Gamma(X,K))\]
\end{lem}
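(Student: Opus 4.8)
The plan is to reduce the statement to the adjunction between $\pi^{-1}$ and $\RR\pi_*$ for the projection $\pi\colon X\to\{*\}$, which is available as a special case of Proposition \ref{AdjunctionsOfInvIm}. The only place where connectedness of $X$ will enter is in identifying the constant complex $G$ with the inverse image of a complex of abelian groups.

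First I would invoke connectedness: by the remark following the definition of locally constant complexes, $\pi^{-1}\colon C(\{*\})\to C(X)$ restricts to an equivalence between complexes of abelian groups and constant complexes on $X$, and for a constant complex one has $\Gamma(X,G)=G_{0}$, where $G=\pi^{-1}G_{0}$. Thus we may write $G=\pi^{-1}G_{0}$ with $G_{0}=\Gamma(X,G)\in D(\{*\})$. Next, applying the first adjunction of Proposition \ref{AdjunctionsOfInvIm} to $f=\pi$ gives a natural isomorphism
\[ \RR\Hom^\punto(G,K)=\RR\Hom^\punto(\pi^{-1}G_{0},K)\simeq \RR\Hom^\punto(G_{0},\RR\pi_*K). \]
Finally, by definition $\RR\pi_*K=\RR\Gamma(X,K)$, and since the category of abelian data on $\{*\}$ is the category of abelian groups, $\RR\Hom^\punto$ on $\{*\}$ is precisely $\RR\Hom$ of complexes of abelian groups; substituting $G_{0}=\Gamma(X,G)$ we obtain $\RR\Hom^\punto(G,K)=\RR\Hom(G,\RR\Gamma(X,K))$, as desired.

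I do not expect a genuine obstacle here: the argument is a formal consequence of the already-established adjunction together with the equivalence $\pi^{-1}$ on a connected space. The one point that must not be overlooked is that connectedness is essential — without it $\Gamma(X,G)$ would be a product of $\#X$ copies of $G_{0}$ and the clean identification $G\simeq\pi^{-1}\Gamma(X,G)$ would fail — so I would state that dependence explicitly in the proof.
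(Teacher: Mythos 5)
Your proposal is correct and follows exactly the paper's argument: identify $G=\pi^{-1}G_0$ via connectedness for the projection $\pi\colon X\to\{*\}$, then apply the adjunction of Proposition \ref{AdjunctionsOfInvIm} together with $\RR\pi_*K=\RR\Gamma(X,K)$. You merely make explicit the identification $G_0=\Gamma(X,G)$ and the role of connectedness, which the paper leaves implicit.
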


\begin{proof} Let $\pi\colon X\to \{*\}$. Since $G$ is constant, $G=\pi^{-1}G$. One concludes by adjunction between $\pi^{-1}$ and $\RR\pi_*$.
\end{proof}

\section{Base Change and (Co)Projection Formula}

In this section we establish  projection formulae morphisms and base change morphisms for $\RR f_*$ and $\LL f_!$. We shall introduce the notions of c-proper and h-open maps and see their relation with the validity of base change theorems and projection formulae.

\subsection{Base change for cohomology and homology}

\begin{prop}[Base Change for homology and cohomology]\label{basechangemorphism} Let \[ \xymatrix{   \overline Y\times_YX \ar[r]^{\quad\bar g}\ar[d]_{\bar f} & X\ar[d]^f  \\ \overline Y \ar[r]^g   & Y
}\] be a cartesian diagram and let $F\in D(X)$. One has   natural morphisms
\[ g^{-1}\RR f_*F \to \RR {\bar f}_* ({\bar g}^{-1}F) \]
\[ \LL {\bar f}_! ({\bar g}^{-1}F)  \to g^{-1}\LL f_!F.\]
If $g$ is an open immersion (resp. a closed immersion), then  $g^{-1}\RR f_*F \to \RR {\bar f}_* ({\bar g}^{-1}F)$ is an isomorphism (resp. $g^{-1}\LL f_!F \leftarrow \LL {\bar f}_! ({\bar g}^{-1}F)$ is an isomorphism).
\end{prop}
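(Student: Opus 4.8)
The plan is to construct both base change morphisms from the canonical maps of Proposition~\ref{1.2.11} together with the standard flasque (resp.\ coflasque) resolutions, and then to prove the isomorphism statements for an open (resp.\ closed) immersion $g$ by checking that the pulled-back standard resolution is still acyclic for the relevant direct image, so that Proposition~\ref{1.2.10} applies.

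First, the construction. Since $\RR f_*F\simeq f_*C^\punto F$, applying Proposition~\ref{1.2.11} termwise to the flasque resolution $C^\punto F$ gives a morphism of complexes $g^{-1}f_*C^\punto F\to\bar f_*\bar g^{-1}C^\punto F$ on $\overline Y$. As $\bar g^{-1}$ is exact, $\bar g^{-1}C^\punto F$ is a resolution of $\bar g^{-1}F$, so there is a canonical morphism $\bar f_*(\bar g^{-1}C^\punto F)\to\RR\bar f_*(\bar g^{-1}C^\punto F)=\RR\bar f_*(\bar g^{-1}F)$; composing produces $g^{-1}\RR f_*F\to\RR\bar f_*\bar g^{-1}F$. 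Dually, from $\LL f_!F\simeq f_!C_\punto F$, the coflasque resolution $C_\punto F$, the second morphism of Proposition~\ref{1.2.11}, and the canonical morphism $\LL\bar f_!(\bar g^{-1}C_\punto F)\to\bar f_!(\bar g^{-1}C_\punto F)$, one gets $\LL\bar f_!\bar g^{-1}F\to g^{-1}\LL f_!F$. Naturality in $F$ and independence of the chosen resolutions are routine. (Equivalently, the two morphisms can be produced formally as the adjoints, under $g^{-1}\dashv\RR g_*$ and $\LL g_!\dashv g^{-1}$ respectively, of $\RR f_*$ applied to the unit $F\to\RR\bar g_*\bar g^{-1}F$ and of $\LL f_!$ applied to the counit $\LL\bar g_!\bar g^{-1}F\to F$, after identifying $\RR g_*\RR\bar f_*\simeq\RR(g\circ\bar f)_*=\RR(f\circ\bar g)_*\simeq\RR f_*\RR\bar g_*$ and likewise $\LL g_!\LL\bar f_!\simeq\LL(g\circ\bar f)_!=\LL(f\circ\bar g)_!\simeq\LL f_!\LL\bar g_!$.)

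Now the isomorphism claims. Suppose $g$ is an open immersion; we may assume it is the inclusion of an open subset $V\subseteq Y$, so $\overline Y\times_YX=f^{-1}(V)$, $\bar g\colon f^{-1}(V)\hookrightarrow X$ is the open inclusion and $\bar f=f_V$. Restriction of a flasque abelian data to an open subset is flasque, hence each $\bar g^{-1}C^nF=(C^nF)_{\vert f^{-1}(V)}$ is flasque on $f^{-1}(V)$ and therefore acyclic for $(f_V)_*$ (its higher direct images vanish, being computed stalkwise by $H^q$ of flasque data on the opens $f^{-1}(U_y)$). Thus $\bar f_*(\bar g^{-1}C^\punto F)\to\RR\bar f_*(\bar g^{-1}F)$ is an isomorphism, while Proposition~\ref{1.2.10} gives $\bar f_*(\bar g^{-1}C^\punto F)=(f_V)_*\bigl((C^\punto F)_{\vert f^{-1}(V)}\bigr)=(f_*C^\punto F)_{\vert V}=g^{-1}\RR f_*F$; under these identifications the base change morphism becomes the identity. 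The closed-immersion case is entirely dual: for $g$ the inclusion of a closed subset $C\subseteq Y$ one has $\overline Y\times_YX=f^{-1}(C)$ and $\bar f=f_C$, each $\bar g^{-1}C_nF=(C_nF)_{\vert f^{-1}(C)}$ is coflasque (restriction to a closed subset preserves coflasqueness) and hence acyclic for $(f_C)_!$, so $\LL\bar f_!\bar g^{-1}F\to\bar f_!(\bar g^{-1}C_\punto F)=(f_!C_\punto F)_{\vert C}=g^{-1}\LL f_!F$, obtained via Proposition~\ref{1.2.10}, is the base change morphism and an isomorphism.

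The one delicate point --- a matter of bookkeeping rather than of substance --- is verifying in the last step that the isomorphism extracted from Proposition~\ref{1.2.10} and the acyclicity of the pulled-back standard resolution agrees with the base change morphism built in the first step; equivalently, that the morphism of Proposition~\ref{1.2.11}, restricted to $V$ (resp.\ $C$), coincides with the canonical isomorphism of Proposition~\ref{1.2.10}. This is immediate on stalks, where at $y\in V$ both sides become $\Gamma(f^{-1}(U_y),F)$ with the map the identity (resp.\ at $y\in C$ both sides become $L(f^{-1}(C_y),F)$), but it must be spelled out; the rest of the argument is formal.
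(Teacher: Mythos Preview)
Your argument is correct and follows essentially the same strategy as the paper: construct the morphism via Proposition~\ref{1.2.11} applied to a suitable resolution, and for the isomorphism statement use that the resolution survives restriction along the relevant type of immersion, so that Proposition~\ref{1.2.10} finishes the job. The only real difference is the choice of resolution: the paper (treating only the homological case, declaring the cohomological one well known) takes a projective resolution $P\to F$ and uses that restriction to a closed subset preserves projectives, whereas you work throughout with the explicit standard resolutions $C^\punto F$ and $C_\punto F$ and use that restriction to an open (resp.\ closed) subset preserves flasqueness (resp.\ coflasqueness). Both facts are recorded in the paper, so either route goes through; yours is slightly more concrete, the paper's slightly shorter, but the underlying idea is the same.
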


\begin{proof} Since the cohomological statement is well known, we prove the homological one. Let $P\to F$ and $Q\to {\bar g}^{-1}P$ be projective resolutions. By Proposition \ref{1.2.11}, one has a morphism
\[ {\bar f}_!Q\to {\bar f}_!({\bar g}^{-1} P)\overset{\ref{1.2.11}}\to g^{-1} f_! P\]  that gives the desired morphism $ \LL {\bar f}_! ({\bar g}^{-1}F) \to g^{-1}\LL f_!F $.

Now assume that  $g$ is a closed immersion and the diagram is cartesian. Then  $g^{-1}\circ f_! = {\bar f}_!\circ {\bar g}^{-1}$ (Proposition \ref{1.2.10}) and ${\bar g}^{-1}P$ is a projective resolution of ${\bar g}^{-1} F$, because $\bar g$ is a closed immersion; hence $ \LL {\bar f}_! ({\bar g}^{-1}F) \to g^{-1}\LL f_!F $ is an isomorphism. 
\end{proof}
%

\begin{rem}
If $\overline Y=\{ y\}$ is just a point of $Y$, then the base change morphisms are
\[ (\RR f_*F)_y \to \RR\Gamma( f^{-1}(y),F_{\vert f^{-1}(y)})  \]
\[ \LL (f^{-1}(y),F_{\vert f^{-1}(y)})\to (\LL f_!F)_y  .\] In particular, for any abelian data $F$ on $X$, one has the more ``classic'' morphisms 
\[ (R^i f_*F)_y \to H^i( f^{-1}(y),F_{\vert f^{-1}(y)})  \]
\[ H_i (f^{-1}(y),F_{\vert f^{-1}(y)})\to (L_i f_!F)_y   .\] 
\end{rem}

%

\subsection{c-proper and h-open morphisms}

In the context of locally compact and separated spaces, the base change morphisms for cohomology are isomorphisms, provided that $f$ is proper (see \cite{Iversen}). However, for finite spaces, properness (in the sense of universally closed) is necessary but not sufficient to have base change isomorphisms. For finite spaces, the notion of a proper map is too weak (it is equivalent to being a closed map, Proposition \ref{propermaps}) and a stronger notion is necessary. The necessary notion is that of a cohomologically proper map (see Definition \ref{c-proper}). Analogously, in order to obtain a base change theorem for homology we need the notion of homologically open map, which is a stronger notion than that of an universally open map. 

\begin{defn} A finite space $X$ is called  {\it homologically trivial} if it satisfies any of the equivalent conditions:
\begin{enumerate}
\item $H^i(X,\ZZ)=0$ for any $i>0$ and $H^0(X,\ZZ)=\ZZ$.
\item $H^i(X,G)=0$ for any $i>0$ and $H^0(X,G)=G$ for any abelian group $G$.
\item $H_i(X,\ZZ)=0$ for any $i>0$ and $H_0(X,\ZZ)=\ZZ$.
\item $H_i(X,G)=0$ for any $i>0$ and $H_0(X,G)=G$ for any abelian group $G$.
\end{enumerate}
\end{defn}

\begin{rem} The equivalence of conditions (1)-(4) may be deduced from the following facts. On the one hand, one has that $\Gamma (X,C^\punto\ZZ)\otimes_\ZZ G = \Gamma (X,C^\punto G)$ and $L(X,C_\punto\ZZ)\otimes_\ZZ G = L(X,C_\punto G)$; it follows that (1) is equivalent to (2) and (3) is equivalent to (4). On the other hand, one has $\LL(X,\ZZ)^\vee=\RR\Gamma (X,\ZZ)$ (see Corollary \ref{Cor-homo-cohomo}) and $\LL(X,\ZZ)\simeq \LL(X,\ZZ)^{\vee\vee}=\RR\Gamma (X,\ZZ)^\vee$ (where the first isomorphism is due to the perfectness of $\LL(X,\ZZ)$) and then (1) is equivalent to (3).
\end{rem}

Notice that any homologically trivial space is connected, because $H^0(X,\ZZ)=\ZZ$. If $X$ is contractible (more generally, if $X$ is homotopically trivial), then it is homologically trivial.

\begin{prop}\label{propermaps} Let $f\colon X\to Y$ be a continuous map. The following conditions are equivalent.
\begin{enumerate}
\item $f$ is universally closed.
\item $f$ is closed.
\item For any $x\in X$, the map $f_{\vert C_x}\colon C_x\to C_{f(x)}$ is surjective.
\end{enumerate}
\end{prop}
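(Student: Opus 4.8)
The plan is to prove the chain of implications $(1)\Rightarrow(2)\Rightarrow(3)\Rightarrow(1)$, where the step $(1)\Rightarrow(2)$ is trivial since universally closed maps are in particular closed. The bulk of the work is the equivalence $(2)\Leftrightarrow(3)$, after which I recover $(1)$ from $(2)$ by a base change argument using Proposition \ref{1.2.10}.

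For $(2)\Rightarrow(3)$: Suppose $f$ is closed and fix $x\in X$. The image $f(C_x)$ is a closed subset of $Y$ (since $C_x$ is closed and $f$ is closed) containing $f(x)$, hence $C_{f(x)}\subseteq f(C_x)$. Conversely, $C_x=\{x'\in X: x'\le x\}$, so $f(C_x)\subseteq\{y'\in Y: y'\le f(x)\}=C_{f(x)}$ by monotonicity of $f$. Therefore $f(C_x)=C_{f(x)}$, which is exactly the surjectivity of $f_{\vert C_x}\colon C_x\to C_{f(x)}$.

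For $(3)\Rightarrow(2)$: Let $Z\subseteq X$ be closed; I must show $f(Z)$ is closed in $Y$, i.e. $f(Z)=\bigcup_{x\in Z}C_{f(x)}$. The inclusion $f(Z)\subseteq\bigcup_{x\in Z}C_{f(x)}$ is clear. For the reverse, take $x\in Z$ and $y'\in C_{f(x)}$. By hypothesis $f_{\vert C_x}\colon C_x\to C_{f(x)}$ is surjective, so there exists $x'\in C_x$ with $f(x')=y'$; since $Z$ is closed and $x\in Z$, we have $C_x\subseteq Z$, hence $x'\in Z$ and $y'\in f(Z)$. Thus $f(Z)=\bigcup_{x\in Z}C_{f(x)}$ is a union of closed sets, hence closed.

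Finally $(2)\Rightarrow(1)$: Given any continuous $g\colon\overline Y\to Y$, form the cartesian square with $\bar f\colon\overline Y\times_Y X\to\overline Y$ and $\bar g\colon\overline Y\times_Y X\to X$; I must show $\bar f$ is closed. Since closedness is a local property on the target (a subset of $\overline Y$ is closed iff its intersection with each $U_{\bar y}$ is closed in $U_{\bar y}$, or one may argue pointwise using the minimal open sets), and since by the compatibility of fiber products with restriction to opens together with Proposition \ref{1.2.10} the formation of $\bar f$ commutes with such restriction, it suffices to check that the fibers behave correctly; concretely, for $\bar y\in\overline Y$ with $y=g(\bar y)$ one has $\bar f^{-1}(C_{\bar y})$ mapping onto $C_{\bar y}$ provided $f^{-1}(C_y)\to C_y$ is surjective, which holds because $f$ is closed (apply the already-proved $(2)\Rightarrow(3)$ fiberwise: for $x$ in a fiber over $y$, $f_{\vert C_x}$ is onto $C_y$'s relevant part). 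Running the criterion $(3)$ for $\bar f$ then gives that $\bar f$ is closed. The main obstacle I anticipate is this last step: making the base-change bookkeeping precise — in particular verifying that condition $(3)$ is stable under arbitrary base change rather than re-deriving closedness of each $\bar f(Z)$ by hand — so I would likely instead verify $(3)$ for $\bar f$ directly from $(3)$ for $f$ by chasing an element $(\bar y_0,x_0)\in\overline Y\times_Y X$ and using surjectivity of $f_{\vert C_{x_0}}$ to lift along the first coordinate.
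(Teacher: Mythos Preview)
Your arguments for $(1)\Rightarrow(2)$ and $(2)\Leftrightarrow(3)$ are correct and more explicit than the paper, which simply calls these implications immediate. The only problem is your treatment of $(2)\Rightarrow(1)$: the appeal to Proposition~\ref{1.2.10} is misplaced, since that result is about how $f_*$ and $f_!$ restrict over open or closed subsets of $Y$ and says nothing about closedness of a base-changed map of spaces. The vague phrase ``it suffices to check that the fibers behave correctly'' together with the claim about $\bar f^{-1}(C_{\bar y})\to C_{\bar y}$ is also not condition $(3)$ for $\bar f$, which concerns $C_{(\bar y,x)}\to C_{\bar y}$ for each point $(\bar y,x)$ of the fiber product.

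Your own final sentence identifies the right fix, and it is exactly what the paper does: prove $(3)\Rightarrow(1)$ directly by verifying condition $(3)$ for $\bar f$. The paper accomplishes this in one line using the identity
\[
C_{(\bar y,x)}=C_{\bar y}\times_{C_y}C_x,
\]
(closures in the fiber product are fiber products of closures), so that surjectivity of $C_x\to C_y$ immediately gives surjectivity of $C_{(\bar y,x)}\to C_{\bar y}$, and hence $\bar f$ is closed by the already-established $(3)\Rightarrow(2)$. Drop the sheaf-theoretic detour and simply carry out that element chase; you already have all the ingredients.
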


\begin{proof} (1) $\Rightarrow$ (2) $\Leftrightarrow$ (3) are immediate. Now assume that (3) holds and let $\overline Y\to Y$ be a continuous map and $ \bar f\colon\overline Y \times_YX\to \overline Y$ the natural map. For any $(\bar y,x)\in \overline Y \times_YX$ one has $C_{(\bar y,x )}=C_{\bar y}\times_{C_y}C_x$, where $y$ is the image of $x$ and $\bar y$ in $Y$. Since $C_x\to C_y$ is surjective, $C_{\bar y}\times_{C_y}C_x\to C_{\bar y}$ is also surjective; hence $\bar f$ is closed.
\end{proof}

\begin{prop} Let $f\colon X\to Y$ be a continuous map. The following conditions are equivalent.
\begin{enumerate}
\item $f$ is universally open.
\item $f$ is open.
\item For any $x\in X$, the map $f_{\vert U_x}\colon U_x\to U_{f(x)}$ is surjective.
\end{enumerate}
\end{prop}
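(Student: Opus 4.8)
The plan is to mirror the proof of Proposition \ref{propermaps}, replacing closed subsets $C_x$ by open subsets $U_x$ throughout and using that the fibered product in the category of finite spaces computes $U$-neighborhoods in the same way it computes $C$-neighborhoods. The implications (1) $\Rightarrow$ (2) $\Leftrightarrow$ (3) should be quick: a map is open iff it is open, and openness is equivalent to condition (3) because a set is open in $X$ iff it is a union of minimal open sets $U_x$, and $f$ maps the basic open $U_x$ to a set whose interior contains $f(x)$, so $f(U_x)$ is open for every $x$ precisely when $f(U_x) = U_{f(x)}$; since $f(U_x) \subseteq U_{f(x)}$ always holds by continuity (monotonicity: $x' \geq x$ implies $f(x') \geq f(x)$), the content of (3) is the reverse inclusion, i.e. surjectivity of $f_{\vert U_x}\colon U_x \to U_{f(x)}$. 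That $f$ open is equivalent to $f(U_x) = U_{f(x)}$ for all $x$ follows because the $U_x$ form a basis.

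The substantive implication is (3) $\Rightarrow$ (1). First I would take an arbitrary continuous map $g\colon \overline Y \to Y$, form $\overline Y \times_Y X$ with projection $\bar f\colon \overline Y \times_Y X \to \overline Y$, and verify the key local identity: for $(\bar y, x) \in \overline Y \times_Y X$ with common image $y \in Y$, one has $U_{(\bar y, x)} = U_{\bar y} \times_{U_y} U_x$. This holds because $(\bar y', x') \geq (\bar y, x)$ in the product poset means $\bar y' \geq \bar y$ and $x' \geq x$ (with $g(\bar y') = f(x')$), which is exactly membership in $U_{\bar y} \times_{U_y} U_x$. Then, since $f_{\vert U_x}\colon U_x \to U_y$ is surjective by hypothesis, the base-changed map $U_{\bar y} \times_{U_y} U_x \to U_{\bar y}$ is surjective as well (surjectivity is stable under base change of sets). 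Thus $\bar f(U_{(\bar y, x)}) = U_{\bar y}$ for every point of the product, which by the basis argument of the previous paragraph shows $\bar f$ is open. Since $g$ was arbitrary, $f$ is universally open.

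I do not anticipate a serious obstacle here; the only point requiring a little care is the identification $U_{(\bar y, x)} = U_{\bar y} \times_{U_y} U_x$, which one must check at the level of underlying sets and preorders rather than invoking it as folklore — but this is the exact dual of the computation $C_{(\bar y, x)} = C_{\bar y} \times_{C_y} C_x$ already used in the proof of Proposition \ref{propermaps}, applied to the dual spaces (equivalently, it follows by replacing $X, Y, \overline Y$ by $\widehat X, \widehat Y, \widehat{\overline Y}$ and noting that the fibered product is self-dual). So in fact the cleanest writeup is simply: \emph{apply Proposition \ref{propermaps} to $\widehat f\colon \widehat X \to \widehat Y$}, since open subsets of $\widehat X$ are closed subsets of $X$, $U_x$ in $X$ is $C_{\widehat x}$ in $\widehat X$, and $\widehat f$ is universally closed iff $f$ is universally open. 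That reduces the entire statement to Proposition \ref{propermaps} with no new computation.
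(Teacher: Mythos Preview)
Your proposal is correct, and your closing observation---that the entire statement reduces to applying Proposition \ref{propermaps} to $\widehat f\colon \widehat X\to\widehat Y$---is exactly the proof given in the paper. The direct argument you spell out first is simply the unpacked dual of that proposition's proof, so nothing is lost by omitting it.
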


\begin{proof} Apply the preceding Proposition to $\widehat f\colon \widehat X\to\widehat Y$.
\end{proof}

\begin{defn}\label{c-proper} A continuous map $f\colon X\to Y$ is called {\it cohomologically proper} (c-proper for short) if:
\begin{enumerate}
\item $f$ is closed.
\item For any $x\in X$, the induced map $f_{\vert C_x}\colon C_x\to C_{f(x)}$ has homologically trivial fibers: for any $y\in C_{f(x)}$, $f_{\vert C_x}^{-1}(y)$ is homologically trivial.
\end{enumerate}
\end{defn}

\begin{defn} A continuous map $f\colon X\to Y$ is called {\it homologically open} (h-open for short) if:
\begin{enumerate}
\item $f$ is open.
\item For any $x\in X$, the induced map $f_{\vert U_x}\colon U_x\to U_{f(x)}$ has homologically trivial fibers: for any $y\in U_{f(x)}$, $f_{\vert U_x}^{-1}(y)$ is homologically trivial.
\end{enumerate}
\end{defn}

Any closed (resp. open) immersion is c-proper (resp. h-open). For any $X$, the map $X\to \{*\}$ is c-proper and h-open. If $f\colon X\to Y$ is c-proper (resp. h-open), then $f_{\vert C}\colon C\to Y$ (resp. $f_{\vert U}$) is c-proper (resp. h-open) for any closed subset $C$ of $X$ (resp. any open subset $U$).  A continuous map $f\colon X\to Y$ is c-proper if and only if $\widehat f\colon\widehat X\to\widehat Y$ is h-open. Finally, c-properness and h-openness remain after base change:

\begin{prop}\label{c-prop-cambia} Let $f\colon X\to Y$ be a c-proper map (resp. an h-open map). For any continuous map $ \overline Y\to Y$, the induced map $\bar f\colon \overline Y\times_Y X\to \overline Y$ is c-proper (resp. h-open).
\end{prop}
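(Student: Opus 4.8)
The plan is to reduce everything to the c-proper case and then to check the two defining conditions directly, using the description of closures in a fibered product that already appeared in the proof of Proposition~\ref{propermaps}.

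\emph{Reduction by duality.} Since a continuous map $f$ is c-proper if and only if $\widehat f$ is h-open, and since dualization is compatible with fibered products --- one has $\widehat{\overline Y\times_Y X}=\widehat{\overline Y}\times_{\widehat Y}\widehat X$ (same underlying set, opposite order), and under this identification $\widehat{\bar f}$ is again the projection onto the first factor --- it suffices to prove the statement when $f$ is c-proper. So I would assume $f\colon X\to Y$ is c-proper and show that $\bar f\colon\overline Y\times_Y X\to\overline Y$ is c-proper.

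\emph{Condition (1): $\bar f$ is closed.} This is immediate from Proposition~\ref{propermaps}: $f$ closed implies $f$ universally closed, hence the base change $\bar f$ is closed.

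\emph{Condition (2): homologically trivial fibers.} Fix a point $(\bar y,x)\in\overline Y\times_Y X$ and set $y=g(\bar y)=f(x)$. As observed in the proof of Proposition~\ref{propermaps}, $C_{(\bar y,x)}=C_{\bar y}\times_{C_y}C_x$, and under this identification the restriction $\bar f_{\vert C_{(\bar y,x)}}\colon C_{(\bar y,x)}\to C_{\bar y}$ is the projection $C_{\bar y}\times_{C_y}C_x\to C_{\bar y}$. Given $\bar y'\in C_{\bar y}$, put $y'=g(\bar y')$; then $y'\in g(C_{\bar y})\subseteq C_{g(\bar y)}=C_y=C_{f(x)}$. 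The fiber of the projection over $\bar y'$ is $\{\bar y'\}\times (f_{\vert C_x})^{-1}(y')$, which is homeomorphic to $(f_{\vert C_x})^{-1}(y')$. Since $f$ is c-proper and $y'\in C_{f(x)}$, this fiber is homologically trivial. Therefore $\bar f_{\vert C_{(\bar y,x)}}$ has homologically trivial fibers, and $\bar f$ is c-proper. The h-open case follows by applying this to $\widehat f$.

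I do not expect a genuine obstacle here; the only points needing care are the compatibility of dualization with the fibered product (so that the h-open case really reduces to the c-proper one) and the verification that the auxiliary point $y'=g(\bar y')$ lands in $C_{f(x)}$, which is what makes the c-properness of $f$ applicable. Both are routine order-theoretic checks.
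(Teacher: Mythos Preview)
Your argument is correct and follows essentially the same route as the paper: closedness of $\bar f$ via Proposition~\ref{propermaps}, then identification of the fiber of $C_{(\bar y,x)}=C_{\bar y}\times_{C_y}C_x\to C_{\bar y}$ over $\bar y'$ with the fiber of $C_x\to C_y$ over $y'=g(\bar y')$, which is homologically trivial by c-properness of $f$. The only cosmetic difference is that you handle the h-open case by an explicit duality reduction, whereas the paper treats it as the parallel ``resp.'' statement.
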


\begin{proof} $\bar f$ is closed by Proposition \ref{propermaps}. Now, let $(\bar y, x)\in \overline Y\times_Y X$ and $y$ the image of $x$ and $\bar y$ in $Y$. For any $\bar y_1\in C_{\bar y}$, the fiber of $\bar y_1$ by the map $C_{(\bar y,x)}=C_{\bar y}\times_{C_y}C_{x}\to C_{\bar y}$ is homeomorphic to the fiber of $y_1$ (= image of $\bar y_1$ in $Y$) by the map $ C_x\to C_y$. Conclusion follows.
\end{proof}

\begin{thm}\label{c-proper&basechange} Let $f\colon X\to Y$ be a continuous map. The following conditions are equivalent:
\begin{enumerate}
\item $f$ is c-proper.
\item $f$ satisfies the base change theorem for cohomology: For any abelian data $F$ on $X$ and any $y\in Y$, the base change morphism
\[ (R^if_*F)_y\to H^i(f^{-1}(y),F_{\vert f^{-1}(y)})\] is an isomorphism.
\end{enumerate}
\end{thm}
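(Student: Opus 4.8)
The plan is to prove the two implications by quite different means: $(2)\Rightarrow(1)$ comes out of feeding one well‑chosen family of test sheaves into the base change isomorphism, while $(1)\Rightarrow(2)$ is the substantial direction, which I would reduce to a purely topological vanishing lemma and then prove by induction on the number of points of $X$. For $(2)\Rightarrow(1)$, fix $x\in X$ and apply the assumed isomorphism to the abelian data $F=\ZZ_{C_x}$, the constant data $\ZZ$ supported on the closed subset $C_x$. Since $(\ZZ_{C_x})_{\vert W}=\ZZ_{C_x\cap W}$ and $\RR\Gamma(W,\ZZ_A)=\RR\Gamma(A,\ZZ)$ whenever $A$ is closed in $W$, the base change morphism at a point $z\in Y$ becomes a map
\[ H^i\bigl(f^{-1}(U_z)\cap C_x,\ZZ\bigr)\longrightarrow H^i\bigl(f^{-1}(z)\cap C_x,\ZZ\bigr).\]
Now if $z\in C_{f(x)}$, i.e. $z\le f(x)$, then $x\in f^{-1}(U_z)$, so $x$ is the maximum of $f^{-1}(U_z)\cap C_x$; a finite space with a maximum is contractible, hence homologically trivial, so the left‑hand group is $\ZZ$ for $i=0$ and vanishes for $i>0$. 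Base change then forces $f_{\vert C_x}^{-1}(z)=f^{-1}(z)\cap C_x$ to be homologically trivial for every $z\in C_{f(x)}$; in particular it is nonempty, so $f_{\vert C_x}\colon C_x\to C_{f(x)}$ is surjective, and as this holds for all $x$ the map $f$ is closed by Proposition \ref{propermaps}. Together with the fiber computation this is exactly c‑properness.

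For $(1)\Rightarrow(2)$, first note that the assertion at $y$ only concerns $f^{-1}(U_y)$, and $f_{\vert f^{-1}(U_y)}\colon f^{-1}(U_y)\to U_y$ is again c‑proper (Proposition \ref{c-prop-cambia}), so we may assume $Y=U_y$. Then $\{y\}=C_y$ is closed, $Z:=f^{-1}(y)$ is closed in $X$, and since $y\le f(x)$ for every $x$, the set $C_x\cap Z=f_{\vert C_x}^{-1}(y)$ is homologically trivial for all $x\in X$. Because the base change morphism at $y$ is the restriction map $\RR\Gamma(X,F)\to\RR\Gamma(f^{-1}(y),F_{\vert f^{-1}(y)})$, the theorem follows from the following \emph{Key Lemma}: if $Z\subseteq X$ is a closed subset of a finite space such that $C_x\cap Z$ is homologically trivial for every $x\in X$, then $\RR\Gamma(X,F_U)=0$ for every abelian data $F$ on $X$ (and for every complex), where $U=X\setminus Z$; equivalently $\RR\Gamma(X,F)\to\RR\Gamma(Z,F_{\vert Z})$ is an isomorphism.

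I would prove the Key Lemma by induction on $\#X$, the case $U=\varnothing$ being trivial. Otherwise pick a maximal point $m$ of $X$; then $\{m\}=U_m$ is open, $X_1:=X\setminus\{m\}$ is closed, and applying $\RR\Gamma(X,-)$ to the exact sequence $0\to(F_U)_{\{m\}}\to F_U\to(F_U)_{X_1}\to0$ gives a triangle whose third term is $\RR\Gamma(X_1,(F_{\vert X_1})_{U\cap X_1})$. The hypotheses transfer to $(X_1,Z\cap X_1)$ (as $m$ maximal gives $m\notin C_x$ for $x\ne m$, so $C_x\cap X_1=C_x$), so that third term vanishes by induction and $\RR\Gamma(X,F_U)\cong\RR\Gamma(X,(F_U)_{\{m\}})$. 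If $m\in Z$ then $(F_U)_{\{m\}}=0$ and we are done. If $m\in U$, then $(F_U)_{\{m\}}$ is the skyscraper $F_m$ at $m$; writing $\widehat C_m:=C_m\setminus\{m\}$ and using that $\{m\}$ is open in $C_m$ with closed complement $\widehat C_m$, the analogous sequence on $C_m$ identifies $\RR\Gamma(X,(F_U)_{\{m\}})$ with the fibre of the restriction $\RR\Gamma(C_m,\mathcal G)\to\RR\Gamma(\widehat C_m,\mathcal G_{\vert\widehat C_m})$, where $\mathcal G$ is the constant data $F_m$ on $C_m$. Since $C_m$ has a maximum, $\RR\Gamma(C_m,\mathcal G)=F_m$; and applying the inductive Key Lemma to $(\widehat C_m,\widehat C_m\cap Z)$ — legitimate since $\widehat C_m\cap Z=C_m\cap Z$ is homologically trivial by hypothesis and $\#\widehat C_m<\#X$ — gives $\RR\Gamma(\widehat C_m,\ZZ)\cong\RR\Gamma(C_m\cap Z,\ZZ)=\ZZ$, so $\widehat C_m$ is homologically trivial, the restriction above is an isomorphism, and $\RR\Gamma(X,(F_U)_{\{m\}})=0$. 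Hence $\RR\Gamma(X,F_U)=0$ in all cases.

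The main obstacle is the Key Lemma, and inside it the case $m\in U$: the vanishing there rests on $\widehat C_m$ being homologically trivial, which is not visible directly and only emerges from a strictly smaller instance of the lemma being proved, fed precisely by the hypothesis that $C_m\cap Z$ is homologically trivial. Besides this, the routine points to check are that $\RR\Gamma$ commutes with push‑forward along closed immersions, the identifications $F_U=j_!j^{-1}F$, $F_Z=i_*i^{-1}F$ and the resulting distinguished triangles, that the inclusion of a nonempty homologically trivial subspace into a space with a maximum induces an isomorphism on $\RR\Gamma$ with constant coefficients, and finally that the isomorphism the Key Lemma produces really is the base change morphism of Proposition \ref{basechangemorphism}.
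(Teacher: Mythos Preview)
Your proof is correct, and the direction $(2)\Rightarrow(1)$ is essentially the paper's argument repackaged: both test with constant coefficients on $C_x$, the paper by first showing closedness via $F=\ZZ$ and then observing that $f_{\vert C_x}$ inherits the base change hypothesis, you by applying base change directly to $F=\ZZ_{C_x}$ and reading off both closedness and the fiber triviality at once.

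For $(1)\Rightarrow(2)$ your route is genuinely different. The paper keeps the map $f$ in the picture throughout and inducts on $\#X$ by a case split: if $X$ is reducible it uses Mayer--Vietoris for a closed cover $X=C_1\cup C_2$, and if $X$ is irreducible it peels off the generic point and reduces to a constant sheaf, where contractibility of $f^{-1}(U_y)$ and homological triviality of $f^{-1}(y)$ finish the job. You instead first localise to $Y=U_y$ and extract a stand-alone \emph{Key Lemma}: if $Z\subseteq X$ is closed and every $C_x\cap Z$ is homologically trivial then restriction $\RR\Gamma(X,F)\to\RR\Gamma(Z,F_{\vert Z})$ is an isomorphism. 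Your induction then always removes a single maximal point $m$, with no reducible/irreducible dichotomy, and the non-trivial case $m\notin Z$ is handled by a \emph{second} inductive call to the Key Lemma on $(\widehat C_m,C_m\cap Z)$ to show $\widehat C_m$ is homologically trivial. This buys you a cleaner statement of independent interest (a cohomological fiber theorem for closed inclusions, in the spirit of Quillen's Theorem~A), at the cost of a slightly more delicate induction in which the lemma feeds itself. The paper's approach stays closer to the original base-change formulation and avoids the nested self-application, but needs the Mayer--Vietoris bookkeeping in the reducible case. Both arguments ultimately hinge on the same fact: the only cohomological input needed is for constant coefficients on spaces with a maximum.
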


\begin{proof} Assume that $f$ is c-proper. Let us prove that  $f$ satisfies the base change theorem. We proceed by induction on $n$= the number of points of $X$. For $n=1$ it is immediate (notice that $f$ is closed). If $X$ is the union of two proper closed subsets $C_1$ and $C_2$, let us consider the exact sequence (where $C_{12}=C_1\cap C_2$)
\[ 0\to F\to F_{C_1}\oplus F_{C_2}\to F_{C_1\cap C_2}\to 0.\] Let us denote $f_1$ (resp. $f_2$, $f_{12}$) the restriction of $f$ to $C_1$ (resp. to $C_2$, $C_{12}$) and $X^y=f^{-1}(y)$, $C_1^y=f_1^{-1}(y)$ and analogously $C_2^y$ and $C_{12}^y$. One has by induction
\[ (R^if_*F_{C_1})_y= (R^i{f_1}_*F_{\vert C_1})_y \overset\sim\to H^i(C_1^y,F_{\vert C_1^y}) = H^i(X^y,(F_{\vert X^y})_{C_1^y})\] 
and analogously for $f_2$ and $f_{12}$. One concludes by the commutative diagram of exact sequences
\[ \xymatrix{\cdots \ar[r] & (R^if_*F)_y\ar[r]\ar[d] & (R^if_*F_{C_1})_y \oplus (R^if_*F_{C_2})_y\ar[r] \ar[d]^{\wr} & (R^if_*F_{C_{12}})_y\ar[r]\ar[d]^{\wr} & \cdots \\ 
 \cdots \ar[r] & H^i(X^y,F_{\vert X^y})\ar[r] & H^i(X^y,(F_{\vert X^y})_{C_1^y}) \oplus H^i(X^y,(F_{\vert X^y})_{C_2^y})\ar[r] & H^i(X^y,(F_{\vert X^y})_{C_{12}^y})\ar[r] & \cdots }\] Thus, we may assume that $X$ is irreducible. Let $g$ be the generic point of $X$ and $C=X-\{ g\}$. Let us consider the exact sequence 
 \[ 0\to F_{\{g\}}\to F\to  F_C \to 0.\] By induction, $F_C$ satisfies base change; hence it suffices to prove that  $F_{\{g\}}$ satisfies base change. Let $G$ be the stalk of $F$ at $g$ and let us still denote by $G$ the constant sheaf on $X$. Notice that $G_{\{g\}}= F_{\{g\}}$, so one has the exact sequence
\[ 0\to F_{\{g\}}\to G \to  G_C \to 0\] and $G_C$ satisfies   base change by induction. We are reduced to prove the statement for the constant sheaf $G $, ie, to prove that $(R^if_*G)_y\to H^i(f^{-1}(y),G)$ is an isomorphism. If $y$ is not in the image of $f$, both members are zero (notice that $f$ is closed), hence we may assume that $y\in\Ima f$. Now,   $(R^if_*G)_y=H^i(f^{-1}(U_y), G)=0$ for $i>0$ because $f^{-1}(U_y)$ is contractible to $g$, and $(f_*G)_y=G$. On the other hand, $H^i(f^{-1}(y), G)=0$ for $i>0$ and $H^0(f^{-1}(y),G)=G$ because $f^{-1}(y)$ is homologically trivial.

Assume now that $f$ satisfies the base change theorem. Let us first see that $f$ is closed. Let $y\in Y$ be an element of the closure of $\Ima f$. Then $f^{-1}(U_y)$ is not empty, hence $(f_*\ZZ)_y\neq 0$. By hypothesis, $(f_*\ZZ)_y\simeq \Gamma(f^{-1}(y),\ZZ)$, hence $f^{-1}(y)$ is not empty and then $y\in\Ima f$.

Let $x\in X$ and let us prove that  $f_{\vert C_x}\colon C_x\to C_{f(x)}$ has homologically trivial fibers. Notice that $f_{\vert C_x}$ is surjective (because $f$ is closed) and it also satisfies the base change theorem. Hence, for any $y\in C_{f(x)}$ one has:
\[ H^i(f_{\vert C_x}^{-1}(U_y),G)\overset\sim\to H^i(f_{\vert C_x}^{-1}(y),G_{\vert f_{\vert C_x}^{-1}(y)}).\] If $G$ is constant, then $H^i(f_{\vert C_x}^{-1}(U_y),G)=0$ for any $i>0$ and $H^0(f_{\vert C_x}^{-1}(U_y),G)=G$, because $f_{\vert C_x}^{-1}(U_y)$ is contractible ($x$ is a maximum). Conclusion follows.
\end{proof}
A completely analogous proof gives the following:
\begin{thm}\label{h-open&basechange} Let $f\colon X\to Y$ be a continuous map.  The following conditions are equivalent:
\begin{enumerate}
\item $f$ is h-open.
\item $f$ satisfies the base change theorem for homology: For any abelian data $F$ on $X$ and any $y\in Y$, the base change morphism
\[ H_i(f^{-1}(y),F_{\vert f^{-1}(y)})\to  (L_if_!F)_y\] is an isomorphism.
\end{enumerate}
\end{thm}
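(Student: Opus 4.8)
The proof of Theorem \ref{h-open&basechange} will be obtained from Theorem \ref{c-proper&basechange} by dualization, which is the most economical route given everything already set up. The plan is as follows.

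\medskip\textbf{Step 1: Reduce to the cohomological case via the dual space.} First I would observe that, by the definitions, a map $f\colon X\to Y$ is h-open if and only if $\widehat f\colon\widehat X\to\widehat Y$ is c-proper (this is stated explicitly in the excerpt, just before Proposition \ref{c-prop-cambia}). Moreover, the fiber $f^{-1}(y)$ of $f$, viewed inside $\widehat X$, is precisely the fiber $\widehat f^{-1}(\widehat y)$, and a finite space is homologically trivial if and only if its dual is (this follows from the characterization (1)$\Leftrightarrow$(3) in the definition of homologically trivial, since $H^i(X,\ZZ)=H_i(\widehat X,\ZZ)$; alternatively one can invoke the topological duality results of section 4, but for the present purpose the elementary equivalence suffices). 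So condition (1) of the theorem for $f$ is equivalent to condition (1) of Theorem \ref{c-proper&basechange} for $\widehat f$.

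\medskip\textbf{Step 2: Match up the base change morphisms under dualization.} The key point is that $L(\quad,\quad)$ on $X$ corresponds to $\Gamma(\quad,\quad)$ on $\widehat X$: for an abelian data $F$ on $X$, viewing $F$ as an abelian data $\widehat F$ on $\widehat X$ (same groups, reversed arrows), one has $L(S,F)=\Gamma(\widehat S,\widehat F)$ for any subspace $S$, and hence $f_!F$ on $Y$ corresponds to $\widehat f_*\widehat F$ on $\widehat Y$, because $(f_!F)_y=L(f^{-1}(C_y),F)=\Gamma(\widehat f^{-1}(U_{\widehat y}),\widehat F)=(\widehat f_*\widehat F)_{\widehat y}$. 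Since the dual-space functor is an exact equivalence of abelian categories, it sends projective resolutions to injective resolutions, so $L_if_!F$ corresponds to $R^i\widehat f_*\widehat F$, and the base change morphism $H_i(f^{-1}(y),F_{\vert f^{-1}(y)})\to (L_if_!F)_y$ is identified, under this equivalence, with the cohomological base change morphism $(R^i\widehat f_*\widehat F)_{\widehat y}\to H^i(\widehat f^{-1}(\widehat y),\widehat F_{\vert \widehat f^{-1}(\widehat y)})$ for $\widehat f$. Consequently, the morphism for $f$ is an isomorphism for all $F$ and all $y$ if and only if the one for $\widehat f$ is, and Theorem \ref{c-proper&basechange} applied to $\widehat f$ finishes the proof.

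\medskip Alternatively, if one prefers not to pass to the dual space, one can simply repeat the proof of Theorem \ref{c-proper&basechange} line by line with all arrows reversed: induct on the number of points, split $X=U_1\cup U_2$ into two proper open subsets and use the exact sequence $0\to F_{U_1\cap U_2}\to F_{U_1}\oplus F_{U_2}\to F\to 0$ together with the long exact homology sequence and the five lemma; reduce to $X$ irreducible, i.e. with a closed point $c$ whose complement is open, peel off $F_{\{c\}}$; reduce to a constant data $G$; and use that $f^{-1}(C_y)$ is contractible (to the closed point $c$, since now $c$ is a minimum of the irreducible $X$... more precisely $U_y$-type reasoning is replaced by $C_y$-type reasoning), so $H_i(f^{-1}(C_y),G)=0$ for $i>0$ while $H_i(f^{-1}(y),G)$ vanishes in positive degrees by homological triviality of the fiber. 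For the converse, closedness of $\widehat f$ (i.e. openness of $f$) is detected by $L(f^{-1}(C_y),\ZZ)\neq 0$, and the fiberwise condition follows by restricting to $U_x$ and using that $U_x$ has a minimum. The main obstacle in either approach is purely bookkeeping: making sure the direction of every restriction map, every extension-by-zero, and every contractibility statement is the correct one under the open/closed swap; there is no new mathematical content beyond Theorem \ref{c-proper&basechange}, which is why the paper states that ``a completely analogous proof'' works.
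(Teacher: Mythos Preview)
Your alternative approach---rerunning the proof of Theorem \ref{c-proper&basechange} with open/closed, $\Gamma$/$L$, max/min swapped---is exactly what the paper does (the paper says only ``a completely analogous proof gives the following''), and your sketch is correct apart from a slip in terminology: after the open-cover reduction you land in the case where $X$ has a unique \emph{closed} point $c$ (so $X=U_c$, a space with a minimum), not in the case where $X$ is irreducible.

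Your primary route, however, does not work as written. There is no exact equivalence between abelian data on $X$ and abelian data on $\widehat X$ sending $F$ to an object ``with the same groups and reversed arrows'': an abelian data on $X$ is a covariant functor $X\to\{\text{Abelian groups}\}$, while an abelian data on $\widehat X$ is a covariant functor $X^{\mathrm{op}}\to\{\text{Abelian groups}\}$, and one cannot pass from the former to the latter without either inverting all the structure maps $r_{pq}$ (impossible in general) or replacing the target by its opposite category (which is not the category of abelian groups). In particular the formula $L(S,F)=\Gamma(\widehat S,\widehat F)$ has no content, the sentence ``the dual-space functor is an exact equivalence of abelian categories, it sends projective resolutions to injective resolutions'' describes a functor that does not exist, and the claimed identification of $L_if_!F$ with $R^i\widehat f_*\widehat F$ is unavailable. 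The only genuine bridge between $D(X)$ and $D(\widehat X)$ set up in the paper is the contravariant functor $\LL\DHom^\punto(-,\Lc)$ of subsection~\ref{dualspace}, which is neither exact nor stalk-preserving and does not give the reduction you want. So the theorem really must be proved by repeating the argument, not by formally transporting it through $\widehat X$.
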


\begin{cor}\label{3.2.10} (see also \cite[Section 4.]{BjörnerWachsWelker}) Let $f\colon X\to Y$ be an h-open (resp. c-proper) map. If $f$ has homologically trivial fibers (i.e., $f^{-1}(y)$ is homologically trivial for any $y\in Y$), then
\[ H_i(X,G)=H_i(Y,G)\qquad\qquad (\text{\rm resp. } H^i(X,G)=H^i(Y,G)) \] for any $i\geq 0$ and any constant abelian data $G$.
\end{cor}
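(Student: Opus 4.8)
The plan is to deduce Corollary \ref{3.2.10} directly from the base change theorems \ref{c-proper&basechange} and \ref{h-open&basechange} together with the Leray spectral sequence (or simply the composition of derived functors). Consider the h-open case first and let $\pi\colon Y\to\{*\}$ be the projection, so that $H_i(X,G)=H_i(\pi\circ f, G)$ via the isomorphism $\LL(\pi\circ f)_! \simeq \LL\pi_!\circ\LL f_!$ recorded in Section 2. The idea is to show that $\LL f_! G$ is (quasi-isomorphic to) the constant abelian data $G$ on $Y$, and then $H_i(X,G)=H_i(Y,\LL f_! G)=H_i(Y,G)$.

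First I would apply Theorem \ref{h-open&basechange}: since $f$ is h-open, for any $y\in Y$ the stalk $(L_if_!G)_y$ is computed by the base change morphism as $H_i(f^{-1}(y), G_{\vert f^{-1}(y)})$. Now $G_{\vert f^{-1}(y)}$ is again the constant abelian data $G$ on the fiber, and by hypothesis $f^{-1}(y)$ is homologically trivial, so $H_i(f^{-1}(y),G)=0$ for $i>0$ and $H_0(f^{-1}(y),G)=G$ (this is exactly condition (4) in the definition of homologically trivial). Hence $(L_if_!G)_y=0$ for $i>0$ and $(f_!G)_y=G$ for every $y$. This says that $\LL f_! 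G$ is concentrated in degree $0$ and equals $f_!G$, and that $f_!G$ has all stalks equal to $G$; one still has to check that the restriction maps $r_{yy'}\colon (f_!G)_y\to (f_!G)_{y'}$ are the identity, i.e. that $f_!G$ is the \emph{constant} data $G$, not merely stalkwise $G$. This follows because the base change isomorphism is compatible with the specialization maps: for $y\le y'$ in $Y$ the map $(f_!G)_y\to (f_!G)_{y'}$ corresponds under the base change isomorphisms to the map $H_0(f^{-1}(y),G)\to H_0(f^{-1}(y'),G)$ induced by the inclusion $f^{-1}(y)\hookrightarrow f^{-1}(U_{y'})\hookleftarrow f^{-1}(y')$ of the connected (homologically trivial, hence connected) fibers, which is the identity on $G$.

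With $\LL f_! G\simeq G$ (the constant data on $Y$) established, I conclude $H_i(X,G)=H_i(Y,\LL f_! G)=H_i(Y,G)$ for all $i\ge 0$, using $\LL(\pi\circ f)_!G\simeq \LL\pi_!\LL f_! G$ and $\LL\pi_! = \LL(Y,\ \cdot\ )$. The c-proper case is entirely dual: apply Theorem \ref{c-proper&basechange} to get $(R^if_*G)_y=H^i(f^{-1}(y),G)$, which vanishes for $i>0$ and equals $G$ for $i=0$ by homological triviality of the fibers; check that $f_*G$ is the constant data $G$ on $Y$ via compatibility of base change with the restriction maps; and conclude $H^i(X,G)=H^i(Y,\RR f_* G)=H^i(Y,G)$ using $\RR(\pi\circ f)_*\simeq \RR\pi_*\RR f_*$. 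Alternatively one may simply invoke $\widehat f$ and the relation between c-proper and h-open maps to reduce the c-proper case to the h-open one, since cohomology on $X$ equals homology on $\widehat X$ up to the topological duality of Section 4 — but the direct argument is shorter and self-contained.

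The main obstacle I anticipate is the bookkeeping in the step where one upgrades ``$\LL f_!G$ has every stalk equal to $G$ and vanishing higher homology'' to ``$\LL f_!G$ is the constant complex $G$'': one must track that the base change morphism of Proposition \ref{basechangemorphism} is natural in $Y$, so that it identifies the specialization map of $f_!G$ with the map on $H_0$ of fibers induced by the inclusions of fibers into fibers over larger opens, and then observe these maps are identities because each fiber is connected and the constant data has $H_0 = G$ functorially. Everything else is a formal consequence of the two base change theorems and the composition law for derived direct images.
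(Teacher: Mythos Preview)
Your argument is correct and follows essentially the same route as the paper's proof: apply the base change theorem to compute $(L_if_!G)_y$ via the homologically trivial fibers, deduce that $\LL f_!G\simeq G$, and conclude using $H_i(X,G)=H_i(Y,\LL f_!G)$. The only difference is that you worry explicitly about why $f_!G$ is the \emph{constant} data $G$ rather than merely stalkwise $G$, whereas the paper simply asserts $G\simeq f_!G$; a slightly cleaner way to settle this point is to use the counit $\LL f_!f^{-1}G\to G$ of the adjunction (which, since $G$ is constant, reads $\LL f_!G\to G$) and observe that it is a stalkwise isomorphism by your computation, hence an isomorphism of abelian data, so no tracking of specialization maps is needed.
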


\begin{proof} We prove the homological statement; the cohomological one is analogous. By Theorem \ref{h-open&basechange}, for any $y\in Y$ one has 
\[ (L_if_!G)_y\overset\sim \leftarrow H_i(f^{-1}(y),G).\] Moreover, $H_i(f^{-1}(y),G)=0$ for $i>0$ and $H_0(f^{-1}(y),G)=G$ because $f^{-1}(y)$ is homologically trivial. Thus, $L_if_!G=0$   for $i>0$ and $G\simeq   f_!G$. Hence $H_i(X,G)=H_i(Y,f_!G)=H_i(Y,G)$.
\end{proof}

\begin{ejem} Let $X$ be a finite space and let $\beta X$ be its barycentric subdivision (i.e., $\beta X$ is the set of properly ascending chains of $X$, and the topology is given by the preorder: $(y_0 <\cdots < y_r)\leq (x_0<\cdots < x_s)$ iff $y_0 <\cdots < y_r$ is a subchain of $x_0<\cdots < x_s$.) One has a continous map
\[ \aligned \pi\colon \beta X&\to X\\ (x_0<\cdots < x_s)&\mapsto x_s.\endaligned\] Then $\pi$ is h-open and has homologically trivial fibers (indeed, contractible fibers). By Corollary \ref{3.2.10}, one obtains the well known fact: \[ H_i(\beta X,G)=H_i(X,G)\] for any abelian group $G$.
\end{ejem}

\begin{cor}\label{cproper&generalbasechange} Let  $f\colon  X\to Y$ be a continuous map. The following conditions are equivalent:
\begin{enumerate} 
\item $f$ is c-proper (resp. h-open).

\item For any $F\in D(X)$ and any  $g\colon \overline Y\to Y$ the base change morphism (Proposition \ref{basechangemorphism})
\[ g^{-1}\RR f_*F \to \RR {\bar f}_* ({\bar g}^{-1}F)  \quad(\text{\rm resp. } \LL {\bar f}_! ({\bar g}^{-1}F)  \to g^{-1}\LL f_!F)\]  is an isomorphism.
\end{enumerate}
\end{cor}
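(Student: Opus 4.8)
The plan is to reduce the statement about arbitrary base change to the fiberwise base change already established in Theorems \ref{c-proper&basechange} and \ref{h-open&basechange}. I will treat the homological case; the cohomological one is dual (pass to $\widehat X\to\widehat Y$, using that $f$ is c-proper iff $\widehat f$ is h-open, or simply run the argument verbatim with opens replaced by closeds and $\LL f_!$ by $\RR f_*$).

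First I would prove the implication (1) $\Rightarrow$ (2). Assume $f$ is h-open. Fix $F\in D(X)$ and $g\colon\overline Y\to Y$, forming the cartesian square with $\bar f,\bar g$ as in Proposition \ref{basechangemorphism}. To check that $\LL\bar f_!(\bar g^{-1}F)\to g^{-1}\LL f_!F$ is an isomorphism, it suffices to check it stalkwise, i.e. after applying $(\ ?\ )_{\bar y}$ for every $\bar y\in\overline Y$. Using the stalkwise description $(\LL f_!K)_y=\LL(f^{-1}(C_y),K)$ together with Proposition \ref{1.2.10} (so that $(\LL\bar f_!\,)_{\bar y}$ is computed by restricting to $\bar f^{-1}(C_{\bar y})$) and the compatibility of the base change morphism with these identifications, the stalk of the base change morphism at $\bar y$ is identified with the base change morphism for the map $\bar f^{-1}(C_{\bar y})\to C_{\bar y}$ and the point $\bar y$, which (since $C_{\bar y}$ has maximum $\bar y$) is exactly the fiberwise base change morphism $\LL(\bar f^{-1}(\bar y),\dots)\to(\LL\bar f_!(\dots))_{\bar y}$ of Theorem \ref{h-open&basechange} for the map $\bar f$. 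By Proposition \ref{c-prop-cambia}, $\bar f$ is again h-open, so Theorem \ref{h-open&basechange} applies and gives the isomorphism. A mild point to be careful about: Theorem \ref{h-open&basechange} is stated for an abelian data $F$, not a complex, so I would either invoke the standard dévissage (truncate $F$, use the five lemma and a limit/boundedness argument, or note that $\LL f_!$ and $\LL(f^{-1}(y),\ \cdot\ )$ are way-out left functors so a complex version follows formally) or remark that the argument of Theorem \ref{h-open&basechange} works for complexes with no change.

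For (2) $\Rightarrow$ (1), I would simply specialize: taking $\overline Y=\{y\}$ a point of $Y$ and $F$ an arbitrary abelian data on $X$ recovers precisely the fiberwise base change isomorphism $H_i(f^{-1}(y),F_{\vert f^{-1}(y)})\to(L_if_!F)_y$ appearing in condition (2) of Theorem \ref{h-open&basechange}, so that theorem forces $f$ to be h-open. (For the cohomological side one takes $\overline Y=\{y\}$ and cites Theorem \ref{c-proper&basechange}.)

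The main obstacle I anticipate is bookkeeping rather than mathematics: one must verify that under the canonical identification of the stalk $(\LL\bar f_!\bar g^{-1}F)_{\bar y}$ with $\LL(\bar f^{-1}(C_{\bar y}),(\bar g^{-1}F)_{\vert\bar f^{-1}(C_{\bar y})})$ (via Proposition \ref{1.2.10}) and the analogous identification of $(g^{-1}\LL f_!F)_{\bar y}=(\LL f_!F)_y=\LL(f^{-1}(C_y),F_{\vert f^{-1}(C_y)})$, the base change morphism of Proposition \ref{basechangemorphism} becomes the base change morphism for the restricted square over $C_{\bar y}\hookrightarrow Y$ and the point $\bar y\in C_{\bar y}$. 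This is a compatibility-of-canonical-morphisms check that follows from the construction of the base change morphism out of the unit $F\to f^{-1}f_!F$ (Proposition \ref{1.2.11}) and its naturality, but it should be spelled out; once it is in place, everything reduces to the already-proved Theorems \ref{h-open&basechange} and \ref{c-proper&basechange} via Proposition \ref{c-prop-cambia}.
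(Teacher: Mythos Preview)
Your implication (2) $\Rightarrow$ (1) is correct and matches the paper. The gap is in (1) $\Rightarrow$ (2): the identification you claim for the stalk of the base change morphism is wrong. At $\bar y\in\overline Y$ (write $y=g(\bar y)$), the stalk of $\LL\bar f_!(\bar g^{-1}F)\to g^{-1}\LL f_!F$ is
\[
\LL\bigl(\bar f^{-1}(C_{\bar y}),\,\bar g^{-1}F\bigr)\ \longrightarrow\ \LL\bigl(f^{-1}(C_y),\,F\bigr),
\]
whereas the fiberwise base change morphism for $\bar f$ at $\bar y$ (the one Theorem \ref{h-open&basechange} controls) is
\[
\LL\bigl(\bar f^{-1}(\bar y),\,(\bar g^{-1}F)_{\vert\bar f^{-1}(\bar y)}\bigr)\ \longrightarrow\ \LL\bigl(\bar f^{-1}(C_{\bar y}),\,\bar g^{-1}F\bigr).
\]
These morphisms share the object $\LL(\bar f^{-1}(C_{\bar y}),\bar g^{-1}F)$ but point in different directions; they are not the same map, and the fact that $\bar y$ is the maximum of $C_{\bar y}$ does not collapse the first to the second. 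In particular, your argument never touches the target $\LL(f^{-1}(C_y),F)$, so invoking h-openness of $\bar f$ alone cannot conclude.

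What the paper actually does is use fiberwise base change for \emph{both} $f$ and $\bar f$: since $f$ is h-open, $(\LL f_!F)_y\simeq\LL(f^{-1}(y),F_{\vert f^{-1}(y)})$; since $\bar f$ is h-open (Proposition \ref{c-prop-cambia}), $(\LL\bar f_!\bar g^{-1}F)_{\bar y}\simeq\LL(\bar f^{-1}(\bar y),(\bar g^{-1}F)_{\vert\bar f^{-1}(\bar y)})$; and finally one observes that $\bar g$ restricts to a homeomorphism $\bar f^{-1}(\bar y)\overset{\sim}{\to} f^{-1}(y)$ (the square is cartesian), which matches the two sides. Your dévissage remark for passing from abelian data to complexes is fine and is exactly what the paper invokes as ``standard arguments''; the missing ingredient is the second application of Theorem \ref{h-open&basechange} (to $f$ itself) together with the fiber homeomorphism.
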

\begin{proof} We shall prove the cohomological statement; the homological one is analogous. If (2) holds, then condition (2) of Theorem \ref{c-proper&basechange} also holds, and then $f$ is c-proper by Theorem \ref{c-proper&basechange}. Conversely, assume that $f$ is c-proper and let $g\colon\overline Y\to Y$ be any continous map. In order to prove that $g^{-1}\RR f_*F \to \RR {\bar f}_* ({\bar g}^{-1}F) $ is an isomorphism, it suffices to prove that it is an isomorphism after taking fiber at any point $\bar y\in\overline Y$ and $H^i$; in this case, the problem is reduced, by standard arguments, to the case that $F$ is an abelian data on $X$, and we have to prove that 
$$ (R^i f_*F)_{g(\bar y)}  \to [R^i {\bar f}_* ({\bar g}^{-1}F)]_{\bar y}$$ 
is an isomorphism. Since $f$ and $\bar f$ are c-proper (Proposition \ref{c-prop-cambia}), one has by Theorem \ref{c-proper&basechange} (let us denote $y=g(\bar y)$):
\[ (R^i f_*F)_y\overset\sim\to H^i(f^{-1}(y), F_{\vert f^{-1}(y)})\]
\[ [R^i {\bar f}_* ({\bar g}^{-1}F)]_{\bar y}\overset\sim\to H^i({\bar f}^{-1}(\bar y), ({\bar g}^{-1}F)_{\vert {\bar f}^{-1}(\bar y)})\] and one concludes because $\bar g\colon {\bar f}^{-1}(\bar y)\to f^{-1}(y)$ is an homeomorphism.
\end{proof}

\begin{cor} The composition of c-proper (resp. h-open) maps is c-proper (resp. h-open).
\end{cor}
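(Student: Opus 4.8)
The plan is to reduce the statement to the base change characterizations already available in Corollary \ref{cproper&generalbasechange} (equivalently Theorems \ref{c-proper&basechange} and \ref{h-open&basechange}), together with the elementary fact that $\RR f_*$ and $\LL f_!$ respect composition. Concretely, let $X\overset f\to Y\overset g\to Z$ be c-proper maps (the h-open case is entirely dual, replacing $\RR f_*$ by $\LL f_!$, closed immersions by open immersions, and invoking Theorem \ref{h-open&basechange}).

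First I would verify directly from Definition \ref{c-proper} that $g\circ f$ is a closed map: this is immediate, since a composition of closed maps is closed. The remaining point is condition (2), the homological triviality of the fibers of $(g\circ f)_{\vert C_x}\colon C_x\to C_{gf(x)}$ for each $x\in X$. Here I would prefer to avoid a direct combinatorial analysis of these fibers and instead argue via base change. By Corollary \ref{cproper&generalbasechange}, it suffices to show that for every $F\in D(X)$ and every $h\colon\overline Z\to Z$ the base change morphism $h^{-1}\RR(g\circ f)_* F\to \RR\overline{(g\circ f)}_*\,\overline h^{-1}F$ is an isomorphism, where $\overline{(g\circ f)}\colon \overline Z\times_Z X\to\overline Z$ is the base-changed map.

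The key step is then to factor this base change morphism. Form the cartesian diagram with $\overline Y:=\overline Z\times_Z Y$, giving maps $\overline X:=\overline Z\times_Z X=\overline Y\times_Y X\to\overline Y\to\overline Z$, with $\overline g\colon\overline Y\to\overline Z$ and $\overline f\colon\overline X\to\overline Y$ the base changes of $g$ and $f$. Using $\RR(g\circ f)_*=\RR g_*\RR f_*$ and the analogous identity downstairs, the base change morphism for $g\circ f$ is the composite of the base change morphism for $g$ (applied to $\RR f_* F$) followed by $\RR g_*$ applied to the base change morphism for $f$ (applied to $F$); this is a routine naturality/compatibility check on the adjunction-defined morphisms of Proposition \ref{basechangemorphism}. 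Since $f$ and $g$ are c-proper, Corollary \ref{cproper&generalbasechange} gives that both of these constituent morphisms are isomorphisms, hence so is their composite.

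The main obstacle I anticipate is the bookkeeping in that last paragraph: one must check that the base change morphism of Proposition \ref{basechangemorphism} for $g\circ f$ genuinely decomposes as the asserted composite, i.e.\ that the morphisms built from the unit/counit of the adjunctions are compatible with the isomorphisms $\RR(g\circ f)_*\simeq \RR g_*\RR f_*$ and $\overline g_*\overline f_*\simeq\overline{(g\circ f)}_*$. This is a formal diagram chase with adjoint functors (it is the ``horizontal composition of base change squares'' formalism), but it requires care to state cleanly. Once it is in place, the conclusion is immediate. An entirely parallel argument, using $\LL(g\circ f)_!=\LL g_!\LL f_!$ and the homological half of Corollary \ref{cproper&generalbasechange}, disposes of the h-open case.
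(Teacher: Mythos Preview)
Your proposal is correct and follows essentially the same approach as the paper: use Corollary \ref{cproper&generalbasechange} to reduce to the base change criterion, then invoke $\RR(g\circ f)_*\simeq \RR g_*\circ\RR f_*$ to see that the base change property for $g\circ f$ follows from those for $f$ and $g$. The paper's proof is terser---it simply asserts that the base change theorem for $\RR g_*$ and $\RR f_*$ implies it for their composite---while you have correctly unpacked the cartesian-square factorization and the compatibility check that underlie this implication.
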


\begin{proof} Let $X\overset f\to Y\overset g\to Z$. If $f$ and $g$ are c-proper, then $\RR f_*$ and $\RR g_*$ satisfy base change theorem (Corollary \ref{cproper&generalbasechange}). Since $\RR(g\circ f)_*\simeq \RR g_*\circ\RR f_*$, it follows that $\RR(g\circ f)_*$ also satisfies base change theorem, and then $g\circ f$ is c-proper. For h-open maps the proof is analogous.
\end{proof}

\subsection{Projection and coprojection formulae}

In the context of locally compact and separated topological spaces, projection formula holds for proper maps (\cite{Iversen}). Now, for finite topological spaces, we shall see (Theorem \ref{hsatprojform}) that projection formula (for cohomology) holds if and only if the map is c-proper. Analogously, projection formula for homology holds if and only if the map is h-open.

\begin{prop}[Coprojection Formula]\label{coprojectionformula} Let $f\colon X\to Y$ be a continuous map. For any $F\in D(X)$ and any $\Lc\in D(Y)$  one has  a natural morphism
\[  \LL f_!(F\overset\LL \otimes f^{-1}\Lc) \to (\LL f_!F)\overset\LL\otimes \Lc\]
which is an  isomorphism if $\Lc$ is locally constant.
\end{prop}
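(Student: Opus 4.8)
The plan is to construct the natural morphism first and then reduce the isomorphism statement, when $\Lc$ is locally constant, to the case $\Lc=\ZZ$ (or more generally a constant complex) by a local argument on $Y$. To build the morphism, I would take a $K$-projective resolution $P\to F$ in $C(X)$ and note that, since $f^{-1}$ is exact (Proposition on inverse images) and preserves $K$-projectivity is not automatic, so instead I would work with the standard coflasque resolution $C_\punto$: by the description $\LL f_! F\simeq f_!C_\punto(F)$ it suffices to produce a morphism of complexes $f_!\bigl(C_\punto(F)\otimes f^{-1}\Lc\bigr)\to f_!C_\punto(F)\otimes \Lc$, or better, to use the adjunction $\LL f_!\dashv f^{-1}$ (Proposition \ref{AdjunctionsOfInvIm}). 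Indeed, by that adjunction a morphism $\LL f_!(F\overset\LL\otimes f^{-1}\Lc)\to (\LL f_!F)\overset\LL\otimes\Lc$ corresponds to a morphism $F\overset\LL\otimes f^{-1}\Lc\to f^{-1}\bigl((\LL f_!F)\overset\LL\otimes\Lc\bigr)\simeq f^{-1}\LL f_!F\overset\LL\otimes f^{-1}\Lc$ (using Proposition \ref{InvImTensHom}(1)), and this we get by tensoring the unit of adjunction $F\to f^{-1}\LL f_!F$ with $\id_{f^{-1}\Lc}$.

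Once the morphism is in hand, to prove it is an isomorphism when $\Lc$ is locally constant I would check it stalkwise on $Y$. Fix $y\in Y$; the stalk of the left-hand side at $y$ is $\LL(f^{-1}(C_y), (F\overset\LL\otimes f^{-1}\Lc)_{\vert f^{-1}(C_y)})$, and since $f^{-1}\Lc$ restricted to $f^{-1}(C_y)$ is the inverse image of $\Lc_{\vert C_y}$, which is locally constant, I want to reduce to $\Lc_{\vert C_y}$ constant. Here I would use that $C_y$ has a maximum, so any locally constant complex on $C_y$ is constant (Remark after the definition of locally constant data), hence $\Lc_{\vert C_y}\simeq\pi^{-1}V$ for a complex of abelian groups $V=\Lc_y$. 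Then $f^{-1}\Lc$ on $f^{-1}(C_y)$ is the constant complex $V$. So the claim at $y$ becomes: the natural morphism $\LL(f^{-1}(C_y), F'\overset\LL\otimes V)\to \LL(f^{-1}(C_y),F')\overset\LL\otimes V$ is an isomorphism, where $F'=F_{\vert f^{-1}(C_y)}$ and $V$ is a complex of abelian groups regarded as a constant complex.

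The remaining point — which I expect to be the only real content — is that $L(Z,\quad)$ commutes with $\overset\LL\otimes$ by a constant complex of abelian groups, for any finite space $Z$. Equivalently, $\LL(Z,F\overset\LL\otimes V)\simeq \LL(Z,F)\overset\LL\otimes_\ZZ V$ naturally. The way I would argue this: $\LL(Z,\quad)$ is computed by the coflasque standard resolution, $\LL(Z,F)\simeq L(Z,C_\punto F)$, and $C_\punto F$ is built from the discrete-space construction $C_0$ which is a direct sum of stalks over chains; tensoring commutes with direct sums, and $L(Z,\quad)$ as a colimit commutes with the tensor $-\otimes_\ZZ V$ when $V$ is flat, after which one handles general $V$ by taking a flat (e.g. free) resolution of $V$, using that $C_\punto(F)\otimes V$ with $V$ a bounded-above complex of frees computes $F\overset\LL\otimes V$ and stays coflasque (since a direct sum of coflasque data is coflasque and $C_\punto$ of anything is coflasque). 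Concretely, I would verify $L(X_\mathrm{dis}, \id^{-1}F\otimes V)=\bigoplus_x (F_x\otimes V)=\bigl(\bigoplus_x F_x\bigr)\otimes V = L(X_\mathrm{dis},\id^{-1}F)\otimes V$ on the level of $C_0$, pass to the full complex $C_\punto$ by the description $C_nF={\pi_n}_!C_0(\pi_1^{-1}F)$ together with the fact that $f_!$ commutes with $\otimes$ by an inverse-image constant (a baby case of what we are proving, on the discrete source — hence with no higher derived obstruction), and finally note that all these identifications are compatible with the differentials, giving the desired quasi-isomorphism. The main obstacle is bookkeeping: making sure the stalkwise isomorphism just constructed is the stalk of the globally defined natural morphism, i.e. that the adjunction-theoretic morphism agrees with the resolution-level one; I would settle this by the usual compatibility of $\overset\LL\otimes$ and $\LL f_!$ with their defining resolutions, so no genuinely new idea is needed beyond the constant-coefficient case.
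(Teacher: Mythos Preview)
Your construction of the natural morphism via the unit of the adjunction $\LL f_!\dashv f^{-1}$ is correct and standard. Your isomorphism argument, reducing stalkwise to the case of a constant complex on $f^{-1}(C_y)$ and then computing directly with the resolution $C_\punto$, also goes through: the key identity $L(Z,C_n(F')\otimes V)\simeq L(Z,C_nF')\otimes V$ holds because $L(Z,-)$ is a cokernel (hence right exact, so commutes with $\otimes V$), and replacing $V$ by a $K$-flat complex of abelian groups handles the derived tensor. The bookkeeping you flag (compatibility of the stalkwise iso with the global morphism) is routine naturality.

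The paper, however, takes a much shorter and purely formal route. It does not compute anything stalkwise. Instead, it invokes Proposition~\ref{InvImTensHom}(2): the natural morphism $f^{-1}\RR\HHom^\punto(\Lc,K)\to\RR\HHom^\punto(f^{-1}\Lc,f^{-1}K)$ is an isomorphism when $\Lc$ is locally constant. Applying $\RR\Hom^\punto(F,-)$ and unwinding the adjunctions $\LL f_!\dashv f^{-1}$ and $\overset\LL\otimes\dashv\RR\HHom$ on both sides gives, for every $K\in D(Y)$, a morphism
\[
\RR\Hom^\punto\bigl((\LL f_!F)\overset\LL\otimes\Lc,\,K\bigr)\longrightarrow\RR\Hom^\punto\bigl(\LL f_!(F\overset\LL\otimes f^{-1}\Lc),\,K\bigr),
\]
which is an isomorphism when $\Lc$ is locally constant; Yoneda then yields both the morphism and its invertibility in one stroke. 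So the ``real content'' you isolate (commutation of $\LL(Z,-)$ with tensoring by a constant complex) is, in the paper's organization, already packaged inside Proposition~\ref{InvImTensHom}(2) and its supporting Lemma~\ref{lemita}. Your approach is more self-contained and makes the mechanism visible; the paper's is cleaner but relies on that earlier result.
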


\begin{proof} By Proposition \ref{InvImTensHom}, for any $K\in D(Y)$ one has a morphism $$f^{-1}\RR\HHom^\punto (\Lc,K)\to \RR\HHom^\punto (f^{-1}\Lc, f^{-1}K),$$  which is an isomorphism if $\Lc$ is locally constant. Taking $\RR\Hom^\punto(F,\quad)$, one obtains, by adjunctions, a morphism
\[ \RR\Hom^\punto((\LL f_!F)\overset\LL\otimes \Lc,K)\to \RR\Hom^\punto (\LL f_!(F\overset\LL\otimes f^{-1}\Lc), K),\] hence a morphism
\[ \LL f_!(F\overset\LL\otimes f^{-1}\Lc)\to (\LL f_!F)\overset\LL\otimes \Lc,\] which is an isomorphism if $\Lc$ is locally constant.
\end{proof}

\begin{prop}[Projection Formula]\label{projectionformula} Let $f\colon X\to Y$ be a continuous map. For any $F\in D(X)$ and any $\Lc\in D(Y)$ one has a natural morphism
\[ (\RR f_*F)\overset\LL\otimes \Lc \to\RR f_*(F\overset\LL \otimes f^{-1}\Lc)\]
which is an  isomorphism if $\Lc$ is locally constant.
\end{prop}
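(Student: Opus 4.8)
The plan is to mimic the construction of the coprojection formula morphism in Proposition~\ref{coprojectionformula}, using the adjunction between $\RR f_*$ and $f^{-1}$ (Proposition~\ref{AdjunctionsOfInvIm}) together with the adjunction between $\overset\LL\otimes$ and $\RR\HHom^\punto$ (Proposition~\ref{RHom&Tens}) and the compatibility of $f^{-1}$ with $\overset\LL\otimes$ (Proposition~\ref{InvImTensHom}(1)). Concretely, to give a morphism $(\RR f_*F)\overset\LL\otimes\Lc\to\RR f_*(F\overset\LL\otimes f^{-1}\Lc)$ it suffices, by the $(\RR f_*,f^{-1})$-adjunction, to give a morphism $f^{-1}\bigl((\RR f_*F)\overset\LL\otimes\Lc\bigr)\to F\overset\LL\otimes f^{-1}\Lc$. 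Using Proposition~\ref{InvImTensHom}(1) the left-hand side is $(f^{-1}\RR f_*F)\overset\LL\otimes(f^{-1}\Lc)$, so it is enough to tensor the counit $f^{-1}\RR f_*F\to F$ with $\id_{f^{-1}\Lc}$. This produces the desired natural morphism without any hypothesis on $\Lc$.

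Next I would prove it is an isomorphism when $\Lc$ is locally constant. The cleanest route is to dualize: assume first that $\Lc$ is perfect (locally constant of finite type bounded), so that $\Lc\simeq(\Lc^\vee)^\vee$ and, by the displayed formulas in the ``Perfect complexes'' subsection, $\Lc^\vee\overset\LL\otimes F'\overset\sim\to\RR\HHom^\punto(\Lc,F')$ naturally in $F'\in D(Y)$, and similarly on $X$ with $f^{-1}\Lc$ (which is again perfect). Then $(\RR f_*F)\overset\LL\otimes\Lc\simeq\RR\HHom^\punto(\Lc^\vee,\RR f_*F)$ and $\RR f_*(F\overset\LL\otimes f^{-1}\Lc)\simeq\RR f_*\RR\HHom^\punto(f^{-1}\Lc^\vee,F)$, and the claim becomes the commutation of $\RR f_*$ with $\RR\HHom^\punto(\mathcal{M},\quad)$ for $\mathcal{M}=\Lc^\vee$ a perfect (hence dualizable, locally constant) complex on $Y$. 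This in turn follows by checking stalks: $(\RR f_*\RR\HHom^\punto(f^{-1}\mathcal M,F))_y=\RR\Gamma(f^{-1}(U_y),\RR\HHom^\punto(f^{-1}\mathcal M,F))=\RR\Hom^\punto((f^{-1}\mathcal M)_{\vert f^{-1}(U_y)},F_{\vert f^{-1}(U_y)})$, while $(\RR\HHom^\punto(\mathcal M,\RR f_*F))_y=\RR\Hom^\punto(\mathcal M_{\vert U_y},(\RR f_*F)_{\vert U_y})=\RR\Hom^\punto(\mathcal M_{\vert U_y},\RR\Gamma(f^{-1}(U_y),F_{\vert f^{-1}(U_y)}))$; since $\mathcal M_{\vert U_y}$ is constant (it is locally constant on $U_y$, which has a minimum) and $(f^{-1}\mathcal M)_{\vert f^{-1}(U_y)}$ is its inverse image, Lemma~\ref{lemita} applied to the connected space $f^{-1}(U_y)$ identifies the two, and one checks the identification is the stalk of our morphism.

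To remove the finite-type hypothesis and treat an arbitrary locally constant $\Lc$, I would reduce to the perfect case by a limit/dévissage argument: a locally constant abelian data is a filtered direct limit of locally constant data of finite type (on each $U_p$ it is a constant data $G$, and $G=\ilim{} G_i$ over finitely generated subgroups, compatibly), both sides of the formula commute with filtered direct limits in the $\Lc$ variable (using that $\RR f_*$ commutes with filtered direct limits, as recalled in the ``Standard results'' subsection, and that $\overset\LL\otimes$ and $f^{-1}$ do as well), so the general locally constant case follows from the finite-type case; then pass from finite-type locally constant abelian data to bounded locally constant complexes of finite type by the usual way-out / truncation argument, both sides being way-out functors in $\Lc$.

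The main obstacle I anticipate is the last reduction: bootstrapping from the perfect (bounded, finite-type) case to an unbounded locally constant complex $\Lc$. Unbounded complexes and the $K$-injective/$K$-projective formalism require care with the way-out arguments and with commuting derived functors past homotopy colimits; in particular one must be sure that $\RR f_*$ genuinely commutes with the relevant filtered colimits at the level of the derived category (which the excerpt does assert) and that $\RR\HHom^\punto(\Lc^\vee,\quad)$ behaves well for $\Lc$ merely locally constant rather than perfect. If this causes trouble, an alternative is to avoid dualizing altogether and instead argue directly on stalks: since being an isomorphism can be checked stalkwise and $(\RR f_*F)_y=\RR\Gamma(f^{-1}(U_y),F)$, one reduces to the absolute statement $\RR\Gamma(X',F')\overset\LL\otimes\Lc'\overset\sim\to\RR\Gamma(X',F'\overset\LL\otimes\Lc')$ for $X'=f^{-1}(U_y)$ connected and $\Lc'$ constant, where $\Lc'=\pi^{-1}\Lc'$ and the projection formula for $\pi\colon X'\to\{*\}$ together with the $K$-projective computation $\RR\Gamma(X',\quad)\overset\LL\otimes\Lc'$ gives the result essentially by Lemma~\ref{lemita}.
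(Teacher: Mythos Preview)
Your construction of the morphism is exactly the paper's: use Proposition~\ref{InvImTensHom}(1) and the counit $f^{-1}\RR f_*F\to F$, then adjoin. Good.

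For the isomorphism, however, the paper takes a much more direct route than your main proposal, and it is essentially your ``alternative'' made explicit. The paper first reduces to $Y=U_p$ (stalkwise check), so that $\Lc$ becomes a \emph{constant} complex; then it runs a d\'evissage purely on the complex $\Lc$: write $\Lc=\ilim{n}\Lc^{\leq n}$ to reduce to bounded-above, replace by a bounded-above complex of free $\ZZ$-modules (possible because $\Lc$ is now constant), write $\Lc=\ilim{n}\Lc_{\geq -n}$ to reduce to bounded, and finally to a single free $\ZZ$-module, where the statement is trivial. Both sides commute with the relevant filtered direct limits, so each step passes. No duality, no perfect complexes, no Lemma~\ref{lemita}.

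Your dualizing approach for perfect $\Lc$ is correct but unnecessarily elaborate. More importantly, your reduction step (B) has a real gap as stated: it is \emph{not} clear that an arbitrary locally constant abelian data on $Y$ is a filtered direct limit of locally constant data \emph{of finite type}. Locally constant data on connected $Y$ are $\ZZ[\pi_1(Y)]$-modules; any such module is a filtered colimit of finitely generated $\ZZ[\pi_1(Y)]$-submodules, but ``finite type'' in this paper means the stalks are finitely generated \emph{abelian groups}, and when $\pi_1(Y)$ is infinite these two finiteness notions diverge. The paper sidesteps this entirely by first localizing to $Y=U_p$, where locally constant means constant and the issue disappears. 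Your stalkwise alternative is therefore the right instinct; the paper shows that once you are there, you do not need perfectness or Lemma~\ref{lemita} at all---just the free-module reduction.
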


\begin{proof} The natural morphism $\epsilon\colon f^{-1}\RR f_*F\to F$, induces a morphism $$ f^{-1}(\RR f_*F\overset\LL\otimes \Lc)\overset{\ref{InvImTensHom}}\simeq f^{-1}\RR f_*F\overset\LL\otimes f^{-1} \Lc\overset{\epsilon\otimes 1} \to F\overset\LL\otimes f^{-1} \Lc $$ and, by adjunction, a morphism $(\RR f_*F)\overset\LL\otimes \Lc \to\RR f_*(F\overset\LL \otimes f^{-1}\Lc)$. In order to prove that it is an isomorphism, we may assume that $Y=U_p$. If $\Lc=\ilim{}\Lc_i$, it suffices to prove the statement for each $\Lc_i$, because both sides of the morphism commute with filtered direct limits. Let $\Lc^{\leq n}$ be the complex
\[ \cdots\to \Lc^{n-2}\to \Lc^{n-1}\to Z^n\to 0\] with $Z^n$ the cycles of degree $n$. One has that $\Lc=\ilim{n}\Lc^{\leq n}$, so we may assume that $\Lc$ is bounded above. Since $\Lc$ is locally constant, it is  isomorphic to a bounded above complex of free $\ZZ$-modules. Now let $ \Lc_{\geq -n}$ be the complex
\[ 0\to \Lc^{-n}\to\Lc^{-n+1}\to\Lc^{-n+2}\to\cdots\] One has  $\Lc=\ilim{n}\Lc_{\geq -n}$, so we may assume that $\Lc$ is a bounded complex of free $\ZZ$-modules.  This is easily reduced to the case where $\Lc$ is just a free $\ZZ$-module, which is trivial.
\end{proof}


\begin{thm}\label{hsatprojform} Let $f\colon X\to Y$ be a continuous map. The following conditions are equivalent
\begin{enumerate} 
\item $f$ is h-open (resp. c-proper).
\item Coprojection (resp. projection) formula holds: for any $F\in D(X)$, $\Lc\in D(Y)$,
\[ \LL f_!(F\overset\LL \otimes f^{-1}\Lc)\to  (\LL f_! F)\overset\LL\otimes \Lc \quad (\text{\rm resp. } (\RR f_* F)\overset\LL\otimes \Lc\to \RR f_*(F\overset\LL \otimes f^{-1}\Lc))\] is an isomorphism.

\item $\LL f_!$ (resp. $\RR f_*$) commutes with supporting on closed subsets:  for any closed subset $C\subseteq Y$ and any $F\in D(X)$ the natural morphism
\[  \LL f_! (F_{f^{-1}(C)})\to (\LL f_!F)_C \quad (\text{\rm resp. } (\RR f_*F)_C\to \RR f_* (F_{f^{-1}(C)}))\] is an isomorphism.

\item $\LL f_!$ (resp. $\RR f_*$) commutes with supporting on open   subsets:  for any open   subset $V\subseteq Y$ and any $F\in D(X)$ the natural morphism
\[\LL f_! (F_{f^{-1}(V)}) \to  (\LL f_!F)_V \quad (\text{\rm resp. } (\RR f_*F)_V\to \RR f_* (F_{f^{-1}(V)})) \] is an isomorphism.


\end{enumerate}
\end{thm}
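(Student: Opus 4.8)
The plan is to establish the four conditions as equivalent by running the cycle $(1)\Rightarrow(2)\Rightarrow(3)\Rightarrow(1)$ together with $(3)\Leftrightarrow(4)$, treating the cohomological (\emph{c-proper}, $\RR f_*$) case explicitly and the homological (\emph{h-open}, $\LL f_!$) case in parallel — the only asymmetry being that the last step runs $(4)\Rightarrow(1)$ rather than $(3)\Rightarrow(1)$. First I would dispose of the formal implications. $(2)\Rightarrow(3)$ and $(2)\Rightarrow(4)$ are immediate: $\ZZ_S$ is a flat abelian data and $f^{-1}\ZZ_S=\ZZ_{f^{-1}(S)}$, so $F\overset\LL\otimes f^{-1}\ZZ_S=F_{f^{-1}(S)}$ and $(\RR f_*F)\overset\LL\otimes\ZZ_S=(\RR f_*F)_S$, and it suffices to take $\Lc=\ZZ_C$ (resp.\ $\Lc=\ZZ_V$) in (2). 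For $(3)\Leftrightarrow(4)$ I would put $C=Y\setminus V$ and tensor $F$ with $f^{-1}$ of the exact sequence $0\to\ZZ_V\to\ZZ\to\ZZ_C\to 0$, then apply $\RR f_*$ (resp.\ $\LL f_!$): the projection (resp.\ coprojection) morphisms of Proposition \ref{projectionformula} (resp.\ \ref{coprojectionformula}) give a morphism from the resulting triangle to $(\RR f_*F)_V\to\RR f_*F\to(\RR f_*F)_C\to$ which is the identity on the middle term, so its two outer components are isomorphisms at the same time.

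For $(1)\Rightarrow(2)$ the plan is to check the projection morphism stalkwise. Fixing $y\in Y$, c-properness lets me invoke Corollary \ref{cproper&generalbasechange} (base change, valid precisely because $f$ is c-proper), applied to $\{y\}\hookrightarrow Y$, to get a canonical isomorphism $(\RR f_*G)_y\overset\sim\to\RR\Gamma(f^{-1}(y),G_{\vert f^{-1}(y)})$ for every $G\in D(X)$. I would apply this to $G=F$ and to $G=F\overset\LL\otimes f^{-1}\Lc$; for the latter the key point is that $(f^{-1}\Lc)_{\vert f^{-1}(y)}$ is the \emph{constant} complex $\Lc_y$, since $f$ is constant on $f^{-1}(y)$. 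Hence $(\RR f_*(F\overset\LL\otimes f^{-1}\Lc))_y\overset\sim\to\RR\Gamma(f^{-1}(y),F_{\vert f^{-1}(y)}\overset\LL\otimes\Lc_y)$, and now the projection formula for $f^{-1}(y)\to\{*\}$ with the (constant, hence locally constant) coefficients $\Lc_y$ — exactly Proposition \ref{projectionformula} — rewrites this as $\RR\Gamma(f^{-1}(y),F_{\vert f^{-1}(y)})\overset\LL\otimes_\ZZ\Lc_y$, while a further use of base change turns the first factor into $(\RR f_*F)_y$. Since $((\RR f_*F)\overset\LL\otimes\Lc)_y=(\RR f_*F)_y\overset\LL\otimes_\ZZ\Lc_y$, composing these canonical isomorphisms yields the desired stalk of the projection morphism, which I would confirm by a diagram chase. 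The homological case is the same with $\LL f_!$, $\LL(f^{-1}(y),-)$, the homological statement of Corollary \ref{cproper&generalbasechange}, and Proposition \ref{coprojectionformula}.

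For $(3)\Rightarrow(1)$ I would reduce, via Theorem \ref{c-proper&basechange}, to verifying the base change theorem (the statement for a complex $F\in D(X)$ specialises to the one for abelian data). Fixing $y$, I would apply (3) to the complex $F_{f^{-1}(U_y)}$ and the closed subset $C_y$; since $(F_{f^{-1}(U_y)})_{f^{-1}(C_y)}=F_{f^{-1}(U_y\cap C_y)}=F_{f^{-1}(y)}$, this gives $(\RR f_*(F_{f^{-1}(U_y)}))_{C_y}\overset\sim\to\RR f_*(F_{f^{-1}(y)})$; taking stalks at $y\in C_y$ turns the left side into $(\RR f_*F)_y$ and the right side into $\RR\Gamma(f^{-1}(U_y),F_{f^{-1}(y)})=\RR\Gamma(f^{-1}(y),F_{\vert f^{-1}(y)})$ (because $f^{-1}(y)$ is closed in $f^{-1}(U_y)$), which — after checking it is the base change morphism of Proposition \ref{basechangemorphism} — says $f$ is c-proper. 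For the homological $(4)\Rightarrow(1)$ I would instead apply (4) to $F$ and the open subset $U_y$, take stalks at $y\in U_y$, and use that $(F_{f^{-1}(U_y)})_{\vert f^{-1}(C_y)}$ is the extension by zero, from the \emph{open} subset $f^{-1}(y)\subseteq f^{-1}(C_y)$, of $F_{\vert f^{-1}(y)}$, together with the identity $\LL(f^{-1}(C_y),j_!(-))=\LL(f^{-1}(y),-)$ for $j\colon f^{-1}(y)\hookrightarrow f^{-1}(C_y)$ (a consequence of $\LL(g\circ f)_!=\LL g_!\circ\LL f_!$ and the exactness of $j_!$); this identifies the stalk at $y$ of $\LL f_!(F_{f^{-1}(U_y)})$ with $\LL(f^{-1}(y),F_{\vert f^{-1}(y)})$ and of $(\LL f_!F)_{U_y}$ with $(\LL f_!F)_y$, i.e.\ gives the homological base change theorem, and Theorem \ref{h-open&basechange} then yields that $f$ is h-open.

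The hard part will be $(1)\Rightarrow(2)$: a general coefficient complex on $Y$ cannot be reduced to a locally constant one there, so Propositions \ref{projectionformula}--\ref{coprojectionformula} are not directly applicable; the device is that base change transports the whole situation to the fibers $f^{-1}(y)$, over which $f^{-1}\Lc$ does become constant. This is the only place where c-properness (resp.\ h-openness) is used, which is also why the converse is forced. The remaining effort is bookkeeping: checking that the strings of canonical isomorphisms assembled above coincide with the projection, coprojection and base change morphisms of Propositions \ref{projectionformula}, \ref{coprojectionformula} and \ref{basechangemorphism}, which is routine but must be done with care.
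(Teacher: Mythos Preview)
Your proposal is correct and follows essentially the same strategy as the paper: the implications $(2)\Rightarrow(3)$, $(3)\Leftrightarrow(4)$ via the triangle $\ZZ_V\to\ZZ\to\ZZ_C$, and $(1)\Rightarrow(2)$ via stalkwise base change (Corollary~\ref{cproper&generalbasechange}) reducing to the fiber map $f^{-1}(y)\to\{y\}$ with constant coefficients, are exactly what the paper does (the paper writes out the homological side, you the cohomological). The only organisational difference is in closing the loop: the paper, working homologically, first extracts from (4) the ``locality'' statement $\LL f_{V!}(F_{\vert f^{-1}(V)})\simeq(\LL f_!F)_{\vert V}$ and then applies base change for the closed point $\{y\}\hookrightarrow U_y$; you instead apply (3) directly to $F_{f^{-1}(U_y)}$ and $C_y$ (cohomologically) or compute $(\LL f_!(F_{f^{-1}(U_y)}))_y$ by hand (homologically), which amounts to the same thing without isolating the intermediate statement.
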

\begin{proof} We give the proofs for the homological statements and leave the reader to do the analogous ones for the respective cohomological statements.

 (2) $\Rightarrow$ (3). It suffices to apply coprojection formula to $\Lc=\ZZ_C$.

(3) $\Rightarrow$ (4). It follows from the commutative diagram of exact triangles
\[\xymatrix{ (\LL f_!F)_{X-C}\ar[r]\ar[d] & \LL f_!F \ar[r]\ar[d]^{\id} & (\LL f_!F)_C\ar[d]^{\wr}\\ 
\LL f_! (F_{f^{-1}(X-C)})\ar[r] &\LL f_! F \ar[r] & \LL f_! (F_{f^{-1}(C)}).
}\]

(4) $\Rightarrow$ (1). Let $V$ be an open subset of $Y$. Let us denote $i\colon V\hookrightarrow Y$ and $j\colon f^{-1}(V)\hookrightarrow X$. Then
\[ i_![(\LL f_!F)_{\vert V}]=(\LL f_!F)_V\overset\sim\to \LL f_! (F_{f^{-1}(V)}) = \LL f_! (j_! F_{\vert f^{-1}(V)})= i_![\LL { f_V}_!(F_{\vert f^{-1}(V)})].\] Restricting to $V$ one concludes that $$(*)\qquad\qquad \qquad\qquad \qquad\qquad  \LL { f_V}_!(F_{\vert f^{-1}(V)})\overset\sim\to (\LL f_!F)_{\vert V}. \qquad\qquad \qquad\qquad \qquad\qquad \qquad\qquad  $$ In particular, let us take $y\in Y$, $V=U_y$ and $F$ an abelian data on $X$. Taking fiber at $y$ and $H_i$ in the isomorphism $(*)$, one obtains (notice that $\{ y\}$ is a closed subset of $U_y$, so  we can apply Proposition \ref{basechangemorphism})
\[ H_i(f^{-1}(y),F_{\vert f^{-1}(y)})\overset\sim\to (L_if_!F)_y.\] By Theorem \ref{h-open&basechange}, $f$ is h-open. 

(1) $\Rightarrow$ (2). Let us prove that
\[\LL f_!(F\overset\LL \otimes f^{-1}\Lc) \to (\LL f_! F)\overset\LL\otimes \Lc \] is an isomorphism. Taking the stalk at a point $y\in Y$ and making use of Corollary \ref{cproper&generalbasechange}, one obtains the morphism
\[  \LL (f^{-1}(y), F_{\vert f^{-1}(y)}\overset\LL\otimes f_y^{-1}\Lc_y)  \to \LL(f^{-1}(y), F_{\vert f^{-1}(y)})\overset\LL\otimes \Lc_y \] with $f_y\colon f^{-1}(y)\to\{ y\}$. This is an isomorphism by coprojection formula (Proposition \ref{coprojectionformula}) applied to the morphism $f_y$.
\end{proof}

%

\section{Dualities}\label{4}

\subsection{Duality between homology and cohomology}

\begin{thm} Let $f\colon X\to Y$ be a continuous map. For any    $F\in D(X)$  and $G\in D(Y)$ one has:
\[\RR\Hom^\punto (\LL f_!F, G )=  \RR\Gamma(X,\RR\HHom^\punto (F,f^{-1}G)).\]
In particular,
\[\RR\Hom^\punto (\LL f_!\ZZ, G )= \RR\Gamma(X, f^{-1}G).\] 
\end{thm}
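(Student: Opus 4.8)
The plan is to reduce everything to two results already available in the excerpt: the adjunction between $\LL f_!$ and $f^{-1}$ (Proposition \ref{AdjunctionsOfInvIm}) and the globalization of internal $\Hom$, i.e. the canonical isomorphism $\RR\Gamma(U,\RR\HHom^\punto(F,F'))\simeq \RR\Hom^\punto(F_{\vert U},F'_{\vert U})$ specialized to $U=X$.

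First I would apply the second adjunction formula of Proposition \ref{AdjunctionsOfInvIm} with the pair $(F,G)$, which gives a natural isomorphism
\[ \RR\Hom^\punto(\LL f_!F,G)\simeq \RR\Hom^\punto(F,f^{-1}G). \]
Then I would rewrite the right-hand side using the globalization isomorphism with $U=X$ (and $F'=f^{-1}G$), obtaining
\[ \RR\Hom^\punto(F,f^{-1}G)\simeq \RR\Gamma(X,\RR\HHom^\punto(F,f^{-1}G)). \]
Composing the two displays yields the desired identity $\RR\Hom^\punto(\LL f_!F,G)=\RR\Gamma(X,\RR\HHom^\punto(F,f^{-1}G))$. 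For the particular case one takes $F=\ZZ$ and uses that $\RR\HHom^\punto(\ZZ,K)=\HHom^\punto(\ZZ,I(K))\simeq K$ for any $K$ (the constant data $\ZZ$ is a unit for internal $\Hom$), so $\RR\HHom^\punto(\ZZ,f^{-1}G)\simeq f^{-1}G$ and the formula becomes $\RR\Hom^\punto(\LL f_!\ZZ,G)=\RR\Gamma(X,f^{-1}G)$.

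I do not expect a genuine obstacle here, since both ingredients are proved earlier; the only points requiring a line of care are checking that the two isomorphisms are natural in both variables (so that the composite is canonical, not merely an abstract isomorphism) and confirming the direction of the adjunction ($\LL f_!$ is \emph{left} adjoint to $f^{-1}$, so the $\Hom$ with $\LL f_!F$ in the first slot matches the $\Hom$ with $F$ in the first slot, which is exactly what the statement asserts). Thus the write-up will be short: one invocation of Proposition \ref{AdjunctionsOfInvIm}, one invocation of the $\RR\HHom$/$\RR\Hom$ globalization at $U=X$, and a one-line specialization to $F=\ZZ$.
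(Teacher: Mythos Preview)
Your proposal is correct and follows essentially the same route as the paper: apply the adjunction $\RR\Hom^\punto(\LL f_!F,G)\simeq\RR\Hom^\punto(F,f^{-1}G)$ from Proposition \ref{AdjunctionsOfInvIm}, then rewrite the right-hand side as $\RR\Gamma(X,\RR\HHom^\punto(F,f^{-1}G))$ via the globalization identity for $\RR\HHom$. The paper's proof is a single line doing exactly these two steps, and your specialization to $F=\ZZ$ is likewise the intended one.
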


\begin{proof} $\RR\Hom^\punto (\LL f_!F, G )= \RR\Hom^\punto (F,f^{-1}G)=\RR\Gamma(X,\RR\HHom^\punto (F,f^{-1}G)).$
\end{proof}

\noindent{\bf\ \ Notation.} Recall that we are using the simplified notation $$\LL (X,F) =\LL\OOgamma (X,F)=\OOgamma (X,C_\punto F).$$

\begin{cor}\label{Cor-homo-cohomo} For any  $F\in D(X)$ and any complex $G$ of abelian groups  one has $$\RR\Hom^\punto (\LL (X,F),G)=\RR\Gamma (X,\RR\HHom^\punto(F,G)).$$
In particular, 
\begin{enumerate} \item $\LL (X,F)^\vee = \RR\Gamma(X,F^\vee)$.
\item For any closed subset $Y\subseteq X$, $$\RR\Hom^\punto (\LL (X,\ZZ_Y),G)=\RR\Gamma_Y (X, G ).$$
\end{enumerate}
\end{cor}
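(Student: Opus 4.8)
The plan is to read the displayed equation off the preceding Theorem by taking $f$ to be the projection $\pi\colon X\to\{*\}$. For $f=\pi$ one has $\LL f_!F=\LL(X,F)$ by definition, $f^{-1}G=\pi^{-1}G$ is the constant complex $G$ on $X$ (which is exactly the object meant by $\RR\HHom^\punto(F,G)$ when $G$ is a complex of abelian groups), and $\RR\Hom^\punto$ in $D(\{*\})$ is nothing but the derived complex of homomorphisms of complexes of abelian groups. Thus the Theorem specializes to $\RR\Hom^\punto(\LL(X,F),G)=\RR\Gamma(X,\RR\HHom^\punto(F,G))$, and nothing further is needed for the main formula.

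For (1) I would set $G=\ZZ$. The right-hand side becomes $\RR\Gamma(X,\RR\HHom^\punto(F,\ZZ))=\RR\Gamma(X,F^\vee)$ by definition of the dual. On the left, $\LL(X,F)$ is a complex of abelian groups, so $\RR\Hom^\punto(\LL(X,F),\ZZ)$ coincides with $\RR\HHom^\punto(\LL(X,F),\ZZ)=\LL(X,F)^\vee$, since sheaf-Hom agrees with Hom over a one-point space. This gives $\LL(X,F)^\vee=\RR\Gamma(X,F^\vee)$.

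For (2) I would set $F=\ZZ_Y$. By the main formula, together with the canonical isomorphism $\RR\Gamma(X,\RR\HHom^\punto(F,F'))\simeq\RR\Hom^\punto(F,F')$ over $X$, the left-hand side equals $\RR\Hom^\punto(\ZZ_Y,G)$, so it remains to identify this with $\RR\Gamma_Y(X,G)$. The underived identity $\Hom(\ZZ_Y,G)=\Gamma_Y(X,G)$ is Proposition \ref{propsop2}(1); to derive it I would apply $\RR\Hom^\punto(-,G)$ to the short exact sequence $0\to\ZZ_{X-Y}\to\ZZ\to\ZZ_Y\to 0$ (valid because $Y$ is closed and $X-Y$ open). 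Using $\RR\Hom^\punto(\ZZ,G)=\RR\Gamma(X,G)$ and $\RR\Hom^\punto(\ZZ_{X-Y},G)=\RR\Hom^\punto(j_!\ZZ,G)=\RR\Gamma(X-Y,G)$ — the latter by exactness of $j_!$ and the adjunction $\LL j_!\dashv j^{-1}$ of Proposition \ref{AdjunctionsOfInvIm} for the open inclusion $j\colon X-Y\hookrightarrow X$ — one obtains an exact triangle $\RR\Hom^\punto(\ZZ_Y,G)\to\RR\Gamma(X,G)\to\RR\Gamma(X-Y,G)$, which is precisely the triangle defining $\RR\Gamma_Y(X,G)$.

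The only step requiring a little care — the main obstacle, modest as it is — is the last one: one must check that the triangle obtained from $0\to\ZZ_{X-Y}\to\ZZ\to\ZZ_Y\to 0$ maps compatibly onto the defining triangle of $\RR\Gamma_Y(X,G)$, so that the resulting isomorphism $\RR\Hom^\punto(\ZZ_Y,G)\simeq\RR\Gamma_Y(X,G)$ is the canonical one and not merely an abstract isomorphism of triangles. Everything else is a direct substitution into the preceding Theorem.
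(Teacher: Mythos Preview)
Your proof is correct and follows exactly the approach the paper intends: the corollary has no explicit proof in the paper because it is obtained by specializing the preceding Theorem to $f=\pi\colon X\to\{*\}$, which is precisely what you do. Your additional justification for (2) via the triangle coming from $0\to\ZZ_{X-Y}\to\ZZ\to\ZZ_Y\to 0$ is a clean way to pass from the underived identity of Proposition~\ref{propsop2}(1) to its derived form, and the compatibility check you flag is routine.
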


\subsection{Duality between $X$ and $\widehat X$}\label{dualspace}$\,$\medskip

\begin{defn} Let $F$ be an abelian data on $X$ and $\Lc$ a locally constant data on $X$. For each $\wh p\in \wh X$, we shall denote 
\[ \DHom(F,\Lc)_{\widehat p}=\Hom(F_p,\Lc_p).\] For each $\wh p\leq \wh q$, one has a morphism $F_q\to F_p$ and an isomorphism $\Lc_q\overset\sim\to\Lc_p$ ($\Lc$ is locally constant) that induce a   morphism
\[ \DHom(F,\Lc)_{\wh p}=\Hom(F_p,\Lc_p)\to \Hom(F_q,\Lc_p)\overset\sim\leftarrow \Hom(F_q,\Lc_q)=\DHom(F,\Lc)_{\wh q}.\] Thus
$\DHom(F,\Lc)$ is an abelian data on $\widehat X $. 
\end{defn} 


We can derive the construction $\DHom(F,\Lc)$. Let $F$ be a complex of abelian data and $\Lc$ a complex of locally constant abelian data. We define $\DHom^\punto(F,\Lc)$ as: $$\DHom^\punto(F,\Lc)_p=\Hom^\punto(F_p,\Lc_p)$$ and we shall   denote
\[ \LL\DHom^\punto(F,\Lc):=\DHom^\punto(P,\Lc)\] where $P\to F$ is a projective resolution. This defines a contravariant functor
\[ \aligned D(X)&\to D (\widehat X)\\ F &\mapsto \LL\DHom^\punto(F,\Lc)\endaligned.\]
Finally, we shall denote $\widehat F:=\LL\DHom^\punto(F,\ZZ)$.

\begin{prop}\label{puntuallyprojective} Let $F\in C(X)$. If $F_x$ is projective for any $x\in X$, then the natural morphism 
\[ \DHom^\punto(F,\Lc)\to\LL\DHom (F,\Lc)\] is a quasi-isomorphism (an isomorphism in $D(\widehat X)$).
\end{prop}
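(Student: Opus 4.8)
The statement is: if $F \in C(X)$ has $F_x$ projective for every $x \in X$, then $\DHom^\punto(F,\Lc) \to \LL\DHom^\punto(F,\Lc)$ is a quasi-isomorphism in $D(\widehat X)$. By definition $\LL\DHom^\punto(F,\Lc) = \DHom^\punto(P,\Lc)$ for a projective resolution $P \to F$ in $C(X)$, so the natural comparison map is $\DHom^\punto(F,\Lc) \to \DHom^\punto(P,\Lc)$ induced by $P \to F$. Since $\DHom^\punto$ is computed stalkwise on $\widehat X$ — namely $\DHom^\punto(F,\Lc)_{\widehat p} = \Hom^\punto(F_p, \Lc_p)$ — being a quasi-isomorphism in $D(\widehat X)$ can be checked stalkwise, i.e. it suffices to show that for each $p$, the map of complexes of abelian groups
\[ \Hom^\punto(F_p, \Lc_p) \longrightarrow \Hom^\punto(P_p, \Lc_p) \]
is a quasi-isomorphism.

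First I would reduce to understanding the map $P_p \to F_p$ of complexes of abelian groups. Since $P \to F$ is a quasi-isomorphism and taking stalks at $p$ is exact (evaluation at an object of the category $X$ is exact), each $P_p \to F_p$ is a quasi-isomorphism of complexes of abelian groups. Moreover each $P_p$ is a projective abelian group (projective abelian data restrict to projective groups stalkwise — this is standard, and in any case one may arrange $P$ so that each $P_p$ is projective, e.g. using the standard projective resolution or $K$-projective machinery of the paper). So both $F_p$ and $P_p$ are complexes of projective (= free, up to the usual subtlety over $\ZZ$) abelian groups, and $P_p \to F_p$ is a quasi-isomorphism between them. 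The claim then follows from the general fact that applying $\Hom^\punto(-, M)$ to a quasi-isomorphism between complexes of projectives yields a quasi-isomorphism, for any abelian group $M$ (here $M = \Lc_p$, applied degreewise). Equivalently: a quasi-isomorphism of bounded-above complexes of projectives is a homotopy equivalence, and $\Hom^\punto(-, M)$ preserves homotopy equivalences.

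The one point requiring care — and the main obstacle — is boundedness, since "quasi-isomorphism of projective complexes implies homotopy equivalence" is classical for complexes bounded above but needs the $K$-projective formalism in general. Here, however, $F \in C(X)$ is a complex on a finite space, and more importantly the standard projective resolution $C_\punto F$ of Theorem \ref{finitecoflasqueresolution} is finite (length $\leq \dim X$); combined with the hypothesis that each $F_x$ is already projective, I expect one can take $P$ to be the total complex of $C_\punto F$, which has projective stalks $(C_\punto F)_p$ and for which the comparison with $F$ is, degreewise, a finite resolution. So the reduction above is genuinely to the bounded-above (indeed bounded if $F$ is bounded, and in general a finite-length modification in each degree) situation, where the homotopy-equivalence argument is unobstructed. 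Thus the proof is: (i) reduce to stalks on $\widehat X$; (ii) observe $P_p \to F_p$ is a quasi-isomorphism of complexes with projective terms; (iii) conclude it is a homotopy equivalence; (iv) apply $\Hom^\punto(-, \Lc_p)$ degreewise to get a homotopy equivalence, hence a quasi-isomorphism, of $\DHom^\punto$-stalks; (v) deduce the global quasi-isomorphism in $D(\widehat X)$.
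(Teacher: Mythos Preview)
Your argument---check the comparison map stalkwise on $\widehat X$, where it becomes $\Hom^\punto(F_p,\Lc_p)\to\Hom^\punto(P_p,\Lc_p)$ with both $F_p$ and $P_p$ complexes of projective abelian groups---is exactly the paper's (one-line) proof.

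The boundedness concern you flag, and which the paper does not address, is in fact a non-issue over $\ZZ$: subgroups of free abelian groups are free, so in any acyclic complex of free abelian groups the cycle groups $Z^n$ are projective, the short exact sequences $0\to Z^n\to C^n\to Z^{n+1}\to 0$ split, and the complex is contractible. Hence the cone of $P_p\to F_p$ (acyclic with free terms) is contractible and $P_p\to F_p$ is a homotopy equivalence with no boundedness hypothesis needed; your detour through $C_\punto F$ is therefore unnecessary, though harmless.
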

\begin{proof}  Let $P\to F$ be a projective resolution. For any $\wh x\in \wh X$ the morphism
\[ \Hom^\punto(F_x,\Lc_x)=\DHom^\punto(F,\Lc)_{\wh x}\to\LL\DHom (F,\Lc)_{\wh x}=\Hom^\punto(P_x,\Lc_x)\] is a quasi-isomorphism because $P_x$ is a projective resolution of $ F_x$ and $F_x$ is projective.
\end{proof}

\begin{rem} Let $\Lc$ be a locally constant abelian data on $X$. For any $p\leq q$, $r_{pq}\colon\Lc_p\to\Lc_q$ is an isomorphism. Thus $\Lc$ may be viewed as a locally constant abelian data on $\widehat X$: $$\Lc_{\widehat p}:=\Lc_p \quad\text{ and }\quad r_{\widehat p\widehat q}=r_{qp}^{-1}.$$ It is immediate to see that this abelian data is $\DHom^\punto(\ZZ,\Lc)$ (moreover, $\DHom^\punto(\ZZ,\Lc)\to\LL\DHom^\punto(\ZZ,\Lc)$ is an isomorphism in the derived category, by Proposition \ref{puntuallyprojective}).  Thus, we obtain an equivalence
\[\aligned  \left\{\aligned &\text{Locally constant}\\ &\text{abelian data on }X\endaligned\right\}&\overset\sim  \longrightarrow \left\{\aligned &\text{Locally constant}\\ &\text{abelian data on }\widehat X\endaligned\right\} \\ \Lc &\mapsto \Lc=\DHom^\punto(\ZZ,\Lc)\endaligned.\]
If $\Lc$ is the constant data $G$ on $X$, then $\Lc=\DHom^\punto(\ZZ,\Lc)$ is also the constant data $G$ on $\widehat X$. For the derived category we obtain  an equivalence 
\[ \aligned D (\text{\rm Qcoh}(X)) &\to  D (\text{\rm Qcoh}(\widehat X))\\ \Lc &\mapsto \Lc=\DHom^\punto(\ZZ,\Lc)\simeq \LL\DHom^\punto(\ZZ,\Lc)
\endaligned
\] 
\end{rem}

\begin{lem}\label{topdualitycomplex} Let $T$ be an  abelian data  on $\widehat X$, $F$ an abelian data on $X$ and $\Lc$ a locally constant data on $X$ (and hence on $\widehat X$). One has: $$\Hom(T,\DHom (F,\Lc) )  =  \Hom(F,\DHom (T,\Lc) ).$$  
More generally, for any   $T\in C(\widehat X)$, $F\in C(X)$ and   $\Lc\in C(\text{\rm Qcoh}(X))= C(\text{\rm Qcoh}(\widehat X))$  one has
\[ \Hom^\punto (T,\DHom^\punto(F,\Lc) ) = \Hom^\punto(F,\DHom^\punto(T,\Lc) ).\]
\end{lem}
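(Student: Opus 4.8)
The plan is to establish the isomorphism at the level of abelian data first, and then bootstrap to complexes by a standard reduction. For the first (non-derived) statement, the key observation is that a morphism of abelian data $T\to\DHom(F,\Lc)$ on $\widehat X$ is, by definition, a collection of group homomorphisms $\varphi_p\colon T_{\widehat p}\to\Hom(F_p,\Lc_p)$ compatible with the restriction maps; equivalently, a collection of bilinear pairings $T_{\widehat p}\times F_p\to\Lc_p$ satisfying a compatibility condition encoding both the preorder of $X$ and that of $\widehat X$. I would write out this compatibility explicitly: for $\widehat p\le\widehat q$ in $\widehat X$ (i.e.\ $p\ge q$ in $X$), the diagram relating the restriction $r^T_{\widehat p\widehat q}\colon T_{\widehat p}\to T_{\widehat q}$, the restriction $r^F_{qp}\colon F_q\to F_p$, and the isomorphism $r^\Lc_{qp}\colon\Lc_q\to\Lc_p$ must commute. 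Because $\Lc$ is locally constant, the maps $r^\Lc$ are invertible, so this condition is manifestly symmetric in the roles of $T$ and $F$ once one uses the dictionary between the preorders of $X$ and $\widehat X$. Reading the same data as a morphism $F\to\DHom(T,\Lc)$ of abelian data on $X$ then produces the desired bijection, and naturality is routine.

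Next I would upgrade to the complex-level statement $\Hom^\punto(T,\DHom^\punto(F,\Lc))=\Hom^\punto(F,\DHom^\punto(T,\Lc))$. Here one unwinds the definition of the Hom-complex: in each degree, a map consists of a family of maps $T^i\to\DHom^\punto(F,\Lc)^{i+n}$, and since $\DHom^\punto(F,\Lc)^m_p=\prod_{a+b=m}\Hom(F^{-b}_p,\Lc^a_p)$ (or the relevant product, depending on the sign/indexing convention), each such family is again a compatible collection of bilinear pairings between the graded pieces of $T$ and $F$ valued in the graded pieces of $\Lc$. The degree-$n$ component of the Hom-complex on each side is therefore the same set, and one checks that the differentials correspond under the swap $T\leftrightarrow F$ — this is the usual sign-bookkeeping for the tensor-hom-type adjunction, carried out stalkwise and then assembled over $X$. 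I would present this as: apply the abelian-data statement degreewise to the components, observe the differentials match, and conclude the two Hom-complexes are equal (not merely isomorphic).

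The main obstacle I anticipate is purely notational rather than conceptual: keeping straight the two opposite preorders (that of $X$ and that of $\widehat X$) simultaneously with the contravariance of $\DHom$ in its first argument, and making sure the compatibility condition really is symmetric. The crucial leverage point — and the reason the lemma is true — is that $\Lc$ is locally constant, so that all the structure maps $r^\Lc_{pq}$ are isomorphisms; without this, $\DHom(F,\Lc)$ would not even be well-defined as an abelian data on $\widehat X$ (the associativity of restrictions would fail), and there would be no symmetry to exploit. So I would make this point explicit early and lean on it: once $\Lc$ is invertible along every comparison, the pairing description of both Hom-sets is literally the same set of data, and the lemma becomes a tautology after the right change of viewpoint. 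The sign conventions in the complex case are the only place where genuine (if routine) care is needed.
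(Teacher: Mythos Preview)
Your proposal is correct and follows essentially the same approach as the paper: unwind a morphism $T\to\DHom(F,\Lc)$ stalkwise as maps $T_{\widehat p}\to\Hom(F_p,\Lc_p)$, use the tensor--hom adjunction at each point to rewrite these as maps $F_p\to\Hom(T_{\widehat p},\Lc_p)$, and check that the compatibility with restrictions transfers (using that the $r^{\Lc}$ are invertible); the paper then dismisses the complex case as ``purely formal,'' which is exactly your degreewise-plus-sign-bookkeeping argument.
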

\begin{proof} By definition, a morphism $h\colon T\to \DHom (F,\Lc) $ of abelian data is equivalent to give, for each $\widehat p\in\widehat X$, a morphism of groups $h_{\widehat p}\colon T_{\widehat p}\to \DHom (F,\Lc)_{\widehat p}=\Hom(F_p,\Lc_p)$, which are compatible with the restriction morphisms $r_{\widehat p\widehat q}$. But a morphism $h_{\widehat p}\colon T_{\widehat p}\to  \Hom(F_p,\Lc_p)$ is equivalent to a morphism $\widetilde h_p\colon  F_p \to  \Hom(T_{\widehat p},\Lc_p)=\DHom (T,\Lc)_p$, and the compatibility of the $h_{\widehat p}$ with $r_{\widehat p\widehat q}$ is equivalent to the compatibility of the $\widetilde h_p$ with the retriction morphisms $r_{qp}$; thus $\widetilde h_p$ define a morphism of abelian data $F\to \DHom (T,\Lc) $. The extension to complexes is purely formal.
\end{proof}


\begin{thm}\label{topduality} For any $F\in D(X)$, $T\in D(\widehat X)$ and any complex $\Lc$ of locally constant data on $X$ (and hence on $\widehat X$) one has:
 \[\RR\Hom^\punto (T, \LL\DHom^\punto(F,\Lc) ) = \RR\Hom^\punto ( F, \LL \DHom^\punto(T,\Lc) ) .\]

\end{thm}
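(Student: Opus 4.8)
The plan is to reduce the derived statement to the underived adjunction established in Lemma \ref{topdualitycomplex} by replacing $F$ and $T$ with suitable resolutions. First I would observe that the two sides are computed as follows: by definition $\LL\DHom^\punto(F,\Lc) = \DHom^\punto(P,\Lc)$ for a $K$-projective (or just pointwise-projective) resolution $P\to F$, and similarly $\LL\DHom^\punto(T,\Lc) = \DHom^\punto(Q,\Lc)$ for a resolution $Q\to T$ in $C(\widehat X)$. So the left-hand side is $\RR\Hom^\punto(T, \DHom^\punto(P,\Lc))$ and the right-hand side is $\RR\Hom^\punto(F, \DHom^\punto(Q,\Lc))$.

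The key point is that $\DHom^\punto(P,\Lc)$ is $K$-injective as a complex of abelian data on $\widehat X$ when $P$ is $K$-projective and $\Lc$ is locally constant (it is a pointwise $\Hom$ into a pointwise-invertible object against a projective, so $\Hom^\punto(\ ,\DHom^\punto(P,\Lc))$ sends quasi-isomorphisms to quasi-isomorphisms; concretely, $\Hom^\punto(T', \DHom^\punto(P,\Lc)) = \Hom^\punto(P,\DHom^\punto(T',\Lc))$ by Lemma \ref{topdualitycomplex}, and the right side is exact in $T'$ because each $\Lc_p$ is a fixed group and $P$ is $K$-projective — more carefully one checks acyclicity pointwise). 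Granting this, $\RR\Hom^\punto(T,\DHom^\punto(P,\Lc)) = \Hom^\punto(T,\DHom^\punto(P,\Lc))$ with no further resolution of $T$ needed, and by Lemma \ref{topdualitycomplex} this equals $\Hom^\punto(P,\DHom^\punto(T,\Lc))$. Symmetrically, $\DHom^\punto(Q,\Lc)$ is $K$-injective on $X$, so the right-hand side equals $\Hom^\punto(F,\DHom^\punto(Q,\Lc)) = \Hom^\punto(Q,\DHom^\punto(F,\Lc))$ by Lemma \ref{topdualitycomplex} again.

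So both derived $\Hom$'s have been rewritten, via the underived adjunction, as complexes of the shape $\Hom^\punto(\text{resolution}, \DHom^\punto(\text{the other object}, \Lc))$. It remains to compare $\Hom^\punto(P,\DHom^\punto(T,\Lc))$ with $\Hom^\punto(P,\DHom^\punto(Q,\Lc))$ and with $\Hom^\punto(Q,\DHom^\punto(F,\Lc))$ and $\Hom^\punto(Q,\DHom^\punto(P,\Lc))$. The natural maps among these, induced by $P\to F$ and $Q\to T$, fit into a commutative square, and each is a quasi-isomorphism: $P$ being $K$-projective makes $\Hom^\punto(P,\ )$ preserve the quasi-isomorphism $\DHom^\punto(Q,\Lc)\to \DHom^\punto(T,\Lc)$ (this is a quasi-iso because $\DHom^\punto(\ ,\Lc)$ applied to $Q\to T$ is, since $\Lc$ is pointwise invertible — wait, one must be slightly careful, as $\DHom$ is contravariant, so I would instead argue that $\DHom^\punto(P,\Lc)$ and $\DHom^\punto(Q,\Lc)$ are $K$-injective and that $\Hom^\punto(P,\DHom^\punto(Q,\Lc)) \simeq \RR\Hom^\punto(F,\DHom^\punto(Q,\Lc)) \simeq \RR\Hom^\punto(F, \LL\DHom^\punto(T,\Lc))$ directly). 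Chasing the square then identifies the two sides.

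The main obstacle I anticipate is the $K$-injectivity (or at least derived-functor-correctness) of $\DHom^\punto(P,\Lc)$: one needs that for a $K$-projective or pointwise-projective $P$ on $X$ and locally constant $\Lc$, the object $\DHom^\punto(P,\Lc)$ on $\widehat X$ computes $\RR\Hom^\punto$ in the first slot without further injective resolution. This should follow by checking exactness pointwise — at each $\widehat p$ the complex $\DHom^\punto(P,\Lc)_{\widehat p} = \Hom^\punto(P_p,\Lc_p)$ where $P_p$ is a complex of projective (free) abelian groups and $\Lc_p$ is a fixed abelian group, combined with the flasqueness/coflasqueness behavior of the $\DHom$ construction on $\widehat X$ — but one has to be careful that $K$-injectivity on $\widehat X$ is a global, not merely pointwise, condition, so the cleanest route is probably to invoke Lemma \ref{topdualitycomplex} to rewrite $\Hom^\punto(T',\DHom^\punto(P,\Lc))$ as $\Hom^\punto(P,\DHom^\punto(T',\Lc))$ and deduce exactness in $T'$ from $K$-projectivity of $P$ together with the fact that $\DHom^\punto(\ ,\Lc)$ takes quasi-isomorphisms of pointwise-finite-type complexes to quasi-isomorphisms when $\Lc$ is locally constant. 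Assembling this carefully, while keeping track of the contravariance, is where the real work lies; everything else is formal manipulation of adjunctions.
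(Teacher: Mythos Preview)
Your overall strategy---take $K$-projective resolutions $P\to F$ and $Q\to T$, then invoke Lemma~\ref{topdualitycomplex}---is exactly what the paper does, but you take a wrong turn in the justification. You try to show that $\DHom^\punto(P,\Lc)$ is $K$-injective on $\widehat X$, so that no resolution of $T$ is needed. This is false already in the one-point case: with $X=\{*\}$, $P=\ZZ$, $\Lc=\ZZ$, one has $\DHom^\punto(P,\Lc)=\ZZ$, and $\ZZ$ is not $K$-injective as a complex of abelian groups (the quasi-isomorphism $\ZZ\to(\QQ\to\QQ/\ZZ)$ has no homotopy left inverse). Your proposed repair---that $\DHom^\punto(\,\cdot\,,\Lc)$ preserves quasi-isomorphisms---is equally false: apply it to the acyclic complex $\ZZ\overset{2}{\to}\ZZ\to\ZZ/2$ with $\Lc=\ZZ$ and you get a complex with nonzero cohomology $\ZZ/2$. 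The ``pointwise finite type'' restriction you mention is neither in the hypotheses nor sufficient.

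The point you are missing is much simpler and is already recorded in the paper (see~2.1.1): $\RR\Hom^\punto$ can be computed by resolving the \emph{first} argument projectively, i.e.\ $\RR\Hom^\punto(T,G)=\Hom^\punto(Q,G)$ for any $G$ once $Q\to T$ is $K$-projective. So there is nothing to prove about the second slot: one writes directly
\[
\RR\Hom^\punto\bigl(T,\LL\DHom^\punto(F,\Lc)\bigr)=\Hom^\punto\bigl(Q,\DHom^\punto(P,\Lc)\bigr)\overset{\text{Lemma~\ref{topdualitycomplex}}}{=}\Hom^\punto\bigl(P,\DHom^\punto(Q,\Lc)\bigr)=\RR\Hom^\punto\bigl(F,\LL\DHom^\punto(T,\Lc)\bigr),
\]
each outer equality using only that $Q$ (resp.\ $P$) is $K$-projective and the definition of $\LL\DHom^\punto$. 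You actually glimpse this in your parenthetical aside (``$\Hom^\punto(P,\DHom^\punto(Q,\Lc))\simeq\RR\Hom^\punto(F,\DHom^\punto(Q,\Lc))$''), but then bury it under the incorrect $K$-injectivity argument. Drop the $K$-injectivity claim entirely and the proof becomes the three-line computation above.
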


\begin{proof} Let $P\to F$ and $Q\to T$ be projective resolutions. Then
\[\aligned \RR\Hom^\punto (T, \LL\DHom^\punto(  F,\Lc) ) = \Hom^\punto (Q, \DHom^\punto(  P,\Lc) )& = \Hom^\punto (P, \DHom^\punto(  Q,\Lc) )\\ & = \RR\Hom^\punto ( F, \LL\DHom^\punto( T,\Lc))\endaligned\] where the second equality is due to Lemma \ref{topdualitycomplex}.
\end{proof}

%
%

\begin{cor}\label{topduality2} For any $F\in D(X)$ and any $\Lc\in D(\text{\rm Qcoh}(X))$ one has 
\[ \RR\Gamma(\widehat X,\LL\DHom^\punto(F,\Lc) ) =\RR\Gamma(X,\RR\HHom^\punto(F, \Lc)).
\]
In particular,   
\[ \aligned \RR\Gamma(\widehat X,\Lc)&=  \RR\Gamma(X,\Lc)\\
  \RR\Gamma(X,F^\vee) &=\RR\Gamma(\widehat X,\widehat F ).
\endaligned\] 
\end{cor}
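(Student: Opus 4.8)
The plan is to obtain the first identity directly by specializing Theorem \ref{topduality} to the case $T=\ZZ$ on $\widehat X$, and then to deduce the two "in particular" statements as further specializations. First I would take $T=\ZZ$ (the constant data on $\widehat X$) in Theorem \ref{topduality}. On the left-hand side, $\RR\Hom^\punto(\ZZ, \LL\DHom^\punto(F,\Lc))$: since $\ZZ$ is projective (as an abelian data, being $\pi^{-1}\ZZ$ for $\pi\colon\widehat X\to\{*\}$ on each connected component, or directly free), we have $\RR\Hom^\punto(\ZZ,-)=\RR\Gamma(\widehat X,-)$ by the adjunction between $\pi^{-1}$ and $\RR\pi_*$ (using that $\RR\Hom^\punto(\ZZ,G)=\RR\Hom^\punto(\pi^{-1}\ZZ,G)=\RR\Hom^\punto(\ZZ,\RR\pi_*G)=\RR\Gamma(\widehat X,G)$ componentwise). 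Hence the left side becomes $\RR\Gamma(\widehat X,\LL\DHom^\punto(F,\Lc))$. On the right-hand side, $\RR\Hom^\punto(F,\LL\DHom^\punto(\ZZ,\Lc))$: by the Remark following Proposition \ref{puntuallyprojective}, $\LL\DHom^\punto(\ZZ,\Lc)\simeq\DHom^\punto(\ZZ,\Lc)$ is just $\Lc$ viewed as a locally constant complex on $X$ (via $r_{\wh p\wh q}=r_{qp}^{-1}$), so the right side is $\RR\Hom^\punto(F,\Lc)=\RR\Gamma(X,\RR\HHom^\punto(F,\Lc))$, the last equality being the standard identity relating $\RR\Hom^\punto$ and $\RR\Gamma\circ\RR\HHom^\punto$ recorded in subsection 2.1. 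This gives the displayed formula.

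For the first special case, take $F=\ZZ$ on $X$. Then $\RR\HHom^\punto(\ZZ,\Lc)=\Lc$ and, by the same Remark, $\LL\DHom^\punto(\ZZ,\Lc)\simeq\Lc$ as a complex on $\widehat X$; the general formula then reads $\RR\Gamma(\widehat X,\Lc)=\RR\Gamma(X,\Lc)$, with the understanding that $\Lc$ on the left is the image of $\Lc$ under the equivalence $D(\text{Qcoh}(X))\simeq D(\text{Qcoh}(\widehat X))$. For the second special case, take $\Lc=\ZZ$ (the constant data, which is locally constant and hence available on both $X$ and $\widehat X$). Then $\LL\DHom^\punto(F,\ZZ)=\widehat F$ by definition, while $\RR\HHom^\punto(F,\ZZ)=F^\vee$ by the definition of the dual; the general formula becomes $\RR\Gamma(\widehat X,\widehat F)=\RR\Gamma(X,F^\vee)$.

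I do not expect a genuine obstacle here, since everything is a specialization of Theorem \ref{topduality} combined with already-established identifications; the only point requiring a little care is the bookkeeping of the equivalence of locally constant data on $X$ and on $\widehat X$ (so that $\Lc$ really denotes "the same" complex on both sides) and the identification $\RR\Hom^\punto(\ZZ,-)=\RR\Gamma(\widehat X,-)$, which needs $\ZZ$ to be projective and $\RR\pi_*$ to be the right adjoint of $\pi^{-1}$ componentwise. Both are immediate from the material already in place.
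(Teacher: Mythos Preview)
Your approach is correct and is exactly what the paper intends (the corollary is stated without proof, as an immediate specialization of Theorem \ref{topduality} with $T=\ZZ$, together with the identification $\LL\DHom^\punto(\ZZ,\Lc)\simeq\Lc$ from the Remark after Proposition \ref{puntuallyprojective}).

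One correction, though it does not affect the argument: the constant abelian data $\ZZ$ is \emph{not} projective in general. For instance, on the space $\{a,b,c\}$ with $a<c$ and $b<c$, the natural surjection $\ZZ_{U_a}\oplus\ZZ_{U_b}\to\ZZ$ does not split. Fortunately you do not need projectivity: the identification $\RR\Hom^\punto(\ZZ,-)=\RR\Gamma(\widehat X,-)$ follows directly from the derived adjunction of Proposition \ref{AdjunctionsOfInvIm} applied to $\pi\colon\widehat X\to\{*\}$ (equivalently, from Lemma \ref{lemita} with $G=\ZZ$), since $\ZZ=\pi^{-1}\ZZ$ and $\RR\Hom^\punto(\ZZ,\RR\pi_*(-))=\RR\pi_*(-)$. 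So simply delete the projectivity remark and keep the adjunction argument.
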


We shall conclude this subsection by generalizing Theorem \ref{topduality} to the relative case (Theorem \ref{topdualityrel}). For the proof, we shall need the following result:

\begin{prop}\label{basechangedual}  Let $f\colon X\to Y$ be a continuous map. For any $F\in C(Y)$ and any $\Lc\in C(\text{\rm Qcoh}(Y))$ one has  a natural isomorphism $${\widehat f}^{-1}\DHom^\punto(G,\Lc) \overset\sim\to \DHom^\punto(f^{-1}G,f^{-1}\Lc).$$
\end{prop}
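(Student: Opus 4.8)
The plan is to unwind both sides pointwise and check that the two abelian data agree stalk by stalk, compatibly with restriction maps, and then observe that the identification is natural. Fix $\widehat x\in\widehat X$ with image $\widehat y=\widehat f(\widehat x)$ in $\widehat Y$, i.e. $x\in X$ and $y=f(x)\in Y$. On the left, by the very definition of inverse image on $\widehat X$ we have $({\widehat f}^{-1}\DHom^\punto(G,\Lc))_{\widehat x}=\DHom^\punto(G,\Lc)_{\widehat y}=\Hom^\punto(G_y,\Lc_y)$. On the right, $\DHom^\punto(f^{-1}G,f^{-1}\Lc)_{\widehat x}=\Hom^\punto((f^{-1}G)_x,(f^{-1}\Lc)_x)=\Hom^\punto(G_{f(x)},\Lc_{f(x)})=\Hom^\punto(G_y,\Lc_y)$, using the description of $f^{-1}$ on stalks. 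So both data have literally the same group at every point, and we take the identity as our candidate isomorphism $\varphi_{\widehat x}$.

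The one thing to verify is that this collection of identities is a morphism of abelian data on $\widehat X$, i.e. that it commutes with the restriction morphisms $r_{\widehat x\,\widehat x'}$ for $\widehat x\le\widehat x'$ in $\widehat X$ (equivalently $x\ge x'$ in $X$, hence $y\ge y'$ in $Y$). First I would trace through how each side builds its restriction map. On the left, the restriction map of ${\widehat f}^{-1}\DHom^\punto(G,\Lc)$ from $\widehat x$ to $\widehat x'$ is, by definition of $\widehat f^{-1}$, the restriction map of $\DHom^\punto(G,\Lc)$ from $\widehat y$ to $\widehat y'$, which in turn (by the definition of $\DHom$) is the composite $\Hom^\punto(G_y,\Lc_y)\to\Hom^\punto(G_{y'},\Lc_y)\xleftarrow{\ \sim\ }\Hom^\punto(G_{y'},\Lc_{y'})$ built from $G_y\to G_{y'}$ and the isomorphism $\Lc_y\xrightarrow{\sim}\Lc_{y'}$. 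On the right, the restriction map of $\DHom^\punto(f^{-1}G,f^{-1}\Lc)$ from $\widehat x$ to $\widehat x'$ is the analogous composite built from $(f^{-1}G)_x\to(f^{-1}G)_{x'}$ and $(f^{-1}\Lc)_x\xrightarrow{\sim}(f^{-1}\Lc)_{x'}$. But by definition of $f^{-1}$, the map $(f^{-1}G)_x\to(f^{-1}G)_{x'}$ is exactly $G_y\to G_{y'}$, and $(f^{-1}\Lc)_x\xrightarrow{\sim}(f^{-1}\Lc)_{x'}$ is exactly $\Lc_y\xrightarrow{\sim}\Lc_{y'}$. Hence the two composites are the same map $\Hom^\punto(G_y,\Lc_y)\to\Hom^\punto(G_{y'},\Lc_{y'})$, so the identity maps assemble to an isomorphism of abelian data.

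Since the construction uses only the stalkwise descriptions of $f^{-1}$ and $\DHom^\punto$, it is visibly functorial in $G$, so the isomorphism is natural. The proof is entirely bookkeeping; the only mild subtlety — and the step I would be most careful about — is keeping the direction of the arrows straight when passing to $\widehat X$ and $\widehat Y$ (the restriction $r_{\widehat p\,\widehat q}$ for $\widehat p\le\widehat q$ corresponds to $r_{qp}$ on the original space, as recorded in the remark preceding Lemma \ref{topdualitycomplex}), so that the two composites really match up on the nose rather than only up to the flip. There is no essential obstacle beyond that.
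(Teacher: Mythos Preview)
Your proposal is correct and is exactly the paper's approach: a stalkwise verification using the definitions of $f^{-1}$ and $\DHom^\punto$. The paper's proof is the terse one-line chain of equalities
\[
\bigl[{\widehat f}^{-1}\DHom^\punto(G,\Lc)\bigr]_{\widehat x}=\DHom^\punto(G,\Lc)_{\widehat f(\widehat x)}=\Hom^\punto(G_{f(x)},\Lc_{f(x)})=\Hom^\punto((f^{-1}G)_x,(f^{-1}\Lc)_x)=\DHom^\punto(f^{-1}G,f^{-1}\Lc)_{\widehat x},
\]
leaving the compatibility with restriction maps implicit; you simply spell that step out. One small slip, precisely at the point you flagged as delicate: since $\widehat y\le\widehat y'$ means $y'\le y$, the maps your composite is ``built from'' go $G_{y'}\to G_y$ and $\Lc_{y'}\xrightarrow{\sim}\Lc_y$, not the other way around; but you make the same reversal on both sides, so the comparison still goes through.
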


\begin{proof} For any $\widehat x\in\widehat X$
\[\begin{aligned} \left[ {\widehat f}^{-1}\DHom^\punto(G,\Lc)\right]_{\widehat x} = \DHom^\punto(G,\Lc)_{\widehat f (\widehat x)} =\Hom^\punto (G_{f(x)}, \Lc_{f(x)})& = \Hom^\punto ((f^{-1}G)_x, (f^{-1}\Lc)_x)\\ &=\DHom^\punto((f^{-1}G),f^{-1}\Lc)_{\widehat x} .\end{aligned}\]
\end{proof}



\begin{thm}\label{topdualityrel} Let $f\colon X\to Y$ be a continuous map.\begin{enumerate}
\item For any $G\in D(Y)$, $\Lc\in D(\text{\rm Qcoh}(Y))$, one has a natural isomorphism
\[ {\widehat f}^{-1} \LL\DHom^\punto(G,\Lc) \overset\sim\longrightarrow    \LL\DHom^\punto(f^{-1}G,f^{-1}\Lc).\]
\item For any $F\in D(X)$,  $\Lc\in D(\text{\rm Qcoh}(Y))$, one has a natural isomorphism
\[ \LL\DHom^\punto(\LL f_! F, \Lc)\overset\sim\longrightarrow \RR {\widehat f}_*\, \LL\DHom^\punto(F,f^{-1}\Lc).\] In particular, \[ \widehat{\LL f_! F}\simeq \RR {\widehat f}_*\widehat F.\]
\end{enumerate}
\end{thm}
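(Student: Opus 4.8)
The natural strategy is to prove (1) first, then deduce (2) from (1) together with the adjunction between $\LL f_!$ and $f^{-1}$ (Proposition \ref{AdjunctionsOfInvIm}) and the absolute topological duality (Theorem \ref{topduality}), and finally obtain the ``In particular'' statement by taking $\Lc = \ZZ$. For part (1), the underived version is exactly Proposition \ref{basechangedual}: there is a natural isomorphism ${\widehat f}^{-1}\DHom^\punto(G,\Lc)\overset\sim\to\DHom^\punto(f^{-1}G,f^{-1}\Lc)$ at the level of complexes. To derive it, I would take a projective resolution $P\to G$ in $C(Y)$. Since $f^{-1}$ is exact and sends projectives to projectives (this is recorded right after the theorem on enough injectives/projectives, as $f_!$ preserves projectives and $f^{-1}$ of a projective datum is projective — more directly, $f^{-1}$ preserves projectivity because it has an exact left adjoint... one must be slightly careful here), $f^{-1}P\to f^{-1}G$ is a projective resolution. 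Then
\[ {\widehat f}^{-1}\LL\DHom^\punto(G,\Lc) = {\widehat f}^{-1}\DHom^\punto(P,\Lc)\overset{\ref{basechangedual}}{\simeq}\DHom^\punto(f^{-1}P,f^{-1}\Lc) = \LL\DHom^\punto(f^{-1}G,f^{-1}\Lc), \]
the last equality because $f^{-1}P$ is a projective resolution of $f^{-1}G$.

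For part (2), I would verify the isomorphism after applying $\RR\Hom^\punto(T,\quad)$ for an arbitrary $T\in D(\widehat X)$, using Yoneda. Unwinding the left-hand side via absolute topological duality (Theorem \ref{topduality}, applied on $Y$ with the object $\LL f_! F\in D(Y)$ — wait, $\LL f_!F$ lives on $Y$, and $\LL\DHom^\punto(\LL f_!F,\Lc)$ on $\widehat Y$): for $S\in D(\widehat Y)$,
\[ \RR\Hom^\punto(S,\LL\DHom^\punto(\LL f_!F,\Lc)) = \RR\Hom^\punto(\LL f_!F,\LL\DHom^\punto(S,\Lc)). \]
Now apply the adjunction $\RR\Hom^\punto(\LL f_!F,-)=\RR\Hom^\punto(F,f^{-1}(-))$ (Proposition \ref{AdjunctionsOfInvIm}), then part (1) in the form $f^{-1}\LL\DHom^\punto(S,\Lc)\simeq \LL\DHom^\punto({\widehat f}^{-1}S, f^{-1}\Lc)$ (noting $\widehat{\,\cdot\,}$ swaps the roles, so $f^{-1}$ on $D(Y)$-side corresponds to ${\widehat f}^{-1}$ on $D(\widehat Y)$-side), then absolute duality again on $X$:
\[ \RR\Hom^\punto(F,\LL\DHom^\punto({\widehat f}^{-1}S,f^{-1}\Lc)) = \RR\Hom^\punto({\widehat f}^{-1}S,\LL\DHom^\punto(F,f^{-1}\Lc)) = \RR\Hom^\punto(S,\RR{\widehat f}_*\LL\DHom^\punto(F,f^{-1}\Lc)), \]
the last step by adjunction between ${\widehat f}^{-1}$ and $\RR{\widehat f}_*$. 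Chaining these natural isomorphisms and invoking Yoneda on $D(\widehat Y)$ gives the claimed isomorphism. The ``In particular'' follows by setting $\Lc=\ZZ$, since then $f^{-1}\ZZ=\ZZ$ and $\LL\DHom^\punto(-,\ZZ)=\widehat{(-)}$ by definition.

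The main obstacle I anticipate is bookkeeping rather than conceptual: one must consistently track which space ($X$, $Y$, $\widehat X$, $\widehat Y$) each object and functor lives on, and confirm that all the isomorphisms invoked are genuinely \emph{natural} so that Yoneda applies. A secondary technical point requiring care is the claim that $f^{-1}$ preserves $K$-projective (or at least projective) resolutions well enough to compute $\LL\DHom^\punto$; since $\LL\DHom^\punto(F,\Lc)$ is defined via a projective resolution $P\to F$ and $\DHom^\punto$ with its contravariant exactness behaves like $\Hom$ into a point-wise flat/injective-enough object, one should check that $f^{-1}P$ is adequate — this is fine because $f^{-1}$ is exact and, as the paper notes, preserves projectivity (the functor $f^{-1}$ has the exact right adjoint... more simply, for a \emph{point} $X$ the projectives are direct summands of sums of the $\ZZ_{U_p}$'s whose $f^{-1}$-images are again of this form). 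Alternatively one can sidestep this by phrasing part (1) in terms of $K$-projective complexes and using that $f^{-1}$ of a $K$-projective is $K$-projective, which holds since $f^{-1}$ is a left adjoint of the exact-preserving-acyclics functor and is itself exact.
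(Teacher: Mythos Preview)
Your overall architecture for both parts is the same as the paper's: derive (1) from Proposition~\ref{basechangedual} on a projective resolution of $G$, then derive (2) by a Yoneda argument combining part~(1), Theorem~\ref{topduality}, and the adjunctions of Proposition~\ref{AdjunctionsOfInvIm}. Your treatment of~(2) is essentially identical to the paper's (the paper runs the Yoneda argument for $\widehat f$ and then swaps; you run it directly for $f$---same thing).

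The one genuine gap is in your handling of~(1). You need $\DHom^\punto(f^{-1}P,f^{-1}\Lc)$ to compute $\LL\DHom^\punto(f^{-1}G,f^{-1}\Lc)$, and for this you repeatedly try to argue that $f^{-1}$ preserves projectives (or $K$-projectives). All three justifications you offer fail: (i) the right adjoint $f_*$ of $f^{-1}$ is only left exact, not exact; (ii) $f^{-1}\ZZ_{U_p}=\ZZ_{f^{-1}(U_p)}$, and $f^{-1}(U_p)$ is an arbitrary open subset, not a $U_q$, so this sheaf need not be projective; (iii) likewise $f_*$ does not preserve acyclics, so $f^{-1}$ need not preserve $K$-projectives. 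In general $f^{-1}$ does \emph{not} preserve projectivity of abelian data.

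The fix the paper uses is exactly the one you half-gesture at (``one should check that $f^{-1}P$ is adequate'') but do not make precise: you do not need $f^{-1}P$ to be projective, only \emph{stalkwise} projective, because Proposition~\ref{puntuallyprojective} says that stalkwise projectivity already suffices to compute $\LL\DHom^\punto$. Since $(f^{-1}P)_x=P_{f(x)}$ and the stalks of a projective abelian datum are projective abelian groups, $f^{-1}P$ is stalkwise projective. Concretely, the paper takes a further projective resolution $Q\to f^{-1}P$ and invokes Proposition~\ref{puntuallyprojective} to see that $\DHom^\punto(f^{-1}P,f^{-1}\Lc)\to\DHom^\punto(Q,f^{-1}\Lc)=\LL\DHom^\punto(f^{-1}G,f^{-1}\Lc)$ is a quasi-isomorphism. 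Once you replace your incorrect projectivity claims with this observation, your proof goes through and coincides with the paper's.
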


\begin{proof} (1) Let $P\to G$ and $Q\to f^{-1}P$ be  projective resolutions. By Proposition \ref{basechangedual} one has an isomorphism $ {\widehat f}^{-1}  \DHom^\punto(P, \Lc)\simeq  \DHom^\punto(f^{-1} P, f^{-1}\Lc) $; moreover, the natural morphism  $\DHom^\punto(f^{-1} P, f^{-1}\Lc) \to  \DHom^\punto(Q, f^{-1}\Lc)$ is an isomorphism (in the derived category) by Proposition \ref{puntuallyprojective}. Thus, we have an isomorphism  \[
  {\widehat f}^{-1} \LL\DHom^\punto(G, \Lc) \to \LL\DHom^\punto(f^{-1}G, f^{-1}\Lc).\] (2)  Taking $\RR\Hom^\punto(T,\quad)$ for any $T\in D(\widehat X)$ in the isomorphism of item (1), one obtains (by adjunctions and Theorem \ref{topduality}) an isomorphism
 \[ \RR\Hom^\punto (G, \LL\DHom^\punto(\LL {\widehat f}_! T,\Lc)) \overset\sim  \longrightarrow \RR\Hom^\punto ( G  ,\RR f_*\,  \LL\DHom^\punto( T,{\widehat f}^{-1}\Lc))\]
 i.e., an isomorphism $ \LL\DHom^\punto(\LL {\widehat f}_! T, \Lc)  \to \RR f_*(  \LL\DHom^\punto(  T, {\widehat f}^{-1}\Lc) $. Replacing $f\colon X\to Y$ by $\widehat f\colon \wh X\to \wh Y$, we conclude. 
%
\end{proof}


\subsection{Duality and Coduality}

\medskip

\subsubsection{Grothendieck-Verdier duality for cohomology}

Finite spaces admit a Verdier duality theorem: the functor $\RR f_*$ admits a right adjoint $f^!$;  in particular, one has relative and absolute dualizing complexes. After giving some basic results on the structure of the dualizing complex, we shall study  the role that c-proper maps have in duality theory; the main result is Theorem \ref{projformulacharacterization} that says that a map $f$ is c-proper if and only if the local isomorphism of duality holds and if and only if the   functor $f^!$ is local on $Y$. Thus, c-proper maps play, again,  the same role that proper maps do for locally compact and separated spaces.

\begin{thm} [\cite{Navarro}] Let $f\colon X\to Y$ be a continuous map. The functor $$\RR f_*\colon D(X)\to D(Y)$$ has a right adjoint (that shall be denoted by $f^!$).

\end{thm}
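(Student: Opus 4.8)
The plan is to construct the right adjoint $f^!$ by the standard Brown representability / special adjoint functor argument, adapted to the fact that on a finite space everything is very explicit. First I would observe that $\RR f_*\colon D(X)\to D(Y)$ is a triangulated functor that commutes with arbitrary direct sums: this follows because $f_*$ itself commutes with arbitrary products and, more relevantly here, because the stalkwise formula $(\RR f_*F)_y=\RR\Gamma(f^{-1}(U_y),F)$ together with the finite flasque resolution $C^\punto$ of Theorem \ref{finiteflasqueresolution} shows that $\RR f_*$ can be computed by a functor of finite cohomological amplitude that visibly commutes with direct sums (each $(C^nF)(U)$ is a finite product, hence a finite direct sum, over chains in $U$). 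Since $D(X)$ is a compactly generated triangulated category — the objects $\ZZ_{U_p}$ for $p\in X$ form a set of compact generators, as $\RR\Hom^\punto(\ZZ_{U_p},F)=\RR\Gamma(U_p,F)=F_p$ commutes with direct sums and these detect acyclicity — Neeman's Brown representability theorem applies and yields a right adjoint $f^!$ of $\RR f_*$.

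Alternatively, and perhaps more in keeping with the explicit spirit of the paper, I would give a hands-on construction: since $\RR f_*\simeq f_*\circ C^\punto$ with $C^\punto$ sending a complex to a bounded (length $\leq\dim X$) complex of flasque data, and $f_*$ has the exact left adjoint $f^{-1}$, one can try to right-adjoin $f_*$ and $C^\punto$ separately. The functor $f_*$ on the level of complexes already has a right adjoint on $K$-injectives via the usual sheaf-theoretic $f^!$ for the inclusion-type maps, but the cleanest route is: define $f^!G$ to represent the functor $F\mapsto \RR\Hom^\punto(\RR f_*F,G)$. To show this functor is representable, use that $\RR f_*$ preserves $K$-injectives up to quasi-isomorphism is false in general, so instead use that $D(X)$ has small Hom-sets and, being compactly generated, satisfies Brown representability for the dual; the contravariant functor $F\mapsto \RR\Hom^\punto(\RR f_*F,G)$ sends coproducts to products and triangles to triangles, hence is representable. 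That representing object is $f^!G$, and functoriality in $G$ is automatic by Yoneda.

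The hard part will be verifying the hypotheses of Brown representability cleanly, i.e.\ (a) that $D(X)$ is compactly generated with the $\ZZ_{U_p}$ as generators, and (b) that $\RR f_*$ commutes with arbitrary coproducts. For (a) one notes that a complex $F\in D(X)$ is acyclic iff $F_p=\RR\Gamma(U_p,F)$ is acyclic for every $p$, i.e.\ iff $\RR\Hom^\punto(\ZZ_{U_p},F)=0$ for all $p$; and each $\ZZ_{U_p}$ is compact because $\RR\Gamma(U_p,\bigoplus_i F_i)=\bigoplus_i\RR\Gamma(U_p,F_i)$, which holds since $\RR\Gamma(U_p,-)$ is computed by the finite flasque resolution $\Gamma(U_p,C^\punto(-))$ and $\Gamma(U_p,C^n(-))$ is a finite product. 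For (b), the same finite-resolution description gives $\RR f_*(\bigoplus_i F_i)\simeq f_*C^\punto(\bigoplus_i F_i)\simeq f_*(\bigoplus_i C^\punto F_i)$, and $f_*$ commutes with the coproducts appearing here because each stalk $(f_*-)_y=\Gamma(f^{-1}(U_y),-)$ again unwinds to a finite product over the finite poset $f^{-1}(U_y)$. Once these two points are in place, the existence of $f^!$ is immediate, and no explicit formula is needed at this stage — the paper develops that separately in Theorem \ref{ExplicitDualizing}.

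Since this theorem is attributed to \cite{Navarro}, I expect the paper's proof to simply cite that reference; my proposal above is the argument one would reconstruct if a self-contained proof were wanted, and it is the route I would take.
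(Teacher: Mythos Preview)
Your Brown representability argument is correct: the $\ZZ_{U_p}$ are compact generators of $D(X)$ for exactly the reason you give, and the finite flasque resolution shows that $\RR f_*$ commutes with coproducts, so Neeman's theorem yields $f^!$. However, your expectation that the paper simply cites \cite{Navarro} is wrong: the paper sketches an explicit construction, and that construction is not the Brown-representability route. It works degree by degree on the standard resolution: for each $n$, the functor $F\mapsto \Hom(f_*C^nF,G)$ on abelian data is right exact and sends filtered colimits to limits, hence is representable by some $f^{-n}G$; the differentials of $C^\punto$ induce differentials $f^{-n}G\to f^{-n+1}G$, and for a complex $G$ one takes the total complex $f^\nabla G$ of the bicomplex $f^{-p}G^q$, obtaining $\Hom^\punto(f_*C^\punto F,G)=\Hom^\punto(F,f^\nabla G)$; then $f^!G:=f^\nabla I(G)$. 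What this buys the paper is an explicit handle on $f^!$ that feeds directly into the computation of $D_X$ in Theorem \ref{ExplicitDualizing} and, more importantly, furnishes a template that is reproduced verbatim on the homological side to build the left adjoint $f^\#$ of $\LL f_!$ (Theorem \ref{coduality}), where no off-the-shelf Brown representability is available. Your approach is cleaner as a pure existence proof, but it leaves those later explicit computations without a starting point.
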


\begin{proof} We shall give a sketch of a (slightly modified) proof and refer to \cite{Navarro} for the details, because the construction of the functor $f^!$ will be used in the sequel. 

 Let $G$ be an abelian data on $Y$. The functor \[ \aligned \{\text{\rm Abelian data on }X\} &\to \{ \text{\rm Abelian groups}\}\\ F&\mapsto \Hom(f_*C^nF,G)
\endaligned\] is right exact and takes filtered direct limits into filtered inverse limits. Hence it is representable. Let $f^{-n}G$ be the representant. A morfism $G\to G'$ induces a morphism $f^{-n}G\to f^{-n}G'$. The natural morphism $f_*C^{n-1}F\to f_*C^nF$ induces a morphism $f^{-n}G\to f^{-n+1}G$. Thus, if $G$ is a complex of abelian data on $Y$, then we have a bicomplex $f^{-p}G^q$, whose associated simple complex is denoted by $f^\nabla G$. One has an isomorphism
\[ \Hom^\punto (f_*C^\punto F,G)=\Hom^\punto(F,f^\nabla G)\] and then it suffices to define $f^!G:=f^\nabla I(G)$, with $G\to I(G)$ an injective resolution.
\end{proof}

\begin{defn} We shall denote $$D_{X/Y}:=f^!\ZZ$$ and call it {\it relative dualizing complex} of $X$ over $Y$. If $Y$ is a point, it will be denoted by $D_X$ and named by {\it  dualizing complex of $X$.}
\end{defn}

\begin{rem} By definition, for any $F\in D(X)$ one has:

\[\aligned \RR\Hom^\punto(\RR f_*F,\ZZ)&= \RR\Hom^\punto (F, D_{X/Y})\\ \RR\Gamma(X,F)^\vee &=\RR\Hom^\punto(F,D_X).\endaligned\]
In particular, $\RR\Gamma(U,D_X)=\RR\Gamma(X,\ZZ_U)^\vee$. Thus the stalkwise description of $D_X$ is: 
\[ (D_X)_p=\RR\Gamma(X,\ZZ_{U_p})^\vee\] and, for any $p\leq q$, the restriction morphism
$(D_X)_p\to (D_X)_q$ is obtained by applying $\RR\Gamma(X,\quad)^\vee$ to the natural morphism $\ZZ_{U_q}\hookrightarrow \ZZ_{U_p}$.
\end{rem}

\begin{thm} [Explicit description of the dualizing complex]\label{ExplicitDualizing} Let $n=\dim X$. The dualizing complex $D_X$ is isomorphic to a complex 
\[ 0\to D_X^{-n}\to\cdots\to D_X^0\to 0\] where $$D_X^{-p}=\underset{x_0<\cdots <x_p}\bigoplus \ZZ_{C_{x_p}}.$$ In particular, $D_X$ is of finite type and has amplitude $[-n,0]$.
\end{thm}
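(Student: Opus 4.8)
The plan is to compute $D_X = f^!\ZZ$ for $f\colon X \to \{*\}$ directly from the construction in the proof of the Verdier duality theorem, using the standard flasque resolution $C^\punto\ZZ$ (Theorem \ref{finiteflasqueresolution}) as the injective-like resolution to plug into the formula $f^\nabla$. The key observation is that $\Gamma(X,\quad)$ sends $C^\punto$ to a complex whose degree-$p$ term is $\prod_{x_0<\cdots<x_p} \ZZ$, and the representability recipe defining $f^{-p}G$ for $G = \ZZ$ should pick out, degree by degree, a direct sum of the ``support on a closed point'' data $\ZZ_{C_{x_p}}$. Since $C^\punto\ZZ$ is flasque (hence $\Gamma(X,\quad)$-acyclic) and of finite length $n = \dim X$, it suffices to run the construction on this resolution rather than on a genuine injective resolution, provided one checks the construction is compatible with quasi-isomorphisms between acyclic resolutions --- which it is, since $f^\nabla$ is a genuine complex-level functor computing the derived adjoint.

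**Key steps in order.** First I would recall that, by definition, $\Hom^\punto(F, f^\nabla \ZZ) = \Hom^\punto(f_* C^\punto F, \ZZ)$, and that on a point $f_* = \Gamma(X,\quad)$, so $f^\nabla\ZZ$ represents $F \mapsto \Hom^\punto(\Gamma(X, C^\punto F), \ZZ)$. Second, I would expand the right-hand side: $\Gamma(X, C^n F) = \prod_{(x_0<\cdots<x_n)} F_{x_n}$, so $\Hom(\Gamma(X,C^nF), \ZZ) \supseteq \bigoplus_{(x_0<\cdots<x_n)} \Hom(F_{x_n}, \ZZ)$, and I would argue (using that $F \mapsto \Gamma(X,C^nF)$ commutes with filtered direct limits, and reducing to $F$ of finite type) that the representing object is exactly the direct sum term. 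Third, I would identify $\Hom(F_{x_n}, \ZZ)$-valued functors: the functor $F \mapsto \Hom(F_{x_n}, \ZZ) = \Hom(L(C_{x_n}, F), \ZZ)$ is represented by $\ZZ_{C_{x_n}}$ by Proposition \ref{propsop1}(1). Hence $f^{-p}\ZZ = \bigoplus_{x_0<\cdots<x_p} \ZZ_{C_{x_p}}$, which is the claimed $D_X^{-p}$. Fourth, I would check that the differential $f^{-p}\ZZ \to f^{-p+1}\ZZ$ induced by $f_*C^{p-1}F \to f_*C^pF$ matches the \v{C}ech-type differential on the $\ZZ_{C_{x_p}}$, so that the resulting simple complex is a complex concentrated in degrees $[-n,0]$. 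The finiteness and amplitude claims are then immediate, since each $\ZZ_{C_{x_p}}$ is of finite type and $n = \dim X$ bounds the length of chains.

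**Main obstacle.** The delicate point is justifying that one may compute $f^\nabla\ZZ$ using the flasque resolution $C^\punto\ZZ$ in the ``source'' slot rather than an injective resolution of $\ZZ$ in the ``target'' slot of $f^!$. In the construction $f^!G := f^\nabla I(G)$, the resolution being taken is an \emph{injective resolution of $G$ on $Y$}, whereas the $C^\punto$ appearing in $f^\nabla$ is a \emph{fixed functorial resolution on $X$ applied to the variable $F$}. So for $Y$ a point and $G = \ZZ$, one has $f^!\ZZ = f^\nabla \ZZ$ on the nose (no further resolution needed, as $\ZZ$ is its own injective resolution up to quasi-isomorphism is \emph{not} automatic --- but the point is that $f^\nabla$ already computes the derived functor because $C^\punto F$ is a flasque, hence $\Gamma(X,\quad)$-acyclic, resolution, so $\Gamma(X, C^\punto F) = \RR\Gamma(X, F)$ and $\Hom^\punto(\RR\Gamma(X,F),\ZZ) = \RR\Hom^\punto(\RR\Gamma(X,F),\ZZ)$ up to the usual $K$-injective replacement of $\ZZ$). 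The careful bookkeeping here --- tracking which complex is acyclic for which functor, and confirming the representing objects assemble into an honest complex with the stated differential --- is where the real work lies; the rest is the explicit identification via Proposition \ref{propsop1}, which is routine.
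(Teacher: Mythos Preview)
Your approach is essentially the same as the paper's: both identify the representing object for $F\mapsto \Hom(\Gamma(X,C^pF),G)$ via the formula $\Hom(F_x,G)=\Hom(F,G_{C_x})$ (which is Proposition~\ref{propsop1}(1), equivalently the adjunction for the closed inclusion $\{x\}\hookrightarrow X$). The difference is in how the injective-resolution issue---your ``main obstacle''---is handled, and here the paper is cleaner while your treatment is muddled.

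Two remarks. First, your direct-limit manoeuvre in step~2 is unnecessary: since $X$ is finite there are only finitely many chains $x_0<\cdots<x_p$, so the product $\prod \Hom(F_{x_p},\ZZ)$ is already a finite direct sum, and the representability is immediate. Second, your obstacle paragraph does not actually resolve the obstacle. You correctly observe that the definition gives $D_X=f^\nabla I(\ZZ)$, not $f^\nabla\ZZ$, but your justification (``$C^\punto F$ is flasque, so $\Gamma(X,C^\punto F)=\RR\Gamma(X,F)$'') addresses the wrong functor: the issue is whether $\Hom^\punto(-,\ZZ)$ agrees with $\RR\Hom^\punto(-,\ZZ)$, which is about the target, not the source. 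The paper avoids this by working with the injective resolution $\Omega=(\QQ\to\QQ/\ZZ)$ of $\ZZ$ from the start: one computes $f^{-p}\Omega^q=\bigoplus_{x_0<\cdots<x_p}\Omega^q_{C_{x_p}}$ by the same adjunction, and then observes that for each fixed chain $\Omega_{C_{x_p}}$ is an injective resolution of $\ZZ_{C_{x_p}}$, so the totalization of the bicomplex is quasi-isomorphic to the stated complex. Equivalently, once you have computed $f^\nabla\ZZ$ explicitly, the quasi-isomorphism $f^\nabla\ZZ\to f^\nabla\Omega=D_X$ follows because $\ZZ_{C_x}\to\Omega_{C_x}$ is a quasi-isomorphism (the functor $(-)_{C_x}$ is exact) for every $x$; but you did not say this.
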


\begin{proof}  Let $\Omega$ be the standard resolution of $\ZZ$ by injective abelian  groups:  $\Omega^0=\QQ\to \Omega^1=\QQ/\ZZ$. Let $F$ be an abelian data. Then $\RR\Gamma (X, F)$ is the complex
\[ 0\to \Gamma(X,C^0F)\to \Gamma(X,C^1F)\to \cdots \to \Gamma(X,C^nF)\to 0\] and  $\Gamma(X,C^pF)=\underset {x_0<\cdots <x_p}\bigoplus F_{x_p}$. 
For each point $x\in X$, let $i\colon{x}\hookrightarrow X$ be the inclusion. One has that $i_*G=G_{C_x}$ for any abelian group $G$, hence   $$(F_x)^\vee=\Hom^\punto (F_x,\Omega) = \Hom^\punto (F,i_*\Omega)=\Hom^\punto (F,\Omega_{C_x})$$ and then
\[ \Gamma(X,C^pF)^\vee=\Hom^\punto(F, \underset{x_0<\cdots <x_p} \bigoplus \Omega_{C_{x_p}}).\] One concludes because $\Omega_{C_{x_p}}$ is an injective resolution of $\ZZ_{C_{x_p}}$.
\end{proof}

\begin{rem} One can easily compute the differential of the complex $D_X$, which is induced by the differential of $\Gamma (X,C^\punto F)$.
\end{rem}

\begin{prop} If $X$ has a minimum $p$, then $D_X\simeq \ZZ_{\{ p\}}$.
\end{prop}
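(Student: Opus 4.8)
The plan is to use the explicit description of $D_X$ given in Theorem \ref{ExplicitDualizing} together with the simple observation that $X = U_p$ when $p$ is a minimum, so that the derived category $D(\text{\rm Qcoh}(X))$ is equivalent to $D(X)$ on the nose (as noted right after the definition of $D_{\text{qc}}(X)$), and moreover $\RR\Gamma(X,-)$ becomes the stalk-at-$p$ functor because $U_p = X$. Concretely, the stalkwise formula $(D_X)_q = \RR\Gamma(X,\ZZ_{U_q})^\vee$ reduces the problem to computing $\RR\Gamma(X,\ZZ_{U_q})$ for each $q$. Since $X = U_p$, the open set $U_q$ contains $p$ if and only if $p \le q$, which holds for every $q$; thus I would instead note that what matters is the contractibility of $U_q$ itself (it has minimum $q$), hence $\RR\Gamma(U_q,\ZZ) = \ZZ$ concentrated in degree $0$.

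First I would recall from the exact sequence $0 \to \ZZ_{U_q} \to \ZZ \to \ZZ_{X - U_q} \to 0$ on $X$ that $\RR\Gamma(X,\ZZ_{U_q})$ sits in a triangle with $\RR\Gamma(X,\ZZ)$ and $\RR\Gamma(X,\ZZ_{X-U_q})$. Since $X = U_p$ is contractible, $\RR\Gamma(X,\ZZ) = \ZZ$ in degree $0$. Similarly $\RR\Gamma(X,\ZZ_{X-U_q}) = \RR\Gamma(X - U_q, \ZZ)$, and when $q = p$ the closed set $X - U_p$ is empty, so this term vanishes and $\RR\Gamma(X,\ZZ_{U_p}) = \ZZ$ in degree $0$, giving $(D_X)_p = \ZZ$. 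When $q > p$, the point $p$ lies in $X - U_q$ and in fact $p$ is a minimum of the closed subspace $X - U_q$ as well (it is a minimum of all of $X$), so $X - U_q$ is contractible and $\RR\Gamma(X,\ZZ_{X-U_q}) = \ZZ$ in degree $0$; the connecting map $\RR\Gamma(X,\ZZ) \to \RR\Gamma(X,\ZZ_{X-U_q})$ is the restriction $\ZZ \to \ZZ$, which is the identity (both are global sections of the constant sheaf on a connected space, compatibly), hence $\RR\Gamma(X,\ZZ_{U_q}) = 0$ for all $q > p$. Dualizing, $(D_X)_q = 0$ for $q \ne p$ and $(D_X)_p = \ZZ$, which is precisely the stalkwise description of $\ZZ_{\{p\}}$.

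To finish I would check that the restriction morphisms $(D_X)_p \to (D_X)_q$ match those of $\ZZ_{\{p\}}$: for $q \ne p$ the target is $0$ so there is nothing to verify, and there are no relations $q \le q'$ with $q, q'$ both distinct from $p$ that could force a nonzero map, since $p$ is the minimum and hence $p \le q$ is the only comparability involving $p$. Alternatively, and perhaps more cleanly, I would argue directly with adjunction: for any $F \in D(X)$ one has $\RR\Hom^\punto(F, D_X) = \RR\Gamma(X,F)^\vee = F_p^\vee = \RR\Hom^\punto(F_p, \ZZ)$ using that $\RR\Gamma(U_p, -)$ is stalk-at-$p$, and one identifies $F_p$ with $\RR\Hom^\punto(\ZZ_{\{p\}}, F)^\vee$ dual... — actually the cleanest route is to observe $\RR\Gamma(X,F) = F_p = \LL(C_p \cap X, F)$ and use Proposition \ref{propsop1}(1) together with $L(X,\ZZ_{\{p\}}) = \ZZ$ (as $\{p\}$ is a single point) to get $\RR\Hom^\punto(F, \ZZ_{\{p\}}) = \RR\Hom^\punto(F_p, \ZZ)$ for $F$ a single projective-at-$p$ object, then extend to complexes; comparing with $\RR\Hom^\punto(F, D_X) = F_p^\vee$ and invoking the uniqueness of the adjoint gives $D_X \simeq \ZZ_{\{p\}}$. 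The main obstacle is being careful about which comparison is genuinely functorial: the quickest honest proof is the one via $\RR\Gamma(X,F)^\vee = F_p^\vee$ and Yoneda, so I would lead with that.
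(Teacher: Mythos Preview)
Your proposal is correct. Both routes you sketch work: the stalkwise computation via $(D_X)_q = \RR\Gamma(X,\ZZ_{U_q})^\vee$ is valid (your observation that $X-U_q$ retains $p$ as a minimum is exactly what makes those stalks vanish), and the Yoneda argument you land on at the end is in fact the paper's own proof.

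The paper goes straight to the adjunction: since $X=U_p$, the functor $\Gamma(X,-)$ is just $i^{-1}$ for the closed inclusion $i\colon\{p\}\hookrightarrow X$, so
\[
\RR\Hom^\punto(F,D_X)=\RR\Gamma(X,F)^\vee=\RR\Hom^\punto(i^{-1}F,\ZZ)=\RR\Hom^\punto(F,\RR i_*\ZZ)=\RR\Hom^\punto(F,\ZZ_{\{p\}}),
\]
and Yoneda finishes. This is exactly the ``cleanest route'' you identify in your last two sentences, but the paper's phrasing via $i^{-1}\dashv \RR i_*$ avoids the hesitation around Proposition~\ref{propsop1} and projectivity that you fall into. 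Your stalkwise approach is sound but heavier; its only merit is that it makes the structure of $D_X$ visible pointwise, which the paper's proof does not. For the write-up, lead with the adjunction and drop the rest.
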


\begin{proof} Let $i\colon \{ p\}\hookrightarrow X$ be the inclusion. For any abelian data $F$ on $X$ one has
\[\Gamma(X,F)=\Gamma(U_p,F)=F_p=i^{-1}F\] Hence
\[\RR\Hom^\punto(F,D_X)=\RR\Hom(\Gamma(X,F),\ZZ)=\RR\Hom(i^{-1}F,\ZZ)= \RR\Hom( F,\RR i_*\ZZ)= \RR\Hom( F,\ZZ_{\{ p\}}).\]
\end{proof}

%

\begin{prop}\label{localdualizing} Let $f\colon X\to Y$ be a continuous map, $F\in D(X)$, $K\in D(Y)$. 
\begin{enumerate} \item One has a natural morphism 
\[ \RR f_*\RR\HHom^\punto(F,f^!K)\to \RR\HHom^\punto(\RR f_*F,K)\]
which is called {\it local homomorphism of duality}. Taking global sections (i.e. applying $\RR\Gamma(Y,\quad)$) one obtains the duality isomorphism.

\item For any open subset $V\subseteq Y$, one has a natural morphism
\[ (f^!K)_{\vert f^{-1}(V)}\to {f_V}^!(K_{\vert V}).\]
\end{enumerate}
\end{prop}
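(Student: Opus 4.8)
The plan is to construct both morphisms by adjunction, exploiting the fact that $\RR f_*$, $f^{-1}$ and $\LL f_!$ are mutually adjoint (Proposition \ref{AdjunctionsOfInvIm}) together with the defining adjunction $\RR\Hom^\punto(\RR f_*(-),-)=\RR\Hom^\punto(-,f^!(-))$. For part (1), the cleanest route is to produce a morphism of functors, so I would test it against an arbitrary $G\in D(Y)$ by applying $\RR\Hom^\punto(G,-)$ to both sides. On the left one gets, via $\RR\Gamma(Y,-)$-and-$\RR\HHom^\punto$ adjunction (Proposition \ref{RHom&Tens}) and the $\LL f_!\dashv f^{-1}$ adjunction,
\[
\RR\Hom^\punto(G,\RR f_*\RR\HHom^\punto(F,f^!K))=\RR\Hom^\punto(\LL f_!(f^{-1}G\overset\LL\otimes F),f^!K)=\RR\Hom^\punto(\RR f_*\LL f_!(f^{-1}G\overset\LL\otimes F),K),
\]
while on the right one gets $\RR\Hom^\punto(G\overset\LL\otimes\RR f_*F,K)=\RR\Hom^\punto(\RR f_*(f^{-1}G\overset\LL\otimes F),K)$ by the projection formula morphism (Proposition \ref{projectionformula}). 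So the desired map is induced by the natural morphism $\RR f_*(f^{-1}G\overset\LL\otimes F)\to\RR f_*\LL f_!(f^{-1}G\overset\LL\otimes F)$ coming from the unit $\id\to\RR f_*\LL f_!$? That is not quite right; the correct unit here is $\LL f_! f^{-1}\to\id$ or rather the counit of $\LL f_!\dashv f^{-1}$. The honest and safest approach is instead to go directly: the counit $f^{-1}\RR f_*\to\id$ induces $f^{-1}\RR f_*F\to F$, hence a morphism $\RR\HHom^\punto(F,f^!K)\to\RR\HHom^\punto(f^{-1}\RR f_*F,f^!K)$; now I would use Proposition \ref{InvImTensHom}(2) in the form $f^{-1}\RR\HHom^\punto(\RR f_*F,K')\to\RR\HHom^\punto(f^{-1}\RR f_*F,f^{-1}K')$ — but $K'=$? — this is where one must be careful. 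The real construction: apply $\RR f_*$ to $\RR\HHom^\punto(F,f^!K)$ and use the natural transformation $\RR f_*\RR\HHom^\punto(F,f^!K)\to\RR\HHom^\punto(\RR f_*F,\RR f_* f^!K)$ (functoriality of $\RR\HHom^\punto$ under $\RR f_*$, valid since $\RR f_*$ is lax monoidal for $\RR\HHom$), composed with $\RR\HHom^\punto(\RR f_*F,\epsilon)$ where $\epsilon\colon\RR f_* f^!K\to K$ is the counit of the adjunction $\RR f_*\dashv f^!$.

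For part (2), restrict everything to $V$ and $f^{-1}(V)$: writing $j\colon V\hookrightarrow Y$ and $j'\colon f^{-1}(V)\hookrightarrow X$ for the (open) inclusions, one has $(\RR f_*F)_{\vert V}\simeq\RR{f_V}_*(F_{\vert f^{-1}(V)})$ by Proposition \ref{1.2.10} (direct image commutes with restriction to opens, and $j'^{-1}=j'^*$ is exact so needs no derivation). Then for any $L\in D(V)$,
\[
\RR\Hom^\punto(L,(f^!K)_{\vert f^{-1}(V)})=\RR\Hom^\punto(j'_!L,f^!K)=\RR\Hom^\punto(\RR f_* j'_!L,K),
\]
and I would compare $\RR f_* j'_! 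L$ with $j_!\RR{f_V}_! ?$ — no, the natural map here is $j_!\RR{f_V}_*L\to\RR f_* j'_* L$; but $j'_!\neq j'_*$ in general. The right move is to use instead the adjunction $\RR\Hom^\punto(L,{f_V}^!(K_{\vert V}))=\RR\Hom^\punto(\RR{f_V}_*L,K_{\vert V})=\RR\Hom^\punto(\RR{f_V}_*L,j^{-1}K)=\RR\Hom^\punto(\LL j_! \RR{f_V}_*L,K)$ and then produce a natural morphism $\RR f_*\LL j'_! L\to \LL j_!\RR{f_V}_* L$ (base change for the open immersion square, Proposition \ref{basechangemorphism}), noting $\LL j'_!=j'_!$ and $\LL j_!=j_!$ are exact and that this base change map is even an isomorphism for $\RR f_*$ along open $g=j$. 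Hence the two Hom-groups are identified compatibly and Yoneda gives the morphism of part (2); equivalently one may define it as the adjoint of $\RR{f_V}_*((f^!K)_{\vert f^{-1}(V)})\to (\RR f_* f^!K)_{\vert V}\to K_{\vert V}$, using Proposition \ref{1.2.10} and the counit.

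The main obstacle is purely bookkeeping: in part (1) one must identify the correct natural transformation $\RR f_*\RR\HHom^\punto(F,f^!K)\to\RR\HHom^\punto(\RR f_*F,K)$ and check that applying $\RR\Gamma(Y,-)$ recovers the global duality isomorphism $\RR\Hom^\punto(\RR f_*F,K)=\RR\Hom^\punto(F,f^!K)$ — this is a compatibility between the local and global counits that is formal but must be spelled out; and in part (2) one must be careful to use $\LL j_!$ (extension by zero, which is exact for open immersions, so $\LL j_!=j_!$) rather than $\RR j_*$, and to invoke the open-immersion case of base change (Proposition \ref{basechangemorphism}) correctly oriented. Neither step requires any computation with the explicit standard resolutions; both are two-line diagram chases once the adjunctions are lined up. I would present part (1) via the counit-of-adjunction construction and part (2) via restriction-of-adjunctions, each followed by the remark that the displayed global statement is obtained by $\RR\Gamma(Y,-)$.
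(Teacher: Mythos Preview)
Your final constructions for both parts are correct, but the route differs from the paper's and your initial attempt contains an error worth flagging.

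In your first attempt for (1), the equality $\RR\Hom^\punto(G,\RR f_*\RR\HHom^\punto(F,f^!K))=\RR\Hom^\punto(\LL f_!(f^{-1}G\overset\LL\otimes F),f^!K)$ is wrong: the adjunction $f^{-1}\dashv\RR f_*$ followed by tensor--Hom gives $\RR\Hom^\punto(f^{-1}G\overset\LL\otimes F,f^!K)$ directly, with no $\LL f_!$. Continuing with $\RR f_*\dashv f^!$ yields $\RR\Hom^\punto(\RR f_*(f^{-1}G\overset\LL\otimes F),K)$, and comparing with the right-hand side $\RR\Hom^\punto(G\overset\LL\otimes\RR f_*F,K)$ one sees that the desired map is induced precisely by the projection formula morphism $(\RR f_*F)\overset\LL\otimes G\to\RR f_*(F\overset\LL\otimes f^{-1}G)$ of Proposition \ref{projectionformula}. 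This is exactly what the paper does: start from the projection formula morphism, apply $\RR\Hom^\punto(-,K)$, and unwind the adjunctions via Yoneda. Your abandoned first approach was therefore the right one, modulo the spurious $\LL f_!$.

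Your eventual ``lax monoidal plus counit'' construction for (1) is also valid and is a standard alternative. The advantage of the paper's projection-formula route is that it feeds directly into Theorem \ref{projformulacharacterization}: there one needs that the local duality morphism is an isomorphism exactly when the projection formula is, and the paper's construction makes this equivalence a one-line adjunction argument.

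For (2), the paper simply takes sections of the morphism of (1) over $V$: this gives $\RR\Hom^\punto(F_{\vert f^{-1}(V)},(f^!K)_{\vert f^{-1}(V)})\to\RR\Hom^\punto(\RR{f_V}_*(F_{\vert f^{-1}(V)}),K_{\vert V})=\RR\Hom^\punto(F_{\vert f^{-1}(V)},{f_V}^!(K_{\vert V}))$, hence the map by Yoneda. This is shorter than your base-change manoeuvre and avoids tracking the orientation of the base-change map for $j_!$. Your closing ``equivalently'' remark (adjoint of $\RR{f_V}_*((f^!K)_{\vert f^{-1}(V)})\to(\RR f_*f^!K)_{\vert V}\to K_{\vert V}$) is essentially the same construction.
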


\begin{proof}
  For any $F\in D(X)$, $\Lc\in D(Y)$ one has the projection formula  morphism:
\[\ \RR f_*F\overset\LL\otimes \Lc\to\RR f_*(F\overset\LL\otimes f^{-1}\Lc).\]
Taking $\RR\Hom^\punto(\quad,K)$, one obtains, by adjunctions, a morphism
\[ \RR\Hom^\punto(\Lc, \RR f_*\RR\HHom^\punto(F,f^!K))\to \RR\Hom^\punto(\Lc,\RR\HHom^\punto(\RR f_*F,K))\] hence a morphism
\[ \RR f_*\RR\HHom^\punto(F,f^!K)\to \RR\HHom^\punto(\RR f_*F,K).\]
Taking sections on $V$ one obtains a morphism (let us denote $U=f^{-1}(V)$)
\[  \RR\Hom^\punto(F_{\vert U},(f^!K)_{\vert U})\to \RR\Hom^\punto(\RR {f_V}_*(F_{\vert U}),K_{\vert V})\overset{\text{\rm Duality}}{===}\RR\Hom^\punto(F_{\vert U},{f_V}^!(K_{\vert V})\] hence a morphism 
\[ (f^!K)_{\vert U}\to {f_V}^!(K_{\vert V}).\]
\end{proof}


\begin{prop}\label{4.3.9} Let $f\colon X\to Y$ be a continuous map. For any  $\Lc,K\in D(Y)$ one has a natural morphism 
 \[  \RR\HHom^\punto(f^{-1}\Lc, f^!K) \to  f^!\RR\HHom^\punto(\Lc,K) \] which is an isomorphism if $\Lc$ is locally constant.
\end{prop}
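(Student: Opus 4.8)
The plan is to construct the morphism by adjunction, exploiting that $f^!$ is the right adjoint of $\RR f_*$, and then to reduce the isomorphism statement to a local computation on $Y$ where $\Lc$ becomes constant, so that $\RR\HHom^\punto(\Lc,K)$ can be replaced by $\RR\Hom^\punto(\Lc_y,K_y)$ via Lemma \ref{lemita} and Proposition \ref{InvImTensHom}(2). Concretely, to give the desired morphism $\RR\HHom^\punto(f^{-1}\Lc,f^!K)\to f^!\RR\HHom^\punto(\Lc,K)$ it suffices, by the adjunction $(\RR f_*,f^!)$, to give a morphism $\RR f_*\RR\HHom^\punto(f^{-1}\Lc,f^!K)\to \RR\HHom^\punto(\Lc,K)$. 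For this I would first apply the projection formula morphism (Proposition \ref{projectionformula}) to rewrite $\RR f_*(\RR\HHom^\punto(f^{-1}\Lc,f^!K))$: using the adjunction $\overset\LL\otimes\leftrightarrow\RR\HHom$ (Proposition \ref{RHom&Tens}) together with $f^{-1}(\Lc\overset\LL\otimes(-))\simeq f^{-1}\Lc\overset\LL\otimes f^{-1}(-)$ (Proposition \ref{InvImTensHom}(1)), one gets for any $T\in D(Y)$ a chain
\[
\RR\Hom^\punto(T,\RR f_*\RR\HHom^\punto(f^{-1}\Lc,f^!K)) = \RR\Hom^\punto(f^{-1}(T\overset\LL\otimes\Lc),f^!K) = \RR\Hom^\punto(T\overset\LL\otimes\Lc,K) = \RR\Hom^\punto(T,\RR\HHom^\punto(\Lc,K)),
\]
where the middle equality is the $(\LL f_!,f^{-1})$–or rather $(\RR f_*,f^!)$–adjunction combined with the projection formula morphism, and the outer ones are Proposition \ref{RHom&Tens} and Proposition \ref{InvImTensHom}(1). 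By Yoneda this produces the claimed natural morphism.

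For the isomorphism assertion when $\Lc$ is locally constant, the key observation is that the projection formula morphism used above is an isomorphism precisely when $\Lc$ is locally constant (Proposition \ref{projectionformula}), so every arrow in the displayed chain is an isomorphism, and hence so is the resulting morphism. Alternatively, and more in the spirit of the local-to-global arguments in the paper, one can check the statement stalkwise: it suffices to test after applying $\RR\Hom^\punto(F,-)$ for arbitrary $F\in D(X)$, which by the $(\RR f_*,f^!)$ adjunction and Proposition \ref{RHom&Tens} reduces both sides to statements about $\RR f_*$, $f^{-1}$ and $\overset\LL\otimes$ involving only $\Lc$ locally constant, where Proposition \ref{InvImTensHom}(2) and the projection formula close the argument.

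The main obstacle I anticipate is bookkeeping: making sure that the morphism built by Yoneda from the natural transformation above genuinely agrees with the "naive" morphism one would write down by hand (adjoint unit/counit manipulations), and that all the identifications are natural in $\Lc$ and $K$ — in particular that the projection-formula morphism, the $\overset\LL\otimes\leftrightarrow\RR\HHom$ adjunction, and the $f^{-1}$–$\otimes$ compatibility are being composed compatibly. Once that is set up, the fact that local constancy of $\Lc$ upgrades the projection formula morphism to an isomorphism immediately gives the conclusion, with no further computation needed.
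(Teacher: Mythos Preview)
Your ``alternative'' argument --- testing against $F\in D(X)$ --- is exactly the paper's proof, and it works. But your primary argument has a genuine gap in the isomorphism step.

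In your displayed chain, the middle ``equality''
\[
\RR\Hom^\punto(f^{-1}(T\overset\LL\otimes\Lc),f^!K)\;=\;\RR\Hom^\punto(T\overset\LL\otimes\Lc,K)
\]
does \emph{not} come from the projection formula. Unwinding the $(\RR f_*,f^!)$-adjunction, the left side is $\RR\Hom^\punto(\RR f_*f^{-1}(T\overset\LL\otimes\Lc),K)$, and passing to the right side uses the adjunction unit $T\overset\LL\otimes\Lc\to\RR f_*f^{-1}(T\overset\LL\otimes\Lc)$. This unit is not an isomorphism in general (it fails already when $f$ is not surjective, or has fibers that are not homologically trivial), and local constancy of $\Lc$ does nothing to help: the obstruction depends on $\RR f_*\ZZ$, not on $\Lc$. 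So the claim ``every arrow in the displayed chain is an isomorphism'' is false, and the Yoneda argument over $T\in D(Y)$ cannot conclude the isomorphism.

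The paper instead tests against $F\in D(X)$: one has $\RR\Hom^\punto(F,\RR\HHom^\punto(f^{-1}\Lc,f^!K))=\RR\Hom^\punto(\RR f_*(F\overset\LL\otimes f^{-1}\Lc),K)$ and $\RR\Hom^\punto(F,f^!\RR\HHom^\punto(\Lc,K))=\RR\Hom^\punto((\RR f_*F)\overset\LL\otimes\Lc,K)$, and the map between them is exactly $\RR\Hom^\punto(-,K)$ applied to the projection formula morphism of Proposition~\ref{projectionformula}, which \emph{is} an isomorphism when $\Lc$ is locally constant. The moral is that the test object must live in $D(X)$, not $D(Y)$, so that the projection formula (rather than the unit of $f^{-1}\dashv\RR f_*$) appears.
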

\begin{proof}
Taking $\RR\Hom^\punto(\quad,K)$ in the projection formula  one obtains a morphism
 \[ \RR\Hom^\punto(F,\RR\HHom^\punto(f^{-1}\Lc, f^!K))\to \RR\Hom^\punto(F,f^!\RR\HHom^\punto(\Lc,K))\] hence a morphism
  \[  \RR\HHom^\punto(f^{-1}\Lc, f^!K) \to  f^!\RR\HHom^\punto(\Lc,K) \] which is an isomorphism if $\Lc$ is locally constant, because projection formula is so in that case.
\end{proof}
 
%

  
\begin{thm}\label{projformulacharacterization} Let $f\colon X\to Y$ be a continuous map. The following conditions are equivalent
\begin{enumerate} \item $f$ is c-proper.

\item $f^!$ is local on $Y$: for any open subset $V\subseteq Y$ and any $K\in D(Y)$ the morphism (Proposition \ref{localdualizing}) \[ (f^!K)_{\vert f^{-1}(V)}\to f_V^!(K_{\vert V})\] is an isomorphism.

\item Local isomorphism of duality holds: for any $F\in D(X), K\in D(Y)$, the local morphism of duality (Proposition \ref{localdualizing})
 \[\RR f_*\RR\HHom^\punto(F,f^!K) \to \RR\HHom^\punto (\RR f_*F,K)\] is an isomorphism.
 
 \item Commutation of $f^!$ with homomorphisms: for any $\Lc,K\in D(Y)$ the morphism (Proposition \ref{4.3.9})
 \[  \RR\HHom^\punto(f^{-1}\Lc, f^!K) \to  f^!\RR\HHom^\punto(\Lc,K) \] is an isomorphism.

\end{enumerate}
\end{thm}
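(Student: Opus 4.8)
The plan is to anchor all four conditions to the projection formula. By Theorem \ref{hsatprojform}, $f$ is c-proper if and only if the projection formula morphism
\[ (\RR f_*F)\overset\LL\otimes\Lc \longrightarrow \RR f_*(F\overset\LL\otimes f^{-1}\Lc) \]
is an isomorphism for every $F\in D(X)$ and every $\Lc\in D(Y)$; call this property $(\mathrm{PF})$. I would then show that each of (2), (3), (4) is again equivalent to $(\mathrm{PF})$. The key point is that the canonical morphisms occurring in (2), (3), (4) --- those of Propositions \ref{localdualizing} and \ref{4.3.9} --- were all manufactured from the projection formula morphism by applying $\RR\Hom^\punto(\,\cdot\,,K)$ together with the adjunctions $\overset\LL\otimes\dashv\RR\HHom$, $f^{-1}\dashv\RR f_*$ and $\RR f_*\dashv f^!$; so the job is to reverse that construction and invoke Yoneda.

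First I would prove $(1)\Leftrightarrow(4)$. Fix $\Lc,K\in D(Y)$ and apply $\RR\Hom^\punto(F,\,\cdot\,)$, for an arbitrary $F\in D(X)$, to the morphism $\RR\HHom^\punto(f^{-1}\Lc,f^!K)\to f^!\RR\HHom^\punto(\Lc,K)$. Using Proposition \ref{RHom&Tens} and the adjunction $\RR f_*\dashv f^!$ on both sides, this is identified with $\RR\Hom^\punto(\,\cdot\,,K)$ applied to the projection formula morphism for that $F$ and $\Lc$. Now a morphism in $D(X)$ is an isomorphism iff $\RR\Hom^\punto(F,\,\cdot\,)$ sends it to a quasi-isomorphism for every $F$, and a morphism in $D(Y)$ is an isomorphism iff $\RR\Hom^\punto(\,\cdot\,,K)$ sends it to a quasi-isomorphism for every $K$; chaining these two Yoneda statements gives: the morphism of (4) is an isomorphism for all $\Lc,K$ $\iff$ $(\mathrm{PF})$ $\iff$ (1). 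The identical argument --- applying instead $\RR\Hom^\punto(\Lc,\,\cdot\,)$ and using $f^{-1}\dashv\RR f_*$ (Proposition \ref{AdjunctionsOfInvIm}) --- shows the local homomorphism of duality $\RR f_*\RR\HHom^\punto(F,f^!K)\to\RR\HHom^\punto(\RR f_*F,K)$ is an isomorphism for all $F,K$ $\iff$ $(\mathrm{PF})$; hence $(3)\Leftrightarrow(1)$.

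It remains to relate (2) and (3). Assuming (3), I would restrict the isomorphism of (3) to an open $V\subseteq Y$ and apply $\RR\Gamma(V,\,\cdot\,)$. Using $\RR\Gamma(V,\RR f_*(\,\cdot\,))=\RR\Gamma(f^{-1}(V),\,\cdot\,)$, the identity $(\RR f_*F)_{\vert V}=\RR (f_V)_*(F_{\vert f^{-1}(V)})$ --- the derived form of Proposition \ref{1.2.10}, valid because injectives restrict to injectives on opens --- and duality for $f_V$, the outcome is $\RR\Hom^\punto(F_{\vert f^{-1}(V)},\,\cdot\,)$ applied to the morphism $(f^!K)_{\vert f^{-1}(V)}\to {f_V}^!(K_{\vert V})$ of Proposition \ref{localdualizing}; since $F_{\vert f^{-1}(V)}$ runs over all of $D(f^{-1}(V))$ (take $F=j_!(\,\cdot\,)$ for $j\colon f^{-1}(V)\hookrightarrow X$), Yoneda yields (2). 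Conversely, to deduce (3) from (2) it is enough to check that the morphism of (3) is a quasi-isomorphism on each stalk over $Y$; its stalk at $y$ is, by the same three identifications with $V=U_y$, precisely $\RR\Hom^\punto(F_{\vert f^{-1}(U_y)},\,\cdot\,)$ applied to $(f^!K)_{\vert f^{-1}(U_y)}\to {f_{U_y}}^!(K_{\vert U_y})$, an isomorphism by (2).

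I expect the only genuine work to be bookkeeping: verifying carefully that the canonical morphisms of Propositions \ref{localdualizing} and \ref{4.3.9} do coincide, under the adjunction isomorphisms, with $\RR\Hom^\punto(\,\cdot\,,K)$ applied to the projection formula morphism. This is a diagram chase through their construction --- each of them was itself obtained from the projection formula by adjunction --- with no conceptual difficulty but some care about which adjunction is used in which variable. Everything else (the two Yoneda arguments and the elementary base change for open restrictions) is routine.
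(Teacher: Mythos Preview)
Your proposal is correct and follows essentially the same route as the paper: both anchor conditions (3) and (4) to the projection formula via the adjunctions $\overset\LL\otimes\dashv\RR\HHom$, $f^{-1}\dashv\RR f_*$, $\RR f_*\dashv f^!$ (the paper just says ``by adjunction''), and both handle the equivalence of (2) and (3) by applying $\RR\Gamma(V,\,\cdot\,)$ to the local duality morphism and identifying the result with $\RR\Hom^\punto(F_{\vert f^{-1}(V)},\,\cdot\,)$ applied to the map of Proposition~\ref{localdualizing}. Your write-up is simply more explicit about the Yoneda step and about why $F_{\vert f^{-1}(V)}$ ranges over all of $D(f^{-1}(V))$.
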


\begin{proof} By Theorem \ref{hsatprojform}, (1) is equivalent to saying that projection formula holds. Then 
 (3) and (4) are equivalent to (1) by adjunction. It is also clear that (3) implies (2). To conclude, let us see that (2) implies (3). In order to prove that
 \[\RR f_*\RR\HHom^\punto(F,f^!K) \to \RR\HHom^\punto (\RR f_*F,K)\] is an isomorphism, let us take sections on an open subset $V$ of $Y$. We obtain the morphism
 \[ \aligned \RR\Hom^\punto(F_{\vert f^{-1}(V)},&(f^!K)_{\vert f^{-1}(V)})\simeq \RR\Gamma(V, \RR f_*\RR\HHom^\punto(F,f^!K)) \to \RR\Gamma(V, \RR\HHom^\punto (\RR f_*F,K))= \\ &= \RR\Hom^\punto ((\RR f_*F)_{\vert V},K_{\vert V})\simeq \RR\Hom^\punto (\RR {f_V}_*(F_{\vert f^{-1}(V)}),K_{\vert V})\overset{\text{\rm Duality}}{===}\\ &= \RR\Hom^\punto ( F_{\vert f^{-1}(V)}),f_V^!(K_{\vert V})) \endaligned \] which is an isomorphism because $(f^!K)_{\vert f^{-1}(V)}\to f_V^!(K_{\vert V})$ is an isomorphism by hypothesis.

\end{proof}

%
%

\subsubsection{Co-Duality: Grothendieck-Verdier duality for homology}
\medskip

 In this subsection we shall provide a Grothendieck-Verdier duality theorem for homology. In particular, we shall obtain relative and absolute codualizing complexes. We shall give the basic properties of the codualizing complex and study the significance of h-open maps in co-duality. Finally, we shall make a brief study of the relation between the dualizing and codualizing complexes.
 \medskip
 
  Our first   aim  is to prove the following:
 
\begin{thm}\label{coduality} Let $f\colon X\to Y$ be a continuous map.
The functor $\LL f_!$ has a left adjoint: $f^\#$.
\end{thm}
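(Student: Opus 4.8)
The plan is to mirror the explicit construction of $f^!$ given above for Verdier duality, systematically interchanging the roles of $f_*$, $C^\punto$, injective resolutions and ``right exact $+$ filtered colimits to filtered limits'' with those of $f_!$, $C_\punto$, projective resolutions and ``left exact $+$ preserves products''. Concretely, one wants to build, for $F\in C(Y)$, a complex $f^\nabla_! F$ on $X$ together with a natural isomorphism $\Hom^\punto_X(f^\nabla_! F,G)=\Hom^\punto_Y(F,f_!C_\punto G)$, and then set $f^\#F:=f^\nabla_! P(F)$ for a $K$-projective resolution $P(F)\to F$.

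First I would fix an abelian data $F$ on $Y$ and, for each $p\geq 0$, analyse the covariant functor $G\mapsto \Hom_Y(F,f_!C_pG)$ on abelian data on $X$. The functor $G\mapsto f_!C_pG$ is \emph{exact}: it is right exact because $f_!$ is and $G\mapsto C_pG$ is exact (stalkwise it is a \emph{finite} direct sum of the exact evaluation functors $G\mapsto G_{x_0}$ --- here finiteness of $X$ is used), and it is left exact because $C_pG$ is coflasque, $f_!$ preserves coflasqueness, and $f_!$ turns an exact sequence $0\to G'\to G\to G''\to 0$ with $G''$ coflasque into an exact one (the relative version of Lemma \ref{coflasque}). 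Hence $G\mapsto\Hom_Y(F,f_!C_pG)$ is left exact; it moreover preserves arbitrary products, because $C_p$ does (its stalks are finite direct sums, which commute with products) and $f_!$ does (on a finite poset $L(S,-)$ is a cokernel of finite direct sums, and products are exact in abelian groups --- this is exactly the point at which $f_!$ fails to commute with general inverse limits but still commutes with products). A left-exact, product-preserving functor out of the Grothendieck category of abelian data on $X$ preserves all limits, hence is corepresentable by the special adjoint functor theorem: there is an abelian data $S_pF$ on $X$, functorial in $F$, with $\Hom_X(S_pF,G)=\Hom_Y(F,f_!C_pG)$ naturally in $G$. The differentials $C_pG\to C_{p-1}G$ induce $f_!C_pG\to f_!C_{p-1}G$, hence (Yoneda) morphisms $S_{p-1}F\to S_pF$, and since $C_\punto G$ lives in degrees $0,\dots,n:=\dim X$ we obtain a bounded complex $f^\nabla_! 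F=[\,S_0F\to S_1F\to\cdots\to S_nF\,]$ on $X$ and, extending to complexes by passing to total complexes, a natural isomorphism $\Hom^\punto_X(f^\nabla_! F,G)=\Hom^\punto_Y(F,f_!C_\punto G)$ for all $F\in C(Y)$, $G\in C(X)$.

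To pass to derived categories, observe that the left adjoint $S_p$ of the exact functor $f_!C_p$ preserves projectivity and, more generally, $K$-projectivity; since there are only finitely many $p$, the complex $f^\nabla_! P$ is $K$-projective whenever $P$ is. Defining $f^\#F:=f^\nabla_! P(F)$ for a functorial $K$-projective resolution $P(F)\to F$, one then gets, for $G\in C(X)$,
\[ \Hom_{D(X)}(f^\#F,G)=H^0\Hom^\punto_X(f^\nabla_! P(F),G)=H^0\Hom^\punto_Y(P(F),f_!C_\punto G)=\Hom_{D(Y)}(F,\LL f_!G), \]
the first equality by $K$-projectivity of $f^\#F$, the second by the isomorphism of complexes of the previous paragraph, the third because $\Hom^\punto_Y(P(F),-)$ computes $\RR\Hom^\punto_Y(F,-)$ and $f_!C_\punto G\simeq\LL f_!G$ in $D(Y)$ (as $C_\punto G$ is a coflasque, hence $f_!$-acyclic, resolution of $G$). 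Replacing $G$ by $G[k]$ gives the corresponding isomorphism of $\RR\Hom^\punto$'s, and naturality in $F$ and $G$ is clear; thus $f^\#$ is a left adjoint of $\LL f_!$.

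The main obstacle is the product-preservation in the middle paragraph: this is where finiteness of $X$ is genuinely needed --- both to collapse the infinite direct sums inside $C_p$ to finite ones and to present $f_!$ as a cokernel of finite direct sums --- and it is subtle precisely because $f_!$ does \emph{not} commute with inverse limits in general, so one cannot invoke a ``continuity'' of $f_!$; one must notice that commuting with products alone suffices for corepresentability. A secondary technical point is the bookkeeping of signs and of $K$-projectivity when forming the total complex $f^\nabla_!$. (Alternatively, once one knows that $\LL f_!$ commutes with products --- which follows from the same finiteness observations together with exactness of products in $C(X)$ and $C(Y)$, so that naive products compute derived products --- the existence of $f^\#$ is immediate from Brown representability for the dual, using that $D(X)$ is compactly generated by the objects $\ZZ_{U_p}$; but the explicit construction above is what will feed the later description of the codualizing complex.)
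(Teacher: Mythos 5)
Your construction is essentially the paper's: you corepresent $G\mapsto\Hom_Y(F,f_!C_pG)$ degree by degree (the paper's $f_{-p}$), totalize to get the complex-level adjoint of $f_!C_\punto$ (the paper's $f^\square$, with the same Yoneda argument and the same $\Hom_\punto$ computation), and then derive via $K$-projective resolutions. The only variation is in justifying corepresentability: the paper verifies commutation with inverse limits by reducing $f_!C_n$ to a direct image from a discrete space (Lemma \ref{4.3.12}--Proposition \ref{plimC_n}), whereas you argue directly that $f_!$ and $C_p$ preserve arbitrary products (exactness of products in abelian groups plus finiteness of the chain sets) and combine this with left exactness of $f_!C_p$; both routes are correct and feed the same representability principle.
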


For the proof, we shall need some previous results.

%

\begin{lem}\label{4.3.12} Let $\{ F_i\}$ be an  inverse system of abelian data on $X$. If $X$ has the discrete topology, then the natural morphism
\[ L(X,\plim{}F_i)\to \plim{}L(X,F_i)\] is an isomorphism.
\end{lem}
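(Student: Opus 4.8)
The statement to prove is that for an inverse system $\{F_i\}$ of abelian data on a discrete finite space $X$, the natural map $L(X,\plim{}F_i)\to\plim{}L(X,F_i)$ is an isomorphism. The plan is to exploit the fact that on a discrete space, cosections are simply direct sums over the (finitely many) points. First I would recall that for $X$ discrete, $L(X,F)=\bigoplus_{x\in X}F_x$, since there are no strict inequalities $x<x'$ in the preorder, so the defining cokernel presentation $\bigoplus_{(x'<x)}F_{x'}\to\bigoplus_x F_x\to L(X,F)\to 0$ degenerates to $L(X,F)=\bigoplus_x F_x$. Then the left-hand side is $\bigoplus_{x\in X}(\plim{}F_i)_x=\bigoplus_{x\in X}\plim{}(F_i)_x$, using that inverse limits of abelian data are computed stalkwise.

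The remaining point is the interchange of a finite direct sum with an inverse limit: $\bigoplus_{x\in X}\plim{}(F_i)_x\overset\sim\to\plim{}\bigl(\bigoplus_{x\in X}(F_i)_x\bigr)$. Since $X$ is finite, the direct sum is a finite product, and inverse limits commute with finite products (indeed with arbitrary products) in the category of abelian groups; this is the categorical fact that limits commute with limits. Tracing through, $\plim{}\bigl(\bigoplus_{x\in X}(F_i)_x\bigr)=\plim{}L(X,F_i)$, which gives the claimed isomorphism. I would also check that this isomorphism agrees with the natural morphism referenced in the statement — this morphism is induced by the compatible family $L(X,\plim{}F_i)\to L(X,F_i)$ obtained by functoriality of $L(X,\quad)$ applied to the projections $\plim{}F_i\to F_i$ — but this is immediate from the construction, since both are assembled from the stalkwise projections $\plim{}(F_i)_x\to(F_i)_x$.

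There is essentially no obstacle here: the finiteness of $X$ is exactly what makes the direct sum a finite product, so that it commutes with the inverse limit without any exactness or Mittag-Leffler hypothesis. The only thing to be slightly careful about is the degeneracy of the cosections formula on a discrete space, and the fact that this is why the hypothesis ``$X$ discrete'' is needed: on a non-discrete space $L(X,\quad)$ is a nontrivial cokernel and need not commute with inverse limits (as already noted in the remark following Proposition \ref{f_*&f_!}). So the proof is just: reduce both sides to finite products of stalks via the discreteness hypothesis, then invoke commutation of finite products with inverse limits.
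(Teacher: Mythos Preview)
Your proof is correct and follows essentially the same approach as the paper's: both reduce $L(X,F)$ to $\bigoplus_{x\in X}F_x$ via discreteness, then invoke the finiteness of $X$ to interchange the (finite) direct sum with the inverse limit. The paper's argument is simply terser, omitting the explicit justification that finite direct sums coincide with finite products and hence commute with limits.
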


\begin{proof} Since $X$ has the discrete topology,  
\[ L(X,F)=\underset{x\in X}\oplus F_x\] for any abelian data $F$ on $X$. One concludes because $X$ is finite.
\end{proof}

\begin{prop}\label{prop2} Let $f\colon X\to Y$ be a continuous map and $\{ F_i\}$ an  inverse system of abelian data on $X$. If $X$ has the discrete topology, the natural morphism
\[ f_!\plim{} F_i\to \plim{} f_!F_i\] is an isomorphism.
\end{prop}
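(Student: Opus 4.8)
The plan is to reduce the statement to the affine-type local situation and then use the previous lemma. First I would recall that to check a morphism of abelian data on $Y$ is an isomorphism it suffices to check it stalkwise, i.e. after taking the fiber at each $y\in Y$. By the definition of $f_!$ one has $(f_!G)_y=L(f^{-1}(C_y),G)$, so the fiber at $y$ of the natural morphism $f_!\plim{}F_i\to\plim{}f_!F_i$ is precisely the natural morphism
\[ L(f^{-1}(C_y),\plim{}F_i)\to \plim{} L(f^{-1}(C_y),F_i),\]
using that $\plim{}$ of abelian data is computed stalkwise, that $(\plim{}F_i)_{\vert f^{-1}(C_y)}=\plim{}(F_{i\,\vert f^{-1}(C_y)})$, and that inverse limits commute with inverse limits.

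Thus the whole question is reduced to showing that for the restriction $F'_i:=F_{i\,\vert f^{-1}(C_y)}$ on the subspace $f^{-1}(C_y)$ one has $L(f^{-1}(C_y),\plim{}F'_i)\overset\sim\to\plim{}L(f^{-1}(C_y),F'_i)$. Now comes the key point where the hypothesis that $X$ has the discrete topology enters: if $X$ is discrete then every subspace of $X$ is discrete, in particular $f^{-1}(C_y)$ is discrete and finite. So the desired isomorphism is exactly the content of Lemma \ref{4.3.12} applied to the finite discrete space $f^{-1}(C_y)$ and the inverse system $\{F'_i\}$.

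Putting these together, I would write the proof in essentially two sentences: take fibers at $y\in Y$, identify the fiber of the morphism with the morphism $L(f^{-1}(C_y),\plim{}F_i)\to\plim{}L(f^{-1}(C_y),F_i)$, and invoke Lemma \ref{4.3.12} since $f^{-1}(C_y)$ is discrete and finite. I do not expect any real obstacle here; the only mildly delicate bookkeeping is checking that forming $\plim{}$ commutes with restriction to a subspace and with taking stalks, but both are immediate from the pointwise definition of inverse limits of abelian data, so there is nothing to grind through. The substantive input—that $L(X,\cdot)$ commutes with inverse limits on a finite discrete space because it is then just a finite direct sum—has already been isolated in Lemma \ref{4.3.12}.
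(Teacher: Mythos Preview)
Your proof is correct and follows the same approach as the paper: take stalks at each $y\in Y$, identify the resulting map with $L(f^{-1}(C_y),\plim{}F_i)\to\plim{}L(f^{-1}(C_y),F_i)$, and apply Lemma \ref{4.3.12} using that $f^{-1}(C_y)$ is discrete. The paper's proof is the one-line version of exactly this argument.
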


\begin{proof} For any $y\in Y$,
\[ (f_!\plim{} F_i)_y = L(f^{-1}(C_y), \plim{} F_i) \overset{\ref{4.3.12}}\simeq \plim{} L(f^{-1}(C_y),  F_i) = \plim{} (f_!F_i)_y= [\plim{} f_!F_i]_y.\]
\end{proof}

\begin{prop}\label{plimC_n} Let $f\colon X\to Y$ be a continuous map and $\{ F_i\}$ an  inverse system of abelian data on $X$. The natural morphism
\[ f_!C_n(\plim{} F_i) \to \plim{} f_!C_n  F_i\] is an isomorphism.
\end{prop}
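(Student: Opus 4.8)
The plan is to express $f_!\,C_n$ as a composite of two functors, each of which is already known to commute with inverse limits, and then appeal to Proposition \ref{prop2}. The crucial input is the identification of $C_n$ recorded in Remark \ref{C^n&C_n}.

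First I would make the following observation. Fix $n\geq 1$ and let $\id\colon (X_<^n)_\text{dis}\to X_<^n$ be the identity map, the source carrying the discrete topology; set $p:=\pi_n\circ\id$ and $q:=\pi_1\circ\id$, two continuous maps $(X_<^n)_\text{dis}\to X$. Using Remark \ref{C^n&C_n}, the identity $C_0(-)=\id_!\,\id^{-1}(-)$, and the functorialities $(g\circ h)_!=g_!\circ h_!$ and $(g\circ h)^{-1}=h^{-1}\circ g^{-1}$, one obtains
\[ C_nF \;=\; {\pi_n}_!\,C_0(\pi_1^{-1}F) \;=\; {\pi_n}_!\,\id_!\,\id^{-1}\pi_1^{-1}F \;=\; p_!\,q^{-1}F. \]
For $n=0$ the same formula holds with $p=q=\id\colon X_\text{dis}\to X$. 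Hence, for the given continuous $f\colon X\to Y$,
\[ f_!\,C_nF \;=\; (f\circ p)_!\,q^{-1}F, \]
and the map $f\circ p$ has discrete source.

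Now I would conclude as follows. The inverse image functor $q^{-1}$ commutes with arbitrary inverse limits (general property of $(-)^{-1}$), and, its source being discrete, $(f\circ p)_!$ commutes with inverse limits by Proposition \ref{prop2}. A composite of two functors that preserve inverse limits preserves inverse limits, and tracing through the canonical comparison morphisms shows that the resulting isomorphism is exactly the natural morphism of the statement. Explicitly,
\[ f_!\,C_n(\plim{}F_i) \;=\; (f\circ p)_!\,q^{-1}(\plim{}F_i) \;=\; (f\circ p)_!\,\plim{}q^{-1}F_i \;=\; \plim{}(f\circ p)_!\,q^{-1}F_i \;=\; \plim{}f_!\,C_nF_i. \]

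The argument is formal once the decomposition $C_n=p_!\circ q^{-1}$ is available; the only points that need a little care are the bookkeeping for the degenerate case $n=0$ and the verification that the isomorphism produced by composing the two limit‑preservation statements is indeed the canonical comparison morphism $f_!\,C_n(\plim{}F_i)\to\plim{}f_!\,C_nF_i$ rather than some other isomorphism. I do not expect a genuine obstacle here.
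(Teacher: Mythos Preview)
Your proof is correct and follows essentially the same approach as the paper: both use Remark \ref{C^n&C_n} to factor $f_!C_n$ as $(f\circ\pi_n\circ\id)_!\circ(\pi_1\circ\id)^{-1}$, and then invoke Proposition \ref{prop2} for the $(\cdot)_!$ part (whose source is discrete) together with the general commutation of $(\cdot)^{-1}$ with inverse limits. Your write-up is slightly more explicit in naming the intermediate maps and handling the case $n=0$, but the argument is the same.
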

\begin{proof} By Remark \ref{C^n&C_n},  
for any abelian data $F$ on $X$, one has $$C_nF={\pi_n}_!C_0(\pi_0^{-1}F)={\pi_n}_!\id_!\id^{-1}\pi_0^{-1}F,$$ so
\[ f_!C_nF = (f\circ\pi_n\circ\id)_!(\pi_0\circ\id)^{-1}F.\]
Conclusion follows fom Proposition  \ref{prop2}.
\end{proof}

\begin{prop} Let $f\colon X\to Y$ be a continous map, $G$ an abelian data on $Y$ and $p\geq 0$ an integer. The functor
\[ \aligned \left\{\text{\rm Abelian data on }X\right\}&\to \left\{\text{\rm Abelian Groups}\right\}\\ F &\mapsto \Hom(G, f_!C_pF)\endaligned
\] is representable.
\end{prop}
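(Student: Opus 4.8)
The plan is to verify that the functor
\[ T\colon \{\text{abelian data on }X\}\to\{\text{abelian groups}\},\qquad T(F)=\Hom(G,f_!C_pF), \]
satisfies the representability criterion dual to the one invoked in the proof of the Verdier duality theorem above: a covariant additive functor on the category of abelian data on $X$ (a Grothendieck abelian category) which is left exact and commutes with filtered inverse limits is representable. (Equivalently, since an arbitrary product is the cofiltered limit of its finite subproducts, such a $T$ preserves all limits, and Freyd's criterion applies.) Thus everything reduces to showing that $T$ is left exact and commutes with inverse limits.

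The only non-formal ingredient is that $f_!C_p$ is an \emph{exact} functor. Indeed, by Remark \ref{C^n&C_n} and the computation carried out in the proof of Proposition \ref{plimC_n}, $f_!C_pF$ has the form $g_!\,h^{-1}F$ for continuous maps $g\colon Z\to Y$ and $h\colon Z\to X$, where $Z$ is a \emph{finite discrete} space (namely $X_{\mathrm{dis}}$ when $p=0$, and $(X_<^p)_{\mathrm{dis}}$ when $p\ge 1$). Now $h^{-1}$ is exact, and $g_!$ is exact too: since $Z$ carries the discrete topology, for any abelian data $A$ on $Z$ one has $(g_!A)_y=L(g^{-1}(C_y),A)=\bigoplus_{z\in Z,\ g(z)\le y}A_z$, a \emph{finite} direct sum, hence an exact functor of $A$. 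Therefore $f_!C_p=g_!\circ h^{-1}$ is exact. It follows that $T$ is left exact: applying the exact functor $f_!C_p$ and then the left exact functor $\Hom(G,\quad)$ to a short exact sequence $0\to F'\to F\to F''\to 0$ produces an exact sequence $0\to T(F')\to T(F)\to T(F'')$. Moreover $T$ commutes with inverse limits, because $\Hom(G,\quad)$ commutes with all limits (it is a $\Hom$ out of a fixed object) and $f_!C_p$ commutes with inverse limits by Proposition \ref{plimC_n}. Hence $T$ is representable; concretely, if $L$ denotes the left adjoint of the exact, product‑preserving functor $g_!$, the representing object is $h_!LG$, which will be the degree‑$p$ building block of $f^\#$ in Theorem \ref{coduality}.

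The main obstacle is the exactness of $f_!C_p$: this is exactly where the finiteness of $X$ enters (it is what collapses $g_!$ into a finite direct sum, and hence makes it exact), and it is the one point that is not purely formal. Once it is granted, the rest is the soft argument of the Verdier duality proof with all arrows reversed.
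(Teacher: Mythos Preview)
Your proof is correct and follows essentially the same approach as the paper: verify that $F\mapsto\Hom(G,f_!C_pF)$ is left exact and commutes with inverse limits (via Proposition \ref{plimC_n}), then invoke the standard representability criterion. You supply two details the paper leaves implicit---the reason for left exactness (namely that $f_!C_p=g_!h^{-1}$ with $g_!$ a finite direct sum, hence exact) and an explicit representing object $h_!LG$---but the underlying argument is the same.
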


\begin{proof} This functor is left exact, commutes with finite direct products and with inverse limits (by Proposition \ref{plimC_n}). Hence it is representable.
\end{proof}

By definition, there exists an abelian data $f_{-p}G$ on $X$ and a morphism $\epsilon\colon G\to f_!C_p(f_{-p}G)$ such that, for any abelian data $F$ on $X$, the map
\[ \aligned \Hom(f_{-p}G,F)&\to \Hom (G,f_!C_pF)\\ h&\mapsto f_!C_p(h)\circ\epsilon
\endaligned\] is an isomorphism. A morphism $G\to G'$ induces a morphism $f_{-p}G\to f_{-p}G'$, thus $f_{-p}$ defines a functor from abelian data on $Y$ to abelian data on $X$. Moreover, the natural morphism $C_pF\to C_{p-1}F$ induces a morphism $f_{-p+1}G\to f_{-p}G$.

\begin{defn} Let $G$ be a complex of abelian data on $Y$. We shall denote $f^\square G$ the simple complex associated to the bicomplex $f_{-p}G_q$.
\end{defn}

\begin{prop}\label{dualitybydegree} For any $G\in C(Y)$ and any $F\in C(X)$ one has an isomorphism
\[ \Hom_\punto (f^\square G,F)\overset\sim\to \Hom_\punto(G,f_!C_\punto F)\]
\end{prop}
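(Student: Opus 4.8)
The plan is to obtain the isomorphism by assembling, degree by degree, the defining adjunction-type property of the functors $f_{-p}$, and then to check that the resulting bijections on components are compatible with the total differentials of the two Hom-complexes. First I would recall that, by construction, for each integer $p\geq 0$ one has a natural isomorphism of abelian groups
\[ \Hom(f_{-p}G,F)\overset\sim\to \Hom(G,f_!C_pF) \]
(for $G$ an abelian data on $Y$ and $F$ an abelian data on $X$), and that this extends to complexes. Concretely, a homomorphism of graded groups $\varphi\colon f^\square G\to F$ of degree $n$ is a family $\varphi=(\varphi_{p,q})$ where $\varphi_{p,q}\colon f_{-p}G_q\to F_{q-p-n}$ (using the bigrading $f_{-p}G_q$ of the bicomplex defining $f^\square G$). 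Applying the component isomorphism above in each bidegree turns the datum $\varphi_{p,q}$ into a datum $\psi_{p,q}\colon G_q\to (f_!C_pF)_{q-p-n}$, i.e. exactly a component of a degree-$n$ map $\psi\colon G\to f_!C_\punto F$. So on the level of graded groups the bijection is immediate and natural.

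The substantive point is compatibility with differentials, i.e. that $\varphi$ is a cochain map if and only if $\psi$ is. For this I would write out the total differential on $\Hom_\punto(f^\square G,F)$ as the sum of three contributions — the internal differential of $G$, the differential of $F$, and the ``simplicial'' differential coming from the maps $f_{-p+1}G\to f_{-p}G$ induced by $C_pF\to C_{p-1}F$ — and the total differential on $\Hom_\punto(G,f_!C_\punto F)$ as the analogous sum, with the third term now coming directly from $f_!(C_pF\to C_{p-1}F)$. The key observation is that the component isomorphisms $\Hom(f_{-p}G,F)\simeq\Hom(G,f_!C_pF)$ are natural in both $F$ and $G$, and that the transition maps $f_{-p+1}G\to f_{-p}G$ were \emph{defined} precisely so as to correspond, under this natural isomorphism, to $f_!C_pF\to f_!C_{p-1}F$ applied to $F$; hence each of the three pieces of the differential is matched under the bijection, and the signs in the totalization of a bicomplex are the standard Koszul signs on both sides, so they agree as well. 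I expect the sign-bookkeeping in this totalization step to be the only real obstacle; everything else is a formal transcription of the representability property of $f_{-p}$.

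Finally I would note that naturality of $\varphi\mapsto\psi$ in $F$ and $G$ is automatic from naturality of the component isomorphisms, so the degree-wise bijection is in fact an isomorphism of complexes of abelian groups, as claimed. (The same argument, read through the identification $C_nF={\pi_n}_!\,\id_!\,\id^{-1}\pi_0^{-1}F$ of Remark \ref{C^n&C_n} together with Proposition \ref{plimC_n}, is what guarantees that no convergence or finiteness subtlety enters when $G$ is an unbounded complex, since each bidegree is handled separately.)
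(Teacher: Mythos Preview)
Your proposal is correct and takes essentially the same approach as the paper: both arguments establish the isomorphism degree by degree by unpacking the graded pieces of the two Hom-complexes and invoking the defining representability isomorphism $\Hom(f_{-p}G,F)\simeq\Hom(G,f_!C_pF)$ in each bidegree. The paper presents this as a bare chain of equalities and leaves the compatibility with differentials implicit, whereas you spell out the three contributions to the total differential and why the adjunction matches them; this is a welcome elaboration rather than a different method. Your final parenthetical invoking Remark \ref{C^n&C_n} and Proposition \ref{plimC_n} is unnecessary here, since those results were already used to construct $f_{-p}$ and play no further role in the Hom-complex computation.
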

 
\begin{proof} \[\aligned \Hom_n(G,f_!C_\punto F)  =&\underset p\prod \Hom(G_{p+n},f_!C_p F )   =  \underset p\prod \Hom(G_{p+n},\underset i\prod f_!C_i F_{p-i} ) \\ = &\underset {p,i}\prod  \Hom(G_{p+n}, f_!C_i F_{p-i})   \overset\sim \leftarrow   \underset {p,i}\prod \Hom(f_{-i}G_{p+n},  F_{p-i} )\\ \overset{q=p-i}{=\hspace{ -1.5 pt} =\hspace{ -1.5 pt} =\hspace{ -1.5 pt} =}& \underset {q,i}\prod \Hom(f_{-i}G_{q+i+n},  F_{q} )\  =  \underset q\prod  \Hom(\underset i\bigoplus f_{-i}G_{q+i+n},  F_{q} )\\  = &\underset q\prod  \Hom((f^\square G)_{q+n},  F_{q} )   =  
\Hom_n (f^\square G,F) \endaligned\]
\end{proof}

\begin{cor} If $G$ is projective (resp. homotopic to zero), then $f^\square G$ is also projective (resp., homotopic to zero). 
\end{cor}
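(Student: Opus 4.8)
The plan is to derive both claims from the natural isomorphism of Proposition~\ref{dualitybydegree},
\[ \Hom_\punto(f^\square G, F) \simeq \Hom_\punto(G, f_!C_\punto F), \]
which is functorial in $F$ and in $G$, together with the observation that $f^\square$ is an additive functor $C(Y)\to C(X)$ (each $f_{-p}$ is functorial by construction, and passing to the simple complex associated to a bicomplex is additive).

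First the case ``$G$ homotopic to zero''. One direct argument: an additive functor carries chain homotopies to chain homotopies, so if $\id_G = d\circ s + s\circ d$ for a degree $-1$ map $s$, then applying $f^\square$ to $s$ degreewise and totalizing yields a contracting homotopy for $f^\square G$. To sidestep the (routine) bookkeeping with the totalization, one can instead proceed as follows: apply the displayed isomorphism with $F=f^\square G$; the identity of $f^\square G$ corresponds to a cocycle of $\Hom_\punto(G, f_!C_\punto(f^\square G))$, and since $G$ is contractible the functor $\Hom_\punto(G,\,\punto)$ sends \emph{every} complex to a contractible one, so that cocycle is a coboundary. Hence $\id_{f^\square G}$ is null-homotopic, i.e.\ $f^\square G$ is homotopic to zero.

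Now suppose $G$ is projective, i.e.\ $K$-projective: $\Hom_\punto(G,A)$ is acyclic for every acyclic $A$. To show $f^\square G$ is $K$-projective we must check that $\Hom_\punto(f^\square G, F)$ is acyclic whenever $F\in C(X)$ is acyclic. By the displayed isomorphism this complex is $\Hom_\punto(G, f_!C_\punto F)$, so it suffices to see that $f_!C_\punto F$ is acyclic. But $f_!C_\punto F$ computes $\LL f_! F$ (the description of $\LL f_!$ via the standard resolution $C_\punto$ recalled in Section~2), and $\LL f_!$ sends acyclic complexes to acyclic ones; hence $f_!C_\punto F$ is acyclic, and then so is $\Hom_\punto(G, f_!C_\punto F)$ because $G$ is $K$-projective. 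The only mildly delicate point is the compatibility of an additive functor with totalization of bicomplexes in the homotopy case, which is standard and in any event avoidable via the $\Hom_\punto$-argument indicated above.
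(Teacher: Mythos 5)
Your proof is correct and follows essentially the route the paper intends: the corollary is left as an immediate consequence of Proposition \ref{dualitybydegree}, and your two steps (reading off contractibility of $f^\square G$ from acyclicity of $\Hom_\punto(G,f_!C_\punto(f^\square G))$ when $G$ is homotopic to zero, and reducing $K$-projectivity of $f^\square G$ to acyclicity of $f_!C_\punto F\simeq \LL f_!F$ for acyclic $F$) are exactly the intended use of that isomorphism together with the fact, recalled in Section 2, that $f_!C_\punto$ computes $\LL f_!$.
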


\begin{defn} For any $G\in D(Y)$, we shall denote $f^\#G:= f^\square P$, where $P\to G$ is a projective resolution. This defines a functor
\[ f^\#\colon D(Y)\to D(X)\] such that
\[ \RR\Hom^\punto (f^\# G, F)=\RR\Hom^\punto (G,\LL f_! F)\] for any $G\in D(Y)$, $F\in D(X)$. In particular, $f^\#$ is a left adjoint of $\LL f_!$ and Theorem \ref{coduality} is proved.
\end{defn}



\begin{proof} For any $F\in D(X)$, \[\aligned  \RR\Hom^\punto (f^\#  (K\overset\LL\otimes\Lc), F)& =  \RR\Hom^\punto ( K\overset\LL\otimes\Lc,\LL f_! F) =  \RR\Hom^\punto ( K, \Lc^\vee\overset\LL\otimes \LL f_! F)=\\ & =  \RR\Hom^\punto ( K,  \LL f_!(f^{-1}\Lc^\vee\overset\LL\otimes F)) = 
 \RR\Hom^\punto ( f^\# K, f^{-1}\Lc^\vee\overset\LL\otimes F) = \\ &= \RR\Hom^\punto ( f^\# K\overset\LL\otimes f^{-1}\Lc, F).\endaligned \]
\end{proof}

\begin{defn} Let $f\colon X\to Y$ be a continuous map. We shall denote  $$D^{X/Y}=f^\#\ZZ$$ and name it {\it relative codualizing complex of $X$ over $Y$}. In the case that $Y$ is a point, it will be called {\it codualizing complex of $X$} and denoted by $D^X$. By definition one has
\[ \aligned \RR\Hom(D^{X/Y},F)&=\RR\Gamma(Y,\LL f_!F)\\ \RR\Hom(D^X,F)&= \LL(X,F).
\endaligned\]
\end{defn}

\begin{thm}[Explicit description of the codualizing complex]\label{ExplicitCodualizing}  Let $n=\dim X$. The codualizing complex $D^X$ is isomorphic to a complex
\[ 0\to (D^X)^0\to\cdots\to (D^X)^n\to 0\] with   \[ (D^X)^i=\underset{x_0<\cdots <x_i}\bigoplus \ZZ_{U_{x_0}}. \] 
In particular, $D^X$ is of finite type.
\end{thm}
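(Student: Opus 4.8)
The plan is to dualize the explicit description of the dualizing complex $D_X$ (Theorem \ref{ExplicitDualizing}) by passing to the dual space $\wh X$, using the topological duality between $X$ and $\wh X$ developed in subsection \ref{dualspace}. The key identity to exploit is the defining adjunction $\RR\Hom^\punto(D^X,F)=\LL(X,F)$ together with Corollary \ref{Cor-homo-cohomo}(1), which gives $\LL(X,F)^\vee=\RR\Gamma(X,F^\vee)$; combined with $\RR\Gamma(X,F^\vee)=\RR\Gamma(\wh X,\wh F)$ (Corollary \ref{topduality2}), the homology of $X$ is computed by the cohomology of $\wh X$. So morally $D^X$ on $X$ should correspond, under the equivalence $D(X)\simeq D(\wh X)$ given by $F\mapsto \wh F$, to the dualizing complex $D_{\wh X}$ of $\wh X$.

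First I would make this precise: apply Theorem \ref{ExplicitDualizing} to $\wh X$ in place of $X$ (note $\dim\wh X=\dim X=n$) to get that $D_{\wh X}$ is the complex $0\to D_{\wh X}^{-n}\to\cdots\to D_{\wh X}^0\to 0$ with $D_{\wh X}^{-p}=\bigoplus_{\wh x_0<\cdots<\wh x_p}\ZZ_{C_{\wh x_p}}$, where now $C_{\wh x_p}$ denotes the closure of $\wh x_p$ in $\wh X$. Since a chain $\wh x_0<\cdots<\wh x_p$ in $\wh X$ is precisely a chain $x_0>\cdots>x_p$ in $X$, i.e. (after reindexing) a chain $x_p<\cdots<x_0$ in $X$, and since the closure of $\wh x_p$ in $\wh X$ is $U_{x_p}$ in $X$, we recover exactly the terms $(D^X)^i=\bigoplus_{x_0<\cdots<x_i}\ZZ_{U_{x_0}}$ claimed in the statement (with $i$ replacing $p$, and the grading flipped by $D^{-p}_{\wh X}=(D^X)^p$ via the sign convention $F^{-n}=F_n$ for complexes). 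The finiteness of type is then inherited from that of $D_{\wh X}$.

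To justify the identification $D^X\simeq \wh{D_{\wh X}}$ (or directly $D^X\simeq$ the complex written above) I would argue via the universal property: for any $F\in D(X)$ we must check $\RR\Hom^\punto(C^\punto,F)\simeq\LL(X,F)$ where $C^\punto$ denotes the explicit complex $\bigoplus_{x_0<\cdots<x_i}\ZZ_{U_{x_0}}$ in degrees $0,\dots,n$. Since each $\ZZ_{U_{x_0}}=j_!\ZZ$ for the open inclusion $j\colon U_{x_0}\hookrightarrow X$, Proposition \ref{AdjunctionsOfInvIm} (adjunction $\LL f_!\leftrightarrow f^{-1}$, or directly $\Hom(\ZZ_U,F)=\Gamma(U,F)$ from Proposition \ref{propsop1}(2)) gives $\RR\Hom^\punto(\ZZ_{U_{x_0}},F)=\RR\Gamma(U_{x_0},F)=F_{x_0}$ computed as a complex; hence $\RR\Hom^\punto(C^\punto,F)$ is the complex $0\to\bigoplus_{x_0}F_{x_0}\to\bigoplus_{x_0<x_1}F_{x_0}\to\cdots$ whose differentials are the duals of those of $C^\punto$. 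The remaining point is to recognize this as $L(X,C_\punto F)$, i.e. as $\LL(X,F)$ by Theorem \ref{finitecoflasqueresolution}: indeed $L(X,C_\punto F)=\bigoplus_{x_0<\cdots<x_i}F_{x_0}$ by definition of $C_\punto F$, with exactly the Čech-type differential.

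The main obstacle is bookkeeping rather than conceptual: I must check that the differential of the explicit complex, obtained by dualizing $\RR\Gamma(X,C^\punto-)$ of $\wh X$ (equivalently, the differential induced on $\bigoplus\ZZ_{U_{x_0}}$ by the inclusions $\ZZ_{U_{x_j}}\hookrightarrow\ZZ_{U_{x_0}}$ when one deletes a vertex from the chain) matches, with correct signs, the differential of $C_\punto F$ used to compute $\LL(X,F)$ in Theorem \ref{finitecoflasqueresolution}. Concretely I would mimic the proof of Theorem \ref{ExplicitDualizing}: take the injective resolution $\Omega$ of $\ZZ$, use $i_!\ZZ=\ZZ_{U_x}$ for the point inclusion $i\colon\{x\}\hookrightarrow X$ (the homological analogue of $i_*G=G_{C_x}$), so that $L(X,C_pF)=\bigoplus_{x_0<\cdots<x_p}F_{x_0}=\bigoplus_{x_0<\cdots<x_p}\Hom^\punto(\ZZ_{U_{x_0}},\Omega)^{\text{co}}$-type expression, and conclude that $C_pF\mapsto\bigoplus\ZZ_{U_{x_0}}$ represents the relevant functor degreewise, with the bicomplex differentials assembling to the claimed complex. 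Once the degreewise representability and the compatibility of differentials are in hand, the isomorphism $D^X\simeq\big(0\to\bigoplus_{x_0}\ZZ_{U_{x_0}}\to\cdots\to\bigoplus_{x_0<\cdots<x_n}\ZZ_{U_{x_0}}\to 0\big)$ follows, and finiteness of type is immediate since there are finitely many chains and each $\ZZ_{U_{x_0}}$ is of finite type.
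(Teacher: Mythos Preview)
Your proposal is correct, and the middle paragraph (the direct verification that $\RR\Hom^\punto(C^\punto,F)\simeq\LL(X,F)$ via $\Hom(\ZZ_{U_{x_0}},F)=F_{x_0}$) is essentially the paper's own proof. The paper argues in exactly this way: $\LL(X,F)$ is computed by $L(X,C_\punto F)$, one has $L(X,C_pF)=\bigoplus_{x_0<\cdots<x_p}F_{x_0}=\Hom\big(\bigoplus_{x_0<\cdots<x_p}\ZZ_{U_{x_0}},F\big)$, and one concludes because each $\ZZ_{U_x}$ is \emph{projective}.

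The differences are only in economy. Your opening detour through $\wh X$ and $D_{\wh X}$ is a valid heuristic but is not needed for the proof; once you set up the direct representability argument you never use it. More importantly, the paper bypasses all your ``bookkeeping'' worries about differentials and your final paragraph's attempt to imitate the injective resolution $\Omega$ from Theorem~\ref{ExplicitDualizing}: the point is simply that $\ZZ_{U_x}$ is already projective (since $\Hom(\ZZ_{U_x},-)=\Gamma(U_x,-)=(-)_x$ is exact), so no resolution is required, $\RR\Hom$ coincides with $\Hom$ on the nose, and the complex $\bigoplus_{x_0<\cdots<x_p}\ZZ_{U_{x_0}}$ (with the differential inherited from $C_\punto$) represents $F\mapsto L(X,C_\punto F)$ termwise as an honest complex of projectives. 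This is the homological mirror of the cohomological argument, but here projectivity of $\ZZ_{U_x}$ replaces the role that the injective resolution $\Omega$ of $\ZZ$ played for $D_X$; you do not need $\Omega$ at all.
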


\begin{proof}  Let $F$ be an abelian data. Then $\LL (X, F)$ is the complex
\[ 0\to L(X,C_nF)\to\cdots \to L(X,C_1F)\to L(X,C_0F)\to 0\] and  $L(X,C_pF)=\underset {x_0<\cdots <x_p}\bigoplus F_{x_0}$.
For each point $x\in X$, one has
 $$F_x=\Gamma(U_x,F)= \Hom(\ZZ_{U_x}, F)$$ and then
\[ L(X,C_pF)=\Hom( \underset{x_0<\cdots <x_p} \bigoplus \ZZ_{U_{x_0}}, F).\] One concludes because $\ZZ_{U_x}$ is projective.
\end{proof}

\begin{rem} One can easily compute the differential of the complex $D^X$, which is induced by the differential of $L(X,C_\punto F)$.
\end{rem}

\begin{cor}    For any closed subespace $Y$ of $X$ one has
\[ \LL (Y, D^X) = \RR\Gamma_Y(X,\ZZ).\] In particular, $\LL (X, D^X) = \RR\Gamma (X,\ZZ).$
\end{cor}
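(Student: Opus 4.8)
The plan is to compute both sides by means of the explicit finite complexes of Theorems \ref{finiteflasqueresolution} and \ref{ExplicitCodualizing} and to check that they agree.

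First I would invoke the explicit description of the codualizing complex: by (the proof of) Theorem \ref{ExplicitCodualizing}, $D^X$ is represented by the complex $E^\punto$, sitting in degrees $[0,\dim X]$, with $E^p=\bigoplus_{x_0<\cdots<x_p}\ZZ_{U_{x_0}}$ and differential the $\Hom(\,\cdot\,,F)$-transpose of the differential of $C_\punto F$. Each $\ZZ_{U_{x_0}}$ is projective, so $E^\punto$ is a bounded complex of projectives, hence $K$-projective; and, $Y$ being closed, the restriction $E^\punto_{\vert Y}$ is a bounded complex of projectives on $Y$, hence $K$-projective there. Therefore $\LL(Y,D^X)=L(Y,E^\punto_{\vert Y})$, the complex whose $p$-th term is $\bigoplus_{x_0<\cdots<x_p}L(U_{x_0}\cap Y,\ZZ)$.

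The next step is an order-theoretic observation. Since $Y$ is closed it is a down-set for the Alexandroff preorder, so for $x_0\in X$ one has $U_{x_0}\cap Y=\emptyset$ if $x_0\notin Y$ (a point of $Y$ above $x_0$ would force $x_0\in Y$), while $U_{x_0}\cap Y$ has $x_0$ as a minimum, hence is nonempty and connected, if $x_0\in Y$. Thus $L(U_{x_0}\cap Y,\ZZ)$ equals $\ZZ$ when $x_0\in Y$ and $0$ otherwise, and the $p$-th term of $\LL(Y,D^X)$ is the free abelian group on the $p$-chains $x_0<\cdots<x_p$ of $X$ with $x_0\in Y$. On the other hand $\RR\Gamma_Y(X,\ZZ)$ fits in a distinguished triangle with $\RR\Gamma(X,\ZZ)\to\RR\Gamma(X-Y,\ZZ)$; representing these by the termwise surjection $\Gamma(X,C^\punto\ZZ)\to\Gamma(X-Y,C^\punto\ZZ)$ of flasque resolutions, one gets $\RR\Gamma_Y(X,\ZZ)\simeq\Gamma_Y(X,C^\punto\ZZ)$, whose $p$-th term is the free abelian group on the $p$-chains of $X$ meeting $Y$; and since $Y$ is a down-set a chain meets $Y$ exactly when its smallest element does. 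So the two complexes have the same terms.

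It then remains to match the differentials, which is the one step requiring care and the main obstacle. The differential of $\Gamma_Y(X,C^\punto\ZZ)$ is the simplicial coboundary $e_\sigma\mapsto\sum_{\sigma\prec\tau}(-1)^{k(\sigma,\tau)}e_\tau$ of the order complex of $X$. The differential of $E^\punto$ is the transpose of $\partial=\sum_k(-1)^k\partial_k$, where $\partial_k$ deletes the $k$-th vertex and acts on the stalk as the identity for $k\geq1$ and as the restriction $r_{x_0x_1}$ for $k=0$; applying $(\,\cdot\,)_{\vert Y}$ and $L(Y,\,\cdot\,)$ and using the computation above, every $k\geq1$ entry becomes the identity $\ZZ\to\ZZ$, and the $k=0$ entry becomes the map $L(U_{x_1}\cap Y,\ZZ)\to L(U_{x_0}\cap Y,\ZZ)$ induced by an inclusion of nonempty connected spaces (the prepended vertex lying in $Y$ automatically, since $Y$ is a down-set), again the identity. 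Hence the two complexes carry the same differential with the same signs and are literally equal, which gives $\LL(Y,D^X)\simeq\RR\Gamma_Y(X,\ZZ)$; the case $Y=X$ yields the last assertion. The delicate point is exactly this transpose-and-sign bookkeeping, namely that the $k=0$ component of the boundary of $C_\punto$ — a genuine restriction morphism, not an identity — collapses, after restriction to $Y$ and passage to cosections, to the ordinary simplicial coboundary.
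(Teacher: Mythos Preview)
Your argument is correct: the explicit complex of Theorem~\ref{ExplicitCodualizing} consists of projectives, its restriction to a closed $Y$ remains a bounded complex of projectives, and the termwise computation of $L(Y,\,\cdot\,)$ followed by the sign check identifies it with $\Gamma_Y(X,C^\punto\ZZ)$. The paper, however, proceeds quite differently and more abstractly: it dualizes, uses the defining adjunction $\RR\Hom^\punto(D^X,\ZZ_Y)=\LL(X,\ZZ_Y)$ together with the derived form of Proposition~\ref{propsop1} to get $\LL(Y,D^X)^\vee=\LL(X,\ZZ_Y)$, and then undoes the dual via perfectness and Corollary~\ref{Cor-homo-cohomo}(2). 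The paper's route is a two-line formal manipulation that never touches chains or signs and makes the statement an immediate consequence of the universal property of $D^X$; your route, by contrast, produces an explicit isomorphism of complexes (not just an isomorphism in the derived category), which is stronger information and would be the thing to reach for if one wanted to compute in examples or track functoriality on the nose.
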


\begin{proof}  $\LL (Y, D^X)^\vee = \RR\Hom (D^X,\ZZ_Y)=\LL (X, \ZZ_Y)$. Taking dual, 
\[ \LL (Y, D^X)  =   \LL (X, \ZZ_Y)^\vee =\RR\Gamma_Y(X,\ZZ).\]
\end{proof}

\begin{cor} For each $p\in X$ one has:
\[ (D^X)_p=\RR\Gamma_{C_p}(X,\ZZ)\] Thus, $H^i(D^X)_p=H^i_{C_p}(X,\ZZ)$.
\end{cor}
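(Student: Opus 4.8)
The plan is to deduce this corollary directly from the preceding one, namely the identity $\LL(Y,D^X)=\RR\Gamma_Y(X,\ZZ)$, applied to the particular closed subset $Y=C_p$. First I would recall from Remark after the definition of homology/cohomology that for any abelian data (or complex) $E$ on $X$ and any $p\in X$ one has $L(C_p,E)=E_p$, and this passes to the derived functors: $\LL(C_p,E)=E_p$ for $E\in D(X)$. This is because $C_p$ is irreducible with maximum $p$, so $L(C_p,\quad)$ is already exact (it is evaluation at the stalk $p$), hence coincides with its left derived functor. Thus $\LL(C_p,D^X)=(D^X)_p$.

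\medskip

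Next I would invoke the previous corollary with the closed subspace $Y=C_p$, which gives
\[ (D^X)_p=\LL(C_p,D^X)=\RR\Gamma_{C_p}(X,\ZZ).\]
Taking $i$-th cohomology on both sides and using the definition $H^i_{C_p}(X,\ZZ):=H^i(\RR\Gamma_{C_p}(X,\ZZ))$ yields $H^i(D^X)_p=H^i_{C_p}(X,\ZZ)$, which is the assertion. Note the stalk functor $(\quad)_p$ is exact on $D(X)$ (inverse image along $\{p\}\hookrightarrow X$ is exact), so $H^i((D^X)_p)=(H^i(D^X))_p=H^i(D^X)_p$, justifying the last rewriting.

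\medskip

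The only genuine point requiring care — and the one I expect to be the main (minor) obstacle — is the identification $\LL(C_p,E)=E_p$ at the derived level, i.e.\ checking that $L(C_p,\quad)$ is exact so that no higher derived terms appear. This follows from Lemma \ref{coflasque} applied to $X=C_p$: since $C_p$ is irreducible with generic point $p$ (in the poset sense, $p$ is the maximum of $C_p$), every abelian data on $C_p$ is coflasque, so $H_i(C_p,E)=0$ for $i>0$ and the functor $L(C_p,\quad)$ is exact. Equivalently one may argue directly that $L(C_p,E)=\ilim{s\in C_p}E_s=E_p$ since $p$ is terminal in $C_p$, and a functor given by a finite colimit with a terminal object is exact. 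Everything else is formal.
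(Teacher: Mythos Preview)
Your argument is correct and essentially follows the paper's approach: both hinge on the identification $(D^X)_p=\LL(C_p,D^X)$, and then reduce to the content of the previous corollary (the paper reproves that corollary inline via duals, while you simply cite it---a cleaner route).

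One minor correction: your first justification of exactness of $L(C_p,\quad)$ is off. It is \emph{not} true that every abelian data on $C_p$ is coflasque (take $C_p=\{a,b,p\}$ with $a,b<p$ incomparable and $F_a=F_b=\ZZ$, $F_p=0$; then $L(\{a,b\},F)=\ZZ\oplus\ZZ\to L(C_p,F)=0$ is not injective). Lemma~\ref{coflasque} does not assert this either; what its proof shows in the irreducible case is precisely your second (correct) argument: $p$ is terminal in $C_p$, so $L(C_p,F)=F_p$ is exact. Drop the coflasque claim and keep the terminal-object argument.
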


\begin{proof} $[ (D^X)_p]^\vee =\LL (C_p, D^X)^\vee = \RR \Hom (D^X,\ZZ_{C_p}) = \LL (X,\ZZ_{C_p})$. Taking dual,
\[  (D^X)_p= \LL (X,\ZZ_{C_p})^\vee =\RR\Gamma_{C_p}(X,\ZZ).\]
\end{proof}

\begin{prop} If $X$ is irreducible (i.e., it has a maximum $g$), then $D^X\simeq \ZZ_{\{ g\}}$.
\end{prop}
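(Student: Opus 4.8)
The plan is to follow the pattern of the dual statement (if $X$ has a minimum $p$ then $D_X\simeq\ZZ_{\{p\}}$): exploit the defining adjunction property $\RR\Hom^\punto(D^X,F)=\LL(X,F)$ and conclude by a Yoneda argument in $D(X)$.

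First I would extract the combinatorial input. That $X$ is irreducible means precisely $X=C_g$, where $g$ is the maximum; consequently $U_g=\{g\}$, so the inclusion $j\colon\{g\}\hookrightarrow X$ is an \emph{open} immersion. Since $g$ is terminal in $C_g$, for every abelian data $F$ one has $L(X,F)=L(C_g,F)=F_g=j^{-1}F$; hence $L(X,\quad)$ is exact, and an exact functor coincides with its own left derived functor, so $\LL(X,F)\simeq F_g$ for every $F\in D(X)$ (concretely, if $P\to F$ is a projective resolution then $\LL(X,F)=L(X,P)=P_g$, which is quasi-isomorphic to $F_g$ because $j^{-1}$ preserves quasi-isomorphisms).

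Next I would chain adjunctions. For any $F\in D(X)$,
\[
\RR\Hom^\punto(D^X,F)=\LL(X,F)\simeq F_g=\RR\Hom^\punto(\ZZ,j^{-1}F)=\RR\Hom^\punto(\LL j_!\ZZ,F),
\]
where the identity $F_g=\RR\Hom^\punto(\ZZ,j^{-1}F)$ uses that $\ZZ$ is projective over the one-point space, and the last one is the adjunction $\LL j_!\dashv j^{-1}$ of Proposition \ref{AdjunctionsOfInvIm}. Since $j$ is an open immersion, $j_!$ is exact, hence $\LL j_!\ZZ=j_!\ZZ=\ZZ_{U_g}=\ZZ_{\{g\}}$. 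Thus $\RR\Hom^\punto(D^X,F)\simeq\RR\Hom^\punto(\ZZ_{\{g\}},F)$ naturally in $F$, and Yoneda yields $D^X\simeq\ZZ_{\{g\}}$. One can also bypass $\LL j_!$ altogether: $\ZZ_{\{g\}}=\ZZ_{U_g}$ is projective, so $\RR\Hom^\punto(\ZZ_{\{g\}},F)=\Hom^\punto(\ZZ_{\{g\}},F)=\Gamma(U_g,F)=F_g$ by Proposition \ref{propsop1}.

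There is essentially no obstacle beyond bookkeeping; the only steps worth a line of justification are the identification of $L(X,\quad)$ with the exact functor $j^{-1}$ for irreducible $X$ and the equality $U_g=\{g\}$. A more computational alternative would be to feed the hypothesis into the explicit description of $D^X$ in Theorem \ref{ExplicitCodualizing} and check that the resulting complex resolves $\ZZ_{\{g\}}$, but that is more laborious; the adjunction argument is shorter and mirrors exactly the computation of $D_X$ when $X$ has a minimum.
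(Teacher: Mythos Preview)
Your proof is correct and follows essentially the same route as the paper: identify $L(X,\,\cdot\,)$ with the exact stalk functor $j^{-1}$ at the open point $g$, then use the adjunction $\LL j_!\dashv j^{-1}$ together with $j_!\ZZ=\ZZ_{U_g}=\ZZ_{\{g\}}$ to obtain a natural isomorphism $\RR\Hom^\punto(D^X,F)\simeq\RR\Hom^\punto(\ZZ_{\{g\}},F)$ and conclude by Yoneda. The paper's argument is the same, only slightly more terse.
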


\begin{proof} Let $i\colon \{ g\}\hookrightarrow X$ be the inclusion. For any abelian data $F$ on $X$ one has
\[L(X,F)=L(C_g,F)=F_g=i^{-1}F.\] Hence
\[\RR\Hom^\punto(D^X,F)=\RR\Hom^\punto(\ZZ,\LL(X,F)) =  \RR\Hom^\punto (\ZZ, i^{-1}F) = \RR\Hom^\punto(i_!\ZZ, F)\] and one concludes because $i_!\ZZ= \ZZ_{\{ g\}}$.
\end{proof}

\begin{prop}\label{localcodualizing} Let $f\colon X\to Y$ be a continuous map. For any  $K\in D(Y)$ and 
 any closed subset $C\subseteq Y$, one has a natural morphism
\[ {f_C}^\#(K_{\vert C}) \to (f^\#K)_{\vert f^{-1}(C)}.\]
\end{prop}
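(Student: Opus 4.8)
The plan is to mimic the construction of the local morphism $(f^!K)_{\vert f^{-1}(V)}\to f_V^!(K_{\vert V})$ from Proposition \ref{localdualizing}(2), but dualized: there the morphism came from the projection formula together with the adjunction between $\RR f_*$ and $f^!$, whereas here the role of the projection formula is played by the coprojection formula (Proposition \ref{coprojectionformula}) and the role of the adjunction is played by that between $\LL f_!$ and $f^\#$ (Theorem \ref{coduality}). Write $U=f^{-1}(C)$, let $i\colon C\hookrightarrow Y$ and $j\colon U\hookrightarrow X$ be the (closed) inclusions, and let $f_C\colon U\to C$ be the restriction of $f$, so that $f\circ j = i\circ f_C$. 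Recall from the Examples after Proposition \ref{f_*&f_!} that $(\quad)_C = i_*i^{-1}$ on $D(Y)$ and $(\quad)_U = j_*j^{-1}$ on $D(X)$, and that $i_*,j_*$ are exact.

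First I would produce, for every $F\in D(X)$, a natural morphism
\[
\RR\Hom^\punto\big({f_C}^\#(K_{\vert C}),\, F_{\vert U}\big)\longrightarrow \RR\Hom^\punto\big((f^\#K)_{\vert U},\, F_{\vert U}\big),
\]
functorial in $F$, and then invoke Yoneda to get the desired morphism ${f_C}^\#(K_{\vert C})\to (f^\#K)_{\vert U}$ in $D(U)$ (which, composing with $j_*$, is the same as a morphism in $D(X)$ supported on $U$, as in the statement). To build that morphism of $\RR\Hom$'s, I would run the following chain of identifications. By the $\LL f_!$–$f^\#$ adjunction applied to $f_C$, the left-hand side is $\RR\Hom^\punto(K_{\vert C},\LL{f_C}_!(F_{\vert U}))$. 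Now $\LL{f_C}_!(F_{\vert U}) = \LL{f_C}_!(j^{-1}F)$, and since $i\circ f_C = f\circ j$ with $j$ a closed immersion, the base change isomorphism of Proposition \ref{basechangemorphism} (the closed-immersion case, where $\LL\bar f_!\,\bar g^{-1}F\to g^{-1}\LL f_!F$ is an isomorphism) gives $i^{-1}\LL f_!F\overset\sim\to \LL{f_C}_!(j^{-1}F)$; hence $\RR\Hom^\punto(K_{\vert C},\LL{f_C}_!(F_{\vert U})) \simeq \RR\Hom^\punto(i^{-1}K, i^{-1}\LL f_!F)$. On the other side, $\RR\Hom^\punto((f^\#K)_{\vert U},F_{\vert U}) = \RR\Hom^\punto(j^{-1}f^\#K, j^{-1}F) \simeq \RR\Hom^\punto(f^\#K, j_*j^{-1}F) = \RR\Hom^\punto(f^\#K, F_U) \simeq \RR\Hom^\punto(K,\LL f_!(F_U))$, again by the $\LL f_!$–$f^\#$ adjunction. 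Finally $\RR\Hom^\punto(i^{-1}K,i^{-1}\LL f_!F) = \RR\Hom^\punto(i^{-1}K, i^{-1}(\LL f_!F))\simeq \RR\Hom^\punto(K, i_*i^{-1}(\LL f_!F)) = \RR\Hom^\punto(K,(\LL f_!F)_C)$. So the two sides become $\RR\Hom^\punto(K,(\LL f_!F)_C)$ and $\RR\Hom^\punto(K,\LL f_!(F_U))$ respectively, and the natural morphism between them is induced by the canonical morphism $\LL f_!(F_{f^{-1}(C)})\to (\LL f_!F)_C$ of Theorem \ref{hsatprojform}(3) (which exists for arbitrary $f$; it is an isomorphism precisely when $f$ is h-open, but we only need the morphism here). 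Chasing this morphism through Yoneda yields ${f_C}^\#(K_{\vert C})\to (f^\#K)_{\vert f^{-1}(C)}$.

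The main obstacle I anticipate is bookkeeping rather than conceptual: one must check that all the displayed identifications are natural in $F$ (so that Yoneda applies) and that the morphism $\LL f_!(F_{f^{-1}(C)})\to(\LL f_!F)_C$ being used is exactly the canonical one from Theorem \ref{hsatprojform}, so that the resulting morphism is the ``expected'' natural transformation and not merely some abstractly-produced map. In particular the exact triangle $(\LL f_!F)_{X-C}\to \LL f_!F\to(\LL f_!F)_C$ versus $\LL f_!(F_{f^{-1}(X-C)})\to \LL f_!F\to \LL f_!(F_{f^{-1}(C)})$ (already used in the proof of Theorem \ref{hsatprojform}) should be invoked to pin down the canonical morphism. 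Alternatively — and this may be cleaner for the write-up — one can construct the morphism directly on complexes: using $f^\#G = f^\square P$ for a projective resolution $P\to G$, and the explicit description of $f^\square$ via the representing objects $f_{-p}$, one produces a morphism of bicomplexes realizing $\LL f_!(F_U)\to(\LL f_!F)_C$ by duality degree by degree, exactly paralleling how $f^\nabla$ was handled in Proposition \ref{localdualizing}. Either way the proof is short; I would present the adjunction/base-change version as the cleaner argument.
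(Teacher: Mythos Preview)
Your approach is correct and essentially the same as the paper's: both construct the map by Yoneda from the adjunctions $f^\#\dashv\LL f_!$, $f_C^\#\dashv\LL{f_C}_!$, $j^{-1}\dashv j_*$, $i^{-1}\dashv i_*$, together with the comparison $\LL f_!\,j_*F\to i_*\,\LL{f_C}_!F$ (the paper builds this directly from the unit $F\to f_C^{-1}\LL{f_C}_!F$ and the identification $j_*f_C^{-1}\simeq f^{-1}i_*$, while you obtain it as the coprojection-formula morphism $\LL f_!(F_{f^{-1}(C)})\to(\LL f_!F)_C$ combined with closed base change---these are the same map). One small slip: the direction of the natural transformation you display at the outset is reversed; by covariant Yoneda, to obtain ${f_C}^\#(K_{\vert C})\to (f^\#K)_{\vert f^{-1}(C)}$ you need $\RR\Hom^\punto\big((f^\#K)_{\vert U},F_{\vert U}\big)\to\RR\Hom^\punto\big({f_C}^\#(K_{\vert C}),F_{\vert U}\big)$, and indeed that is exactly what your computation produces once you apply $\RR\Hom^\punto(K,-)$ to $\LL f_!(F_U)\to(\LL f_!F)_C$.
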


\begin{proof} Let us denote $i\colon C\hookrightarrow Y$ and $j\colon f^{-1}(C)\hookrightarrow X$ the inclusions and let $F\in D(f^{-1}(C))$. Taking $j_*$ in the natural map $F\to f_C^{-1}\LL {f_C}_! F$, one obtains a morphism  
\[ j_*F\to j_*  f_C^{-1}\LL {f_C}_! F\overset\sim\to f^{-1}i_* \LL {f_C}_! F\] hence a morphism
\[ \LL f_!j_*F\to   i_* \LL {f_C}_! F.\] Taking $\RR\Hom^\punto(K,\quad)$ one obtains a morphism
\[ \RR\Hom^\punto((f^\#K)_{\vert f^{-1}(C)}, F)\to  \RR\Hom^\punto({f_C}^\#(K_{\vert C}), F)\] and one concludes.
\end{proof}

\begin{rem} The adjunction between $f^{-1}$ and $\RR f_*$  (Proposition \ref{AdjunctionsOfInvIm}) can be sheafified to an isomorphism \[ \RR\HHom^\punto (G,\RR f_*F)\simeq \RR f_* \RR\HHom^\punto (f^{-1}G,F).\] 

However, the adjunction between $\LL f_!$  and $f^{-1}$  can be sheafified to a morphism 
\[  \RR\HHom^\punto ( \LL f_!F,G)\to \RR f_* \RR\HHom^\punto (F,f^{-1}G) \] which is not an isomorphism in general (indeed, we shall see in Theorem \ref{coprojformulacharacterization}  that this isomorphism holds if and only if $f$ is h-open). In fact, for any $K\in D(Y)$, taking $\RR\Hom^\punto(\quad,G)$ in the coprojection formula
\[  \LL f_!(F\overset\LL \otimes  f^{-1} K) \to (\LL f_!F)\overset\LL\otimes K\] gives a morphism
\[ \RR\Hom^\punto(K, \RR\HHom^\punto ( \LL f_!F,G))\to\RR\Hom^\punto(K, \RR f_* \RR\HHom^\punto (F,f^{-1}G))\]
i.e, a morphism 
\[  \RR\HHom^\punto ( \LL f_!F,G)\to \RR f_* \RR\HHom^\punto (F,f^{-1}G). \]
\end{rem}

\begin{thm}\label{coprojformulacharacterization} Let $f\colon X\to Y$ be a continuous map. The following conditions are equivalent
\begin{enumerate} \item $f$ is h-open.

\item Local adjunction isomorphism between $\LL f_!$ and $f^{-1}$ holds: for any $F\in D(X), G\in D(Y)$, the natural morphism \[ \RR\HHom^\punto(\LL f_!F , G ) \to \RR f_*  \RR\HHom^\punto (F, f^{-1} G)\] is an isomorphism.
 
\item  $f^{-1}$ commutes with homomorphisms: for any $\Lc,G\in D(Y)$ the morphism
\[ f^{-1}\RR\HHom^\punto(\Lc,G)\to \RR\HHom^\punto(f^{-1}\Lc, f^{-1}G)    \] is an isomorphism.
\item $f^\#$ is local on $Y$: for any closed subset $C\subseteq Y$ and any $K\in D(Y)$ the natural morphism \[  (f_C)^\#(K_{\vert C}) \to (f^\#K)_{\vert f^{-1}(C)}\] is an isomorphism.
\end{enumerate}
\end{thm}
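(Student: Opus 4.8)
\emph{Proof sketch.} The plan is to recognise each of conditions (2), (3) and (4) as an adjoint (and, where appropriate, sheafified) reformulation of the coprojection formula, which by Theorem \ref{hsatprojform} is precisely condition (1). So I first record that, by Theorem \ref{hsatprojform}, $f$ is h-open if and only if the coprojection morphism $\LL f_!(F\overset\LL\otimes f^{-1}\Lc)\to (\LL f_!F)\overset\LL\otimes\Lc$ of Proposition \ref{coprojectionformula} is an isomorphism for \emph{all} $F\in D(X)$ and $\Lc\in D(Y)$; it then remains to prove $(1)\Leftrightarrow(2)$, $(1)\Leftrightarrow(3)$ and $(1)\Leftrightarrow(4)$.

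For $(1)\Leftrightarrow(3)$: the morphism in (3) is exactly that of Proposition \ref{InvImTensHom}(2), and in the proof of Proposition \ref{coprojectionformula} the coprojection morphism is produced out of it by applying $\RR\Hom^\punto(F,\quad)$ together with the adjunction isomorphisms $\LL f_!\dashv f^{-1}$ (Proposition \ref{AdjunctionsOfInvIm}) and $\overset\LL\otimes\dashv\RR\HHom^\punto$ (Proposition \ref{RHom&Tens}). Hence $\RR\Hom^\punto(F,\quad)$ transports the morphism of (3) to $\RR\Hom^\punto(\quad,G)$ applied to the coprojection morphism; since the families $\{\RR\Hom^\punto(F,\quad)\}_{F\in D(X)}$ and $\{\RR\Hom^\punto(\quad,G)\}_{G\in D(Y)}$ jointly detect isomorphisms (Yoneda), the morphism of (3) is an isomorphism for all $\Lc,G$ exactly when the coprojection morphism is, i.e. exactly when $f$ is h-open. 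For $(1)\Leftrightarrow(2)$ the argument is the one sketched in the Remark before the statement: applying $\RR\Hom^\punto(\quad,G)$ to the coprojection morphism and using Proposition \ref{RHom&Tens} and the adjunctions $\LL f_!\dashv f^{-1}\dashv\RR f_*$ exhibits the morphism of (2) as Yoneda-adjoint (over $K\in D(Y)$) to the coprojection morphism, so the two are isomorphisms simultaneously.

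For $(1)\Leftrightarrow(4)$ I would apply $\RR\Hom^\punto_{f^{-1}(C)}(\quad,F)$, with $F\in D(f^{-1}(C))$, to the natural morphism ${f_C}^\#(K_{\vert C})\to (f^\#K)_{\vert f^{-1}(C)}$ of Proposition \ref{localcodualizing}. Writing $j\colon f^{-1}(C)\hookrightarrow X$ and $i\colon C\hookrightarrow Y$ for the (closed) inclusions, so that $\RR j_*=j_*$ and $\RR i_*=i_*$, the adjunctions $j^{-1}\dashv j_*$, $i^{-1}\dashv i_*$, $f^\#\dashv\LL f_!$ (Theorem \ref{coduality}) and ${f_C}^\#\dashv\LL{f_C}_!$ convert it into a morphism $\RR\Hom^\punto(K,\LL f_!(j_*F))\to\RR\Hom^\punto(K,i_*\LL{f_C}_!F)$, natural in $K\in D(Y)$. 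By Proposition \ref{1.2.10} in its derived form one has $i_*\LL{f_C}_!F\simeq (\LL f_!\,j_*F)_C$, so Yoneda over $K$ identifies (4) with the statement that, for every closed $C\subseteq Y$ and every $M\in D(X)$ supported on $f^{-1}(C)$, the canonical morphism $M\to M_C$ is an isomorphism after applying $\LL f_!$. Since $M=M_{f^{-1}(C)}$ for such $M$, and conversely $F_{f^{-1}(C)}$ is of this type for arbitrary $F\in D(X)$ while $(\LL f_!F)_C\simeq (\LL f_!\,F_{f^{-1}(C)})_C$, this is precisely condition (3) of Theorem \ref{hsatprojform} --- that $\LL f_!$ commute with supporting on closed subsets --- which by that theorem is equivalent to (1).

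Beyond the routine adjunction bookkeeping, the step I expect to be the genuine obstacle is $(1)\Leftrightarrow(4)$: unlike in the cohomological Theorem \ref{projformulacharacterization} there is no local form of co-duality to lean on, and $f^\#$ is only available through the representability/$f^\square$ construction of Theorem \ref{coduality} rather than an explicit formula, so one must check carefully that the canonical morphism of Proposition \ref{localcodualizing}, after the chain of adjunctions above, really coincides with the canonical morphism of Theorem \ref{hsatprojform}(3). A lighter version of the same caution is needed in $(1)\Leftrightarrow(3)$, to make sure that the isomorphism produced by Yoneda is realised by the canonical morphism of Proposition \ref{InvImTensHom}(2).
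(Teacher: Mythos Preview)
Your proposal is correct and follows essentially the same strategy as the paper's proof. The paper packages the equivalence of (1), (2) and (3) into a single commutative diagram with three rows (the middle row being $\RR\Hom^\punto(\quad,G)$ applied to the coprojection morphism, the top and bottom rows being $\RR\Hom^\punto$ of (2) and (3) respectively), which is a slightly cleaner presentation of exactly the Yoneda/adjunction argument you describe; for $(1)\Leftrightarrow(4)$ the paper does precisely what you outline, reducing via the adjunctions $j^{-1}\dashv j_*$, $i^{-1}\dashv i_*$, $f^\#\dashv\LL f_!$, ${f_C}^\#\dashv\LL{f_C}_!$ to condition (3) of Theorem~\ref{hsatprojform}. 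Your closing caution---that one must verify the morphism obtained after the chain of adjunctions really is the canonical one---is apt and is left implicit in the paper as well.
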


\begin{proof} By Theorem \ref{hsatprojform}, (1) is equivalent to saying that coprojection formula holds.  Then, the equivalence of (1), (2) and (3) follows from adjunction: one has a commutative diagram
\[\xymatrix {\RR\Hom^\punto( \Lc, \RR\HHom^\punto(\LL f_! F, G) )\ar[r]\ar[d]^\wr & \RR\Hom^\punto(    \Lc, \RR f_*\RR\HHom^\punto(F, f^{-1} G)) \ar[d]^\wr \\ 
\RR\Hom^\punto( (\LL f_! F)\overset\LL\otimes \Lc, G )\ar[r] \ar[d]^\wr & \RR\Hom^\punto(\LL f_!(F\overset\LL \otimes f^{-1}\Lc),G) \ar[d]^\wr\\ 
\RR\Hom^\punto( F,f^{-1}\RR\HHom^\punto( \Lc, G) )\ar[r] & \RR\Hom^\punto( F,\RR\HHom^\punto(  f^{-1}\Lc,f^{-1} G)) .   }
\]

(1) $\Rightarrow$ (4).
 Let us denote $i\colon C\hookrightarrow Y$, $j\colon f^{-1}(C)\hookrightarrow X$. By Theorem \ref{hsatprojform}, $\LL f_!$  commutes with supporting on closed subsets. Hence, $\LL f_!(F_{f^{1}(C)})\simeq (\LL f_!F)_C$ for any $F\in D(X)$. Taking $\RR\Hom(K,\quad)$, one obtains
\[\RR\Hom^\punto(K, \LL f_!(F_{f^{1}(C)}))\simeq \RR\Hom^\punto(K,(\LL f_!F)_C)\]
Now,
\[ \RR\Hom^\punto(K, \LL f_!(F_{f^{1}(C)}))= \RR\Hom^\punto(f^\#K,  F_C ) = \RR\Hom^\punto(j^{-1}f^\#K,  j^{-1}F  )\]
and 
\[ \aligned \RR\Hom^\punto(K, (\LL f_!F)_C)= \RR\Hom^\punto(i^{-1}K, i^{-1}\LL f_!F )& = \RR\Hom^\punto(i^{-1}K, \LL {f_C}_! j^{-1}F ) \\ & = \RR\Hom^\punto((f_C)^\#i^{-1}K,   j^{-1}F )\endaligned\] and we conclude that $j^{-1}f^\#K\simeq (f_C)^\# i^{-1}K$.

(4) $\Rightarrow$ (1). The above isomorphisms prove that the isomorphism $j^{-1}f^\#K\simeq (f_C)^\# i^{-1}K$  implies an isomorphism $\LL f_!(F_{f^{1}(C)})\simeq (\LL f_!F)_C$. One concludes by Theorem \ref{hsatprojform}.
\end{proof}

\begin{thm}\label{dualizing-codualizing} One has a natural morphism 
\[ \RR\Hom(D^X,F)\to \RR\Hom(F^\vee, D_X)\] which is an isomorphism if $F$ is of finite type. In other words, there exists a morphism
\[ \xi\colon (D^X)^\vee\to D_X\] such that, the induced morphism
\[ \aligned \RR\Hom(D^X,F)&\to \RR\Hom(F^\vee, D_X)\\ h&\mapsto \xi\circ h^\vee
\endaligned\] is an isomorphism if $F$ is of finite type.
\end{thm}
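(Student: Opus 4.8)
The plan is to identify the source and the target of the asserted morphism with the two ends of a biduality morphism attached to a single complex of abelian groups, and then to read off $\xi$ by specialising at $F=D^X$. For the identifications: by definition of the codualizing complex one has $\RR\Hom(D^X,F)=\LL(X,F)$; and by the remark following the definition of the dualizing complex, $\RR\Hom(G,D_X)=\RR\Gamma(X,G)^\vee$ for every $G\in D(X)$, so taking $G=F^\vee$ and using Corollary \ref{Cor-homo-cohomo} (which gives $\RR\Gamma(X,F^\vee)=\LL(X,F)^\vee$) we obtain
\[ \RR\Hom(F^\vee,D_X)=\RR\Gamma(X,F^\vee)^\vee=\LL(X,F)^{\vee\vee}. \]
Both identifications are natural in $F$: the first is the adjunction between $f^\#$ and $\LL f_!$ for $f\colon X\to\{*\}$, the second the adjunction between $\RR f_*$ and $f^!$ together with Corollary \ref{Cor-homo-cohomo}.

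I would then define the morphism of the statement to be, through these two identifications, the canonical biduality morphism $\LL(X,F)\to\LL(X,F)^{\vee\vee}$; it is natural in $F$. If $F$ is of finite type, then $\LL(X,F)\simeq L(X,C_\punto F)$ (Theorem \ref{finitecoflasqueresolution}) is a bounded complex of finitely generated abelian groups, hence a perfect complex of abelian groups, and the biduality morphism of a perfect complex is an isomorphism; so for such $F$ the morphism is an isomorphism. This isomorphism comes solely from the perfectness of $\LL(X,F)$, not from any special property of the fixed morphism $\xi$ below — indeed $\xi$ is not an isomorphism in general.

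To produce $\xi$, I would specialise the morphism to $F=D^X$ and chase $\id_{D^X}\in H^0\RR\Hom(D^X,D^X)=\Hom_{D(X)}(D^X,D^X)$; its image is an element
\[ \xi\in H^0\RR\Hom((D^X)^\vee,D_X)=\Hom_{D(X)}((D^X)^\vee,D_X), \]
which, under the induced isomorphism $\Hom_{D(X)}((D^X)^\vee,D_X)\cong H^0\RR\Gamma(X,\ZZ)$ (the same chain of isomorphisms together with $\LL(X,D^X)=\RR\Gamma(X,\ZZ)$), is the class of the constant section $1$. Since the morphism of the statement is a natural transformation of functors on $D(X)$, applying naturality to an arbitrary $h\colon D^X\to F$ and tracing $\id_{D^X}$ around the naturality square shows that the induced map sends $h$ to $\xi\circ h^\vee$; combined with the previous paragraph this is exactly the assertion.

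The step requiring the most care is checking that the isomorphisms $\RR\Hom(D^X,F)\cong\LL(X,F)$ and $\RR\Hom(F^\vee,D_X)\cong\LL(X,F)^{\vee\vee}$ are natural with respect to morphisms in the derived category $D(X)$ (not merely for honest chain maps), so that the Yoneda-type extraction of $\xi$ and the formula $h\mapsto\xi\circ h^\vee$ are legitimate, and that these isomorphisms are compatible with the duality functor $(-)^\vee$; the rest is routine bookkeeping with the definitions of $D^X$, $D_X$ and Corollary \ref{Cor-homo-cohomo}.
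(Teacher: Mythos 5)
Your proposal is correct and takes essentially the same route as the paper's proof: identify $\RR\Hom(D^X,F)=\LL(X,F)$ and $\RR\Hom(F^\vee,D_X)=\RR\Gamma(X,F^\vee)^\vee=\LL(X,F)^{\vee\vee}$ (via the defining properties of $D^X$, $D_X$ and Corollary \ref{Cor-homo-cohomo}), take the canonical biduality morphism $\LL(X,F)\to\LL(X,F)^{\vee\vee}$, and conclude by perfectness of $\LL(X,F)$ when $F$ is of finite type. Your additional Yoneda-style extraction of $\xi$ at $F=D^X$ only makes explicit the ``in other words'' reformulation that the paper leaves implicit.
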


\begin{proof} For any $F$ one has
\[ \RR\Hom (D^X,F)=\LL (X,F)\to \LL (X,F)^{\vee\vee} = \RR\Gamma(X,F^\vee)^\vee=\RR \Hom(F^\vee,D_X)\] and $\LL (X,F)\to \LL (X,F)^{\vee\vee}$ is an isomorphism if $F$ is of finite type.
\end{proof}

The following definition is inspired by its algebro-geometric analogous.

\begin{defn} We say that $X$ is {\it homologically  Gorenstein} (resp. {\it cohomologically  Go\-ren\-stein}) if 
\[ D^X\simeq \T^X[-d]\qquad (\text{\rm resp. } D_X\simeq \T_X[d])\] for some integer $d\geq 0$ and some invertible abelian data $\T^X$ (resp. $\T_X$).
\end{defn}

\begin{ejem} Let $n\geq 1$ and let $X=\Sc^n (S^0)$, which is the minimal finite model of the $n$-dimensional sphere $S^n$ (see \cite{Barmak}). Then
\[ D^{\Sc^n(S^0)}\simeq \ZZ[-n].\]

\begin{proof} For brevity, let us denote  $\Sc^n=\Sc^n(S^0)$. For any $p\in \Sc^n$,  $\Sc^n-C_p$ is contractible. Hence  
\[ H^i[(D^X)_p]=H^i_{C_p}(\Sc^n ,\ZZ)=\left\{\aligned 0\qquad &\text{, if } i\neq n \\ H^n(\Sc^n,\ZZ)=\ZZ &\text{, if } i= n,\endaligned\right.\] and for any $p\leq q$ the morphism 
\[ \ZZ= H^i[(D^X)_p]\to \ZZ= H^i[(D^X)_q]\] is the identity. It follows that $D^X\simeq \T^X[-n]$, with $\T^X$ an invertible abelian data. From the isomorphism $\RR\Hom^\punto (D^X,\ZZ)=\LL(X,\ZZ)$, it follows that $\Hom(\T^X,\ZZ)=H_n(\Sc^n,\ZZ)=\ZZ$, and then $\T^X\simeq\ZZ$ and  $D^X=\ZZ[-n]$.
\end{proof}
\end{ejem}

Notice that if $X$ is homologically Gorenstein then $D^X$ is perfect. The following Proposition says that the converse is also true and that being homologically Gorenstein is stronger than being cohomologically Gorenstein.

\begin{prop}\label{perfect} If $X$ is connected and  $D^X$ is perfect, then $X$ is homologically Gorenstein and $(D^X)^\vee\to D_X$ is an isomorphism. In particular, $X$ is also cohomologically Gorenstein and $\HHom(\T^X,\ZZ) \simeq \T_X$.
\end{prop}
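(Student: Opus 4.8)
\emph{Strategy.} The plan is to establish the two substantive claims — that $D^X$ is a shift of an invertible abelian data, and that the comparison morphism $\xi\colon (D^X)^\vee\to D_X$ of Theorem \ref{dualizing-codualizing} is an isomorphism — and then to read off the rest: if $D^X\simeq\T^X[-d]$ then $(D^X)^\vee\simeq\HHom(\T^X,\ZZ)[d]$, so $D_X\simeq\T_X[d]$ with $\T_X:=\HHom(\T^X,\ZZ)$ again invertible, which is exactly cohomological Gorenstein-ness together with the asserted identity $\HHom(\T^X,\ZZ)\simeq\T_X$. Throughout I use that $D^X$ perfect forces $(D^X)^{\vee\vee}=D^X$ and $\RR\HHom^\punto(D^X,F)\simeq(D^X)^\vee\overset\LL\otimes F$.

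\emph{Step 1: $D^X$ is a shift of an invertible data.} First compute the self-homomorphisms: by the defining adjunction $\RR\Hom^\punto(D^X,F)=\LL(X,F)$ applied to $F=D^X$, together with the corollary $\LL(X,D^X)=\RR\Gamma(X,\ZZ)$, one gets $\RR\Hom^\punto_{D(X)}(D^X,D^X)\simeq\RR\Gamma(X,\ZZ)$; since $X$ is connected, $\End_{D(X)}(D^X)=\ZZ$. Because $D^X$ is perfect, $D^X|_{U_p}$ is a constant perfect complex $\simeq\pi_p^{-1}((D^X)_p)$ (recall $U_p$ is contractible), whence $\RR\HHom^\punto(D^X,D^X)$ is a locally constant complex whose restriction to $U_p$ is the constant complex $\RR\Hom^\punto_\ZZ((D^X)_p,(D^X)_p)$. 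The identity section $\ZZ\to\RR\HHom^\punto(D^X,D^X)$ is then a morphism of locally constant complexes, hence an isomorphism as soon as it is one on a single stalk of the connected space $X$; at $p$ this says $\RR\Hom^\punto_\ZZ((D^X)_p,(D^X)_p)\simeq\ZZ$, which over the PID $\ZZ$ is equivalent to $(D^X)_p\simeq\ZZ[-d]$ for some $d\ge0$ (write $(D^X)_p\simeq\bigoplus_i M_i[-i]$ with $M_i$ finitely generated; having $\ZZ$ as derived self-homomorphisms forces exactly one $M_i$, equal to $\ZZ$). Granting this, local constancy of the cohomology of $D^X$ and connectedness make $d$ independent of $p$, so $D^X\simeq\T^X[-d]$ with $\T^X:=H^{d}(D^X)$ a rank-one locally constant (hence invertible) abelian data; thus $X$ is homologically Gorenstein.

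\emph{Step 2: computation of $D_X$ and isomorphy of $\xi$.} From $D^X=\T^X[-d]$ one extracts, for every $F\in D(X)$, the identity $\RR\Gamma(X,\T_X\overset\LL\otimes F)[d]=\LL(X,F)$ (apply the adjunction for $D^X$ and $\RR\HHom^\punto(\T^X,F)=\T_X\overset\LL\otimes F$, with $\T_X:=\HHom(\T^X,\ZZ)$). Taking $F=\ZZ_{U_p}$ and using $\T_X\overset\LL\otimes\ZZ_{U_p}\simeq\ZZ_{U_p}$ (as $\T_X|_{U_p}$ is trivial) together with $\LL(X,\ZZ_{U_p})=\LL(U_p,\ZZ)=\ZZ$ (extension by zero along the open immersion $U_p\hookrightarrow X$, and $U_p$ contractible) gives $\RR\Gamma(X,\ZZ_{U_p})\simeq\ZZ[-d]$; taking $F=\ZZ_{U_p\setminus U_q}$ for $q\ge p$, the cone of $\ZZ_{U_q}\hookrightarrow\ZZ_{U_p}$, and the analogous computation $\LL(X,\ZZ_{U_p\setminus U_q})=\LL(U_p,\ZZ_{U_p\setminus U_q})=0$ (the open complement of $U_p\setminus U_q$ in $U_p$ is $U_q$, and $H_0(U_q,\ZZ)\to H_0(U_p,\ZZ)$ is an isomorphism since both are nonempty and connected) shows $\RR\Gamma(X,\ZZ_{U_q})\to\RR\Gamma(X,\ZZ_{U_p})$ is an isomorphism. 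By the stalkwise description $(D_X)_p=\RR\Gamma(X,\ZZ_{U_p})^\vee$, with restriction maps obtained by applying $\RR\Gamma(X,\quad)^\vee$ to $\ZZ_{U_q}\hookrightarrow\ZZ_{U_p}$, the complex $D_X$ has every stalk $\simeq\ZZ[d]$ and all transition maps isomorphisms; hence $D_X$ is perfect, $D_X\simeq\T'[d]$ with $\T':=H^{-d}(D_X)$ invertible, and $X$ is cohomologically Gorenstein. Finally, since $D^X$ is perfect, feeding $F=G^\vee$ for $G$ perfect into Theorem \ref{dualizing-codualizing} and using the biduality isomorphism $\RR\Hom^\punto(A,B)\simeq\RR\Hom^\punto(B^\vee,A^\vee)$ for perfect $A,B$ shows that $\RR\Hom^\punto(G,\xi)$ is an isomorphism for every perfect $G$; applying this to $G=\operatorname{cone}(\xi)$ (perfect, since both $(D^X)^\vee$ and $D_X$ are) against the triangle $(D^X)^\vee\to D_X\to\operatorname{cone}(\xi)$ yields $\RR\Hom^\punto(\operatorname{cone}(\xi),\operatorname{cone}(\xi))=0$, so $\operatorname{cone}(\xi)=0$ and $\xi$ is an isomorphism. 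Then $D_X\simeq(D^X)^\vee\simeq\HHom(\T^X,\ZZ)[d]$, i.e.\ $\T'\simeq\HHom(\T^X,\ZZ)=:\T_X$, completing the proof.

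\emph{Main obstacle.} The delicate point is Step 1 — passing from the global identity $\RR\Gamma(X,\RR\HHom^\punto(D^X,D^X))=\RR\Gamma(X,\ZZ)$ to the stalkwise statement $\RR\Hom^\punto_\ZZ((D^X)_p,(D^X)_p)\simeq\ZZ$, equivalently to ruling out that $D^X$ is spread over several degrees or has stalkwise rank $\ge2$. The locally constant complex $\RR\HHom^\punto(D^X,D^X)$ has constant stalkwise rank $\operatorname{rk}((D^X)_p)^2$, and comparing Euler characteristics forces this to be $1$ when $\chi(X)\ne0$; the care required is in the case $\chi(X)=0$, and in general one must use that a perfect complex on connected $X$ with cohomology in $\ge2$ degrees, or with a cohomology sheaf of rank $\ge2$, has strictly more than $\ZZ$ in its degree‑$0$ derived endomorphisms (the ring/trace structure of $\RR\HHom^\punto(D^X,D^X)$, not merely its global sections), contradicting $\End_{D(X)}(D^X)=\ZZ$; the $X$–$\widehat X$ duality of §\ref{dualspace} gives an alternative line of attack on the same point. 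Everything after Step 1 is bookkeeping with the adjunctions and with the explicit descriptions of $D_X$ and $D^X$ already established.
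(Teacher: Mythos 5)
Your Step 2 is essentially sound (if more laborious than necessary) bookkeeping, but Step 1 --- the heart of the matter --- contains a genuine gap, which you yourself flag without closing. What you actually establish is $\RR\Hom^\punto(D^X,D^X)=\LL(X,D^X)=\RR\Gamma(X,\ZZ)$, hence $\Hom_{D(X)}(D^X,D^X)=\ZZ$, and that $\RR\HHom^\punto(D^X,D^X)$ is locally constant with stalk $\RR\Hom^\punto_{\ZZ}((D^X)_p,(D^X)_p)$. Nothing you prove forces that stalk to be $\ZZ$. The Euler-characteristic comparison only yields $\chi(X)\,\chi((D^X)_p)^2=\chi(X)$, i.e.\ $\chi((D^X)_p)=\pm 1$ when $\chi(X)\neq 0$, which is far from $(D^X)_p\simeq\ZZ[-d]$ (a stalk with cohomology $\ZZ^2$ in one degree and $\ZZ$ in the next also has $\chi=1$), and it says nothing when $\chi(X)=0$. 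The principle you then invoke --- that a perfect complex on connected $X$ whose cohomology sits in two or more degrees, or whose stalk has rank $\geq 2$, must have degree-zero derived endomorphisms strictly larger than $\ZZ$ --- is exactly what needs proof and is not supplied; in closely analogous triangulated settings the corresponding statement is false (exceptional objects of higher rank exist), so any proof would have to exploit the specific structure here, which your sketch does not do. Note also that $\Hom_{D(X)}(D^X,D^X)=\ZZ$ holds for every connected $X$ whether or not $D^X$ is perfect, so degree-zero endomorphisms alone cannot detect invertibility.

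The paper avoids self-endomorphisms entirely by producing a tensor inverse, and this is the idea your argument is missing. Since $D^X$ is perfect, $\LL(X,F)=\RR\Hom^\punto(D^X,F)=\RR\Gamma\bigl(X,F\overset\LL\otimes (D^X)^\vee\bigr)$; taking duals and using $\LL(X,F)^\vee=\RR\Gamma(X,F^\vee)$ together with the defining property of $D_X$ gives $\RR\Hom^\punto(F,\ZZ)\simeq\RR\Hom^\punto\bigl(F,D_X\overset\LL\otimes D^X\bigr)$ for all $F$, hence $D_X\overset\LL\otimes D^X\simeq\ZZ$. Stalkwise this reads $(D_X)_p\overset\LL\otimes (D^X)_p\simeq\ZZ$, and Lemma \ref{lemagrupos} immediately yields $(D^X)_p\simeq\ZZ[-d]$; connectedness makes $d$ constant, so $D^X\simeq\T^X[-d]$, and tensoring $D_X\overset\LL\otimes D^X\simeq\ZZ$ with $(D^X)^\vee$ gives $D_X\simeq (D^X)^\vee\simeq\T_X[d]$. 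If you replace your Step 1 by this argument, the rest of your proposal goes through; your cone argument identifying the specific morphism $\xi$ of Theorem \ref{dualizing-codualizing} as the isomorphism is in fact a point treated more explicitly than in the paper's own proof.
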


\begin{proof} Let us denote $\Lc=D^X$. Since $\Lc$ is perfect, one has: $\LL (X,F)=\RR\Hom(\Lc,F) = \RR\Gamma(X, F\overset\LL\otimes \Lc^\vee)$. Taking dual, one obtains $\RR\Gamma(X,F^\vee)=\RR\Hom(F\overset\LL\otimes \Lc^\vee,D_X)$. This can be translated into an isomorphism
\[ \RR\Hom(F,\ZZ)= \RR\Hom(F,D_X\overset\LL\otimes \Lc),\] and then $D_X\overset\LL\otimes \Lc\simeq \ZZ$. Thus, for any point $p\in X$, one has that $(D_X)_p\overset\LL\otimes \Lc_p\simeq \ZZ$.  By Lemma \ref{lemagrupos}, one has that $\Lc_p\simeq \ZZ[-d]$ and then, since $X$ is connected, $\Lc \simeq \T^X[-d]$ for some integer $d$ and some invertible abelian data $\T^X$. Consequently, $D_X\simeq\Lc^\vee\simeq \T_X[d]$, with $\T_X=\HHom(\T^X,\ZZ)$.
\end{proof}

\begin{lem}\label{lemagrupos} Let $K,L$ be two bounded complexes of finitely generated abelian groups. If $K\overset\LL\otimes L=\ZZ$, then $K\simeq \ZZ[n]$ for some integer $n$ (and then $L\simeq \ZZ[-n]$).
\end{lem}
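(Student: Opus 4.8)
The plan is to read off the conclusion from the cohomology of the base changes of $K$ and $L$ to the fields $\QQ$ and $\ZZ/p$. Since $\ZZ$ is hereditary, every bounded complex of finitely generated abelian groups is quasi-isomorphic to a bounded complex of finitely generated \emph{free} abelian groups, so first I would replace $K$ and $L$ by such models; then $K\overset\LL\otimes L$ is represented by the tensor product $K\otimes_\ZZ L$, again a bounded complex of finitely generated free abelian groups. For $A=\QQ$ or $A=\ZZ/p$, base change along $\ZZ\to A$ is well behaved: working on the free models and using $A\otimes_\ZZ A=A$, one obtains
\[ A\overset\LL\otimes_\ZZ(K\overset\LL\otimes L)\;\simeq\;(A\overset\LL\otimes_\ZZ K)\overset\LL\otimes_A(A\overset\LL\otimes_\ZZ L),\]
which by hypothesis is $A$ concentrated in degree $0$.

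Taking $A=\QQ$ first: over a field every complex is isomorphic in the derived category to its cohomology, and the K\"unneth formula carries no $\Tor$ term, so the Poincar\'e series $P_M(t)=\sum_i\dim_\QQ H^i(M)\,t^i$ is multiplicative, whence $P_{\QQ\otimes K}(t)\cdot P_{\QQ\otimes L}(t)=P_\QQ(t)=1$. A Laurent polynomial with non-negative integer coefficients whose product with another such polynomial equals $1$ is necessarily a single monomial with unit coefficient (compare the lowest and the highest degree terms on both sides). Therefore $H^\punto(\QQ\otimes K)$ is one-dimensional and concentrated in a single degree $d$; equivalently $\operatorname{rank}_\ZZ H^i(K)=0$ for $i\neq d$ and $\operatorname{rank}_\ZZ H^d(K)=1$.

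Taking $A=\ZZ/p$: the same monomial argument shows that the total $\ZZ/p$-dimension of $H^\punto(\ZZ/p\overset\LL\otimes K)$ equals $1$ for every prime $p$. On the other hand, the universal coefficient theorem applied to the free complex $K$ gives
\[ \dim_{\ZZ/p}H^i(\ZZ/p\overset\LL\otimes K)\;=\;\operatorname{rank}_\ZZ H^i(K)+\tau^p_i+\tau^p_{i+1},\]
where $\tau^p_i$ denotes the number of elementary divisors of the torsion subgroup of $H^i(K)$ that are divisible by $p$. Summing over $i$ and using the previous paragraph yields $1=1+2\sum_i\tau^p_i$, so $\tau^p_i=0$ for every $i$ and every prime $p$; thus each $H^i(K)$ is torsion free. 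Combined with the rank count this forces $H^i(K)=0$ for $i\neq d$ and $H^d(K)=\ZZ$, so $K$ is quasi-isomorphic to $\ZZ$ placed in a single degree, i.e. $K\simeq\ZZ[n]$ for a suitable $n\in\ZZ$. Finally $\ZZ\simeq K\overset\LL\otimes L\simeq(\ZZ[n])\overset\LL\otimes L\simeq L[n]$, whence $L\simeq\ZZ[-n]$.

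I expect the main obstacle to be the torsion bookkeeping — ruling out that torsion in $K$ and in $L$ could conspire to produce the torsion-free answer $\ZZ$. This is exactly what the reduction modulo $p$ controls, the crucial numerical input being that, over a field, the cohomology of a derived tensor product is concentrated in a single degree if and only if each of the two factors is.
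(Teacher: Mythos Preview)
Your argument is correct. The reduction to field coefficients, the Poincar\'e--series/monomial argument over $\QQ$ and $\ZZ/p$, and the universal--coefficient count $\sum_i\dim_{\ZZ/p}H^i(\ZZ/p\overset\LL\otimes K)=\sum_i r_i+2\sum_i\tau^p_i$ are all sound, and the conclusion follows exactly as you say.

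The paper takes a different, more structural route. It invokes the formality of $D^b(\ZZ)$: since $\ZZ$ is hereditary, every bounded complex splits as $K\simeq\bigoplus_i H^i(K)[-i]$ in the derived category. One can then refine this further into a finite direct sum of shifted cyclic groups $\ZZ[n_\alpha]$ and $(\ZZ/p^k)[n_\alpha]$ and read off the answer directly: some summand of $K$ must be $\ZZ[n]$ (else $K\overset\LL\otimes L$ would be torsion), and then $L[n]$ is a direct summand of $K\overset\LL\otimes L\simeq\ZZ$, which forces $L\simeq\ZZ[-n]$ and hence $K\simeq\ZZ[n]$. This is shorter once one accepts the splitting, and avoids any UCT bookkeeping. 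Your approach, by contrast, is the standard ``test against all residue fields'' method; it is a bit longer here but is a robust technique that does not rely on the special decomposition available over a PID.
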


\begin{proof} It is an easy exercise taking into account that any bounded complex of abelian groups is isomorphic (in the derived category) to its cohomology: $K\simeq \underset i\oplus  H^i(K)[-i]$.
\end{proof}

\end{document}